\documentclass{amsart}
\usepackage{comment}
\usepackage{amscd}
\usepackage{hyperref}
\usepackage{marginnote}
\usepackage{epsf,latexsym,graphicx,dsfont}
\usepackage[matrix,arrow,tips,curve]{xy}
\usepackage{color}
\usepackage{tikz}
\usepackage{euscript}
\newlength{\defbaselineskip} 
\usepackage{footnote}
\usepackage{color}
\usepackage{todonotes}
\usepackage{MnSymbol}

\usepackage{amsfonts}
\usepackage[centertags]{amsmath}

\usepackage{amscd}

\usepackage{fullpage}
\usepackage{color}
\usepackage{euscript}
\usepackage{tikz}
\usepackage{footnote}
\usepackage{color}
\usepackage{todonotes}
\usepackage{url}
\newtheorem{thm}{Theorem}[section]
\newtheorem{cor}[thm]{Corollary}
\newtheorem{corollary}[thm]{Corollary}
\newtheorem{corr}[thm]{Corollary}

\newtheorem{lemma}[thm]{Lemma}
\newtheorem{lem}[thm]{Lemma}
\newtheorem{prop}[thm]{Proposition}

\newtheorem{prob}[thm]{Problem}

\theoremstyle{definition}
\newtheorem{defi}[thm]{Definition}

\newtheorem{rem}[thm]{Remark}
\newtheorem{remark}[thm]{Remark}

\usetikzlibrary{matrix,arrows}
\makeatletter \tikzset{
  edge node/.code={%
      \expandafter\def\expandafter\tikz@tonodes\expandafter{\tikz@tonodes #1}}}
\makeatother \tikzset{
  subseteq/.style={
    draw=none,
    edge node={node [sloped, allow upside down, auto=false]{$\subseteq$}}},
  Subseteq/.style={
    draw=none,
    every to/.append style={
      edge node={node [sloped, allow upside down, auto=false]{$\subseteq$}}}
  }
}

 \numberwithin{equation}{section}
\numberwithin{equation}{section} \theoremstyle{definition}
\DeclareMathOperator{\Pic}{Pic}
\DeclareMathOperator{\NS}{NS}
\DeclareMathOperator{\T}{T}
\DeclareMathOperator{\Aut}{Aut}
\DeclareMathOperator{\Fix}{Fix}

\DeclareMathOperator{\Sym}{Sym}

\DeclareMathOperator{\id}{id}
\DeclareMathOperator{\divi}{div}

\DeclareMathOperator{\Hilb}{Hilb}

          \newcommand\PP{{\mathbb{P}}}
           
          \newcommand\C{{\mathbb{C}  }}

            \newcommand\ZZ{{\mathbb Z}}

          \newcommand\oo{\mathcal O}
             \newcommand\Q{\mathbb Q}
             \newcommand{\ra}{\rightarrow}
          \newcommand\Z{\mathbb{Z}}

          \newcommand\rk{\mathrm{rk}}

\definecolor{zielony}{rgb}{0.5, 0.9, 0.1}
\definecolor{czerwony}{rgb}{0.8, 0.2, 0.1}
\definecolor{niebieski}{rgb}{0.3, 0.1, 0.9}

\newcounter{appendice}

\title{Generalized Nikulin surfaces and irreducible symplectic fourfolds}

\author[C. Camere]{Chiara Camere}
\address{C. Camere: Dipartimento di Matematica, Università degli Studi di Milano, Milano, Italy}
\email{chiara.camere@unimi.it}

\author[A. Garbagnati]{Alice Garbagnati}
\address{A. Garbagnati: Dipartimento di Matematica, Università degli Studi di Milano, Milano, Italy}
\email{alice.garbagnati@unimi.it}

\author[G. Kapustka]{Grzegorz Kapustka}
\address{G. Kapustka: Faculty of Mathematics and Informatics, Jagiellonian University, ul. Łojasiewicza 6, 30-348 Krak\'ow, Poland}
\email{grzegorz.kapustka@uj.edu.pl}

\author[M. Kapustka]{Micha\l{} Kapustka}
\address{M. Kapustka: Institute of Mathematics of the Polish Academy of Sciences, ul. Śniadeckich 8, 00-656 Warszawa, Poland}
\email{michal.kapustka@impan.pl}

\begin{document}
	\subjclass[2020]{Primary 14J28, 14J42, 14J50, 14J35.}
	\keywords{$K3$ surface, Nikulin surface, symplectic involution, $4$-dimensional irreducible symplectic manifold and variety, Nikulin orbifold}
\maketitle

\begin{abstract} A Nikulin surface is the minimal resolution of the quotient of a $K3$ surface $S$ by a symplectic involution $\iota_S$. Equivalently, it is the $2$-dimensional component of the fixed locus of the involution induced by $\iota_S$ on the Hilbert scheme $S^{[2]}$.
    We study $K3$ surfaces $F$ that are the $2$-dimensional component of the fixed locus of a symplectic involution $\iota$ on hyper-K\"ahler manifolds $X$ of $K3^{[2]}$-type; we call them generalized Nikulin surfaces.
    We show that a projective $K3$ surface is a generalized Nikulin surface if and only if its N\'eron-Severi lattice contains primitively the lattice $E_7(-2)$. 
    Moreover, we show that the transcendental lattices $\T_F$ and $\T_{\widetilde{X/ \iota}}$, where $\widetilde{X/ \iota}$ is the terminalization of the quotient $X/\iota$, are Hodge isometric.
    Finally, we describe projective models of generalized Nikulin surfaces of small degrees.
\end{abstract}
\section{Introduction}
Let $X$ be a manifold of $K3^{[2]}$-type admitting a symplectic involution $\iota$.
Then, the fixed locus of $\iota$ consists of a $K3$ surface $F$ and $28$ isolated points.
We will call $F$ a \emph{generalized Nikulin surface}. The reason for this name is that these surfaces naturally appear as deformation of the so called Nikulin surfaces, which are minimal resolutions of quotients of $K3$ surfaces by symplectic involutions.

A $K3$ surface (resp. a manifold of $K3^{[2]}$-type) admits a symplectic involution if and only if its N\'eron--Severi lattice contains primitively $E_8(-2)$, \cite{vGS, C,Mon}. The quotient of a $K3$ surface by its symplectic involution is singular in 8 points and its minimal resolution (the Nikulin surface) is obtained by blowing up once each of these singular points.

The N\'eron--Severi lattice $N$ of a general non projective Nikulin surface is called a Nikulin lattice and it is the extension of the lattice $\langle -2\rangle^{\oplus 8}$  with generators $E_i$, that are the exceptional curves of the resolution,
by the class $\frac{1}{2}(\sum_i E_i)$. A projective $K3$ surface is a Nikulin surface if and only if its N\'eron--Severi lattice contains primitively $N$, see \cite{GS,N}.  

There is another characterization of the Nikulin surfaces, which motivates the name generalized Nikulin surface for the surface in the fixed locus of a symplectic involution on a manifold of $K3^{[2]}$-type. Indeed, if $S$ is a K3 surface with a symplectic involution $\iota_S$,  then the Hilbert scheme of two points on $S$, $S^{[2]}$, is naturally endowed with a symplectic involution $\iota_S^{[2]}$ induced by $\iota_S$. The surface fixed by $\iota_S^{[2]}$ on $S^{[2]}$ is the minimal resolution of the quotient $S/\iota_S$, i.e. the Nikulin surface of $(S,\iota)$. Deforming $S^{[2]}$ to a manifold of $K3^{[2]}$-type $X$ and $\iota_S^{[2]}$ to a symplectic involution $\iota$ of $X$ one deforms also the fixed surface, from a Nikulin surface to a generalized Nikulin surface.  

We prove that the condition of being a generalized Nikulin surface is a lattice-theoretic condition which allows to describe their family in terms of lattice-polarized $K3$ surfaces.
\begin{thm}\label{main}The N\'eron--Severi lattice of a $K3$ surface $F$ contains primitively the lattice $E_7(-2)$  if and only if $F$ is the $2$-dimensional component of the fixed locus of a symplectic involution on a  manifold of $K3^{[2]}$-type, i.e.~it is a generalized Nikulin surface.
\end{thm}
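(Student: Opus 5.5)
Both directions will be proved by studying the restriction map $H^2(X,\Z)\to H^2(F,\Z)$ for a symplectic involution $\iota$ on a manifold $X$ of $K3^{[2]}$-type. Recall the known lattice picture (Mongardi): $H^2(X,\Z)^{\iota}\cong U^{\oplus 3}\oplus E_8(-2)\oplus\langle -2\rangle$ and the anti-invariant lattice is $(H^2(X,\Z)^{\iota})^\perp \cong E_8(-2)$, which is contained in $\NS(X)$.

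\emph{First direction: a generalized Nikulin surface $F$ has $E_7(-2)\subset \NS(F)$ primitively.} I will first settle the Nikulin case $X=S^{[2]}$, $\iota=\iota_S^{[2]}$, where $F=\widetilde{S/\iota_S}$. Here one computes $H^2(F,\Z)$ directly: it is the overlattice of $\pi_*H^2(S,\Z)^{\iota_S}$ and the Nikulin lattice $N$ spanned by the $E_i$. Inside $H^2(S^{[2]},\Z)=H^2(S,\Z)\oplus\langle\delta\rangle$, with $\delta$ restricting to $F$ as a combination of the $E_i$, I will check that the restriction $r\colon H^2(X,\Z)^\iota\to H^2(F,\Z)$ is, up to scaling the form by $2$ on the image, described explicitly. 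Its image together with its orthogonal complement determine a sublattice $L\subset H^2(F,\Z)$ orthogonal to $r(H^2(X,\Z)^\iota)$ that contains no transcendental classes and is therefore always algebraic. Taking discriminants, $H^2(F)$ has rank $22$ while the relevant image has rank $15$, so $L$ has rank $7$. The computation in the Nikulin model should identify $L\cong E_7(-2)$: the $E_8(-2)$ coming from the invariant part of $S$ pairs with $N$ and $\delta$ to cut down one dimension. By deformation invariance, meaning parallel transport in the family of pairs $(X,\iota)$ that is connected by Mongardi's results, the fixed surface deforms in a smooth family. The sublattice $L$ is locally constant and remains of type $(1,1)$ since $r$ is a morphism of Hodge structures. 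It follows that $E_7(-2)\hookrightarrow \NS(F)$ primitively for every generalized Nikulin surface.

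\emph{Converse: a surface $F$ with $E_7(-2)\subset\NS(F)$ primitively is a generalized Nikulin surface.} By the first direction, the period of $F$ lies in the period domain of $E_7(-2)$-polarized K3 surfaces, which is $13$-dimensional, matching $\dim$ of the moduli of pairs $(X,\iota)$, namely $21-8=13$. I will show that the map $(X,\iota)\mapsto F$ induces an isomorphism on period domains. To do this, embed $E_7(-2)^\perp\subset\Lambda_{K3}$, with signature $(3,12)$, and compare it with the invariant lattice $H^2(X,\Z)^\iota$ of signature $(3,12)$. The map $r$ should give an isometry $H^2(X,\Z)^\iota(2)\supseteq\dots$ onto $E_7(-2)^\perp$ up to finite index. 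I will verify via discriminant forms that $r$ identifies the periods, i.e.\ $\sigma_F=r(\sigma_X)$. Then, given $F$, I choose a period $\sigma_X\in H^2(X,\Z)^\iota\otimes\C$ mapping to $\sigma_F$. Surjectivity of the period map for $K3^{[2]}$-type manifolds gives $X$, and Mongardi's criterion (that $E_8(-2)\subset\NS(X)$ with suitable Kähler chamber conditions implies existence of a symplectic involution acting as the model) gives $\iota$. Its fixed surface $F'$ has the same period as $F$, and by uniqueness of the $E_7(-2)$-polarization up to isometry (Nikulin's uniqueness of primitive embeddings, since rank is small compared to $22$), the global Torelli theorem gives $F'\cong F$.

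\emph{Main obstacle.} The hardest step is the explicit lattice computation of $r$: showing that the orthogonal complement is exactly $E_7(-2)$, primitive, and that $r(\sigma_X)$ hits every period. This requires controlling the overlattice index, which is a $2$-group issue. I would attempt it first in the Nikulin model with explicit bases $E_i$, $\delta|_F$, $\pi_*$. A secondary subtlety is the Kähler chamber and wall condition when realizing $\iota$, since one must avoid walls where $\iota$ fails to be regular. For this I would use a generic choice of $X$ in the fiber and the fact that $F$ is only determined up to isomorphism.
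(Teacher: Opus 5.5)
\textbf{Forward direction.} This is essentially the paper's argument. Your $L$ is the orthogonal complement of $T_F^G$, the saturation of $\nu^*(H^2(X,\Z)^{\iota})$. You compute it on a natural pair $(S^{[2]},\iota_S^{[2]})$ and carry it along by deformation, as the paper does.

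\textbf{Converse direction.} Here you take a genuinely different and more direct route. The paper passes through the Nikulin orbifold. From the period of $F$ it builds $(X,\iota)$ via the map $Q$ in the proof of Theorem \ref{Thm1.1}. It then compares $F$ with the fixed surface using the Hodge isometry $T_Y^{-4}\cong T_F^G$ of Theorem \ref{Thm1.2} and the gluing data of $E_7(-2)$. Since $\tilde f\circ\rho^*\pi_*=\nu^*$ on invariant classes, this is exactly your map $r$ factored through $Y$. You instead use $r$ itself to identify $D_{\Lambda_{K3^{[2]}}^I}$ with the period domain of $E_7(-2)^\perp\subset\Lambda_{K3}$. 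This avoids $Y$ altogether: you need neither the description of $H^2(Y,\Z)$ and the marking built from $Q$, nor the integrality argument proving Theorem \ref{Thm1.2}. In exchange, the paper gets the main theorem as a corollary of structural results on $Y$ that it proves anyway.

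\textbf{Steps to make precise.}

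First, $L\cong E_7(-2)$ depends on $r(\delta)$ being a multiple of $\sum_iE_i$. In fact $r(\delta)=\frac12\sum_iE_i$, i.e.\ $\nu^*\Delta=\sum_iE_i$. The local picture is the same at all eight fixed points, and $r(\delta)^2=2\delta^2=-4$ fixes the coefficient. With unequal coefficients the complement in $N$ would be a different lattice. This happens for $K3^{[3]}$, where the analogous class is $\sum_jE_j+2E_i$. The injectivity of $r$ on $H^2(X,\Q)^{\iota}$, which you read off in this model, also gives $r(\sigma_X)\neq 0$. You need this both for $L\subset\NS(F)$ and for $\sigma_{F'}$ being proportional to $r(\sigma_X)$.

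Second, the claim that $F'$ has the same period as $F$ requires compatible markings. Uniqueness of the primitive embedding of $E_7(-2)$ only lets you choose $\mu$ on $F$ sending the given copy to the reference $L_0=r_0(\Lambda_{K3^{[2]}}^I)^\perp$, where $r_0$ is the restriction map of a fixed reference pair. On $F'$ you need a marking $\alpha$ with $\alpha\circ r_X=r_0\circ\phi$ on invariant classes. You get it by parallel transport of the fixed surface along marked pairs in the reference component, since $r$ is locally constant. Then $\alpha^{-1}\circ\mu$ is a Hodge isometry and weak Torelli applies, with no discriminant-form computation needed. If you compare across components, you need instead the surjectivity of $O(E_7(-2))\to O(A_{E_7(-2)})$ from Lemma \ref{lemma: properties of G}.

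Third, to realize $I$ by a regular involution you should not take a generic point of the fiber of the period map. Take the birational model whose K\"ahler cone is the $I$-stable chamber containing a general $I$-invariant class. Such a chamber exists because $E_8(-2)$ contains no wall divisors: it has no classes of square $-2$, nor of square $-10$ and divisibility $2$.
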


There is another variety strictly related with the pair $(X,\iota)$, which is the terminalization of the quotient $X/\iota$. It will be denoted by $Y$ and it is called a Nikulin orbifold. The orbifold $Y$ is an irreducible symplectic orbifold and therefore its second cohomology group is endowed with a Beauville--Bogomolov  form. The following result relates its transcendental structure with the one of the generalized Nikulin surfaces, and proves a conjecture stated in \cite[Conjecture 3.12]{CGKK}. 
\begin{thm}\label{main2} Let $(X,\iota)$ be a manifold of $K3^{[2]}$-type with a symplectic involution $\iota$, $Y$ the terminalization of the quotient $X/\iota$ and $F$ the surface fixed by $\iota$ on $X$. Then there exists a Hodge isometry between the transcendental lattices of $F$ and $Y$.
\end{thm}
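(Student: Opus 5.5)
We will compare the two transcendental lattices through the invariant part of $H^2(X,\Z)$, passing through the natural maps induced by restriction to $F$ and by the quotient map. Write $\Lambda=H^2(X,\Z)\cong U^{\oplus 3}\oplus E_8(-1)^{\oplus 2}\oplus\langle -2\rangle$, and recall from \cite{Mon} that $\iota^*$ acts with anti-invariant lattice $E_8(-2)\subset \NS(X)$ and invariant lattice $H^2(X,\Z)^{\iota}\cong U^{\oplus 3}\oplus E_8(-2)\oplus\langle -2\rangle$. Since $\iota$ is symplectic, $\T_X\subset H^2(X,\Z)^{\iota}$.

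\emph{First step.} Show that restriction $r\colon H^2(X,\Z)\to H^2(F,\Z)$ is a morphism of Hodge structures that kills $E_8(-2)$ and satisfies $(r(a),r(b))_F = c\, q_X(a,b)$ on the invariant part for a fixed rational constant $c$. I would prove this by deformation: the statement is topological and invariant along deformations of $(X,\iota)$, so it suffices to check it for $X=S^{[2]}$ with $\iota=\iota_S^{[2]}$, where $F$ is the Nikulin surface $\widetilde{S/\iota_S}$. There $H^2(S^{[2]})^{\iota}=H^2(S)^{\iota_S}\oplus\langle\delta\rangle$, and restriction corresponds to the push-forward $\pi_*$ to $\widetilde{S/\iota_S}$, which multiplies the form by $2$ on $H^2(S)^{\iota_S}$. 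In particular, with the appropriate constant $c$ (coming out as $2$), $r$ restricted to $\T_X$ gives a Hodge embedding $\T_X(2)\hookrightarrow \T_F$, with $\T_F$ the saturation of its image.

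\emph{Second step.} Do the same for $Y$. By \cite{CGKK} (Menet's description of the Nikulin orbifold), the quotient map $\pi\colon X\dashrightarrow Y$ induces $\pi_*\colon H^2(X,\Z)^{\iota}\to H^2(Y,\Z)$ with $q_Y(\pi_*a,\pi_*b)=2q_X(a,b)$, and the image of $\T_X$ saturates to $\T_Y$. Thus both $\T_F$ and $\T_Y$ are overlattices of the same Hodge lattice $\T_X(2)$ inside $\T_X\otimes\Q$.

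\emph{Third step, the main obstacle.} Show that these two overlattices coincide, i.e.\ that the saturations of $\T_X(2)$ in $H^2(F,\Z)$ and in $H^2(Y,\Z)$ give the same sublattice of $\T_X\otimes\Q$. This is a purely lattice-theoretic statement about the embeddings $H^2(X,\Z)^{\iota}(2)\to H^2(F,\Z)$ and $H^2(X,\Z)^{\iota}(2)\to H^2(Y,\Z)$. It is again deformation-invariant, because both maps are locally constant in families, so I would check it on the Hilbert-scheme model, where the problem reduces to comparing $\pi_*H^2(S,\Z)$ inside $H^2(\widetilde{S/\iota_S},\Z)$ (whose saturation involves the Nikulin lattice $N$) with Menet's explicit lattice $H^2(Y,\Z)\cong U(2)^{\oplus3}\oplus E_8(-1)\oplus\langle-2\rangle^{\oplus 2}$.

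Concretely, I would identify the saturation of $\pi_*(H^2(X,\Z)^{\iota})$ in each of the two lattices as the preimage of the same isotropic subgroup of the discriminant group of $H^2(X,\Z)^{\iota}(2)$. Intersecting with $\T_X\otimes\Q$ then shows the two saturations of $\T_X(2)$ are equal. Since this identification is made by maps compatible with the Hodge structures, it gives the required Hodge isometry $\T_F\cong\T_Y$. The delicate part is tracking the classes $\delta$ and $\frac12\sum E_i$ through the discriminant groups.
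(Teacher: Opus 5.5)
Your proposal is correct and is essentially the paper's argument. The identification $\rho^*\pi_*\circ(\nu^*)^{-1}$ you construct is the inverse of the paper's map $\tilde f(\gamma)=\nu^*\pi^*\rho_*(\tfrac12\gamma)$ from $T_Y^{-4}$ to $T_F^G$. Both arguments also rest on the same inputs: the specialization to $(S^{[2]},\iota_S^{[2]})$ with $\nu^*\Delta=\sum_iE_i$, and Menet's explicit description of $\pi_*$.

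The only difference is the last step. The paper proves one inclusion of your two overlattices, namely integrality of $\tilde f$. It then gets equality from the fact that both lattices are abstractly isometric to $\mathbb{T}$ (Corollary~\ref{5.4}). You instead compute both saturations directly on the Hilbert-scheme model. There one finds that $\nu^*H^2(X,\Z)$ and $\rho^*\pi_*H^2(X,\Z)$ are already saturated; on the $F$-side this holds because $\pi_*H^2(S,\Z)=N^\perp$ is primitive and $\tfrac12\sum_iE_i\cdot E_j=-1$. So both are copies of $H^2(X,\Z)/E_8(-2)$, and the isometry comes out canonically without a discriminant count.
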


The idea of the proof of Theorem \ref{main}, given in Section \ref{section: proofs of 2nd theorem and definition of Generalized Nikulin lattice}, is the following:
as said before, deforming $(S^{[2]},\iota_S^{[2]})$ to a general $(X,\iota)$ the fixed locus $F_0$ of $\iota_S^{[2]}$ deforms to the fixed locus $F$ of $\iota$. 
In the deformation of $S^{[2]}$ to $X$, we loose the algebraicity of the diagonal $\Delta$ in $S^{[2]}$ (which indeed becomes a transcendental class) and so the algebraicity of $F_0\cap \Delta$ in $F_0$. Since $F_0\cap \Delta=\frac{1}{2}(\sum_i E_i)$, we consider the sublattice $G\subset N$ in the Nikulin lattice that is the orthogonal complement to the class $\frac{1}{2}(\sum_i E_i)$. We call it the generalized Nikulin lattice, and we prove that it is isometric to $E_7(-2)$.
In order to show that any $K3$ surface $F$ containing primitively $E_7(-2)$ in its Picard lattice is a generalized Nikulin surface we relate this surface with the irreducible symplectic orbifold $Y$ by using Theorem \ref{main2} and this allows to reconstruct the manifold of $K3^{[2]}$-type $X$ and its symplectic involution $\iota$ such that $F$ is the fixed surface of $\iota$ on $X$. 
An important ingredient in the proof is the characterization of orbifolds bimeromorphic to Nikulin orbifolds inside the family of the orbifold of Nikulin type, as the ones that contain a $(-4)$-class of divisibility $2$ in their N\'eron--Severi lattice (Theorem \ref{Thm1.1}).

After recalling some preliminary results in Section \ref{sec: preliminaries}, we present in more detail the three main results of our paper in Section \ref{section: The main results}, which are proved in the following two sections. In particular, in Section \ref{section: proofs of 1st theorem} we characterize orbifolds bimeromorphic to Nikulin orbifolds as the orbifolds of Nikulin type whose N\'eron--Severi lattice contains a class with divisibility 2 and self-intersection $-4$, and we show that the locus of marked Nikulin orbifolds is dense in any connected component of the moduli space of marked Nikulin orbifolds. This result is analogous to the one which characterizes the Hilbert scheme of points on a $K3$ surface in the family of the manifold of $K3^{[2]}$-type (see \cite[Theorem 1.1]{MarkmanMehrotra}). In Section \ref{section: proofs of 2nd theorem and definition of Generalized Nikulin lattice}, we first study the so-called generalized Nikulin lattice and 
we prove that such a lattice is isometric to $E_7(-2)$. These preliminary results are applied to prove first Theorem \ref{main2} and then Theorem \ref{main}. 
In Section \ref{section: Families of projective generalized Nikulin surfaces} we consider the family of projective generalized Nikulin surfaces.  
We observe that the family has countably many components, each of which is $12$-dimensional and is described as a family of lattice-polarized $K3$ surfaces. The lattices which are used to polarize these families are either $\langle 2d\rangle\oplus E_7(-2)$ for any positive integer $d$, or even finite-index overlattices (of index either 2 or 4) of $\langle 2d\rangle\oplus E_7(-2)$, see Proposition \ref{prop: Ns e T of projective F} for details.

In Section~\ref{section:projective models}, in analogy with the case of Nikulin
surfaces \cite{FV, vGS, GS, V}, we describe the geometry of the projective models
of generalized Nikulin surfaces for several values of the polarization degree. We
relate it with the geometry of the associated manifold of $K3^{[2]}$-type and Nikulin
orbifold. We show
in Corollary \ref{cor: isom between NS(F) and NS(S),NS(Z)}, that a $(\langle 2\rangle\oplus E_7(-2))$-polarized generalized Nikulin surface is a seven nodal quartic.
We prove in Proposition~\ref{prop 10.10} that a general
$(\langle 4\rangle\oplus E_7(-2))'$-polarized generalized Nikulin surface is the
double cover of a quadric in $\mathbb{P}^3$ branched along its intersection with a
Kummer quartic surface. Moreover, in Corollary~\ref{8'} we show that that the moduli space of
$(\langle 8\rangle \oplus E_7(-2))'$-polarized generalized Nikulin surfaces is
unirational and give a geometric description of its general elements.

In Section~\ref{section: related problems} we present some consequences and
generalizations of the study of generalized Nikulin surfaces.
In Section~\ref{subset:the residuals involution} we explain that generalized Nikulin
surfaces admit natural residual involutions \cite{El} on their derived categories and
we discuss their properties and relations with our geometric constructions.
In Section~\ref{subset: N generalized} we discuss problems analogous to the ones
considered above for the fixed loci of symplectic involutions on higher dimensional
 manifolds of $K3^{[n]}$-type: we propose a notion of $n$-generalized Nikulin surfaces,
which are surfaces appearing as components of fixed loci of symplectic involutions on
some  manifold of $K3^{[n]}$-type for some $n\geq 3$, and we prove interesting properties in the case $n=3$. This construction has many aspects in common with the construction of generalized Nikulin surfaces in the case $n=2$, especially from a lattice-theoretic point of view.

\subsection*{Acknowledgements}

We would like to dedicate this paper to B. van Geemen, whom we warmly thank for his invaluable suggestions, support, and discussions.

CC and AG are members of INdAM-GNSAGA and are supported by the Prin 2020 project ``Curves, Ricci flat Varieties and their Interactions''. CC is also supported by the Prin 2022 project ``Symplectic varieties: their interplay with Fano manifolds and derived categories''.
GK is supported by the project Narodowe Centrum Nauki 2024/53/B/ST1/00161.
MK is supported by the project Narodowe Centrum Nauki 2024/53/B/ST1/01413.

\section{Preliminaries}\label{preliminaries}\label{sec: preliminaries}
The aim of this section is to introduce notation and recall some known results, useful in the following.
\subsection{Lattices}
We recall that a lattice $(L,b)$ is a free finitely generated $\mathbb{Z}$-module $L$
endowed with a symmetric non-degenerate bilinear form $b\colon L\times L\to \mathbb{Z}$.
We say that $L$ is even if $b(\ell,\ell)\in 2\mathbb{Z}$ for every $\ell\in L$. The
discriminant group of $L$ is $A_L:=L^{\vee}/L$, where
$L^{\vee}=\mathrm{Hom}(L,\mathbb{Z})$ is isomorphic to
$$\{m\in L\otimes \mathbb{Q}\mid b(m,\ell)\in \mathbb{Z}\ \forall\,\ell\in L\}.$$
The bilinear form on $L$ induces, by $\mathbb{Q}$-linear extension, a bilinear form on
$L^{\vee}$ and hence on $L^{\vee}/L$. This endows $A_L=L^{\vee}/L$ with a quadratic
form, called the discriminant form. Observe that $A_L$ is a product of finite cyclic
groups and its order equals the discriminant $d(L):=|\det(B)|$, where $B$ is a matrix
representing $b$ with respect to a basis of $L$. A unimodular lattice is a lattice
which is isometric to its dual, or equivalently, a lattice with $d(L)=1$. The
signature of a lattice is the signature of the $\mathbb{R}$-linear extension of $b$
to $L\otimes \mathbb{R}$.

In the following we denote by:
\begin{itemize}
\item $U$ the unique even unimodular lattice of rank $2$ and signature $(1,1)$;
\item $E_8$ the unique even unimodular positive definite lattice of rank $8$;
\item $E_7$ the unique even positive definite lattice with discriminant group
$\mathbb{Z}/2\mathbb{Z}$, which is generated by its roots (i.e.~ by vectors on which
the bilinear form takes the value $2$); it is the orthogonal complement of a vector
with self-intersection $2$ in $E_8$;
\item $L(n)$, given a lattice $L$ and an integer $n\in\mathbb{Z}$, the lattice obtained
by multiplying the bilinear form of $L$ by $n$;
\item $u(n)$, for $n\in\mathbb{Z}$, the discriminant form of $U(n)$; for each
$m\in \mathbb{Z}$ and $\alpha\in \mathbb{Q}$, $\mathbb{Z}/m\Z(\alpha)$ is the
discriminant cyclic group $\mathbb{Z}/m\Z$ endowed with the quadratic form taking value
$\alpha$ on a generator; for short, the discriminant quadratic form of
$\mathbb{Z}/m\Z\!\left(\pm\tfrac{1}{m}\right)$ is denoted by $\left(\pm\tfrac{1}{m}\right)$;
\item $\Lambda_{K3}$ the unique even unimodular lattice of rank $22$ and signature
$(3,19)$; it is isometric to $U^{\oplus 3}\oplus E_8(-1)^{\oplus 2}$;
\item $\mathrm{div}(v)$, for $v\in L$, the generator of the ideal
$\{(v,w)\mid w\in L\}\subset \mathbb{Z}$.
\item We denote $q_k$ the finite quadratic form on $(\Z/2\Z)^{\oplus 2}$
\begin{eqnarray}\label{eq: form qk}q_k:=\left\lbrace\begin{array}{ll}
     \left(\frac{1}{2}\right)^{\oplus 2} &\mathrm{if}\ k\equiv 0\mod 4 \\
v(2)     & \mathrm{if}\ k\equiv 1\mod 4 \\
\left(-\frac{1}{2}\right)^{\oplus 2} &\mathrm{if}\ k\equiv 2\mod 4 \\
u(2)     & \mathrm{if}\ k\equiv 3\mod 4 
\end{array}\right.\end{eqnarray}
\item The lattice $K_t$ is the negative definite lattice represented by the matrix $\left[\begin{array}{cc}-\frac{t+1}{2}&1\\1&-2\end{array}\right]$ where $t\equiv 1\mod 2$
\item 
If $t=2k+1$, the discriminant quadratic form of $K_{t}(2)$ is $\Z/t\Z\left(-\frac{4}{t}\right)\oplus (-q_k)=\Z/t\Z\left(-\frac{t+1}{t}\right)\oplus (-q_k)$.

\end{itemize}

Given a lattice $L$ an overlattice of finite index of $M$ is a lattice such that there exists an injective map $f:L\hookrightarrow M$ which respects the bilinear forms and $M/f(L)$ is a finite group of order $r$. We will call $r$ the index of the inclusion. The finite index overlattices of $L$ are in 1:1 correspondence with the isotropic subgroups of $A_L$, see \cite{N}.

We will say that an inclusion of lattices $f:L\hookrightarrow M$ is primitive if $M/f(L)$ is a free $\Z$-module.
{If $R$ is a finite index overlattice of $L\oplus M$, then $R$ is generated by the generators of $L$ and $M$ and by some other vectors $v$, which are contained neither in $L$ nor in $M$ and are necessarily of the form $(\ell+m)/r$, $\ell\in L$, $m\in M$, $r\in \Z$. We call these vectors the \emph{gluing vectors} between $L$ and $M$.}

In the following we will apply the theory of lattices to the second cohomology groups of $K3$ surfaces, irreducible symplectic manifolds and varieties. The bilinear forms are defined as the cup product in the first case and as the Beauville--Bogomolov form in the other ones. We omit to denote explicitly the bilinear form, since it is clear from the context. We often call "self intersection" of $\ell$ the value $b(\ell,\ell)$.

\subsection{Symplectic involutions on $K3$ surfaces and Nikulin surfaces}\label{subsec: symplectic involution on K3s and Nikuli surfaces}
An automorphism $\alpha$ of a $K3$ surface $S$ is said to be symplectic, if its action on the symplectic form of $S$ is trivial. The quotient of a $K3$ surface by a symplectic automorphism is a singular surface, with ADE singularities, whose minimal resolution is another $K3$ surface.

In particular, if one considers a symplectic involution $\iota_S$ on a $K3$ surface $S$, then the quotient $S/\iota$ has 8 singular points of type $A_1$, which are the image of the 8 points fixed by $\iota_S$ on $S$. The minimal resolution contains 8 exceptional divisors $E_i$, which are rational curves. Therefore $E_i^2=-2$ and $E_iE_j=0$ if $i\neq j$.

Observe that, by blowing-up $S$ in the 8 points fixed by $\iota_S$, one obtains a non-minimal smooth surface $\widetilde{S}$ and $\iota_S$ lifts to an involution  $\widetilde{\iota_S}$ of $S$ which fixes the 8 exceptional divisors. The smooth surface  $\widetilde{S}/\widetilde{\iota_S}$ is isomorphic to the minimal resolution of $S/\iota$ and the $2:1$ cover $\widetilde{S}\ra \widetilde{S}/\widetilde{\iota_S}$ is branched on the eight curves $E_i$. This implies that $(\sum_iE_i)/2\in \NS(S)$.
\begin{defi}\label{def:Nikulin surface and lattice}
A Nikulin surface $W$ is the minimal resolution of the quotient of a $K3$ surface $S$ by a symplectic involution $\iota_S$.
The \emph{Nikulin lattice} $N$ is the minimal primitive sublattice of $\NS(W)$ which contains all the exceptional curves arising from the desingularization of $S/\iota_S$.
Equivalently the Nikulin lattice $N$ is the lattice $\langle E_i,i=1,\ldots, 8, \sum_iE_i/2\rangle$ where $E_i^2=-2$, $E_iE_j=0$.
\end{defi}
The Nikulin lattice is a negative definite rank $8$ even lattice whose discriminant group is $(\Z/2\Z)^{\oplus 6}$ and whose discriminant form is $u(2)^{\oplus 3}$.

We recall that a $K3$ surface admits a symplectic involution if and only if the lattice $E_8(-2)$ is primitively embedded in its N\'eron--Severi lattice and in this case the symplectic involution acts as $-1$ on this copy of $E_8(-2)$ and as $+1$ on its orthogonal complement in the second cohomology group. Moreover, a projective $K3$ surface is a Nikulin surface if and only if its N\'eron--Severi lattice contains primitively the lattice $N$. 
In \cite[Corollary 2.2]{GS} the explicit relation between the N\'eron--Severi lattices (and the transcendental lattice) of projective $K3$ surfaces $S$ admitting a symplectic involution $\iota$ and its Nikulin surface $W$ are explicitly given.

We observe that the Nikulin $K3$ surfaces are dense in the moduli space of $K3$ surfaces, indeed all the Kummer surfaces are also Nikulin surfaces (see \cite{NikKumm}), and the Kummer surfaces are dense in the moduli space of $K3$ surfaces (see \cite{PS}). A different way to state the same result is that the $K3$ surfaces which are terminalization of the quotient of $K3$ surfaces by a symplectic involution are dense in the family of their deformations (deformations as $K3$ surfaces, and not just as terminalization of quotients). Even if the previous formulation is less natural, it is the one that we can generalize in higher dimension, indeed  we will prove that the Nikulin orbifolds (terminalizations of certain quotients) are dense in the family of orbifolds of Nikulin type (i.e.~ in the family of the deformations of Nikulin orbifold), see Theorem \ref{Thm1.1} for the precise statement.
\subsection{Symplectic involutions on  manifolds of $K3^{[2]}$-type, Nikulin orbifolds and orbifolds of Nikulin type}\label{subsec: preliminarie on K32-type and Nikulin orbifold}
An irreducible holomorphic symplectic fourfold is said to be of $K3^{[2]}$-type if it is deformation equivalent to the Hilbert scheme of $2$ points on a $K3$ surface. If $S$ is a $K3$ surface, then $S^{[2]}= \Hilb^2(S)$ is isomorphic to the blow-up of the quotient of $S\times S$ by the involution $(12)$ switching the two copies of $S$. This quotient is singular along the image of the diagonal, and by blowing-up this singularity once, one obtains a smooth symplectic fourfold, isomorphic to $\Hilb^2(S)$. We will denote by $\Delta$ the exceptional divisor of the map $\Hilb^2(S)\to (S\times S)/(12)$.
Let $X$ be a  manifold of $K3^{[2]}$-type admitting a symplectic involution $\iota$. Then there exists a deformation of the pair $(X,\iota)$ to a pair $(S^{[2]},\iota_S^{[2]})$, where $S$ is a $K3$ surface admitting a symplectic involution $\iota_S$ and $\iota_S^{[2]}$ is the induced automorphism on $S^{[2]}$, see \cite{Mon}.

By \cite{Mon}, the fixed locus of $\iota$ on $X$ consists of 28 isolated points and a $K3$ surface, which we denote $F$ and call generalized Nikulin surface in the following. 
If one specializes $(X,\iota)$ to $(S^{[2]},\iota_S^{[2]})$ as explained above, one is able to explicitly describe the fixed locus of $\iota_{S}^{[2]}$ by relating it with the geometry of $S$ and $\iota_S$, and in particular one shows that the $K3$ surface in the fixed locus of $\iota_S^{[2]}$ on $S^{[2]}$ is the minimal resolution of $S/\iota_S$ and in particular is a Nikulin surface (see \cite{Mon}).

In analogy with what happens for $K3$ surfaces, a  manifold of $K3^{[2]}$-type admits a symplectic involution if and only if $E_8(-2)$ is primitively embedded in its N\'eron--Severi lattice and this depends on the fact that the action induced by a symplectic involution on the second cohomology group is essentially unique, see \cite{Mon}. More precisely, the second cohomology group of any manifold of $K3^{[2]}$-type endowed with the Beauville--Bogomolov-- form is isometric to $U^{\oplus 3}\oplus E_8(-1)^{\oplus 2}\oplus \langle -2\rangle$ and this lattice will be denoted by $\Lambda_{K3^{[2]}}$.

Let $I\in O(\Lambda_{K3^{[2]}})$ be the involution which switches the two $E_8(-1)$ summands and let $\Lambda_{K3^{[2]}}^I$ be its invariant sublattice inside $\Lambda_{K3^{[2]}}$.
We denote  $\mathcal{M}_{K3^{[2]},I}$ the coarse moduli space of  marked pairs $(X,\phi)$ such that $X$ is a fourfold of $K3^{[2]}$-type and $\phi:H^2(X,\Z)\rightarrow \Lambda_{K3^{[2]}}$ is a marking such that there exists a symplectic involution $\iota\in\Aut(X)$ satisfying $\phi\circ\iota^*\circ\phi ^{-1}=I$. The period map is the holomorphic map \[\mathcal{P}_{K3^{[2]},I}:\mathcal{M}_{K3^{[2]},I}\rightarrow D_{\Lambda_{K3^{[2]}}^I}:=\{x\in\mathbb{P}(\Lambda_{K3^{[2]}}^I\otimes \mathbb{C})\mid q_{K3^{[2]}}(x)=0,\ B_{K3^{[2]}}(x,\overline{x})>0\}
\]
such that $(X,\phi)\mapsto [\phi_{\mathbb{C}}(H^{2,0}(X))].$

By the description of the fixed locus of $\iota$ on $X$, it follows that the quotient $X/\iota$ is singular in 28 points and a surface. There exists a symplectic resolution of the singular surface, obtained by blowing it up once. We denote $Y$ such a partial resolution, which is a terminalization of $X/\iota$, so that $Y$ has 28 singular points and is an irreducible symplectic orbifold (see \cite{Fuj}). 

By \cite{Menet}, the second cohomology group of $Y$ is endowed with a Beauville--Bogomolov quadratic form  and it is isometric (as abstract lattice) to  $U(2)^{\oplus 3}\oplus E_8(-1)\oplus \langle -2\rangle^{\oplus 2}$ and the class of the exceptional divisor of the partial resolution $Y\ra X/\iota$ is a divisor $\Sigma$ such that $\Sigma^2=-4$ and $\divi(\Sigma)=2$. We will call $\Lambda_N$ the lattice $U(2)^{\oplus 3}\oplus E_8(-1)\oplus \langle -2\rangle^{\oplus 2}$ and we will denote by $q_{\Lambda_N}$ (respectively $B_{\Lambda_N}$) its quadratic form (respectively its bilinear form).
In \cite[Table 3]{CGKK}, the relation between the N\'eron--Severi lattices (and the transcendental lattices) of a projective  manifold of $K3^{[2]}$-type $X$ with a symplectic involution $\iota$, and of $Y$, the Nikulin orbifold terminalization of $X/\iota$, is explicitly given.

Observe that not all the deformations of $Y$ are obtained as quotient of a  manifold of $K3^{[2]}$-type by a symplectic involution,  indeed not all the deformations of $Y$ have an algebraic class of self intersection $-4$ and divisibility 2. The orbifolds obtained as deformation of $Y$ are called orbifold of Nikulin type and we will identify the ones which are Nikulin orbifold (i.e.~ terminalizations of quotients of manifolds of $K3^{[2]}$-type by a symplectic involution) in Theorem \ref{Thm1.1}.

\subsection{Moduli spaces of irreducible symplectic orbifolds}\label{subsec:moduli-orbifolds}
Since orbifolds of Nikulin type are irreducible symplectic orbifolds (and also primitive symplectic varieties), there exists a coarse moduli space $\mathcal{M}_{\Lambda_N}$ of marked orbifolds of Nikulin type $(Y,\eta)$, which maps holomorphically to the period domain associated to the Beauville--Bogomolov  lattice $\Lambda_N$ \cite{BL, Menet}.

Indeed, the set $$\mathcal{M}_{\Lambda_N}:=\lbrace (Z,\eta)\mid Z\ \mathrm{is\  of\ Nikulin\ type,}\ \eta:H^2(Z,\mathbb{Z})\rightarrow \Lambda_N\ \mathrm{is\ an\ isometry}\rbrace/\simeq$$
has a natural structure of a complex space given by the local Torelli theorem as described in \cite{Menet}.

 The period map is the holomorphic map
\[
\mathcal{P}:\mathcal{M}_{\Lambda_N}\longrightarrow D_{\Lambda_N}:=\{x\in\mathbb{P}(\Lambda_N\otimes \mathbb{C})\mid q_{\Lambda_N}(x)=0,\ B_{\Lambda_N}(x,\overline{x})>0\},\ 
(Z,\eta)\mapsto [\eta_{\mathbb{C}}(H^{2,0}(Z))]
\]
where $\eta_{\mathbb{C}}$ is the $\mathbb{C}$-linear extension of $\eta$.
By work of Menet \cite[Theorem 1.1]{MenetTorelli}, the global Torelli theorem is known to hold for irreducible symplectic orbifolds; in particular, $\mathcal{P}$ is surjective and a local isomorphism on each connected component (see \cite[Proposition 5.8, Theorem 5.9]{MenetTorelli}). Moreover, a Hodge-theoretic version of the Torelli theorem is known by \cite[Theorem 1.1]{MenetRiess}.

Given $(Z,\eta)\in\mathcal{M}_{\Lambda_N}$, we denote by $\mathrm{Mo}^2(Z)$ the group of monodromy operators acting on $H^2(Z,\mathbb{Z})$; this group acts on $\mathcal{M}_{\Lambda_N}$ via $(Z,\eta)\mapsto (Z,\eta\circ\gamma)$. We also denote by $\mathrm{Mo}^2_{Hdg}(Z)$ the subgroup of monodromy operators which are also Hodge isometries. For any irreducible symplectic orbifold $Z$ we denote respectively $\mathcal{K}_Z$, $\mathcal{BK}_Z$ and $\mathcal{FE}_Z$ the K\"ahler cone, the birational K\"ahler cone and the fundamental exceptional chamber of $Z$.

Let $p\in D_N$ be a point and let $(Z,\eta)\in\mathcal{P}^{-1}(p)$. Consider $\mathcal{KT}(Z)$ the set of K\"ahler-type chambers in $\mathcal{C}_Z$, i.e.~ subsets $g(f^*\mathcal{K}_W)$, where $g\in\mathrm{Mo}^2_{Hdg}(Z)$ and $f:Z\dashrightarrow W$ is a bimeromorphic map to another orbifold of Nikulin type $W$. For any connected component $\mathcal{M}^0_{\Lambda_N}$, the map
\[
\rho_p: \mathcal{P}^{-1}(p)\cap \mathcal{M}^0_{\Lambda_N}\rightarrow \mathcal{KT}(Z),\ (W,\varphi)\mapsto \eta^{-1}(\varphi(\mathcal{K}_W))
\]
is well-defined because if there exists a parallel transport operator $g:H^2(W_1,\mathbb{Z})\rightarrow H^2(W_2,\mathbb{Z})$ which is a Hodge isometry then there exists a bimeromorphic map $h:W_1\dashrightarrow W_2$ (see \cite[Corollary 5.11]{MenetTorelli}).
\begin{prop}\label{prop:equivariant-bijection}
The map $\rho_p$ is a $\mathrm{Mo}^2_{Hdg}(Z)$-equivariant bijection.
\end{prop}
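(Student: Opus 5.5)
This is the orbifold analogue of Markman's result for manifolds of $K3^{[n]}$-type, that fibres of the period map correspond bijectively to Kähler-type chambers. The plan is to transfer his proof using Menet's global Torelli theorem for irreducible symplectic orbifolds. The argument has three parts: equivariance, surjectivity and injectivity.

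\emph{Equivariance.} An element $g\in\mathrm{Mo}^2_{Hdg}(Z)$ acts on the fibre by $(W,\varphi)\mapsto (W,\varphi\circ\eta^{-1}\circ g^{-1}\circ\eta)$; one has to choose the convention so that it is an action preserving the component $\mathcal{M}^0_{\Lambda_N}$. On $\mathcal{KT}(Z)$, $g$ acts naturally by sending a chamber $C$ to $g(C)$. Unwinding the definition of $\rho_p$, the chamber attached to the translated pair is $g(\eta^{-1}\varphi(\mathcal{K}_W))$. Preservation of the component holds because monodromy operators are parallel transport operators.

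\emph{Surjectivity.} Take a chamber $g(f^*\mathcal{K}_W)$ with $f\colon Z\dashrightarrow W$ bimeromorphic and $g\in \mathrm{Mo}^2_{Hdg}(Z)$.
\begin{enumerate}
\item The map $f^*\colon H^2(W,\Z)\to H^2(Z,\Z)$ is a Hodge isometry. It is also a parallel transport operator, since bimeromorphic irreducible symplectic orbifolds are deformation equivalent via degenerating families (Huybrechts' argument, in the orbifold version available from Menet and Bakker--Lehn).
\item Consider the marking $\varphi:=\eta\circ g\circ f^*$ on $W$. Because $g\circ f^*$ is a parallel transport Hodge isometry, $(W,\varphi)$ lies in $\mathcal{M}^0_{\Lambda_N}$ and has period $p$.
\item By construction, $\rho_p(W,\varphi)=\eta^{-1}\eta g f^*(\mathcal{K}_W)=g(f^*\mathcal{K}_W)$.
\end{enumerate}

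\emph{Injectivity.} This is the main step. Suppose $(W_1,\varphi_1)$ and $(W_2,\varphi_2)$ lie in the fibre and give the same chamber.
\begin{enumerate}
\item The map $\psi:=\varphi_2^{-1}\circ\varphi_1\colon H^2(W_1,\Z)\to H^2(W_2,\Z)$ is a Hodge isometry, because the periods agree. It is a parallel transport operator, because both points lie in the same component.
\item Since the chambers agree, $\psi(\mathcal{K}_{W_1})=\mathcal{K}_{W_2}$, so $\psi$ maps a Kähler class to a Kähler class.
\item Menet's Hodge-theoretic Torelli theorem \cite[Theorem 1.1]{MenetRiess} for orbifolds then gives an isomorphism $h\colon W_1\to W_2$ with $h_*=\psi$.
\item Therefore $(W_1,\varphi_1)\simeq(W_2,\varphi_2)$ as marked pairs.
\end{enumerate}

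I expect the main obstacle to be making the orbifold inputs precise. One input is that bimeromorphic maps induce parallel transport operators, used in surjectivity step 1. The other is the Kähler-class version of Torelli, used in injectivity step 3. If only the weak form is available (non-separated points correspond to bimeromorphic orbifolds, \cite[Corollary 5.11]{MenetTorelli}), I would deduce the strong form as follows. First obtain a bimeromorphic map $h$ with $h_*=\psi$. Then use that a bimeromorphic map between irreducible symplectic orbifolds sending some Kähler class to a Kähler class extends to an isomorphism, by Fujiki's criterion.
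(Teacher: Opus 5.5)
Your proof is correct. The injectivity step is exactly the paper's: $\varphi_2^{-1}\circ\varphi_1$ is a parallel transport Hodge isometry sending $\mathcal{K}_{W_1}$ onto $\mathcal{K}_{W_2}$, so the Hodge-theoretic Torelli theorem of Menet--Riess realizes it as $h_*$ for an isomorphism $h$.

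For surjectivity you take a different and shorter route, which is Markman's original one. You start from a presentation $K=g(f^*\mathcal{K}_W)$ supplied by the definition of $\mathcal{KT}(Z)$, and write down the marking $\eta\circ g\circ f^*$ on $W$ directly. The paper instead proceeds in four steps:
\begin{itemize}
\item it picks a very general $\alpha\in K$;
\item it uses transitivity of $\mathrm{Mo}^2_{Hdg}(Z)$ on exceptional chambers to find $g$ with $g(\alpha)\in\mathcal{FE}_Z$;
\item it applies Menet--Riess, Proposition 4.18, to get a bimeromorphic $h\colon Z\dashrightarrow W$ with $h_*(g(\alpha))$ K\"ahler;
\item it deduces $h_*g(K)=\mathcal{K}_W$ and uses the marking $\eta\circ g^{-1}\circ h^*$.
\end{itemize}
So the paper leans on the Menet--Riess description of the fundamental exceptional chamber and the birational K\"ahler cone. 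Your argument bypasses that and rests only on $f^*$ being a parallel transport operator for a bimeromorphic map of orbifolds of Nikulin type (the orbifold analogue of Huybrechts' theorem). You rightly single this out as the key external input. The paper's argument tacitly needs the same fact to know that $(W,\eta\circ g^{-1}\circ h^*)$ lies in $\mathcal{M}^0_{\Lambda_N}$.

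Two minor corrections. First, the formula you give for the action in the equivariance step does not compose as written. It should be $(W,\varphi)\mapsto(W,\eta\circ g\circ\eta^{-1}\circ\varphi)$, which sends $\rho_p(W,\varphi)$ to $g(\rho_p(W,\varphi))$ at once. Second, the fallback in your last paragraph is imprecise. The weak statement that non-separated points are bimeromorphic does not by itself produce a bimeromorphic $h$ with $h_*=\psi$. The fallback is not needed, though, because the strong form is precisely Menet--Riess, Theorem 1.1, which the paper also uses.
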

\begin{proof}
The proof is analogous to the one of \cite[Proposition 5.14]{MarkmanTorelli}. Indeed, if $\rho_p(W_1,\varphi_1)=\rho_p(W_2,\varphi_2)$, then $\varphi_2^{-1}\varphi_1(\mathcal{K}_{W_1})=\mathcal{K}_{W_2}$. As a consequence, $\varphi_2^{-1}\circ\varphi_1$ is a parallel transport operator which is a Hodge isometry and which sends a K\"ahler class into a K\"ahler class. Hence by \cite[Theorem 1.1]{MenetRiess} $ \varphi_2^{-1}\circ\varphi_1=f_*$ for $f:(W_1,\varphi_1)\rightarrow (W_2,\varphi_2)$ an isomorphism, and $\rho_p$ is injective.

By definition, each K\"ahler-type chamber $K$ is contained in an exceptional chamber $g(\mathcal{FE}_Z)$ for some $g\in\mathrm{Mo}^2_{Hdg}(Z)$; since the action of $\mathrm{Mo}_{Hdg}^2(Z)$ on the set of exceptional chambers is transitive, given a very general $\alpha\in K$ there exists $g\in\mathrm{Mo}^2_{Hdg}(Z)$ such that $g(\alpha)\in \mathcal{FE}_Z$. It follows from \cite[Proposition 4.18]{MenetRiess} that $g(\alpha)\in\mathcal{BK}_Z$ and there exists a bimeromorphic map $h:Z\dashrightarrow W$ such that $h_*(g(\alpha))\in\mathcal{K}_W$. This implies that $h_*\circ g(K)=\mathcal{K}_W$ and $\rho_p(W, \eta\circ g^{-1}\circ h^*)=K$, hence $\rho_p$ is surjective. 

Equivariance is obvious from the definition. \end{proof}

Recent work by Brandhorst, Menet and Muller \cite[Theorem 1.1]{BMM} shows that the monodromy group of orbifolds of Nikulin type is $\mathrm{Mo}^2(\Lambda_N)=O^+(\Lambda_N)$, the arithmetic subgroup of orientation preserving isometries. Since it is a normal subgroup, it does not depend on the choice of the connected component of $\mathcal{M}_{\Lambda_N}$.

As a corollary we obtain the following version of the Torelli theorem for Nikulin orbifolds.
\begin{thm}\label{prop:connected components of moduli}
The moduli space $\mathcal{M}_{\Lambda_N}$ has two connected components, $\mathcal{M}_{\Lambda_N}^+$ and $\mathcal{M}_{\Lambda_N}^-$, exchanged by $-\id$. The period map is generically $1:1$ restricted to each of these components.
\end{thm}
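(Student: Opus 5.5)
The plan is to deduce the statement from the monodromy computation $\mathrm{Mo}^2(\Lambda_N)=O^+(\Lambda_N)$ of \cite[Theorem 1.1]{BMM}, together with Proposition \ref{prop:equivariant-bijection} and the surjectivity and local-isomorphism properties of $\mathcal{P}$ from \cite{MenetTorelli}.

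\emph{Counting components.} Fix a connected component $\mathcal{M}^0_{\Lambda_N}$. The group $O(\Lambda_N)$ acts on $\mathcal{M}_{\Lambda_N}$ by $(Z,\eta)\mapsto(Z,g\circ\eta)$, and this action permutes the connected components. Two marked pairs $(Z,\eta)$ and $(Z,g\circ\eta)$ lie in the same component exactly when $\eta^{-1}g\eta$ is a parallel transport operator, i.e.\ lies in $\mathrm{Mo}^2(Z)$. Moreover, any two orbifolds of Nikulin type are deformation equivalent, so every component contains a marking of every fixed $Z$. Hence the set of components is a torsor under $O(\Lambda_N)/\mathrm{Mo}^2(\Lambda_N)=O(\Lambda_N)/O^+(\Lambda_N)$. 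This quotient has order $2$: the lattice $\Lambda_N$ is indefinite of signature $(3,\ast)$, and it contains $-\id$, which reverses the orientation of positive $3$-planes. Therefore there are exactly two components, exchanged by $-\id$. Normality of $O^+$ is what makes this independent of the chosen base point.

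\emph{Generic injectivity.} Take $p\in D_{\Lambda_N}$ very general, so that $\NS$ is trivial, i.e.\ $p^\perp\cap\Lambda_N=0$. For such $p$ there are no classes cutting the positive cone $\mathcal{C}_Z$, so $\mathcal{C}_Z$ is a single Kähler-type chamber and $\mathcal{KT}(Z)$ is a single element. By Proposition \ref{prop:equivariant-bijection}, $\mathcal{P}^{-1}(p)\cap\mathcal{M}^0_{\Lambda_N}$ is then a single point. Since very general points are dense, and $\mathcal{P}$ is surjective on each component, the restriction of $\mathcal{P}$ to each component is generically $1:1$.

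\emph{Expected obstacle.} The delicate point is justifying that for very general $p$ the Kähler cone equals the whole positive cone. This needs the description of the Kähler cone of irreducible symplectic orbifolds via wall divisors (MBM classes) from \cite{MenetRiess}, in particular the fact that walls are orthogonal to classes in $\NS(Z)$. I would cite that description directly. A secondary check is that $-\id\notin O^+(\Lambda_N)$, which follows from the positive part having odd rank $3$.
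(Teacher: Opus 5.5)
Your proposal is correct and is essentially the paper's argument. The paper only says the $K3$ proof of \cite[Proposition 7.5.4]{HuybrechtsK3} carries over verbatim once $\mathrm{Mo}^2(\Lambda_N)=O^+(\Lambda_N)$ is known (by \cite[Theorem 1.1]{BMM}). You have written out exactly that argument: the components form a torsor under $O(\Lambda_N)/O^+(\Lambda_N)$, which has order $2$, and for very general $p$ with $\NS=0$ the positive cone is a single K\"ahler-type chamber, so Proposition \ref{prop:equivariant-bijection} gives one point per component.
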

\begin{proof}
The proof is essentially the same as in the case of $K3$ surfaces (see \cite[Proposition 7.5.4]{HuybrechtsK3}) and it is the same for all deformation families in which $\mathrm{Mo}^2(\Lambda)=O^+(\Lambda)$ (cf. \cite[note added in proof]{Onorati}).
\end{proof}
We will denote respectively by $\mathcal{P}_+$ and $\mathcal{P}_-$ the restrictions of the period map to $\mathcal{M}_{\Lambda_N}^+$ and $\mathcal{M}_{\Lambda_N}^-$. It follows from \cite[]{MenetTorelli} and Proposition \ref{prop:equivariant-bijection} that $\mathcal{P}_+$ and $\mathcal{P}_-$ are local isomorphisms with $D_{\Lambda_N}$.


\section{The main results}\label{section: The main results}
The aim of this section is to present more precisely our three main theorems, which will be proved in the next sections.

We recalled in Section \ref{subsec: symplectic involution on K3s and Nikuli surfaces} that Nikulin surfaces are characterized by a lattice-theoretic condition, i.e.~ the presence of the Nikulin lattice in the N\'eron--Severi lattice. We state the analogous result for  Nikulin orbifolds inside the family of orbifolds of Nikulin type. Also the density result is the analogue in higher dimension of the density of Nikulin surfaces in the family of $K3$ surfaces, see Section \ref{subsec: symplectic involution on K3s and Nikuli surfaces}.

\begin{thm}\label{Thm1.1}
    A marked orbifold $(Y,\eta)$ of Nikulin type is bimeromorphic to a Nikulin orbifold if and only if its N\'eron--Severi lattice $\NS(Y)$ contains primitively a class of square $(-4)$ and divisibility 2 in $\Lambda_N$.
    
    Moreover, the locus $\mathcal{M}_{-4,2}$ of marked orbifolds of Nikulin type which are bimeromorphic to a Nikulin orbifold is dense inside any connected component of $\mathcal{M}_{\Lambda_N}$.
\end{thm}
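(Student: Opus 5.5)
Both directions of the first statement go through the lattice theory of $\Lambda_N$ together with the Torelli-type results of Section~\ref{subsec:moduli-orbifolds}. The density statement then follows from a standard Noether--Lefschetz argument.

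\emph{Necessity.} If $Y$ is bimeromorphic to a Nikulin orbifold $Y_0$, the exceptional divisor $\Sigma$ of $Y_0\to X/\iota$ satisfies $\Sigma^2=-4$ and $\divi(\Sigma)=2$, by the facts recalled from \cite{Menet}. A bimeromorphic map between irreducible symplectic orbifolds induces a Hodge isometry on $H^2$. Hence the image of $\Sigma$ in $\NS(Y)$ is an algebraic class with the same square and divisibility, and it is primitive since it is primitive on $Y_0$.

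\emph{Sufficiency.} Let $(Y,\eta)$ have a primitive class $D\in\NS(Y)$ with $D^2=-4$ and $\divi(D)=2$.
\begin{enumerate}
\item I would show by Eichler-type criteria that $O^+(\Lambda_N)$ acts transitively on such classes. Concretely, the orbit of a primitive vector in a lattice containing $U(2)^{\oplus 2}$ is determined by its square and by its image $D/2$ in $A_{\Lambda_N}$. One then checks that all admissible residues of $D/2$ are equivalent under the isometries of $A_{\Lambda_N}$ coming from $O^+(\Lambda_N)$. This may force a small case analysis, since $A_{\Lambda_N}=(\Z/2\Z)^{\oplus 8}$ and the residue of $D/2$ must have square $-1\bmod 2\Z$.
\item Fix a reference Nikulin orbifold $(Y_0,\eta_0)$ whose class $\Sigma_0$ satisfies $\eta_0(\Sigma_0)=\eta(D)$, possibly after composing the marking with the element of $O^+(\Lambda_N)=\mathrm{Mo}^2$ from step~1. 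We may also stay in the same connected component, using the $-\id$ symmetry of Theorem~\ref{prop:connected components of moduli}.
\item Consider the sublocus of $D_{\Lambda_N}$ of periods orthogonal to $\eta(D)$. This is $D_{\eta(D)^\perp}$, and $\eta(D)^\perp\cong U(2)^{\oplus 3}\oplus E_8(-1)\oplus\langle -2\rangle$ up to overlattice. It coincides with the image of the period domain $D_{\Lambda_{K3^{[2]}}^I}$ under the map $X\mapsto Y$ given by \cite[Table 3]{CGKK} and the pushforward $\pi_*$. By surjectivity of $\mathcal{P}_{K3^{[2]},I}$ we find a pair $(X,\iota)$ whose Nikulin orbifold $Y'$ has the same period as $(Y,\eta)$, with compatible markings.
\item Proposition~\ref{prop:equivariant-bijection} and \cite[Corollary 5.11]{MenetTorelli} then show that points in the same fibre of $\mathcal{P}_\pm$ are bimeromorphic, so $Y$ is bimeromorphic to $Y'$.
\end{enumerate}

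\emph{Main obstacle.} The hardest part is steps~2--3. One must make sure that the family of Nikulin orbifolds, with its induced markings, really fills out the whole hyperplane section $\eta(D)^\perp\cap D_{\Lambda_N}$ in each connected component. This amounts to matching the lattice $\eta(D)^\perp$ exactly with $\pi_*(\Lambda^I_{K3^{[2]}})$ as computed in \cite{Menet}. Openness is automatic from local Torelli, since both families have dimension $20=\dim D_{\eta(D)^\perp}$. Closedness and surjectivity use surjectivity of the period map for $K3^{[2]}$-type with involution \cite{Mon}.

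\emph{Density.} $\mathcal{M}_{-4,2}$ is the preimage under the local isomorphism $\mathcal{P}_\pm$ of the union of the hyperplane sections $v^\perp\cap D_{\Lambda_N}$, over all $v$ with $v^2=-4$ and $\divi(v)=2$. These $v$ form a single $O^+(\Lambda_N)$-orbit, which is infinite, and the orthogonal group of an indefinite lattice of rank $\ge 3$ has dense orbits in the relevant Grassmannian. Hence the union of the hyperplane sections is dense in $D_{\Lambda_N}$, exactly as in the proof of density of Kummer surfaces or in \cite{MarkmanMehrotra}. Pulling back along $\mathcal{P}_\pm$, which is a local isomorphism on each component, gives density in each component.
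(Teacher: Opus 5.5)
Apart from Step~1, your route is the paper's. Necessity comes from the Hodge isometry induced by a bimeromorphic map. For sufficiency you pull a period in $\eta(D)^\perp$ back to $\Lambda^I_{K3^{[2]}}$ through $\pi_*$ (the paper's map $Q$) and use surjectivity of $\mathcal{P}_{K3^{[2]},I}$. You then check that $\frac12 Q\circ\phi\circ\pi^*$ is an isometry sending the glue vector $(\Delta_Y+\Sigma)/2$ to the glue vector of $\Lambda_N$ over $\mathbb{T}\oplus\langle-4\rangle$, and invoke Menet's Torelli. Density comes from Ananin--Verbitsky.

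Three smaller points on that part:
\begin{itemize}
\item The construction is pointwise, so no openness/closedness argument is needed (and these period domains are $13$-dimensional, not $20$).
\item $\eta(D)^\perp$ must be exactly $\mathbb{T}=U(2)^{\oplus3}\oplus E_8(-1)\oplus\langle-4\rangle$, not something with $\langle-2\rangle$ ``up to overlattice'', because the marking is built from an exact isometry.
\item Eichler's criterion needs $U^{\oplus2}$, not $U(2)^{\oplus2}$. Here $\Lambda_N\cong U^{\oplus3}\oplus N\oplus\langle-2\rangle^{\oplus2}$ supplies it.
\end{itemize}

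The genuine gap is Step~1: the check you propose fails, and the statement as written is false. Since $A_{\Lambda_N}\cong u(2)^{\oplus3}\oplus(-\frac12)^{\oplus2}$ is $2$-elementary, $x\mapsto q(x)\bmod\Z$ is a homomorphism to $\frac12\Z/\Z$. It therefore equals $b(c,\cdot)$ for a unique characteristic element, here $c=(\delta_1+\delta_2)/2$, and every isometry of $\Lambda_N$ fixes $c$.

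The class $\Sigma$ satisfies $\Sigma/2\equiv c$. But take $D'=e-f$ in one $U(2)$ summand ($e^2=f^2=0$, $ef=2$). It is primitive with $D'^2=-4$ and $\divi(D')=2$, yet $D'/2\equiv(e+f)/2\neq c$. So there are at least two $O(\Lambda_N)$-orbits of such classes. Concretely, $D'^\perp\cong\langle4\rangle\oplus U(2)^{\oplus2}\oplus E_8(-1)\oplus\langle-2\rangle^{\oplus2}$ has a discriminant element of order $2$ with $q=-\frac12$, which $A_{\mathbb{T}}$ lacks; hence $D'^\perp\not\cong\mathbb{T}$.

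This cannot be patched. Take a very general $(Y,\eta)$ with $\eta(\NS(Y))=\Z D'$, which exists by surjectivity of $\mathcal{P}$. A bimeromorphic map to a Nikulin orbifold $Y_0$ would send $\Sigma_0$ to $\pm\eta^{-1}(D')$, and hence $\Sigma_0^\perp\cong\mathbb{T}$ onto $\eta^{-1}(D'^\perp)$, which is impossible. So the ``if'' direction holds only for classes in the orbit of $\Sigma$, i.e.\ with $D/2$ equal to the characteristic element of $A_{\Lambda_N}$ (equivalently $D^\perp\cong\mathbb{T}$).

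The paper's Lemma~\ref{lemma: embeddings of <-4> and T in LambdaN}(1) asserts the same transitivity, so its proof has the same hole. Its Nikulin-gluing analysis takes $H_q=\langle\delta_1+\delta_2\rangle$ as the only option and misses the subgroup generated by $(e+f)/2$, whose square is also $\equiv-1$. With the hypothesis corrected, your Steps~2--4 go through as in the paper. Density still holds by running Ananin--Verbitsky over the infinite orbit of $\Sigma$ alone.
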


\begin{defi}
Let $X$ be a  manifold of  $K3^{[2]}$-type admitting a symplectic involution $\iota$. We call generalized Nikulin surface of $(X,\iota)$ the $2$-dimensional component of $$\Fix_{\iota}(X)=\{x\in X\mbox{ such that }\iota(x)=x\}.$$ A generalized Nikulin surface is a surface that is a generalized Nikulin surface of $(X,\iota)$, for some  manifold  of  $K3^{[2]}$  type $X$ with symplectic involution $\iota$, i.e.~ which appears as a fixed surface of the action of a symplectic involution on a  manifold  of  $K3^{[2]}$-type.
\end{defi}

Given a manifold of $K3^{[2]}$-type $X$ with a symplectic involution $\iota$, there are two symplectic varieties naturally associated to $(X,\iota)$, which are the generalized Nikulin surface of $(X,\iota)$ and the Nikulin orbifold of $(X,\iota)$, i.e.~ the terminalization of $X/\iota$. The first is smooth and $2$-dimensional, the latter singular and $4$-dimensional, but both of them are endowed with a symplectic structure induced by the one of $X$.
We are now able to prove the conjecture that we presented in \cite[Conjecture 3.12]{CGKK} and which relates the Hodge structures of these two symplectic varieties. 
\begin{thm}\label{Thm1.2} Let $X$ be a  manifold of $K3^{[2]}$-type with symplectic involution $\iota$, $F$ the generalized Nikulin surface of $(X,\iota)$ and $Y$ the Nikulin orbifold of $(X,\iota)$. Then the transcendental lattice $\T_F$ of the K3 surface $F$ is Hodge isometric to the transcendental lattice $\T_Y$ of the Nikulin orbifold $Y$.
\end{thm}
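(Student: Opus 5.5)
We plan to prove Theorem \ref{Thm1.2} by constructing a natural correspondence relating $H^2(F,\Z)$, $H^2(X,\Z)^\iota$ and $H^2(Y,\Z)$, checking that it is a morphism of Hodge structures that scales the forms in a controlled way, and then comparing transcendental lattices. The two natural maps are the restriction $r\colon H^2(X,\Z)\to H^2(F,\Z)$, which sends $H^{2,0}(X)$ isomorphically onto $H^{2,0}(F)$ because $F$ is a symplectic submanifold fixed by $\iota$, and the composition $\pi_*\colon H^2(X,\Z)\to H^2(Y,\Z)$ (pull back to the blow-up $\widetilde X$ of $X$ along $\Fix(\iota)$, then push forward to $Y$). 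Both are morphisms of Hodge structures, and both kill the anti-invariant part $E_8(-2)$. So the transcendental parts $T_X\subset H^2(X,\Z)^\iota$ map to $\T_F$ and $\T_Y$ respectively.

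First we compute $r$ and $\pi_*$ on the invariant lattice $H^2(X,\Z)^\iota\simeq U^{\oplus3}\oplus E_8(-2)\oplus\langle-2\rangle$. By \cite{Menet} and \cite{CGKK}, $\pi_*$ multiplies the Beauville--Bogomolov form by $2$ on the invariant part, its image has finite index in $\Sigma^\perp\subset H^2(Y,\Z)$, and the overlattice is explicitly known. For $r$ we specialize to $(S^{[2]},\iota_S^{[2]})$, where $F_0$ is the Nikulin surface $W$ of $(S,\iota_S)$. The restriction map is locally constant in families, so it can be computed there. Using \cite[Corollary 2.2]{GS} and the description of $H^2(S^{[2]})=H^2(S)\oplus\Z\delta$, we will show that $r$ restricted to $H^2(S,\Z)^{\iota_S}\simeq U^{\oplus3}\oplus E_8(-2)$ is the usual map $\pi_*$ of $S\to W$, which doubles the form on the $U^{\oplus 3}$ part. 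We will also show that $r(\delta)=\tfrac12\sum E_i$ up to sign, i.e. $F_0\cap\Delta$. So on the invariant lattice $r$ should be $\sqrt{2}$-similitude-like and coincide, up to the same scaling, with $\pi_*$. The key identity to establish is
\[
(r(a),r(b))_F=(\pi_*a,\pi_*b)_Y \quad\text{for } a,b\in H^2(X,\Z)^\iota .
\]
Both sides are deformation invariant, so it suffices to check this on a basis at the Hilbert scheme point.

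Next we identify images and saturations. $\T_F$ is the saturation of $r(T_X)$ and $\T_Y$ is the saturation of $\pi_*(T_X)$. Because of the identity above, the map $\psi=\pi_*\circ r^{-1}\colon r(T_X)\otimes\Q\to\pi_*(T_X)\otimes\Q$ is a rational Hodge isometry. The remaining task is to show that $\psi$ is integral and carries $\T_F$ onto $\T_Y$. For this we compare the full lattices: $r(H^2(X)^\iota)^{\rm sat}$ inside $H^2(F,\Z)$ is the orthogonal of the generalized Nikulin lattice $E_7(-2)$ (plus the relevant glue). On the other side, $\Sigma^\perp$ inside $\Lambda_N$ is known. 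We will check that $\psi$ extends to an isometry between these saturated lattices: $(E_7(-2))^\perp_{\Lambda_{K3}}\simeq U^{\oplus 3}\oplus E_8(-1)\oplus\langle-2\rangle^{\oplus ?}$-type lattices matched with $\Sigma^\perp_{\Lambda_N}$. Restricting the extended isometry to transcendental parts then gives the Hodge isometry $\T_F\simeq\T_Y$.

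The main obstacle is this integrality and saturation step. Index-$2$ gluings occur on both sides, and they must correspond under $\psi$, which requires an explicit comparison of the discriminant groups. I would handle this at the Hilbert scheme point with explicit bases, using $H^2(W)\supset \pi_*H^2(S)^{\iota}\oplus N$ and Menet's explicit basis of $H^2(Y)$. I would then argue that the resulting isometry of these Hodge-type-free lattices is locally constant in the family over $\mathcal M_{K3^{[2]},I}$, hence valid for every $(X,\iota)$.
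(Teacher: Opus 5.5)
Your plan is correct and is essentially the paper's proof. Your $\psi=\pi_*\circ r^{-1}$ is exactly the inverse of the paper's $\tilde f=\tfrac12\,\nu^*\pi^*\rho_*$, because $\pi^*\pi_*a=2a$ on invariant classes. Your form identity is the paper's $B_Y(\alpha,\beta)=\tfrac14\langle f(\alpha),f(\beta)\rangle_F$, which the paper quotes from \cite[Proposition 3.16]{CGKK} rather than checking it at the Hilbert scheme point. As in the paper, the integrality is settled on $T_F^G$ and $T_Y^{-4}$ before restricting to $\T_F$ and $\T_Y$.

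One correction to your sketch: both saturated lattices are isometric to $\mathbb{T}=U(2)^{\oplus 3}\oplus E_8(-1)\oplus\langle -4\rangle$, not to a lattice of the form $U^{\oplus 3}\oplus E_8(-1)\oplus\cdots$. Moreover, $r(H^2(X,\Z))$ and $\rho^*\pi_*(H^2(X,\Z))$ are already saturated: at the Hilbert point they are $N^\perp\oplus\Z\cdot\tfrac12\sum_iE_i$ and $\Sigma^\perp$. So $r(a)\mapsto\pi_*(a)$ is integral on the nose, whereas the paper shows $f$ lands in $2T_F^G$ and gets surjectivity from both sides being abstractly isometric to $\mathbb{T}$.
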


In order to prove Theorem \ref{Thm1.2} and to study the moduli space of generalized Nikulin surfaces,  we are interested in finding a lattice-theoretic characterization of these K3 surfaces. For this reason we introduce an appropriate sublattice of the Nikulin lattice introduced in Definition \ref{def:Nikulin surface and lattice}.

\begin{defi}\label{defi: generalizad Nikulin orbifold}
The generalized 
Nikulin lattice is the sublattice $G\subset N$ that is the orthogonal complement to the class $\frac{1}{2}(\sum_i E_i)$ in the Nikulin lattice $N$.\end{defi}

We show in Proposition \ref{lemma: properties of G} that $G$ is isometric to $E_7(-2)$.
We  prove Theorem \ref{Thm1.2} in Section \ref{section: proofs of 2nd theorem and definition of Generalized Nikulin lattice}, and the proof is  based on the fact that the generalized Nikulin lattice is primitively embedded in the N\'eron--Severi lattice of every generalized Nikulin surface. Also the vice versa is true so that the generalized Nikulin lattice identifies the K3 surfaces which are generalized Nikulin surfaces, by the following result, proved in Section \ref{section: proofs of 2nd theorem and definition of Generalized Nikulin lattice}.

\begin{thm}\label{thm: projective $K3$ G polarized are generalized Nikulin}
A $K3$ surface is a generalized Nikulin surface if and only if the generalized Nikulin lattice can be primitively embedded in its N\'eron--Severi lattice.
\end{thm}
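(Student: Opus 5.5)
The plan is to prove the two implications separately, with Theorem \ref{Thm1.2} and Theorem \ref{Thm1.1} supplying the bridge between $F$, the Nikulin orbifold $Y$ and the pair $(X,\iota)$.

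\emph{Necessity.} Let $F$ be the generalized Nikulin surface of $(X,\iota)$. By \cite{Mon}, $(X,\iota)$ deforms to $(S^{[2]},\iota_S^{[2]})$, and along such a deformation the fixed surface deforms smoothly from the Nikulin surface $F_0=\widetilde{S/\iota_S}$ to $F$. This gives an identification $H^2(F,\Z)\cong H^2(F_0,\Z)$ by parallel transport. On $F_0$ the classes $E_i$ are algebraic, and so is $\frac12\sum E_i = F_0\cap\Delta$. The idea is to show that the restriction map $H^2(X,\Z)\to H^2(F,\Z)$, together with the normal bundle of $F$ in $X$, keeps the sublattice $G=(\frac12\sum E_i)^\perp\cap N$ of type $(1,1)$ throughout the family, while only the class coming from $\Delta$ can become transcendental. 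Concretely:
\begin{itemize}
\item the anti-invariant part $E_8(-2)\subset H^2(X)$ restricts to $F$ in a way that factors through algebraic classes;
\item equivalently, $G$ is orthogonal to the image of $\sigma_X|_F=\sigma_F$ for every member of the family.
\end{itemize}
Since the locus where a given class stays of type $(1,1)$ is closed analytic, and $\mathcal{M}_{K3^{[2]},I}$ is connected over its period domain, it suffices to check that the period of $F$ is orthogonal to $G$ on an open subset. On an open set I would compare the periods: via the correspondence $F\subset X$, $H^{2,0}(F)$ is the restriction of $H^{2,0}(X)$. Then $G\subset \NS(F)$, and primitivity follows because $G$ is primitive in $N\subset H^2(F_0,\Z)$. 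Proposition \ref{lemma: properties of G} identifies $G\cong E_7(-2)$.

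\emph{Sufficiency.} Let $F$ be a K3 surface with $G\hookrightarrow \NS(F)$ primitive, and set $T=G^\perp\subset H^2(F,\Z)$, a lattice of signature $(3,12)$ carrying a Hodge structure. The steps are as follows.
\begin{enumerate}
\item Using the lattice computation from the proof of Theorem \ref{Thm1.2}, identify $T$ with the orthogonal complement in $\Lambda_N$ of the sublattice generated by $\Sigma$ and the image of $E_8(-2)$. The discriminant forms of $E_7(-2)$ and of the relevant complement in $\Lambda_N$ should match, and by Nikulin's uniqueness results this gives a primitive embedding $T\hookrightarrow \Lambda_N$ whose orthogonal complement contains a class $\sigma$ with $\sigma^2=-4$ and $\divi(\sigma)=2$.
\item Transport the period of $F$ to a point of $D_{\Lambda_N}$ orthogonal to $\sigma$. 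By surjectivity of the period map (Theorem \ref{prop:connected components of moduli}), there is $(Z,\eta)$ with $\sigma\in\NS(Z)$.
\item By Theorem \ref{Thm1.1}, $Z$ is bimeromorphic to a Nikulin orbifold $Y$, the terminalization of $X/\iota$, with $\T_Y\cong \T_F$ as Hodge structures.
\item By Theorem \ref{Thm1.2}, the fixed surface $F'$ of $\iota$ satisfies $\T_{F'}\cong \T_Y\cong \T_F$.
\end{enumerate}

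The hard part is upgrading $\T_{F'}\cong\T_F$ to $F'\cong F$. Equality of transcendental lattices is not enough by itself. I would instead run the argument with the full Hodge structure on $T=G^\perp$, using the identification of $H^2(F')$ with a lattice built from $H^2(X)^\iota$ and $\Sigma$ so that $G^\perp_{F'}\cong G^\perp_F$ Hodge isometrically. Since $G$ has a unique primitive embedding into $\Lambda_{K3}$, this isometry should extend to $H^2(F',\Z)\cong H^2(F,\Z)$. Alternatively, one can vary the point of the period domain, namely the choice of $(X,\iota)$ with the prescribed period, and use the $O(\Lambda_{K3})$-action to compose with reflections so that the extension maps a Kähler class to a Kähler class. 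The global Torelli theorem for K3 surfaces then gives $F\cong F'$. Controlling this extension step, i.e.\ choosing the marking so that the Hodge isometry preserves effective cones, is the main obstacle.
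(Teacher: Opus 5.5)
Your outline follows the paper's route: transport the period of $F$ through $G^\perp\cong\mathbb{T}\cong\mathbb{E}^\perp$, realize it by a Nikulin orbifold, take the fixed surface $F'$, extend across $E_7(-2)$, and apply Torelli. The necessity direction is essentially the paper's once you state the actual reason. The period satisfies $\sigma_F=\nu^*\sigma_X$ with $\sigma_X$ invariant, and $\nu^*(H^2(X,\Z)^{\iota^*})\perp G$. This orthogonality holds at the Hilbert-scheme point because invariant classes from $H^2(S)$ restrict into $N^\perp$ and $\Delta$ restricts to a multiple of $\sum_iE_i$. Since $\nu^*$ is locally constant in the family, it then holds for every member. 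The anti-invariant $E_8(-2)$ restricts to $0$ by Lemma \ref{lemma: properties of nu*}(4), not to ``algebraic classes'', and no openness or analyticity argument is needed. The sufficiency endgame, however, is not closed, and the tools you name for it are not the right ones.

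First, what identifies $G^\perp_{F'}$ with $G^\perp_F$ is Theorem \ref{them: proof of the conj}: an integral Hodge isometry $\tilde f\colon T_Y^{-4}\to T_{F'}^G$ of the full rank-$15$ lattices, not just $\T_Y\cong\T_{F'}$. You must combine it with a Hodge isometry $T_Y^{-4}=\Sigma^\perp\cong G^\perp_F$. That second isometry requires the marked Nikulin orbifold $(Y,\zeta)$ built in the proof of Theorem \ref{Thm1.1} from $(X,\phi)$ with period $\mathbb{P}(Q_\C)^{-1}(p)$, for which $\zeta(\Sigma)=\sigma$ and $\mathcal{P}(Y,\zeta)=p$. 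The bare bimeromorphism $Z\dashrightarrow Y$ given by the statement of Theorem \ref{Thm1.1} is not guaranteed to send $\sigma$ to $\Sigma$, so $\Sigma^\perp$ need not carry the Hodge structure of $G^\perp_F$. Also, in your step (1) there is no image of $E_8(-2)$ in $\Lambda_N$, since it is killed by $\pi_*$; simply $G^\perp_F\cong\mathbb{T}\cong\sigma^\perp$ by Lemmas \ref{lemma: embeddings of <-4> and T in LambdaN} and \ref{lemma: properties of G}.

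Second, uniqueness of the primitive embedding of $G$ does not let you extend a \emph{prescribed} isometry $G^\perp_F\to T_{F'}^G$. You need its action on discriminant groups to lift to $O(G)$, i.e.\ surjectivity of $O(E_7(-2))\to O(A_{E_7(-2)})$, which is in Lemma \ref{lemma: properties of G}.

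Third, the effective-cone issue you call the main obstacle is not one. The global Torelli theorem for K3 surfaces only needs some Hodge isometry $H^2(F,\Z)\cong H^2(F',\Z)$: composing with $\pm\id$ and reflections in $(-2)$-classes, all of which are Hodge isometries, makes it effective. No variation of $(X,\iota)$ is required.

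With these three points your argument becomes the paper's proof.
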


In Section \ref{Families of projective generalized Nikulin surfaces} we  describe the family of projective generalized Nikulin surfaces, which has countably many components, each of dimension 12. Recall that there are two varieties, $X$ and $Y$ attached to $F$: $X$ is the  manifold of $K3^{[2]}$-type admitting a symplectic involution $\iota$ such that $\Fix_{\iota}(X)\supset F$ and $Y$ is the Nikulin orbifold terminalization of $X/\iota$. The three families of generalized Nikulin surfaces,  manifolds of $K3^{[2]}$-type admitting a symplectic involution and  Nikulin orbifolds are described as families of lattice-polarized varieties. Therefore, in order to explicitly relate $F$, $X$ and $Y$, we prove that the choice of the N\'eron--Severi group of a generalized Nikulin surface $F$ with Picard number 8 determines uniquely $(\NS(X), \T_X)$ and $(\NS(Y), \T_Y)$ and viceversa, see Proposition \ref{prop:relation NS(F), NS(X), NS(W)}.

In Section \ref{section:projective models} we describe projective models of projective generalized Nikulin surfaces. This generalizes the results proved in \cite{GS} for the projective Nikulin surfaces.

\section{Proof of Theorem \ref{Thm1.1}}\label{section: proofs of 1st theorem}\label{2}

The aim of this section is to give a proof of Theorem \ref{Thm1.1}. We use the construction of the moduli space of marked orbifolds of Nikulin type recalled in Section \ref{subsec:moduli-orbifolds}.
Before we recall some lattice-theoretic results needed in the following.
\begin{defi}
We will call $\mathbb{E}$ a vector in $\Lambda_N$ such that $\mathbb{E}^2=-4$ and $\rm{div}(\mathbb{E})=2$ and $\mathbb{T}$ the abstract lattice $\mathbb{T}:=U(2)^{\oplus 3}\oplus E_8(-1)\oplus\langle -4\rangle$. 
\end{defi}

\begin{lemma}\label{lemma: embeddings of <-4> and T in LambdaN}
\begin{enumerate}
\item
There exists a unique, up to isometries, primitive embedding $\psi_{\langle-4\rangle}$ of the lattice $\langle -4\rangle=\Z h$ such that $\psi_{\langle -4\rangle}(h)=\mathbb{E}$ has divisibility 2 in $\Lambda_N$. 
\item The map $O(\langle -4\rangle)\ra O(A_{\langle -4\rangle})$ is surjective.
\item The orthogonal complement of $\Z \mathbb{E}$ in $\Lambda_N$ is isometric to $\mathbb{T}$.
\item The lattice $\mathbb{T}$ admits a unique, up to isometries, primitive embedding in $\Lambda_N$.
\item The map $O(\mathbb{T})\ra O(A_{\mathbb{T}})$ is surjective.
\end{enumerate}
\end{lemma}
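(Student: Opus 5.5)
We use Nikulin's theory of primitive embeddings and gluings via discriminant forms. First we record the discriminant form of $\Lambda_N=U(2)^{\oplus 3}\oplus E_8(-1)\oplus\langle -2\rangle^{\oplus 2}$: the group is $A_{\Lambda_N}\cong(\Z/2\Z)^{\oplus 8}$ with form $u(2)^{\oplus 3}\oplus(-\tfrac12)^{\oplus 2}$. Every element of $\Lambda_N^\vee$ lies in $\tfrac12\Lambda_N$, so every vector of $\Lambda_N$ has divisibility $1$ or $2$.

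For (1) and (3) we construct $\mathbb{E}$ explicitly. Let $e_1,e_2$ be generators of the two $\langle -2\rangle$ summands and set $\mathbb{E}=e_1+e_2$. Then $\mathbb{E}^2=-4$ and $\divi(\mathbb{E})=2$. Its orthogonal complement in $\langle -2\rangle^{\oplus 2}$ is $\Z(e_1-e_2)\cong\langle -4\rangle$, so $\mathbb{E}^\perp= U(2)^{\oplus 3}\oplus E_8(-1)\oplus\langle -4\rangle=\mathbb{T}$. This shows existence and (3) for this particular $\mathbb{E}$.

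For uniqueness we use the standard Eichler-type criterion. Since $\Lambda_N$ contains $U(2)^{\oplus 2}$, primitive vectors of the same square, the same divisibility and the same class $\mathbb{E}/\divi(\mathbb{E})\in A_{\Lambda_N}$ lie in one $O(\Lambda_N)$-orbit (the version for $U(2)$-summands, as used by Menet and by Brandhorst--Menet--Muller). It then remains to show that $O(\Lambda_N)\to O(A_{\Lambda_N})$ is transitive on the elements of $A_{\Lambda_N}$ that arise as $\mathbb{E}/2$. Any such element has $q=-1\bmod 2\Z$ and is nonzero. A Witt-type argument classifies these elements by the finite quadratic form together with the parity datum: whether the class is ``odd'' in $(-\tfrac12)^{\oplus 2}$. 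This classification is the delicate point. The alternative route is through orthogonal complements: by Nikulin, the possible gluings of $\langle-4\rangle$ with $M=\mathbb{E}^\perp$ have $M$ of rank $13$, signature $(3,10)$, and discriminant form determined by the gluing. One checks, case by case, that only the form of $\mathbb{T}$ is compatible with $\divi=2$. Since $\mathbb{T}$ is indefinite and contains $U(2)$, Nikulin's uniqueness results (the $2$-adic version, e.g. Miranda--Morrison) give $M\cong\mathbb{T}$ and uniqueness of its genus class. Combined with (2) and (5), Nikulin's Proposition 1.15.1 then yields uniqueness of the embedding. We expect this uniqueness step to be the main obstacle, specifically ruling out alternative gluing subgroups with $q_M$ different from $q_{\mathbb{T}}$, and verifying the $2$-adic conditions of Miranda--Morrison for the surjectivity in (5).

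(2) is immediate: $A_{\langle-4\rangle}=\Z/4\Z(-\tfrac14)$, whose only isometries are $\pm1$, and these lift to $\pm\id$. For (5), $A_{\mathbb{T}}=(\Z/2\Z)^{6}\oplus\Z/4\Z$ with form $u(2)^{\oplus3}\oplus(-\tfrac14)$. Since $\mathbb{T}$ is indefinite, contains $U(2)^{\oplus 2}$ and has rank $13\ge \ell(A_{\mathbb{T}})+2$, the Miranda--Morrison criterion gives surjectivity. Alternatively, one can lift generators of $O(A_{\mathbb{T}})$ by hand: isometries of $u(2)^{\oplus 3}$ come from $O(U(2)^{\oplus3})=O(U^{\oplus 3})$, which surjects onto $O(u(2)^{\oplus3})$, while mixing maps between the $\Z/4$ part and the $u(2)$ part are realized by explicit reflections in vectors of square $-4$ or $-8$. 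Finally, (4) follows from (1)–(3) and (5): an embedding of $\mathbb{T}$ has complement of rank $1$ with discriminant determined by gluing, hence equal to $\langle -4\rangle$. The gluing is along an anti-isometry $\Z/4\Z(\tfrac14)\subset A_{\mathbb{T}}$-type subgroup, and these are all conjugate under $O(\mathbb{T})\times O(\langle-4\rangle)$ by the surjectivity statements, so Nikulin's Proposition 1.15.1 gives uniqueness.
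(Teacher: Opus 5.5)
\textbf{Gap: uniqueness in (1) cannot be proved, because (1) as stated is false.} Statement (3) likewise fails for an arbitrary $\mathbb{E}$ of square $-4$ and divisibility $2$. Take a standard basis $e,f$ of one $U(2)$ summand and set $v=e-f$. Then $v^2=-4$, $v\cdot e=-2$ and $v\cdot f=2$, so $v$ is primitive with $\divi(v)=2$. However, $v^\perp=\Z(e+f)\oplus U(2)^{\oplus 2}\oplus E_8(-1)\oplus\langle-2\rangle^{\oplus 2}$ with $(e+f)^2=4$. Its discriminant form contains an element of order $2$ with $q=-\frac12$, coming from a $\langle-2\rangle$ summand. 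By contrast, every element of order $2$ in $A_{\mathbb{T}}=u(2)^{\oplus 3}\oplus(-\frac14)$ has integral square. So $v^\perp$ is not isometric to $\mathbb{T}$, and $v$ and your $e_1+e_2$ lie in different $O(\Lambda_N)$-orbits.

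This breaks both of your routes. The ``case by case'' check that only $q_{\mathbb{T}}$ is compatible with $\divi=2$ would in fact turn up a second genus. The transitivity you need in the Eichler route fails at exactly the parity issue you noticed. The class $\frac{e_1+e_2}{2}$ is the characteristic element of $A_{\Lambda_N}$, i.e.\ $b(\frac{e_1+e_2}{2},y)\equiv q(y)\bmod 1$ for all $y$, and is therefore fixed by all of $O(A_{\Lambda_N})$. The class $\frac v2$ also has $q=-1$ but is not characteristic. The paper's own proof has the same defect. It quotes Eichler's criterion as ``norm and divisibility determine the orbit'', but the criterion determines the orbit by the norm together with the class $\mathbb{E}/\divi(\mathbb{E})\in A_{\Lambda_N}$. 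Its choice $H_q=\langle\delta_1+\delta_2\rangle$ in Nikulin's Proposition 1.15.1 silently selects one of the orbits.

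\textbf{What is true, and how your proof adapts.} What holds, and what is needed later (e.g.\ for $\Sigma$, whose orthogonal complement is $\mathbb{T}$), is (1) and (3) for those $\mathbb{E}$ with $\mathbb{E}/2$ characteristic in $A_{\Lambda_N}$, equivalently with $\mathbb{E}^\perp\cong\mathbb{T}$. Under that hypothesis your argument goes through. Apply the standard Eichler criterion to $\Lambda_N\cong U^{\oplus 3}\oplus N\oplus\langle-2\rangle^{\oplus 2}$, which contains $U^{\oplus 2}$, so no $U(2)$-version is needed. The relevant class is then a single $O(A_{\Lambda_N})$-invariant element.

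Your (2) is fine. Part (5) is immediate from Nikulin's Theorem 1.14.2, since $13\ge \ell(A_{\mathbb{T}})+2=9$; no further $2$-adic verification is required.

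For (4), the gluing you describe needs correcting. Suppose $\mathbb{T}^\perp=\Z k$ with $k^2=-2m$ and the glue group has order $g$. Then $2^8=2^8\cdot 2m/g^2$. The glue group is cyclic and embeds in $A_{\mathbb{T}}$, which has exponent $4$, so $g\mid 4$. The case $g=4$ is impossible: order-$4$ elements of $A_{\mathbb{T}}$ have $q\equiv-\frac14\bmod 1$, while those of $A_{\langle-16\rangle}$ have integral square. Hence $g=2$ and $k^2=-4$. The glue group is $\Z/2\Z$, generated by $k/2$ and twice the generator of the $\Z/4\Z$ summand of $A_{\mathbb{T}}$; it is not a $\Z/4\Z(\frac14)$-subgroup. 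It follows that $\divi(k)=2$ and $k^\perp\cong\mathbb{T}$, which places $k$ in the characteristic orbit. So (4) does follow, but only from the corrected version of (1).
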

\begin{proof}
(1) follows from the isometry $\Lambda_N\simeq U^{\oplus 3}\oplus N\oplus \langle -2\rangle\oplus \langle -2\rangle$ and the so-called Eichler's criterion (see \cite[Satz 10.4]{Ei} and \cite[Lemma 7.8]{GHS}): all vectors of a given norm and divisibility are in the same  $O(\Lambda_N)$ orbit.

The group  $O(A_{\Z\mathbb{E}})$ is $\{\pm \mathrm{Id}\}$ as well as $O(\Z\mathbb{E})$.

Observe $A_{\Z\mathbb{E}}=\langle \mathbb{E}/4\rangle$ and $A_{\Lambda_N}=u(2)^{\oplus 3}\oplus \mathbb{Z}_2\left(-\frac{1}{2}\right)^{\oplus 2}$. Call $\delta_1$ and $\delta_2$ the two last generators of the previous form. Then the embedding of $\mathbb{E}$ in $\Lambda_N$ is determined by the choice of $(H_S,H_q,\gamma,K,\gamma_K)$  as in \cite[Proposition 1.15.1]{N}, where the two subgroups are $H_S=\langle 2\frac{\mathbb{E}}{4}\rangle$ and $H_q=\langle\delta_1+\delta_2\rangle$, and hence $\gamma$ is uniquely determined. One directly computes that $q_K$ is $u(2)^{\oplus 3}\oplus \mathbb{Z}_4\left(-\frac{1}{4}\right)$. The orthogonal of $\Z\mathbb{E}$ is a lattice of signature $(3,12)$ and discriminant form $q_K$. This determines a unique lattice up to isometries, which is isometric to $\mathbb{T}$. The map $O(\mathbb{T})\ra O(A_{\mathbb{T}})$ is surjective by \cite[Theorem 1.14.2]{N}. To determine the embeddings of $\mathbb{T}$ in $\Lambda_N$, one has to apply again \cite[Proposition 1.15.1]{N}. Since $A_{\Lambda_N}=u(2)^{\oplus 3}\oplus \mathbb{Z}_2\left(-\frac{1}{2}\right)^{\oplus 2}$ and $A_{\mathbb{T}}= u(2)^{\oplus 3}\oplus \mathbb{Z}_4\left(-\frac{1}{4}\right)$, $H_{q}$ is forced to be $u(2)^{\oplus 3}\oplus \langle\delta_1-\delta_2\rangle$, up to isometries, and $H_{\mathbb{T}}=u(2)^{\oplus 3}\oplus 2h$ where $h$ is the generator of $\mathbb{Z}_4\left(-\frac{1}{4}\right)$.
\end{proof}
\begin{rem}\label{rem: explicitl psi}{\rm Let us denote $\lambda_i$, $i=1,\ldots, 16$ the element of a basis of $\Lambda_N$ on which the bilinear form is $U(2)^{\oplus 3}\oplus E_8(-1)\oplus\langle -2\rangle\oplus \langle -2\rangle$. Since the embeddings of $\mathbb{Z}\mathbb{E}$ and $\mathbb{T}$ in $\Lambda_N$ are essentially unique, we can fix them as follows:
$\psi_{\langle-4\rangle}(\mathbb{E})=\lambda_{15}+\lambda_{16}$  and $\psi_{\mathbb{T}}(\mathbb{T})$ is generated by $\lambda_{i}$, $i=1,\ldots 14$ and $\lambda_{15}-\lambda_{16}$.}
\end{rem}
We  consider the codimension 1 locus
\[
D_{-4,2}:=(\bigcup_{q_{\Lambda_N}(\delta)=-4, \divi \delta=2}\delta^\perp)\cap D_{\Lambda_N}.
\]

\begin{lem}\label{lemma:density of periods of quotients}

The hyperplane arrangement $D_{-4,2}$ is dense inside $D_{\Lambda_N}$ in the Euclidean topology. 
\end{lem}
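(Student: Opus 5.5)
Throughout, write $V:=\Lambda_N\otimes\mathbb{R}$, $D:=D_{\Lambda_N}$ and $q:=q_{\Lambda_N}$. The plan is the standard density argument for Noether--Lefschetz type loci, as in the $K3$ case or in \cite{MarkmanMehrotra}. It has three steps: reduce to the density of a set of rational vectors in a real quadric, check that every vector of negative square is a limit of such vectors, and then transfer the statement to the period domain.

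\emph{Step 1: reduction.} A point $[\sigma]\in D$ lies in $\delta^\perp$ exactly when $\delta$ is orthogonal to the real $3$-space $\langle\operatorname{Re}\sigma,\operatorname{Im}\sigma\rangle$. That space is positive definite of dimension $2$. It therefore suffices to show that the set
$$R:=\{\mathbb{R}_{>0}\,\delta \mid \delta\in\Lambda_N,\ q(\delta)=-4,\ \divi(\delta)=2\}$$
is dense in the set of rays of negative square in $V$. Granting this, fix $[\sigma]\in D$ and a neighbourhood of it. Choose a negative vector $v\perp\langle\operatorname{Re}\sigma,\operatorname{Im}\sigma\rangle$. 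Approximate $v$ by $\delta\in R$. Then project $\operatorname{Re}\sigma$ and $\operatorname{Im}\sigma$ orthogonally onto $\delta^\perp$ and renormalize so that $q(\sigma')=0$; this is possible because $\delta^\perp$ has signature $(3,12)$. The result $\sigma'$ is close to $\sigma$, so $D_{-4,2}$ meets every neighbourhood.

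\emph{Step 2: producing many vectors of type $(-4,2)$.} By Lemma \ref{lemma: embeddings of <-4> and T in LambdaN}(1), these vectors form a single $O(\Lambda_N)$-orbit. Fix one of them, $\mathbb{E}=\lambda_{15}+\lambda_{16}$. I would write down an explicit family in one copy of $U(2)$ plus the $\langle-2\rangle^{\oplus 2}$ part. Let $e,f$ be the standard basis of a copy of $U(2)$, so $e^2=f^2=0$ and $e\cdot f=2$. Put
$$\delta_{a,b}:=a e+b f+\lambda_{15}+\lambda_{16}.$$
Then $q(\delta_{a,b})=4ab-4$. Its divisibility is $2$, because every vector of $\Lambda_N$ pairs evenly with it; this holds since $U(2)$ is $2$-divisible and $\lambda_{15}\cdot\lambda_{15}=-2$. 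This family alone gives only one direction of negative square, so I would instead use the orbit. The group $O(\Lambda_N)$ is an arithmetic subgroup of $O(V)\cong O(3,13)$, and $O(3,13)$ acts transitively on the negative-square rays. By Borel density, or more concretely by the density of $O(\Lambda_N\otimes\mathbb{Q})$ in $O(V)$ for the isotropic indefinite form, the orbit $O(\Lambda_N)\cdot\mathbb{E}$ has dense image in the real quadric $\{q=-4\}$.

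The cleaner route to this is through rational vectors. A rational $v$ of negative square can be scaled to a primitive $w\in\Lambda_N$ with $q(w)=-4m^2 k$. The aim is then to approximate $v$ by vectors $\delta=w+n u$, where $u$ is isotropic and chosen so that $q(\delta)=-4$ after scaling. This is the usual trick $q(\alpha w+\beta u)$ with large parameters, combined with the Eichler criterion to check that the divisibility is $2$.

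\emph{Step 3 and the main obstacle.} The hard part is Step 2: showing that the \emph{integral} vectors of fixed norm $-4$ and divisibility $2$ have projectively dense directions, rather than merely the rational ones. My first approach would be: $O(\Lambda_N)$ is a lattice in the noncompact simple group $SO(3,13)$. The stabilizer of a negative vector is $O(3,12)$, which is noncompact. Ratner's theorem, or simply the Moore ergodicity theorem, then gives that $\Gamma$-orbits on $O(3,13)/O(3,12)$ are dense. Hence the orbit of $\mathbb{E}$ is dense in $\{q=-4\}$, and the set $R$ is dense in the negative-square rays. If I want to avoid that machinery, the fallback is an explicit approximation. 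I would use unipotent Eichler transvections $E_{u,x}\in O(\Lambda_N)$, with $u$ isotropic in $U(2)$ and $x\perp u$. Their powers move $\mathbb{E}$ along parabolas whose directions accumulate on $u$. Since isotropic rational directions are dense in the light cone, composing such moves reaches any negative direction up to $\epsilon$. Finally, Step 1 converts the density of $R$ into the density of $D_{-4,2}$ in $D_{\Lambda_N}$.
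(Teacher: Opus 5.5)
\textbf{There is a genuine gap.} The reduction in Step~1 aims at a statement that is false, and so do the density claims in Steps~2 and~3. The reason is the same in each case: every vector involved lies in the discrete set $\Lambda_N\subset V$.

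Fix a Euclidean norm $|\cdot|$ on $V$ and a constant $c>0$. Suppose $q(\delta)=-4$ and $q(\delta)\le -c|\delta|^2$. Then $|\delta|^2\le 4/c$. So only finitely many $\delta$ have their ray in the nonempty open set $\{q(v)<-c|v|^2\}$ of negative rays, and $R$ is not dense there.

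The same discreteness breaks Steps~2 and~3.
\begin{itemize}
\item $O(\Lambda_N)\cdot\mathbb{E}$ is a closed discrete subset of the hyperboloid $\{q=-4\}\cong O(3,13)/O(3,12)$, so it cannot be dense in it.
\item Moore's theorem only says that \emph{almost every} $\Gamma$-orbit on $O(3,13)/O(3,12)$ is dense. The orbit of an integral vector is one of the exceptions.
\item Your fallback has the same defect in its last sentence. Eichler transvections push the directions of $\gamma\mathbb{E}$ toward isotropic vectors, never toward a negative direction.
\item Some side remarks also fail. Not every primitive $w$ has $q(w)\in 4\mathbb{Z}$; for instance $q(\lambda_{15})=-2$. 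The direction of $w+nu$ tends to $u$, not to $w$. Borel density gives only Zariski density.
\end{itemize}

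\textbf{The missing idea is that the density comes from the isotropic cone.} If $q(\delta)=-4$ and $|\delta|\to\infty$, then $\delta/|\delta|$ can accumulate only on $\{q=0\}$. If $\delta/|\delta|\to u$, then the hyperplanes $\delta^\perp$ converge to $u^\perp$. For $[\sigma]\in D_{\Lambda_N}$, the space $\langle\mathrm{Re}\,\sigma,\mathrm{Im}\,\sigma\rangle^\perp$ has signature $(1,13)$. So it contains an isotropic $u$, and then $[\sigma]\in u^\perp$. The correct reduction is therefore: the directions of $O(\Lambda_N)\cdot\mathbb{E}$ accumulate at every isotropic direction.

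Your transvections prove exactly this.
\begin{itemize}
\item Take a primitive isotropic $u\in\Lambda_N$ and $x\in u^\perp\cap\Lambda_N$. The map $t(u,x)(y)=y-B(u,y)x+B(x,y)u-\tfrac12 q(x)B(u,y)u$ lies in $O(\Lambda_N)$, because $\Lambda_N$ is even.
\item The vector $t(u,nx)(\mathbb{E})$ has leading term $-\tfrac{n^2}{2}q(x)B(u,\mathbb{E})\,u$. Choosing $q(x)\neq 0$ and $B(u,\mathbb{E})\neq 0$, its direction tends to $\pm u$.
\item Such rational $u$ are dense among the real isotropic directions, because the summand $U(2)$ makes the form $\mathbb{Q}$-isotropic.
\item Isometries preserve both norm and divisibility, so these vectors stay of type $(-4,2)$.
\end{itemize}

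To get a nearby period, project $\mathrm{Re}\,\sigma$ and $\mathrm{Im}\,\sigma$ onto $\delta^\perp$ Euclidean-orthogonally, then apply Gram--Schmidt. Do not use the $q$-orthogonal projection $x\mapsto x-\frac{B(x,\delta)}{q(\delta)}\delta$. It moves $x$ by $\frac14|\delta|^2\,|B(x,\delta/|\delta|)|$, which need not be small as $|\delta|\to\infty$.

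Corrected this way, your argument becomes an elementary proof. The paper does not give one: it simply quotes Anan'in--Verbitsky \cite{AnaninVerbitsky} (see also \cite{MarkmanMehrotra}). That is an ergodic-theoretic result giving density of the $O(\Lambda_N)$-translates of the hyperplane section $\mathbb{E}^\perp\cap D_{\Lambda_N}$.
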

\begin{proof}
The density is shown in \cite[Proposition 3.2 and Remark 3.12]{AnaninVerbitsky} (see also \cite{MarkmanMehrotra}).
\end{proof}

We denote $\mathcal{M}_{-4,2}:=\mathcal{P}^{-1}(D_{-4,2})=\mathcal{P}_+^{-1}(D_{-4,2})\cup\mathcal{P}_-^{-1}(D_{-4,2})\subset \mathcal{M}_{\Lambda_N}$. 

\begin{prop}\label{prop:connected components of (-4,2) locus}
The moduli space $\mathcal{M}_{-4,2}$ has two connected components,  $\mathcal{M}_{-4,2}^\pm:=\mathcal{M}_{-4,2}\cap \mathcal{M}_{\Lambda_N}^\pm$, which are exchanged by $-\id$. 

Moreover, given a very general $\omega\in D_{-4,2}$ such that if $(Y,\eta)\in\mathcal{P}^{-1}(\omega)$ then $\eta(\NS(Y))=\mathbb{Z}\delta\simeq \langle -4\rangle$, there is a $1:1$ correspondence between $\mathcal{P}^{-1}(\omega)$ and the set $\{\mathcal{K}_Y, R_\delta(\mathcal{K}_Y)\}\times \{\pm 1\}$, which is equivariant for the action of Hodge monodromies. 
\end{prop}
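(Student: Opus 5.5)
We split the statement into two parts: the connected components of $\mathcal{M}_{-4,2}$, and the description of a very general fiber over $D_{-4,2}$.

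\textbf{Connected components.} We first note that $-\id$ preserves $D_{-4,2}$ and exchanges $\mathcal{M}^+_{\Lambda_N}$ and $\mathcal{M}^-_{\Lambda_N}$ by Theorem \ref{prop:connected components of moduli}. So it suffices to show that $\mathcal{M}_{-4,2}^+$ is connected. By Lemma \ref{lemma: embeddings of <-4> and T in LambdaN}(1) (Eichler's criterion), all classes $\delta$ with $\delta^2=-4$ and $\divi\delta=2$ form one $O(\Lambda_N)$-orbit. We want them to form one $O^+(\Lambda_N)$-orbit. For this we use an element of $O(\Lambda_N)$ that fixes $\mathbb{E}$ and reverses orientation: for instance $-\id$ on a $U(2)$ summand of $\mathbb{T}=\mathbb{E}^\perp$, extended by the identity. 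It extends because it acts trivially on $A_{\mathbb{T}}$, using Lemma \ref{lemma: embeddings of <-4> and T in LambdaN}(5) and the gluing description of Remark \ref{rem: explicitl psi}. Hence $\mathrm{Mo}^2=O^+(\Lambda_N)$ acts transitively on these classes. Each $\delta^\perp\cap D_{\Lambda_N}$ is the period domain of a lattice of signature $(3,12)$, hence connected. Its preimage in $\mathcal{M}^+_{\Lambda_N}$ is then connected: $\mathcal{P}_+$ is a local isomorphism and generically injective, and the non-separated points can be joined inside the fiber. We do this as in Markman's treatment of Hilbert schemes, via the loci where $\NS$ is larger. Finally, monodromy transitivity identifies the components $\mathcal{P}_+^{-1}(\delta^\perp)$ for all $\delta$, because the monodromy action on the connected space $\mathcal{M}^+_{\Lambda_N}$ is realized by paths. \emph{This is the step I expect to be the main obstacle.} If the monodromy argument is too delicate, I would instead use the density of Lemma \ref{lemma:density of periods of quotients} together with deformations of Nikulin orbifolds $Y$ from pairs $(X,\iota)$ over the connected moduli space $\mathcal{M}_{K3^{[2]},I}$. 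These produce all of $\mathcal{M}^+_{-4,2}$ up to bimeromorphic modification.

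\textbf{Fiber over a very general period.} Fix $\omega$ with $\NS=\Z\delta$. By Proposition \ref{prop:equivariant-bijection}, on each component $\mathcal{P}_\pm^{-1}(\omega)$ is in equivariant bijection with the Kähler-type chambers of the positive cone $\mathcal{C}_Y$. Since $\NS(Y)=\Z\delta$ has negative square, $H^{1,1}\cap\mathcal{C}_Y$ is cut only by $\delta^\perp$. We then need two facts.
\begin{enumerate}
\item The wall $\delta^\perp$ is a genuine wall. The divisor $\Sigma$ on a Nikulin orbifold is a prime exceptional divisor with $\Sigma^2=-4$ and $\divi\Sigma=2$, so $\delta^\perp$ separates the cone into exactly two chambers.
\item The reflection $R_\delta(x)=x-2\frac{(x,\delta)}{(\delta,\delta)}\delta=x+\frac{(x,\delta)}{2}\delta$ is integral, because $\divi\delta=2$. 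It is a monodromy operator in $O^+(\Lambda_N)$ and a Hodge isometry, and it exchanges the two chambers.
\end{enumerate}
Thus $\mathcal{KT}(Y)=\{\mathcal{K}_Y,R_\delta(\mathcal{K}_Y)\}$. The extra factor $\{\pm1\}$ records the two components $\mathcal{M}^\pm$, identified by $-\id$. Equivariance follows from Proposition \ref{prop:equivariant-bijection}.
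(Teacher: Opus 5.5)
Your treatment of the fiber over a very general $\omega$ is essentially the paper's entire proof. In both, $\NS(Y)=\Z\eta^{-1}(\delta)$, $\delta$ is prime exceptional, the positive cone splits into $\mathcal{K}_Y$ and $R_\delta(\mathcal{K}_Y)$, and Proposition \ref{prop:equivariant-bijection} gives the equivariant bijection. The paper does not argue connectedness beyond splitting $\mathcal{M}_{-4,2}$ along $\mathcal{M}^\pm_{\Lambda_N}$.

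One thing to tighten in that part. You justify that $\delta^\perp$ is a genuine wall through the divisor $\Sigma$ of a Nikulin orbifold. But an arbitrary $(Y,\eta)$ over $\omega$ is not yet known to be bimeromorphic to a Nikulin orbifold; that is Theorem \ref{Thm1.1}, whose proof uses this proposition. The paper cites Menet--Riess and Brandhorst--Menet--Muller here. Within your argument you need invariance of wall divisors under parallel transport operators keeping them of type $(1,1)$, combined with your $O^+(\Lambda_N)$-transitivity on $(-4,2)$-classes. That transitivity argument is correct, as is the connectedness of $\delta^\perp\cap D_{\Lambda_N}$.

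The connectedness part, however, has a step that fails: $\mathcal{P}_+^{-1}(\delta^\perp)$ is \emph{not} connected, and your own fiber computation shows it. Generic injectivity of $\mathcal{P}_+$ does not help, since every fiber over $\delta^\perp$ has at least two points.

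Over a very general $\omega\in\delta^\perp$, the two points of $\mathcal{P}_+^{-1}(\omega)$ are $(Y,\eta)$ and $(Y,\eta\circ R)$, with $R$ the reflection in $\eta^{-1}(\delta)$. Since $R$ is a monodromy operator in $O^+$, both points lie in $\mathcal{M}^+_{\Lambda_N}$. The classes $\eta^{-1}(\delta)$ and $(\eta\circ R)^{-1}(\delta)=-\eta^{-1}(\delta)$ have opposite signs on $\mathcal{K}_Y$.

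On all of $\mathcal{P}^{-1}(\delta^\perp)$ the class stays algebraic, hence stays a wall divisor, so its orthogonal never meets the K\"ahler cone. Since K\"ahler classes extend to small deformations, the sign of $\delta$ on K\"ahler classes is locally constant on $\mathcal{P}^{-1}(\delta^\perp)$. This is the analogue of the $K3$ fact that, for a $(-2)$-class that stays algebraic, which of $\pm\delta$ is effective is locally constant.

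So $\mathcal{P}_+^{-1}(\delta^\perp)$ splits into two open and closed sheets exchanged by $R_\delta$. The two points over $\omega$ are inseparable only in $\mathcal{M}_{\Lambda_N}$, when approached from outside $\delta^\perp$. Loci with larger $\NS$ cannot join them, because the sign is constant there too.

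The next step is also a non sequitur. An element $\gamma\in O^+(\Lambda_N)$ acting by $(Z,\eta)\mapsto(Z,\gamma\circ\eta)$ is a homeomorphism carrying $\mathcal{P}^{-1}(\delta^\perp)$ onto $\mathcal{P}^{-1}(\gamma(\delta)^\perp)$. But a path from $(Z,\eta)$ to $(Z,\gamma\circ\eta)$ in $\mathcal{M}^+_{\Lambda_N}$ need not stay in $\mathcal{M}_{-4,2}$, so it connects nothing inside $\mathcal{M}^+_{-4,2}$.

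What is actually needed has three parts:
\begin{itemize}
\item Index the sheets by the classes $\delta$ themselves, taking the sheet where $\delta$ is positive on K\"ahler classes.
\item Show each sheet is connected: its part over the very general locus of $\delta^\perp\cap D_{\Lambda_N}$ maps homeomorphically onto that path-connected locus, and this part is dense in the sheet.
\item Chain sheets through periods in $\delta^\perp\cap\delta'^\perp$ where the fiber realizes the needed sign patterns. For instance, a class $\delta'\perp\delta$ links the sheet of $\delta$ to that of $-\delta$ through the sheet of $\delta'$.
\end{itemize}

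Your fallback does not repair this. It relies on Theorem \ref{Thm1.1}, whose proof invokes the present proposition. Working ``up to bimeromorphic modification'' also discards exactly the chamber information that separates the sheets.
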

\begin{proof}
For a very general period $\omega\in D_{-4,2}$, if $(Y,\eta)\in \mathcal{P}^{-1}(\omega)$, then $\eta(\NS(Y))=\mathbb{Z}\delta\simeq \langle -4\rangle$. On the other hand, by \cite{MenetRiess, BMM} $\delta$ is a prime exceptional divisor, hence the (real) positive cone of $Y$ splits into two chambers, $\mathcal{K}_Y$ and $R_\delta(\mathcal{K}_Y)$, where $\mathcal{K}_Y$ denotes the K\"ahler cone of $Y$ and $R_\delta\in O(\Lambda_N)$ is the reflection in $\delta$. By \cite[Thm 5.16]{MarkmanTorelli} and its generalization to irreducible symplectic orbifolds given in Proposition \ref{prop:equivariant-bijection}, there is a $1:1$ correspondence between $\mathcal{P}^{-1}(\omega)$ and the set $\{\mathcal{K}_Y, R_\delta(\mathcal{K}_Y)\}\times \{\pm 1\}$, which is equivariant for the action of Hodge monodromies. 
\end{proof}

\begin{cor}\label{cor:density}
The moduli space $\mathcal{M}_{-4,2}$ is dense inside $\mathcal{M}_{\Lambda_N}$.
\end{cor}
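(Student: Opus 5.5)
We show that every nonempty open subset $V\subset\mathcal{M}_{\Lambda_N}$ meets $\mathcal{M}_{-4,2}$. The ingredients are Lemma \ref{lemma:density of periods of quotients} (density of $D_{-4,2}$ in $D_{\Lambda_N}$) and the fact, recalled after Theorem \ref{prop:connected components of moduli}, that $\mathcal{P}_+$ and $\mathcal{P}_-$ are local isomorphisms onto $D_{\Lambda_N}$.

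Let $(Z,\eta)\in\mathcal{M}_{\Lambda_N}$ and let $V$ be an open neighbourhood of $(Z,\eta)$. By Theorem \ref{prop:connected components of moduli}, $(Z,\eta)$ lies in one of the two components, say $\mathcal{M}^+_{\Lambda_N}$ (the other case follows by applying $-\id$). Since $\mathcal{P}_+$ is a local isomorphism, we may shrink $V$ so that it lies in $\mathcal{M}^+_{\Lambda_N}$ and $\mathcal{P}_+|_V\colon V\to \mathcal{P}(V)$ is a homeomorphism onto an open subset $\mathcal{P}(V)\subset D_{\Lambda_N}$. Note that $\mathcal{P}(V)$ is nonempty and open in the Euclidean topology.

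By Lemma \ref{lemma:density of periods of quotients}, $D_{-4,2}$ is dense in $D_{\Lambda_N}$, so there is a period $\omega\in \mathcal{P}(V)\cap D_{-4,2}$. The point $(Z',\eta'):=(\mathcal{P}_+|_V)^{-1}(\omega)$ lies in $V$ and satisfies $\mathcal{P}(Z',\eta')\in D_{-4,2}$. By definition $\mathcal{M}_{-4,2}=\mathcal{P}^{-1}(D_{-4,2})$, hence $(Z',\eta')\in V\cap\mathcal{M}_{-4,2}$. Since $V$ was an arbitrary neighbourhood, $\mathcal{M}_{-4,2}$ is dense in $\mathcal{M}_{\Lambda_N}$, and in fact in each component $\mathcal{M}^\pm_{\Lambda_N}$ separately. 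This density in each component is what is needed for the second part of Theorem \ref{Thm1.1}.

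The main point to check is that the topology on $\mathcal{M}_{\Lambda_N}$, which is non-Hausdorff, still allows $\mathcal{P}_\pm$ to be a local homeomorphism at every point. This follows from local Torelli as in \cite{Menet}: every point has a Kuranishi-type neighbourhood mapping isomorphically onto an open subset of the period domain. Granting that statement, the argument is purely topological. Proposition \ref{prop:connected components of (-4,2) locus} is not needed for density itself; it only describes the structure of the fibres over points of $D_{-4,2}$.
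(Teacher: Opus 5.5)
Your proposal is correct and is essentially the argument the paper leaves implicit: the corollary is stated without proof right after Lemma~\ref{lemma:density of periods of quotients}, and it follows by pulling back the density of $D_{-4,2}$ through the period maps $\mathcal{P}_\pm$, which are local isomorphisms. You are also right that Proposition~\ref{prop:connected components of (-4,2) locus} is not needed here; in fact openness of $\mathcal{P}$ alone would suffice.
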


Let us fix some notation before showing Theorem \ref{Thm1.1}. Given a $K3^{[2]}$-type manfiold $X$ with a symplectic involution $\iota$, we denote $\pi \colon X\ra X/\iota$ the quotient map and $\rho:Y\ra X/\iota$ the terminalization which is the blow up of the singular surface and which introduces the exceptional divisor $\Sigma$.

\begin{proof}[Proof of Theorem \ref{Thm1.1}] Any marked Nikulin orbifold $(Y,\eta)$ satisfies $\mathcal{P}(Y,\eta)\in D_{-4,2}$ because of what we observed in Section \ref{subsec: preliminarie on K32-type and Nikulin orbifold}, and the very general one satisfies $\NS(Y)=\mathbb{Z}\Sigma\simeq\langle -4\rangle$.
We will show that, for the very general marked pair $(Z,\eta)\in \mathcal{M}_{-4,2}$, the orbifold $Z$ is bimeromorphic to a Nikulin orbifold $Y$. Then the statement follows by Proposition \ref{prop:connected components of (-4,2) locus} and Corollary \ref{cor:density}.

Under the assumption that $(Z,\eta)\in \mathcal{M}_{-4,2}$, the period $\mathcal{P}(Z,\eta)\in \delta^\perp\subset\mathcal{D}_{-4,2}$ for some $\delta\in\Lambda_N\otimes \mathbb{C}$. We observe that $\delta^\perp$ is isometric to the lattice $\mathbb{T}$.

Let $I$ be the isometry  of order two of $\Lambda_{K3^{[2]}}$ recalled in Section \ref{subsec: preliminarie on K32-type and Nikulin orbifold}. 
In \cite[Section 3.1]{CGKK} we considered the map of $\mathbb{Z}$-modules $Q:\Lambda_{K3^{[2]}}\rightarrow \Lambda_N$ defined  as
\[
(\underline{u},\underline{w},\underline{v},\underline{x},\underline{y},k)\mapsto(\underline{u},\underline{w},\underline{v},\underline{x}+\underline{y},k,k),
\]
such that the image of its restriction to the invariant sublattice $\Lambda_{K3^{[2]}}^I$ is of finite index inside $\mathbb{T}\simeq \delta^\perp$;
 in fact,$$Q_{|\Lambda_{K3^{[2]}}^I}(\underline{u},\underline{w},\underline{v},\underline{x},\underline{x},k)=(\underline{u},\underline{w},\underline{v},2\underline{x},k)\in \mathbb{T},$$ hence \[q_{\Lambda_N}(Q_{|\Lambda_{K3^{[2]}}^I}(\underline{u},\underline{w},\underline{v},\underline{x},\underline{x},k))=2q_{K3^{[2]}}((\underline{u},\underline{w},\underline{v},\underline{x},\underline{x},k)).\]
Observe that the map $Q_{|\Lambda_{{K3}^{[2]}}^I}$ is injective and that its $\C$-linear extension $(Q_\C)_{|\Lambda_{{K3}^{[2]}}^I}$ is surjective onto $\mathbb{T}\otimes \C$, so that $\mathbb{P}(\Lambda_{K3^{[2]}}^I\otimes \mathbb{C})\cap Q_{\mathbb{C}}^{-1}(\mathcal{P}(Z,\eta))$ is a unique point $p$. Even though $Q$ is not an isometry, $p$ still satisfies $q_{K3^{[2]}}(p)=0$ and $B_{K3^{[2]}}(p,\overline{p})>0$, hence by the Torelli theorem for manifolds of $K3^{[2]}$-type there exists a marked $(X,\phi)\in\mathcal{M}_{K3^{[2]},I}$  with $X$ of $K3^{[2]}$-type endowed with a symplectic involution $\sigma$ such that  $\phi\circ\sigma^*=I\circ \phi$, as defined in Section \ref{subsec: preliminarie on K32-type and Nikulin orbifold}. 

We want to construct a marking $\zeta:H^2(Y,\mathbb{Z})\rightarrow \Lambda_N$ such that $\zeta\circ\rho^*\circ\pi_*=Q\circ\phi$ and $\mathcal{P}(Y,\zeta)=\mathcal{P}(Z,\eta)$. Recall that $H^2(Y,\mathbb{Z})$ is an overlattice of $\rho^*(H^2(X/\sigma,\mathbb{Z}))\oplus \mathbb{Z}\Sigma$, obtained by adding the so-called gluing vector  $(\Delta_Y+\Sigma)/2$, where $\Delta_Y=\rho^*\pi_*\delta_X$ and where $\delta_X$ is the primitive class which corresponds to $\Delta$ in $H^2(X,\Z)$.
 We want to define $\zeta$ as the linear extension to $H^2(Y,\mathbb{Z})$ of \begin{equation}\label{eq: big map}\left(\frac{1}{2}Q_{|\Lambda_{K3^{[2]}}^I}\circ \phi\circ \pi^*\circ (\rho^*)^{-1}_{|\Sigma^\perp}\right)\oplus \alpha,\end{equation} where $\alpha(\Sigma):=\delta_1-\delta_2$  ($\delta_i$ are the two generators of the $\langle -2\rangle$ summands in $\Lambda_N$). For this we observe that $\alpha$ is obviously an isometry and we need to show that $\frac{1}{2}Q_{|\Lambda_{K3^{[2]}}^I}\circ \phi\circ \pi^*\circ (\rho^*)^{-1}_{|\Sigma^\perp}$ is an isometry which glues with $\alpha$, i.e.~ the $\Q$-linear extension of the map \eqref{eq: big map} sends the glue vector $(\Delta_Y+\Sigma)/2$ to the glue vector of the overlattice $\Lambda_N$ of $\langle -4\rangle\oplus \mathbb{T}$.

Since $(\rho^*)^{-1}_{|\Sigma^\perp}$ is an isometry, we focus on $\frac{1}{2}Q_{|\Lambda_{K3^{[2]}}^I}\circ \phi\circ \pi^*$. The map $\pi^*$ is injective since $X$ is simply connected (see \cite{Menet}) and $Q_{|\Lambda_{K3^{[2]}}^I}$ is injective, 
 hence $\frac{1}{2}Q_{|\Lambda_{K3^{[2]}}^I}\circ \phi\circ \pi^*$ is injective and its image is the sublattice $\mathbb{T}$.  

Moreover, we have $$\pi^*a\pi^*b\pi^*c\pi^*d=2abcd,$$ hence $$q_X(\pi^*a)^2=\frac{1}{3}(\pi^*a)^4=\frac{2}{3}a^4=4q_Y(\rho^*a)^2,$$ by Fujiki relations for $X$ and $Y$.
It follows that $\frac{1}{2}Q_{|\Lambda_{K3^{[2]}}^I}\circ \phi\circ \pi^*\circ (\rho_*)_{|\Sigma^\perp}$ is an isometry, since \[q_{\Lambda_N}(\frac{1}{2}Q(\phi(\pi^*(\rho_*(y)))))=\frac{2}{4}q_{\Lambda_{K3^{[2]}}}(\phi(\pi^*(\rho_*(y))))=\frac{1}{2}q_{X}(\pi^*(\rho_*(y)))=q_Y(\rho^*\rho_*(y))=q_Y(y).\]  Finally, by definition of $Q$, $Q(\delta_X)=\delta_1+\delta_2$, hence the glue vector of the overlattice $H^2(Y,\mathbb{Z})$ is sent to the glue vector of the overlattice $\Lambda_N$ of $\mathbb{T}\oplus\langle -4\rangle$. Thus, $\zeta$ is a marking and $$\mathcal{P}(Y,\zeta)=[\zeta_\mathbb{C}(H^{2,0}(Y))]=[(Q\circ\phi\circ\frac{1}{2}\pi^*\circ\rho_*)_{\mathbb{C}}(H^{2,0}(Y))]=[(Q\circ\phi)_{\mathbb{C}}(H^{2,0}(X))]=\mathcal{P}(Z,\eta).$$
\end{proof}

Let $\psi_{\langle -4\rangle}$ be the primitive embedding fixed in Remark \ref{rem: explicitl psi}, which is uniquely determined up to isometries. One can also consider the coarse moduli space $\mathcal{M}_{\langle -4\rangle,\psi}$ of (isomorphism classes of) marked $\psi_{\langle -4\rangle}$-polarized pairs $(Z,\eta)$, where $Z$ is an orbifold of Nikulin type and $\eta:H^2(Z,\Z)\rightarrow \Lambda_N$ is a marking such that there exists a primitive embedding $j:\langle -4\rangle\hookrightarrow \NS(Z)$ satisfying $\eta\circ j=\psi_{\langle -4\rangle}$. The period map is a holomorphic surjection $$\mathcal{P}_{\langle -4\rangle,\psi}:\mathcal{M}_{\langle -4\rangle,\psi}\rightarrow D_{\mathbb{T}}:=D_{\Lambda_{N}}\cap\mathbb{P}(\mathbb{T}\otimes\mathbb{C}).$$

We observe that for any $\delta\in\Lambda_N$ such that $q_{\Lambda_N}(\delta)=-4$ and $\divi\delta=2$, there is an isometry $\gamma\in O(\Lambda_N)$ such that $\gamma(\delta)=\mathbb{E}$ and $\gamma(\delta^\perp)=\psi_{\mathbb{T}}(\mathbb{T})$, by Lemma \ref{lemma: embeddings of <-4> and T in LambdaN} (1). As a consequence, the hyperplane $\delta^\perp$ inside $\mathcal{D}_{-4,2}$ is isomorphic to $D_{\mathbb{T}}$ and   $(Z,\gamma\circ\eta)\in \mathcal{M}_{\langle -4\rangle,\psi}$ for any pair $(Z,\eta)\in\mathcal{P}^{-1}(\delta^\perp)\subset\mathcal{M}_{-4,2}$. Indeed, one can take $j:=\eta^{-1}\circ\gamma^{-1}\circ \psi_{\langle -4\rangle}$ to obtain a primitive embedding of $\langle -4\rangle$ inside $\NS(Z)$.
\begin{corollary} 
    There is a holomorphic map $\Phi$ with dense image $\mathcal{M}_{K3^{[2]},I}\rightarrow \mathcal{M}_{\langle -4\rangle,\psi}$ given by 
    \(    (X,\eta)\mapsto (Y,\zeta)    \)
    with $(Y,\zeta)$ constructed as in the proof of Theorem \ref{Thm1.1} such that the following diagram commutes
\[\xymatrix{
    \mathcal{M}_{K3^{[2]},I}\ar[r]^\Phi\ar[d]_{\mathcal{P}_{K3^{[2]},I}}  &
\mathcal{M}_{\langle -4\rangle,\psi}\ar[d]^{\mathcal{P}_{\langle -4\rangle,\psi}}\\
    D_{\Lambda_{K3^{[2]}}^I}\ar[r]^{\mathbb{P}(Q_\C)} & D_{\mathbb{T}}.    
    }
    \]    
\end{corollary}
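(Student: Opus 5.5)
The map $\Phi$ is already constructed pointwise in the proof of Theorem \ref{Thm1.1}. Given $(X,\phi)\in\mathcal{M}_{K3^{[2]},I}$ with symplectic involution $\sigma$, we form the Nikulin orbifold $Y$ (the blow-up of $X/\sigma$ along the singular surface). We then take the marking $\zeta$ given by the linear extension of \eqref{eq: big map}. The plan is to check, in order: (i) that $\zeta$ is well defined and $(Y,\zeta)$ lies in $\mathcal{M}_{\langle -4\rangle,\psi}$; (ii) that the diagram commutes; (iii) that $\Phi$ is holomorphic; (iv) that its image is dense.

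\emph{Step (i).} The proof of Theorem \ref{Thm1.1} shows that $\zeta$ is an isometry with $\zeta(\Sigma)=\delta_1-\delta_2$. After composing with a fixed isometry of $\Lambda_N$ exchanging $\delta_2\mapsto-\delta_2$ (or simply adjusting $\alpha$ to send $\Sigma$ to $\lambda_{15}+\lambda_{16}$ and $Q$ accordingly), we get $\zeta(\Sigma)=\psi_{\langle-4\rangle}(h)$. Then $j:=\zeta^{-1}\circ\psi_{\langle -4\rangle}$ is a primitive embedding of $\langle -4\rangle$ into $\NS(Y)$, since $\Sigma$ is a divisor. So $(Y,\zeta)$ is $\psi$-polarized. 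The map is well defined on isomorphism classes: an isomorphism of marked pairs $(X,\phi)\cong(X',\phi')$ commutes with the involutions, because the involution is determined by its action $I$ via the marking. It therefore descends to an isomorphism $Y\cong Y'$ compatible with $\zeta,\zeta'$.

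\emph{Step (ii).} Commutativity is the identity $\mathcal{P}(Y,\zeta)=\mathbb{P}(Q_\C)(\mathcal{P}(X,\phi))$ proven at the end of that proof. The only input is that $\pi^*\rho_*$ maps $H^{2,0}(Y)$ onto $H^{2,0}(X)$.

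\emph{Step (iii), holomorphicity.} This is the step needing care. We work locally: take a local universal deformation $\mathcal{X}\to B$ of $(X,\sigma)$, i.e. the fixed locus in $\mathrm{Def}(X)$ of the induced involution, which is smooth by Montgomery/Mongardi. The involution extends to a fibrewise involution of $\mathcal{X}$ over $B$. Its fixed surface forms a smooth family. The relative quotient $\mathcal{X}/\sigma$ blown up along the relative fixed surface gives a flat family $\mathcal{Y}\to B$ of Nikulin orbifolds, with markings obtained by parallel transport of $\zeta$. By the universal property of the local Kuranishi family of $Y$ (Menet's local Torelli), this yields a holomorphic classifying map $B\to\mathcal{M}_{\langle-4\rangle,\psi}$ agreeing with $\Phi$. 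Alternatively, since the period maps are local isomorphisms and $\mathbb{P}(Q_\C)$ is holomorphic, we can argue as follows. Locally $\Phi=\mathcal{P}_{\langle-4\rangle,\psi}^{-1}\circ\mathbb{P}(Q_\C)\circ\mathcal{P}_{K3^{[2]},I}$ on the appropriate local branch, and the family above shows that the correct branch is chosen continuously.

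\emph{Step (iv), density.} $\mathbb{P}(Q_\C)$ is an isomorphism $D_{\Lambda^I_{K3^{[2]}}}\to D_\mathbb{T}$, since $Q_\C$ is a linear isomorphism scaling the form by $2$. Also $\mathcal{P}_{K3^{[2]},I}$ is surjective. Hence every period in $D_\mathbb{T}$ is attained by $\mathcal{P}_{\langle-4\rangle,\psi}\circ\Phi$. For a very general period, Proposition \ref{prop:connected components of (-4,2) locus} shows the fibre consists of the chambers $\mathcal{K}_Y,R_\delta(\mathcal{K}_Y)$ (times the sign). Under the $\psi$-polarization, the $\pm$ ambiguity is fixed by the polarization component and $R_\delta$ is realized by changing the marking sign on $\delta$. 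Composing the marking with the Hodge monodromy $R_\delta$, or using both components of $\mathcal{M}_{K3^{[2]},I}$, shows that all points of the very general fibres lie in the image. Since very general periods are dense and the period map is a local homeomorphism, the image is dense. I expect the main obstacle to be this fibre bookkeeping, namely matching the non-Hausdorff points and components of $\mathcal{M}_{\langle-4\rangle,\psi}$ with those hit by $\Phi$, together with the relative construction in (iii).
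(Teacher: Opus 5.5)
Your Steps (i) and (ii) match what the paper does. The paper gives no separate proof: the corollary is read off from the construction and the final period identity in the proof of Theorem~\ref{Thm1.1}. You are also right that $\zeta$ must be composed with the reflection in $\delta_2$ (and $Q$ changed accordingly), so that $\Sigma$ goes to $\psi_{\langle -4\rangle}(h)=\lambda_{15}+\lambda_{16}$ rather than to $\delta_1-\delta_2$. The genuine gap is Step (iv).

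Put $\mathbb{E}:=\psi_{\langle -4\rangle}(h)$, and let $R_{\mathbb{E}}$ be the reflection in it. Every point $\Phi(X',\phi')=(Y',\zeta')$ satisfies $(\zeta')^{-1}(\mathbb{E})=\Sigma'$, the exceptional divisor, which is effective. An isomorphism of marked pairs preserves effectivity, so every $(Z,\eta)$ in the image has $\eta^{-1}(\mathbb{E})$ effective. The $\psi$-polarization, however, only asks that $\eta^{-1}(\mathbb{E})$ be algebraic. Hence all four points $(Y,\zeta)$, $(Y,R_{\mathbb{E}}\circ\zeta)$, $(Y,-\zeta)$, $(Y,-R_{\mathbb{E}}\circ\zeta)$ of a very general fibre lie in $\mathcal{M}_{\langle -4\rangle,\psi}$. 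The second and third pull $\mathbb{E}$ back to $-\Sigma$, so they are never in the image.

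Neither of your two remedies helps. ``Composing the marking with $R_\delta$'' is not an operation $\Phi$ performs. The other component of $\mathcal{M}_{K3^{[2]},I}$ only gives $\Phi(X,-\phi)=(Y,-R_{\mathbb{E}}\circ\zeta)$: the marking changes sign on $\Sigma^\perp$ but not on $\Sigma$, since $\alpha$ is fixed, so it again pulls $\mathbb{E}$ back to $\Sigma$. Thus the image meets each very general fibre in exactly two of its four points, and your claim that whole fibres are hit is false.

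Passing to closures does not repair this. By your Step (iii) and the diagram, $\Phi$ is a local isomorphism at $(X,\phi)$. So the chart of $\mathcal{M}_{\langle -4\rangle,\psi}$ around $(Y,\zeta)$ consists of Nikulin orbifolds $(Y_t,\zeta_t)$ whose exceptional divisor is $\zeta_t^{-1}(\mathbb{E})$. The chart around $(Y,R_{\mathbb{E}}\circ\zeta)$ consists of the pairs $(Y_t,R_{\mathbb{E}}\circ\zeta_t)$, which pull $\mathbb{E}$ back to the anti-effective class $-\zeta_t^{-1}(\mathbb{E})$. It is therefore an open set disjoint from the image. With $\mathcal{M}_{\langle -4\rangle,\psi}$ defined as in the paper (no effectivity condition on $j(h)$), density of the image in the whole space cannot be reached by any fibre bookkeeping.

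What your argument, like the proof of Theorem~\ref{Thm1.1}, does prove is the following:
\begin{itemize}
\item $\mathcal{P}_{\langle -4\rangle,\psi}\circ\Phi$ is onto $D_{\mathbb{T}}$;
\item over very general periods, the image is exactly the set of pairs with $\eta^{-1}(\mathbb{E})$ effective;
\item consequently, the image together with its translate under $(Z,\eta)\mapsto(Z,R_{\mathbb{E}}\circ\eta)$ is dense.
\end{itemize}
The ``dense image'' claim has to be read in this sense, or $\mathcal{M}_{\langle -4\rangle,\psi}$ has to be cut down to the locus where $j(h)$ is effective.
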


\section{The generalized Nikulin lattice and the proof of Theorems \ref{Thm1.2} and \ref{thm: projective $K3$ G polarized are generalized Nikulin}}\label{section: proofs of 2nd theorem and definition of Generalized Nikulin lattice}
In this section we prove Theorem \ref{Thm1.2} and discuss the properties of the generalized Nikulin lattice, introduced in Definition \ref{defi: generalizad Nikulin orbifold}. We use the same notation as in Section \ref{subsec: preliminarie on K32-type and Nikulin orbifold}.

\subsection{The generalized Nikulin lattice}
In Definition \ref{defi: generalizad Nikulin orbifold} we introduced the generalized Nikulin lattice $G$, defined as the orthogonal to the class $\sum_iE_i$ in the Nikulin lattice $N$. We are now interested in its lattice-theoretic properties, which will be useful in the following. 

The lattice $G$ is generated by the vectors $$w_i=E_i-E_{i+1},\ i=1,\ldots, 7\mbox{ and }\hat{w}:=(w_1+w_3+w_5+w_7)/2,$$ where the latter class corresponds to $\sum_i{(-1)}^{i+1}E_i\in N$  and is orthogonal to $\sum_{i=1}^8E_i$. With this notation a basis of the discriminant group of $G$ is given by 
\begin{equation}\label{disc G}
w_i/2,\ i=1,2,3,4,5,6,7,\ \ \frac{w_1-w_2-w_5+w_6+\hat{w}}{4}.
\end{equation}

\begin{lem}\label{lemma: properties of G}
The generalized Nikulin lattice $G$ has the following properties: 
\begin{itemize}
\item it is isometric to $E_7(-2)$ and its discriminant quadratic form is $u(2)^{\oplus 3}\oplus \left(\frac{1}{4}\right)$;
\item the natural map $O(G)\ra O(A_G)$ is surjective; 
\item  it admits a unique primitive embedding in $\Lambda_{K3}$ (up to isometries); \item its orthogonal complement in $\Lambda_{K3}$ is isometric to $\mathbb{T}$;
\item $\mathbb{T}$ admits a unique primitive embedding in $\Lambda_{K3}$.
\end{itemize}
\end{lem}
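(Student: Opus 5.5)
The plan is to identify $G$ concretely with $E_7(-2)$ first, and then deduce everything else from Nikulin's results on discriminant forms, as in the proof of Lemma \ref{lemma: embeddings of <-4> and T in LambdaN}.

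\emph{Identification with $E_7(-2)$.} Consider the lattice $D_8(-1)=\{\sum a_iE_i : \sum a_i\equiv 0 \bmod 2\}$ with $E_i^2=-2$. Rescaled by $1/2$, the lattice $\langle E_i\rangle$ becomes $\mathbb{Z}^8(-1)$, and $N(1/2)$ becomes the overlattice of $\mathbb{Z}^8(-1)$ obtained by adding $\frac12\sum e_i$. Taking the orthogonal complement of $\sum e_i$ inside this overlattice gives
\[
\{x\in \mathbb{Z}^8\cup(\mathbb{Z}^8+\tfrac12\textstyle\sum e_i) : \sum x_i=0\}(-1).
\]
This is precisely the standard model of $E_7(-1)$ as the orthogonal complement of the vector $\frac12\sum e_i$ in $E_8(-1)$, written in the coordinates where $E_8=D_8^+$. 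Hence $G\cong E_7(-2)$.

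Alternatively, one checks directly that $w_1,\dots,w_7,\hat w$ span a lattice of rank $7$ and discriminant $2^6\cdot 2=2^7\cdot 2/2$ (compare $\det E_7=2$, so $\det E_7(-2)=2^7\cdot 2=256$). One then finds roots (in the rescaled sense) forming an $E_7$ Dynkin diagram; for instance $w_2,\dots,w_7$ span $A_6(-2)$, and $\hat w$-type vectors of norm $-4$ complete the diagram.

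\emph{Discriminant form.} This follows formally: $A_{E_7(-2)}$ has order $256$ and is $\bigl(E_7^\vee(-2)\bigr)/E_7(-2)$. Since $E_7(-2)^\vee=\frac12E_7^\vee(-2)$, we get $A=\frac12E_7^\vee/E_7$, which is an extension of $E_7/2E_7\cong(\mathbb{Z}/2)^7$ by $\mathbb{Z}/2$. The element $\frac12$ of the generator of $A_{E_7}$ has order $4$, so $A\cong(\mathbb{Z}/2)^6\oplus\mathbb{Z}/4$. I would compute the form on the explicit basis \eqref{disc G}: the $w_i/2$ give $u(2)^{\oplus 3}$ after a change of basis, and the order-$4$ element has square $\frac14\bmod 2\mathbb{Z}$ on the negative-definite side. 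The sign bookkeeping (whether one gets $\frac14$ or $-\frac14$, and $u(2)$ versus $v(2)$) is the main computational obstacle, and I would settle it by evaluating $q$ on the vector $(w_1-w_2-w_5+w_6+\hat w)/4$ directly.

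\emph{Surjectivity and the remaining items.} Surjectivity of $O(G)\to O(A_G)$ follows from Nikulin's \cite[Theorem 1.14.2]{N} only when $\mathrm{rk}\ge \ell(A)+2$, which fails here. Instead I would use that $O(E_7(-2))=W(E_7)=\mathbb{Z}/2\times \mathrm{Sp}_6(\mathbb{F}_2)$. The group $O(A_G)$ maps to $O(q|_{(\mathbb{Z}/2)^6}$-part$)$, and one checks that it equals $\mathrm{Sp}_6(\mathbb{F}_2)\times\{\pm1\}$, so $W(E_7)$, acting on $E_7/2E_7$ through $\mathrm{Sp}_6(\mathbb{F}_2)$, surjects.

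For the embeddings into $\Lambda_{K3}$, the orthogonal complement of a primitive $G$ has signature $(3,12)$ and discriminant form $-q_G=u(2)^{\oplus3}\oplus(-\frac14)$. This agrees with $q_{\mathbb{T}}$ as computed in the proof of Lemma \ref{lemma: embeddings of <-4> and T in LambdaN}, since $u(2)=-u(2)$. Because $\mathbb{T}$ is indefinite with $\mathrm{rk}\,\mathbb{T}=15\ge \ell+2=9$, it is unique in its genus, and $O(\mathbb{T})\to O(A_{\mathbb{T}})$ is surjective by \cite[Theorem 1.14.2]{N}. By \cite[Proposition 1.15.1]{N}, primitive embeddings of $G$ into the unimodular $\Lambda_{K3}$ correspond to lattices $K$ with $q_K=-q_G$ together with an anti-isometry $A_G\cong A_K$, modulo $O(G)\times O(K)$. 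Existence of $K$ and uniqueness of $K\cong\mathbb{T}$ are established above, and surjectivity of either restriction map makes the gluing unique; this gives the uniqueness of the embedding of $G$ and identifies $G^\perp\cong\mathbb{T}$. The symmetric argument, using the uniqueness of $E_7(-2)$ in its genus (definite, but the genus contains a single class by the explicit description) together with surjectivity for $\mathbb{T}$, gives the uniqueness of the primitive embedding of $\mathbb{T}$.
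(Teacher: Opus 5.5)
Your proposal is correct, and it reaches the five items by a somewhat different route than the paper.

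\textbf{Comparison with the paper.} For the identification, the paper observes that $\langle w_1,\dots,w_7\rangle\cong A_7(-2)$ and that adjoining $\hat w$ gives $E_7(-2)$. Your rescaling to $\mathbb{Z}^8(-1)+\mathbb{Z}\tfrac12\sum e_i$, and your recognition of $E_7$ as the orthogonal of the root $\tfrac12\sum e_i$ in $D_8^+=E_8$, is the same fact in cleaner coordinates.

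For surjectivity of $O(G)\to O(A_G)$, the paper computes with OSCAR that both groups have order $2903040$. You argue structurally through $O(E_7(-2))=W(E_7)\cong\mathbb{Z}/2\times\mathrm{Sp}_6(\mathbb{F}_2)$, which explains that number without a computer.

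For the embeddings, the paper invokes Nikulin's Theorem 1.14.4 for both $G$ and $\mathbb{T}$. For $G$ this is fine. For $\mathbb{T}\subset\Lambda_{K3}$ the hypotheses fail, since $t_+=l_+=3$ and $22-15<2+\ell(A_{\mathbb{T}})=9$. Your argument via Nikulin's Proposition 1.15.1 (the complement $E_7(-2)$ is unique in its genus, and $O(\mathbb{T})\to O(A_{\mathbb{T}})$ or $O(G)\to O(A_G)$ is surjective) is what actually proves the last item.

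\textbf{Steps to tighten.}

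First, the map you use for surjectivity is mis-stated. There is no canonical $(\mathbb{Z}/2)^6$ summand. Moreover $q$ does not descend to the canonical subquotient $A_G[2]/2A_G\cong E_7/2E_7^\vee$, because $q(x+2v)=q(x)+1$. Also, the orthogonal group of $u(2)^{\oplus 3}$ is $O_6^+(\mathbb{F}_2)\cong S_8$, not $\mathrm{Sp}_6(\mathbb{F}_2)$. The correct target is the symplectic group of $A_G[2]/2A_G$ with the form $2b$. What must be checked is that the kernel of $O(A_G)\to\mathrm{Sp}(A_G[2]/2A_G)$ is $\{\pm1\}$:
\begin{itemize}
\item Write $A_G=W\oplus\langle v'\rangle$ with $W\cong u(2)^{\oplus3}$.
\item A kernel element satisfies $\phi(w)=w+\lambda(w)\,2v'$ and $\phi(v')=av'+w_0$.
\item Preserving $q$ on $W$ forces $\lambda=0$, and then orthogonality forces $w_0=0$.
\end{itemize}
Since $-\mathrm{id}\in W(E_7)$, surjectivity follows.

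Second, the claim that the genus of $E_7(-2)$ has a single class needs an argument. A lattice $K$ in that genus has $A_K\cong(\mathbb{Z}/2)^6\oplus\mathbb{Z}/4$, of length $7=\mathrm{rk}\,K$, so $K=K'(2)$. Since $q_K$ is integral on $A_K[2]$, $K'$ is even. Thus $K'$ is even, negative definite, of rank $7$ and determinant $2$, i.e.\ $K'\cong E_7(-1)$.

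Third, the sign issue you flag disappears formally. Since $q_G$ is integral on $A_G[2]$, we have $q_G\cong u(2)^{\oplus a}\oplus v(2)^{\oplus b}\oplus(\alpha/4)$ with $a+b=3$. Milgram's formula (signature $\equiv -7 \pmod 8$) forces $\alpha\equiv1$ or $5 \pmod 8$ according to the parity of $b$. Finally, $v(2)\oplus(\tfrac54)\cong u(2)\oplus(\tfrac14)$. Evaluating $q$ on $(w_1-w_2-w_5+w_6+\hat w)/4$ alone would not settle this, because that element pairs nontrivially with $w_3/2$, $w_5/2$ and $w_6/2$.

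In your aside, note that the determinant of $E_7(-2)$ is $2^8$, not $2^6\cdot 2$.
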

\begin{proof}
We observe that $w_i^2=-4$ and $w_{i}w_{i+1}=2$, i.e.~ the lattice $\langle w_i\rangle$ is $A_7(-2)$ and its discriminant group is $(\Z/2\Z)^6\times \Z/16\Z$. When one adds the class $\hat{w}:=(w_1+w_3+w_5+w_7)/2$ one obtains a lattice which is isometric to $E_7(-2)$, whose discriminant group is $(\Z/2\Z)^6\times \Z/4\Z$ and whose discriminant form is $u(2)^{\oplus 3}\oplus (\frac{1}{4})$. This is the generalized Nikulin lattice $G$ defined before. By a direct computation with \cite{oscar} one shows that both $O(A_{E_7(-2)})$ and the image of map $O(G)\ra O(A_G)$ are groups of order 2903040, therefore the map is surjective.

By \cite[Theorem 1.14.4]{N}, $G\simeq E_7(-2)$ admits a unique primitive embedding in $\Lambda_{K3}$, up to isometries. This determines the orthogonal complement in $\Lambda_{K3}$, which is the unique lattice with signature $(3,12)$ and with discriminant form  $u(2)^{\oplus 3}\oplus (-\frac{1}{4})$, i.e.~ the opposite of the discriminant form of $G$. In particular, the orthogonal complement to $G$ in $\Lambda_{K3}$ is isometric to $\mathbb{T}$, which admits a unique embedding in $\Lambda_{K3}$ by \cite[Theorem 1.14.4]{N}.
\end{proof}
Observe that Lemma \ref{lemma: properties of G} is analogous to Lemma \ref{lemma: embeddings of <-4> and T in LambdaN} where it is also proved that $O(\mathbb{T})\ra O(A_{\mathbb{T}})$ is surjective.

\begin{rem}{\rm By construction the Nikulin lattice $N$ is an overlattice of index 4 of $G\oplus\langle -4\rangle\simeq E_7(-2)\oplus \langle -4\rangle$. In particular, to reconstruct $N$ as an overlattice of $G\oplus\langle -4\rangle$ one adds  $3w_1  +3w_2+ 2w_3  +2w_4+  w_5 + w_6  +\hat{w}  +z )/4$, where $z$ is the generator of the direct summand $\langle -4\rangle$. We observe that the added class corresponds to $E_1$ if $z$ corresponds to $\sum_iE_i$. 

Also the lattice $E_8(-2)$ is an overlattice of $G\oplus\langle -4\rangle\simeq E_7(-2)\oplus \langle -4\rangle$. The index of the inclusion is now 2, and the gluing vector between $E_7(-2)$ and $\langle-4\rangle$ is $(\varepsilon_3+\varepsilon_5+\varepsilon_7+z)/2$, where $z$ is the generator of the direct summand $\langle -4\rangle$ and $\varepsilon_i$, $i=1,\ldots, 7$ is a basis of $E_7(-2)$ such that $\varepsilon_i\varepsilon_j=2$ if $i=1,\ldots 6$, $|i-j|=1$, and $\varepsilon_3\varepsilon_7=1$.
With the same notation one can even observe that $N$ is isometric to the overlattice of index 4 of $E_7(-2)\oplus\langle -4\rangle$ obtained by adding the vector $(2\varepsilon_1+ \varepsilon_3+\varepsilon_5+\varepsilon_7+z)/4$.}  
\end{rem}

\begin{rem}\label{rem: embedding E7(-2) in LambdaK3}
{\rm By Lemma \ref{lemma: properties of G} there exists a unique, up to isometries, embedding of $E_7(-2)$ in $\Lambda_{K3}$. A very natural way to describe such an embedding is the following: let $f_i^{(j)}$, $i=1,\ldots,8$, $j=1,2$ a basis of the $j$-th copy of $E_8(-1)$ in $\Lambda_{K3}$, such that $f_i^{(j)}f_{i+1}^{(j)}=1$ $i=1,\ldots ,7$ and $f_3^{(j)}f_{8}^{(j)}=1$. Denoted, as before, $\varepsilon_i$ the standard basis of $E_7(-2)$, we fix the embedding $\varphi_{E_7(-2)}(\varepsilon_i)=f_i^{(1)}-f_i^{(2)}$, $i=1,\ldots, 6$ and   $\varphi_{E_7(-2)}(\varepsilon_7)=f_8^{(1)}-f_8^{(2)}$. This determines uniquely the embedding $\varphi_{\mathbb{T}}$ of $\mathbb{T}$ in $\Lambda_{K3}$.

Alternative embeddings (more related with our construction, but less explicit in the standard basis of $\Lambda_{K3}$), are the following: $\Lambda_{K3}$ is an obvious overlattice of $U^{\oplus 3}\oplus N\oplus N$ (where one glues the discriminant of one copy of $N$ with the other) and also of $U(2)^{\oplus 3}\oplus E_8(-1)\oplus N$ (see e.g. \cite{vGS}). The orthogonal complement to $\sum_i E_i$ in $N\subset U(2)^{\oplus 3}\oplus E_8(-1)\oplus N$ 
is a primitive sublattice of $\Lambda_{K3}$, isometric to $G$ and its orthogonal complement is $\mathbb{T}$, whose last the summand is $\sum_i(-1)^{i} E_i$. The orthogonal complement to $\sum_i E_i$ in $N\subset U^{\oplus 3}\oplus N\oplus N$ is isometric to $G$ and its orthogonal complement is $U^{\oplus 3}\oplus N\oplus \langle -4\rangle$, which is isometric to $\mathbb{T}$.

We underline that all these embeddings are equivalent up to isometries of $\Lambda_{K3}$.}
\end{rem}

\begin{rem}{\rm Since $G\simeq E_7(-2)$ admits a unique primitive embedding in $\Lambda_{K3}$, its orthogonal is uniquely determined by the discriminant form and $O(G)\rightarrow O(A_G)$, the $K3$ surfaces $S$ with $\NS(S)\simeq E_7(-2)$ do not admit Fourier--Mukai partners.}\end{rem}

\subsection{The restriction map $\nu^*$ and the very general generalized Nikulin surfaces}
Given a  manifold of $K3^{[2]}$ type $X$ with a symplectic involution $\iota$, let $F$ be the surface fixed by $\iota$ on $X$. Then the inclusion map $\nu:F\hookrightarrow X$ induces the restriction map $\nu^*:H^2(X,\Q)\ra H^2(F,\Q)$. We recall some of its properties
\begin{lemma}\label{lemma: properties of nu*}Let $\nu^*$ be as above, $\mu_{[F]}:H^2(X,\Q)\ra H^6(X,\Q)$ be the cup product with $[F]$ and $\langle - ,- \rangle_{\bullet}$ are the cup products in $H^*(\bullet,\Q)$. \begin{enumerate}
\item
$\nu^*\colon H^2(X,\mathbb Q)\to H^2(F,\mathbb Q)$ is a morphism of Hodge structures;
\item $\langle \nu^*x,\nu^*y\rangle_F= \langle \mu_{[F]}(x),y\rangle_X$  for all $x,y\in H^2(F,\mathbb{Z})$;
\item $\ker(\mu_{[F]}) = \ker \nu^*$;
\item $\nu^*((H^2(X,\Q)^{\iota})^{\perp})=0$;
\item the restrictions $\nu^*_{{|H^2(X,\Q)}^{\iota^*}}$ and $\nu^*_{{|H^2(X,\Z)}^{\iota^*}}$ are injective.
\end{enumerate}
\end{lemma}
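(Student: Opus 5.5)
The plan is to derive (1)--(3) from standard functoriality of cohomology, and (4)--(5) from the $\iota$-equivariance of $\nu$ together with a specialization to the Hilbert-scheme case $(S^{[2]},\iota_S^{[2]})$. Throughout, $x,y$ range over $H^2(X,\Q)$; this is the only reading under which the statement is meaningful.

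\textbf{Items (1)--(3).} Since $\nu$ is a holomorphic map of compact Kähler manifolds, pull-back preserves the Hodge decomposition, which gives (1). For (2), use the projection formula $\nu_*(\nu^*x\cdot\nu^*y)=x\cdot y\cdot\nu_*1=x\cdot y\cdot[F]$ and integrate:
\[
\langle\nu^*x,\nu^*y\rangle_F=\int_X x\,y\,[F]=\langle\mu_{[F]}(x),y\rangle_X .
\]
For (3), suppose $\nu^*x=0$. Then $\mu_{[F]}(x)=\nu_*\nu^*x=0$. Conversely, suppose $\mu_{[F]}(x)=0$. By (2), $\nu^*x$ is orthogonal to the whole image $\nu^*H^2(X,\Q)$. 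I would then need to know that the intersection form restricted to $\nu^*H^2(X,\Q)$ is nondegenerate, and this is the delicate point of the proof.

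\textbf{Item (4).} Since $\iota\circ\nu=\nu$, we get $\nu^*\iota^*=\nu^*$. Take $x$ in the anti-invariant part, which is the orthogonal complement of the invariant lattice because $\iota^*$ is an isometry. Then $\nu^*x=\nu^*\iota^*x=-\nu^*x$, so $\nu^*x=0$.

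\textbf{Item (5).} Injectivity of $\nu^*$ on $H^2(X,\Q)^{\iota}$ is a closed-to-open question, and the rank of $\nu^*$ restricted to the invariant part is lower semicontinuous. Since the pair $(X,\iota)$ deforms, together with $F$, to $(S^{[2]},\iota_S^{[2]})$ (see \cite{Mon}), it is enough to check injectivity there. On the other hand, rank can only drop on a closed set, so an open argument does not directly pass to every member of the family. For that reason I would instead show that the pairing in (2), restricted to the invariant part, is a nonzero constant multiple of the Beauville--Bogomolov form, say $\langle\nu^*x,\nu^*y\rangle_F=c\,q_X(x,y)$.

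The key fact is that $\mu_{[F]}$ is invariant under the monodromy centralizing $I$. Because $[F]$ is a $\iota$-invariant Hodge class of type $(2,2)$, the bilinear form $(x,y)\mapsto\int_X xy[F]$ on the invariant part must be invariant under a large group, namely the monodromy commuting with $I$, which contains $O^+$ of the invariant lattice up to finite index. Such a form is therefore proportional to $q$. The constant $c$ can then be computed on $S^{[2]}$ using $F_0=\widetilde{S/\iota_S}$. For example, pulling back $\delta_X$ gives $\nu^*\delta=\tfrac12\sum E_i$, whose square is $-4$, while $q(\delta)=-2$, which yields $c=2$.

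Since $q$ is nondegenerate, this gives injectivity over $\Q$, and hence over $\Z$. It also proves the missing direction of (3): the invariant and anti-invariant parts are orthogonal, so the nondegeneracy needed there follows. \textbf{The main obstacle} is justifying this invariance and proportionality argument rigorously.
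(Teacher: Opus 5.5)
Your treatment of (1), (2), the easy half of (3) (via $\mu_{[F]}(x)=\nu_*\nu^*x$) and (4) is fine. Your argument for (4), $\nu^*=\nu^*\iota^*$ because $\iota\circ\nu=\nu$, is the most direct way to see it.

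The gap is that the converse half of (3) and all of (5) rest on the identity $\langle\nu^*x,\nu^*y\rangle_F=c\,q_X(x,y)$ on $H^2(X,\Q)^{\iota}$, and your justification for it does not work as stated. Being an $\iota$-invariant $(2,2)$-class does not make $[F]$, and hence the form $B_F(x,y)=\int_X xy[F]$, invariant under monodromy operators that merely commute with $I$. Only parallel transport along families of pairs $(X_t,\iota_t)$ carries $[F]$ to $[F_t]$. You would still need to show that the image of that monodromy in $O(\Lambda^I_{K3^{[2]}})$ is Zariski dense before concluding proportionality. Moreover, transporting the constant $c$ computed on $S^{[2]}$ already requires $[F_t]$ to be flat in the family, which is exactly the topological input you discarded. (The computation itself, $\nu^*\delta=\tfrac12\sum_iE_i$ giving $c=2$, is right.)

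That discarded route is the correct one, and your semicontinuity worry is misplaced. The fixed surfaces $F_t$ form a smooth proper family inside $X_t$ over a connected base. By Ehresmann the pairs $(X_t,F_t)$ are diffeomorphic, so $\nu_t^*$ is identified with $\nu_0^*$ by parallel transport, and its rank on the invariant part is constant, not merely semicontinuous. It therefore suffices to check injectivity at one point. The paper does this at a very general $(X,\iota)$: there $H^2(X,\Q)^{\iota}=\T_X\otimes\Q$ is an irreducible Hodge structure, and the kernel of $\nu^*$ on it is a sub-Hodge structure. That kernel is proper because $\nu^*\sigma_X=\sigma_F\neq0$, hence it is zero.

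For the converse of (3) you do not need (5) at all. The image $V=\nu^*H^2(X,\Q)$ is a sub-Hodge structure of $H^2(F,\Q)$ whose real span contains the Kähler class $\nu^*\omega$. By the Hodge index theorem the intersection form is positive on $(V^{2,0}\oplus V^{0,2})_{\mathbb R}$ and on $\mathbb{R}\,\nu^*\omega$, and negative definite on $(\nu^*\omega)^{\perp}\cap V^{1,1}_{\mathbb R}$. So it is nondegenerate on $V$, and $\mu_{[F]}(x)=0$ forces $\nu^*x\in V\cap V^{\perp}=0$; this is the statement the paper imports from Voisin.

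If you prefer to keep your proportionality statement, prove it differently. For very general $(X,\iota)$ one has $\mathrm{End}_{Hdg}(\T_X\otimes\Q)=\Q$, and $q_X^{-1}\circ B_F$ is a Hodge endomorphism of $\T_X\otimes\Q$, so $B_F=c\,q_X$ there. Then propagate to all pairs using flatness of $[F_t]$.
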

\begin{proof}
Point (1) is proved in \cite[Proposition 3.16]{CGKK}, and points (2) and (3) are proved in \cite[Proposition B.2 and Remark B.3]{Vo}. For (4), notice that the classes $x\in (H^2(X,\Z)^{\iota^*})^{\perp}$ are such that $\iota^*(x)=-x$, so $\mu_{[F]}(\iota^*(x))=-\mu_{\iota^*[F]}(x)$ which implies $\mu_{[F]}(x)=-\mu_{[F]}(x)=0$. The result follows by (3). If $(X,\iota)$ is a very general pair with $\iota$ a symplectic involution on $X$, then $\T_X\simeq H^2(X,\Z)^{\iota^*}$ so that $\nu^*(H^2(X,\Q)^{\iota^*})$ is a non trivial sub Hodge structure of $\T_F\otimes \Q$. As in the proof of \cite[Proposition 3.16]{CGKK}, by the irreducibility of $\T_X\otimes \Q$, one concludes that  $\nu^*_{{|H^2(X,\Q)}^{\iota^*}}$ is injective, which clearly implies also the injectivity of $\nu^*_{{|H^2(X,\Z)}^{\iota^*}}$.  Since the injectivity is a topological property, hence flat on the deformation family of $(X,\iota)$, it holds for every pair $(X,\iota)$
\end{proof}
\begin{rem}{\rm Point (4) implies that $\nu^*((H^2(X,\Q)^{\iota})=\nu^*((H^2(X,\Q))$ in the cohomology with rational coefficients and in particular the lattices $\nu^*((H^2(X,\Z)^{\iota})$ and $\nu^*(H^2(X,\Z))$ have the same saturation in $H^2(F,\Z)$, which is a lattice of rank $15=\rk((H^2(X,\Z)^{\iota}))$ by (5).}
\end{rem}

If $(X,\iota)$ is very general, then $\T_X\simeq H^2(X,\Z)^{\iota^*}$ and therefore the saturation of $\nu^*\left(H^2(X,\mathbb Z)^{\iota^*}\right)$ is Hodge isomorphic to $\T_F$ by Lemma \ref{lemma: properties of nu*}. 
\begin{defi}
Denote $T_F^{G}$
the saturation of $\nu^*(H^2(X,\mathbb Z)^{\iota^*})\subset H^2(F,\mathbb{Z})$. 
\end{defi}

\begin{prop}\label{main-prop} \label{cor: very general F has NS=G}
    Let $F$ be the generalized Nikulin surface of a very general manifold of $K3^{[2]}$-type $X$ with a symplectic involution $\iota$. Then,$$\NS(F)\simeq E_7(-2)\ \ \mbox{ and }\ \ \T_F=T_F^G\simeq \mathbb{T}.$$
\end{prop}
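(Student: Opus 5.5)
The plan is to exploit the degeneration of $(X,\iota)$ to $(S^{[2]},\iota_S^{[2]})$, where the fixed surface is a Nikulin surface $W$, and to follow the classes of $H^2(F,\Z)$ through this deformation. We first determine $\T_F$ as a lattice, then deduce $\NS(F)$ from it.

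First we fix the rank. For $(X,\iota)$ very general, Lemma \ref{lemma: properties of nu*} (5) together with irreducibility of $\T_X\otimes\Q$ shows that $\nu^*(H^2(X,\Q)^{\iota^*})$ is an irreducible sub-Hodge structure of rank $15$ containing $H^{2,0}(F)$. Hence $\T_F\otimes\Q=\nu^*(H^2(X,\Q)^{\iota^*})$ and $\T_F=T_F^G$, since both are saturated. Consequently $\rk \NS(F)=22-15=7$.

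Next we identify $\NS(F)$ in the family. The pair $(F\subset X)$ deforms flatly over the connected moduli $\mathcal{M}_{K3^{[2]},I}$, so $H^2(F,\Z)$ forms a local system and $T_F^G\subset H^2(F,\Z)$ is a locally constant primitive sublattice. It therefore suffices to compute its orthogonal complement at the special point $(S^{[2]},\iota_S^{[2]})$, where $F_0=W$ is the Nikulin surface. There we will show that $(T_{F_0}^G)^\perp=G$, the orthogonal of $\frac12\sum E_i$ in $N$. The $E_i$ and $\frac12\sum_i E_i$ are orthogonal to the image of $\nu^*$ restricted to $H^2(S,\Z)^{\iota_S}$, because that image is pulled back from $S/\iota_S$. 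The remaining generator $\delta_X$ (the half diagonal) restricts to $F_0\cap\Delta$, which is proportional to $\frac12\sum E_i$. So $\nu^*(H^2(X,\Q)^{\iota})$ is spanned by $\nu^*(H^2(S,\Q)^{\iota_S})$ and $\sum E_i$, and its orthogonal is exactly $N\cap(\sum E_i)^\perp=G$. Since $G$ is primitive in $N$, and $N$ is primitive in $H^2(W,\Z)$, $G$ is primitive in $H^2(F_0,\Z)$. Transporting back, $\NS(F)=(T^G_F)^\perp\simeq G\simeq E_7(-2)$ by Lemma \ref{lemma: properties of G}.

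Finally, since $\T_F$ is the orthogonal complement of a primitive $E_7(-2)$ in $\Lambda_{K3}$, Lemma \ref{lemma: properties of G} gives $\T_F\simeq\mathbb{T}$.

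The main obstacle is the computation at the special point. One must check precisely that $\nu^*$ of the invariant lattice together with $\nu^*\delta_X$ spans, rationally, the orthogonal of $G$ in $H^2(W)$. This requires the explicit description of $\nu^*$ on $S^{[2]}$ as the pullback to $W$ of invariant classes of $S$ (a rank-$14$ space, namely $\mathbb{T}$ minus the last summand) plus $\delta\mapsto \frac12\sum E_i$ up to sign and factor. I would verify this by the description $W\subset S^{[2]}$ as the locus $\{x,\iota_S x\}$ and by computing intersections with $\Delta$ directly.
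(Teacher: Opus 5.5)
Your proposal is correct and follows essentially the paper's route: degenerate to $(S^{[2]},\iota_S^{[2]})$, use that $\nu^*\Delta$ is a nonzero multiple of $\sum_i E_i$, take its orthogonal in $N$ to get $G\simeq E_7(-2)$, and then read off $\T_F\simeq\mathbb{T}$ from Lemma \ref{lemma: properties of G}. Your version is somewhat more careful than the paper's. You prove $\T_F=T_F^G$ first (via irreducibility) and then parallel-transport the topologically defined sublattice $T_F^G$. This replaces the paper's informal step that ``$\nu^*\Delta$ becomes transcendental'' and removes the need to take $S$ very general.
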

\begin{proof}
Since $\nu^*\colon H^2(X,\mathbb Q)\to H^2(F,\mathbb Q)$ is a morphism of Hodge structures, $\T_F\otimes \Q=\nu^*(\T_X)\otimes \Q$ for every $X$ with a symplectic involution $\iota$. Here we are interested in describing $\T_F$ and $\NS(F)$ (as $\Z$ lattices, and not just their $\Q$-extensions).

Let us  specialize $(X,\iota)$ to $(S^{[2]},\iota_S^{[2]})$ where $S^{[2]}$ is the Hilbert scheme of two points on a $K3$ surface $S$ admitting a symplectic involution $\iota_S$ and $\iota_S^{[2]}$ is the induced symplectic involution on $S^{[2]}$. This specialization exists by \cite{Mon}, and is associated to a specialization of the related fixed surfaces. Indeed, by \cite[proof of Proposition 4.6]{Mon}, the surface $F$ (fixed by the action of $\iota$ on $X$) specializes to the surface fixed by $\iota_S^{[2]}$ on $S^{[2]}$. The surface fixed by $\iota_S^{[2]}$ on $S^{[2]}$ is isomorphic to the Nikulin surface of $(S, \iota_S)$, i.e.~ it is isomorphic to $\widetilde{S/\iota_S}$. Therefore, we have a specialization of the triple $(X,\iota,F)$ to the triple $(S^{[2]},\iota_S^{[2]}, \widetilde{S/\iota_S})$. If $S$ is a very general K3 surface with a symplectic involution, i.e.~ $\NS(S)\simeq E_8(-2)$, then $\NS(\widetilde{S/\iota_S})\simeq N$ and $\NS(S^{[2]})$ primitively contains the anti-invariant lattice $\NS(S^{[2]})_-=(H^2(S^{[2]},\Z)^{\iota_S^{[2]}})^{\perp}$ and the class $\Delta$, which is invariant for the action of $\iota_S^{[2]}$.

We consider a local universal family $\mathcal{X}\to U$ for a small disc $U$ of deformations of $S^{[2]}$ together with the involution $\iota_S^{[2]}$(cf.~\cite[p.1480]{Mon}).
After choosing a trivialization of the local system as in \cite[p 1481]{Mon}, the deformation space is a subset of $\PP(H^2(S^{[2]},\mathbb C))$. The considered deformations are parameterized by periods $H^{2,0} \in H^2(X,\mathbb C)$ that are orthogonal to the anti-invariant $E_8(-2)$ (the embedding is unique up to isomorphism by \cite[Corollary 5.3]{Mon}) but not necessarily to $\Delta$. In this local deformation family, all the fibers admit a symplectic involution (but not all of them are Hilbert scheme of points on a $K3$ surface).

The deformation from $S^{[2]}$ to $X$ consists of requiring that the class $\Delta$ becomes a transcendental class (since it is invariant and for the very general $X$ admitting a symplectic involution $\iota$ we have $H^2(X,\Z)^{\iota}=\T_X$). Therefore, the corresponding deformation for the fixed surface from $\widetilde{S/\iota_S}$ to $F$ consists in requiring that $\nu^*(\Delta)$ becomes transcendental.

As a consequence, in order to determine the N\'eron--Severi group of $F$, it suffices to consider the orthogonal complement of  
$\nu^*(\Delta)$ in $\NS(\widetilde{S/\iota_S})$.
Observe that $$\sum_iE_i=\nu^* (\Delta),$$ which is clear set theoretically and can be proved scheme theoretically using the local coordinates as in \cite[Example 4.5 a)]{F}.
The orthogonal complement to $\sum_iE_i$ in the Nikulin lattice $N$ is isometric to $E_7(-2)$, by Lemma \ref{lemma: properties of G}. So $\NS(F)\simeq E_7(-2)$ and by Lemma \ref{lemma: properties of G}, this implies that $\T_F\simeq \mathbb{T}$. Being the transcendental lattice primitively embedded in the second cohomology group, $\T_F$ is also the saturation of $\nu^*(\T_X)=\nu^*(H^2(X,\Z)^{\iota^*})$ and in particular $\T_F=T_F^G$.
\end{proof}

\subsection{The transcendental lattices of $Y$ and $F$ and the proof of Theorem \ref{Thm1.2}}

The choice of the pair $(X,\iota)$ determines a pair $(Y,\Sigma)$ where $Y$ is the terminalization of $X/\iota$ with exceptional divisor $\Sigma$. The  image of the map $\pi_*:H^2(X,\Z)\ra H^2(X/\iota,\Z)$ is embedded in $H^2(Y,\Z)$ via $\rho^*$ where $\rho:Y\ra X/\iota$ is the terminalization and thus it is orthogonal to $\Z\Sigma$. The same holds true for the image through $\pi_*$ of $H^2(X,\Z)^{\iota^*}$. So the saturation of $\rho^*\pi_*(H^2(X,\Z))$ and of $\rho^*\pi_*(H^2(X,\Z)^{\iota^*})$ coincide and both coincide with the orthogonal complement of $\Z\Sigma$ in $H^2(Y,\Z)$. By the explicit expression of $\pi_*$ one sees that $\rho^*\pi_*(H^2(X,\Z))$ is saturated.

\begin{defi} Denote 
$T_Y^{-4}$ the saturation of the lattice $\rho^*\pi_*(H^2(X,\mathbb Z)^{\iota^*})$ in $H^2(Y,\mathbb{Z})$.\end{defi}

\begin{corr}\label{5.4}\label{cor: very general F are G polarized and TF=TY}
If $X$ is a manifold of $K3^{[2]}$-type with $\NS(X)=E_8(-2)$ then:\begin{itemize}\item it admits a symplectic involution $\iota$ \item the fixed surface $F$ is a very general generalized Nikulin surface, and in particular $\NS(F)\simeq E_7(-2)$, and $\T_F\simeq \mathbb{T}\simeq T^G_F$; \item  the terminalization $Y$ of the quotient $X/\iota$ is a very general Nikulin orbifold and in particular $\NS(Y)\simeq \Z \mathbb{E}$ and $\T_Y\simeq \mathbb{T}\simeq T_Y^{-4}$.\end{itemize}
\end{corr}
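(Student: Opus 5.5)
The argument has three parts, one for each bullet. Throughout we use that the symplectic involution acts as $-1$ on $E_8(-2)$ and trivially on its orthogonal complement. Hence the assumption $\NS(X)=E_8(-2)$ forces $\T_X=H^2(X,\Z)^{\iota^*}$.

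\emph{Existence of $\iota$.} By the characterization recalled in Section \ref{subsec: preliminarie on K32-type and Nikulin orbifold}, a manifold of $K3^{[2]}$-type admits a symplectic involution if and only if $E_8(-2)$ embeds primitively in $\NS(X)$ \cite{Mon}. Here $\NS(X)$ is itself $E_8(-2)$, so the existence is immediate. Moreover, the anti-invariant lattice of $\iota^*$ is exactly $\NS(X)$, and therefore $\T_X=(\NS(X))^\perp=H^2(X,\Z)^{\iota^*}$. In other words, $(X,\iota)$ is very general in the sense used in Lemma \ref{lemma: properties of nu*} and Proposition \ref{main-prop}.

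\emph{The surface $F$.} Since $(X,\iota)$ is very general, Proposition \ref{main-prop} applies directly and gives $\NS(F)\simeq E_7(-2)$ and $\T_F=T_F^G\simeq\mathbb{T}$. The one point to check is that the notion of ``very general'' used in the proof of that proposition is indeed $\NS(X)=E_8(-2)$. That proof argues by making $\Delta$ transcendental in the deformation from $S^{[2]}$ with $\NS(S)=E_8(-2)$, and this is exactly the condition $\NS(X)=E_8(-2)$. For surjectivity of periods, note that the orthogonal complement of $\nu^*\Delta$ in $N$ is $G$ by Lemma \ref{lemma: properties of G}. Picard number considerations then give $\rho(F)\ge 7$. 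Equality holds because $\T_F\otimes\Q=\nu^*(\T_X)\otimes\Q$ has rank $15$, by the injectivity in Lemma \ref{lemma: properties of nu*}(5).

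\emph{The orbifold $Y$.} We use the marking $\zeta$ built in the proof of Theorem \ref{Thm1.1}, under which $\zeta(\rho^*\pi_*(H^2(X,\Z)^{\iota^*}))$ is $2\cdot\frac12 Q(\Lambda^I_{K3^{[2]}})$ up to scaling. Its saturation is $\mathbb{E}^\perp\simeq\mathbb{T}$, by Lemma \ref{lemma: embeddings of <-4> and T in LambdaN}(3), and $\Sigma$ maps to $\mathbb{E}$. The morphism $\rho^*\pi_*$ is compatible with Hodge structures, and $\T_X=H^2(X,\Z)^{\iota^*}$ has no algebraic classes. Hence $\rho^*\pi_*(\T_X)\otimes\Q$ contains no algebraic classes, so $\NS(Y)\otimes\Q=\Q\Sigma$. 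Since $\Sigma$ is primitive in $H^2(Y,\Z)$, we get $\NS(Y)=\Z\Sigma\simeq\Z\mathbb{E}$. Then $\T_Y=\Sigma^\perp$, which coincides with $T_Y^{-4}$ by the discussion preceding the definition of $T_Y^{-4}$, and this is isometric to $\mathbb{T}$.

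The main obstacle is the second bullet, namely making sure that Proposition \ref{main-prop} is really proved under precisely the hypothesis $\NS(X)=E_8(-2)$. In particular one needs that $\nu^*\Delta=\sum E_i$ stays transcendental along the deformation and that no new algebraic classes appear. I would handle this through the rank count above together with the Hodge-theoretic irreducibility of $\T_X\otimes\Q$.
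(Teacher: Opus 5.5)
Your proposal is correct and follows essentially the paper's route. Existence of $\iota$ comes from Mongardi's criterion (the paper cites CGKK), the statements on $F$ come from Proposition \ref{main-prop} applied to the very general pair with $\T_X=H^2(X,\Z)^{\iota^*}$, and for $Y$ one identifies $\T_Y=\Sigma^\perp=T_Y^{-4}\simeq\mathbb{T}$ via Lemma \ref{lemma: embeddings of <-4> and T in LambdaN}. The only real difference is that you prove $\NS(Y)=\Z\Sigma$ directly, from injectivity of $\rho^*\pi_*$ on invariant classes and strictness of morphisms of Hodge structures, whereas the paper just cites Theorem \ref{Thm1.1}; that added detail is welcome, while your side remarks (the ``surjectivity of periods'' phrase and the detour through the marking $\zeta$) are unnecessary but harmless.
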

\proof If $X$ is a  manifold of $K3^{[2]}$-type with $\NS(X)\simeq E_8(-2)$, then it is a very general  manifold of $K3^{[2]}$-type with a symplectic involution $\iota$, see e.g. \cite{CGKK}. Then its fixed surface $F$ is a very general generalized Nikulin surface and by Proposition \ref{cor: very general F has NS=G} $\NS(F)\simeq E_7(-2)$ and $\T_F\simeq \mathbb{T}$. The N\'eron--Severi of $Y$ is computed in Theorem \ref{Thm1.1} and then its transcendental lattice is isometric to $\mathbb{T}$ by Lemma \ref{lemma: embeddings of <-4> and T in LambdaN}. By assumption $\NS(X)\simeq E_8(-2)$ and $\T_X\simeq H^2(X,\Z)^{\iota}$. So the saturation of $\pi_*(H^2(X,\Z)^{\iota})$ in $\Lambda_N$ is the transcendental lattice $\T_Y$ and so $T_Y^{-4}\simeq \T_Y$.
In particular, $T_F^G\simeq \T_F\simeq \mathbb{T}\simeq \T_Y\simeq T_Y^{-4}$ as lattices. \endproof

To prove the conjecture \cite[Conjecture 3.12]{CGKK} we now consider a   manifold of $K3^{[2]}$-type $X$ with a symplectic involution $\iota$, without requiring that $X$ is very general. Let $F$ be the fixed locus of a symplectic involution $\iota$ on $X$ and $Y$ the Nikulin orbifold of $(X,\iota)$. We need to relate $\T_F$ with $\T_Y$ and to do that we will consider them as primitive sublattices of the lattices $T_F^G$ and $T_Y^{-4}$ respectively.

\begin{remark} Observe that $T_F^G$ (resp. $T_Y^{-4}$) is primitively embedded in $H^2(F,\Z)$ (resp. $H^2(Y,\Z)$) and this determines also an explicit embedding of its orthogonal complement $E_7(-2)$ (resp. $\Z \Sigma$)  in $H^2(F,\Z)$ (resp. $H^2(Y,\Z)$).\end{remark}  

For any choice of the pair $(X,\iota)$ as above, the sublattice $T_F^G$ (resp. $T_Y^{-4}$) contains primitively $\T_F$ (resp. $\T_Y$); as a consequence, these two sublattices are also two polarized Hodge substructures of $H^2(F,\Z)$ and $H^2(Y,\Z)$ respectively.
\begin{thm}\label{them: proof of the conj} 
Let $X$ be a  manifold of $K3^{[2]}$-type with a symplectic involution $\iota$ and let $Y$ be the terminalization of $X/\iota$, i.e.~ the associated Nikulin orbifold. Then $T_F^{G}$ is Hodge isometric with $T_Y^{-4}$.  
In particular, the transcendental lattices $\T_Y$ and $\T_F$ are isometric.
\end{thm}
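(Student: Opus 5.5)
The natural strategy is to compare the two restriction maps out of the invariant lattice $H^2(X,\Z)^{\iota^*}$. Both $\nu^*$ and $\rho^*\pi_*$ restricted to $H^2(X,\Z)^{\iota^*}$ are morphisms of Hodge structures (Lemma \ref{lemma: properties of nu*} (1) for $\nu^*$; for $\rho^*\pi_*$ this is standard since $\pi$ and $\rho$ are holomorphic). Both are injective: Lemma \ref{lemma: properties of nu*} (5) for $\nu^*$, and the injectivity of $Q_{|\Lambda^I_{K3^{[2]}}}$ together with the formula $\zeta\circ\rho^*\circ\pi_* = Q\circ\phi$ from the proof of Theorem \ref{Thm1.1} for the other. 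Hence on $H^2(X,\Q)^{\iota^*}$ we get a rational Hodge isomorphism
\[
\Psi:=\rho^*\pi_*\circ(\nu^*)^{-1}\colon T_F^G\otimes\Q\longrightarrow T_Y^{-4}\otimes\Q .
\]
The plan is to show that, after a suitable rescaling, $\Psi$ is an integral isometry from $T_F^G$ onto $T_Y^{-4}$. Once this holds, it is automatically a Hodge isometry, and since it preserves $H^{2,0}$ it carries the transcendental sublattice $\T_F$ (the smallest primitive sublattice whose complexification contains $H^{2,0}$) onto $\T_Y$. This gives the final claim.

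\textbf{Step 1 (quadratic forms).} For invariant $x$, the proof of Theorem \ref{Thm1.1} gives $q_Y(\rho^*\pi_*x)=2q_X(x)$, using $\pi^*\pi_*x=2x$ and the Fujiki relations. For $\nu^*$, Lemma \ref{lemma: properties of nu*} (2) gives $\langle\nu^*x,\nu^*y\rangle_F=\langle\mu_{[F]}(x),y\rangle_X$. The idea is to show that on invariant classes this equals $c\,(x,y)_X$ for a universal constant $c$. Since $[F]$ is an $\iota$-invariant Hodge class in $H^4(X,\Q)$, its pairing with $x\cdot y$ for $x,y$ invariant can be written via the decomposition $H^4=\Sym^2H^2\oplus\ldots$ as a combination of $q_X(x,y)$ and other invariant terms. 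The constant $c$ should be determined on the very general member. There, Corollary \ref{cor: very general F are G polarized and TF=TY} gives $T_F^G\simeq\mathbb{T}\simeq T_Y^{-4}$, both of the same discriminant, which forces the two scalings to agree ($c=2$). Alternatively, $c$ can be computed on $S^{[2]}$, where $F=\widetilde{S/\iota_S}$ and everything is explicit.

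\textbf{Step 2 (integrality).} Since the claim concerns topological lattices and $\Psi$ is defined topologically, it is constant in the connected deformation family of pairs $(X,\iota)$. It therefore suffices to check integrality at a single point. I would take the very general point, or better $(S^{[2]},\iota_S^{[2]})$ with explicit descriptions:
\begin{itemize}
\item $\nu^*$ sends invariant classes of $H^2(S)$ to their pushforward on $\widetilde{S/\iota_S}$, and sends $\Delta$ to $\sum E_i$;
\item $\rho^*\pi_*$ is given explicitly by $Q$.
\end{itemize}
Both images are overlattices of the common lattice $\Psi(\nu^*H^2(X,\Z)^{\iota^*})=\rho^*\pi_*H^2(X,\Z)^{\iota^*}$. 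Since $T_F^G$ and $T_Y^{-4}$ are isometric abstract lattices (both $\simeq\mathbb{T}$), equal indices over that common sublattice give equal saturations, provided we check that the gluing vectors correspond. This reduces to comparing which elements of $\frac12 H^2(X,\Z)^{\iota^*}$ become integral on each side.

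\textbf{Main obstacle.} The delicate point is exactly this saturation comparison. Corollary \ref{cor: very general F are G polarized and TF=TY} only gives an abstract isometry, not that $\Psi$ itself is one; $\Psi$ could a priori send $T_F^G$ to a different overlattice. I would first try to settle it on $S^{[2]}$ using Remark \ref{rem: embedding E7(-2) in LambdaK3}, comparing the embedding of $\mathbb{T}$ as orthogonal of $G$ inside $U(2)^{\oplus3}\oplus E_8(-1)\oplus N$ with the embedding of $\mathbb{T}$ in $\Lambda_N$ from Remark \ref{rem: explicitl psi}. Both come from the map $Q$ summing the two $E_8$ copies, so the gluings should match. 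If this explicit matching fails, the fallback is to invoke the surjectivity $O(\mathbb{T})\to O(A_{\mathbb{T}})$ from Lemma \ref{lemma: embeddings of <-4> and T in LambdaN} to correct $\Psi$ by an isometry fixing the Hodge structure. That fallback is only available at very general points, which is another reason to prefer the explicit $S^{[2]}$ computation followed by deformation.
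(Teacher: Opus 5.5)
\textbf{The gap is Step 2, the integrality statement.} Your map $\Psi$ is the right one: it is exactly the inverse of the paper's map $\tilde f(\gamma)=f(\gamma/2)$, where $f=\nu^*\pi^*\rho_*$.

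Step 1 can be completed, but not by your discriminant argument. A priori $\Psi(T_F^G)$ and $T_Y^{-4}$ are only commensurable, so equality of the abstract discriminants only forces $2/c$ to be a rational square. Computing on $S^{[2]}$ does work: $\nu^*\Delta=\sum_iE_i$ has square $-16=2\,q(\Delta)$, which gives $c=2$. The paper simply quotes \cite[Proposition 3.16]{CGKK} for this.

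Step 2 is the entire content of the theorem, and you leave it open. The principle that equal indices over the common sublattice give equal saturations ``provided the gluing vectors correspond'' is exactly what has to be proved, since a lattice can have several distinct overlattices of the same index. The $S^{[2]}$ comparison is only asserted to ``should match''.

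Your fallback cannot help either. For a very general pair $(X,\iota)$, the Hodge structure $T_F^G\otimes\Q=\T_F\otimes\Q$ has only $\Q$ as Hodge endomorphisms. Hence every rational Hodge isometry $T_F^G\otimes\Q\to T_Y^{-4}\otimes\Q$ equals $\pm\Psi$. There is nothing to correct by elements of $O(\mathbb{T})$, and at such a point the theorem is literally the statement $\Psi(T_F^G)=T_Y^{-4}$.

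\textbf{The missing idea.} Integrality is needed in one direction only, and it is immediate once you work with all of $H^2(X,\Z)$ rather than only its invariant part.
\begin{itemize}
\item For every $x\in H^2(X,\Z)$ one has $\pi^*\pi_*x=x+\iota^*x$, and $\nu^*\iota^*=\nu^*$ because $F$ is pointwise fixed.
\item Hence $\nu^*x=\frac12\nu^*(x+\iota^*x)$ is an integral class lying in $T_F^G$, and $\Psi(\nu^*x)=\rho^*\pi_*x$ because $\pi_*\iota^*=\pi_*$.
\item Since $\rho^*\pi_*H^2(X,\Z)$ is already saturated in $H^2(Y,\Z)$, it equals $T_Y^{-4}$. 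Therefore $\Psi^{-1}(T_Y^{-4})=\nu^*H^2(X,\Z)\subseteq T_F^G$.
\end{itemize}
This is exactly the paper's check that $f$ maps $T_Y^{-4}$ into $2T_F^G$. The $E_8(-1)$ summand is handled via $\pi^*\pi_*x=x+\iota^*x$, and the $U(2)^{\oplus 3}\oplus\langle -4\rangle$ part via $\pi^*\pi_*=2$ on invariant classes.

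Finally, $\Psi^{-1}$ is an isometric embedding of $T_Y^{-4}$ into $T_F^G$. These two lattices have the same rank and the same discriminant: both are isometric to $\mathbb{T}$, by Corollary \ref{5.4} together with deformation invariance. So the image has index $1$, and $\Psi$ is an integral Hodge isometry. No comparison of gluing vectors and no degeneration to $S^{[2]}$ is required.
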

\begin{proof}
 From \cite[Proposition 3.16]{CGKK}
 we deduce that there is a rational Hodge isomorphism $f$ from $T_Y^{-4}\otimes \Q$ to $T_F^{G}\otimes \Q$ (we are putting $f:=\nu^*\tilde{\rho}_*\tilde{\pi}^*$ with the notation of the proof of \cite[Proposition 3.16]{CGKK}, which coincides with $\nu^*\pi^*\rho_*$ by the commutativity of the diagram in the proof of \cite[Proposition 3.16]{CGKK}), since $$f(T^{-4}_Y\otimes \Q)=\nu^*\pi^*\pi_*(H^2(X,\Z)^{\iota^*})\otimes \Q=\nu^*(H^2(X,\Z)^{\iota^*}(2))\otimes \Q=T_F^G\otimes \Q.$$ 
 By Corollary \ref{5.4} both $T_Y^{-4}$ and $T_F^{G}$ are abstractly isometric to $\mathbb{T}$. 

 The domain $T_Y^{-4}$ and the codomain $T_F^{G}$ of the map $f$ are both isometric to $\mathbb{T}$, and we want to construct an integral Hodge isometry between these lattices. By \cite[Proposition 3.16]{CGKK}  $$\forall\alpha, \beta\in T_Y^{-4}\otimes \Q,\ \ B_Y(\alpha,\beta)=\frac{1}{4}\langle f(\alpha), f(\beta)\rangle_F,$$ where $B_Y(\cdot,\cdot)$ is the Beauville-Bogomolov form on $Y$ and $\langle\cdot ,\cdot\rangle_F$ is the cup product on $H^2(F,\Q)$.
Then we have to show that integral classes on $T_Y^{-4}$ map to $2T_F^G$(=$\{2w\mid w\in T_F^G\}$). Indeed, $f$ is the composition of three maps among which there is $\pi^*$.  
From \cite[page 19]{Menet} the classes $x$ in the $E_8(-1)$ summand map under $\pi^*$ to $x+\iota^*(x)$ and since $F$ is invariant $\nu^*(x)=\nu^*(\iota^*(x))$, therefore $x$ is mapped to $2T_F^G$ (this is the class $s^{\ast}_1(x)$ in loc. cite). The remaining classes from $U(2)^{\oplus 3}\oplus \langle-4\rangle$ already map to classes divisible by $2$ on $X$, indeed $\pi^*x=2x$ for every invariant class and $U(2)^{\oplus 3}\oplus \langle-4\rangle$ is primitively contained in the saturation of $\rho^*\pi_*(H^2(X,\Z)^{\iota^*}$.
So $$\forall\gamma\in H^2(Y,\Z),\ f\left(\frac{1}{2}\gamma\right)\in H^2(F,\Z),\mbox{ and  we define }\tilde{f}(\gamma):=f\left(\frac{1}{2}\gamma\right)$$ so that $\tilde{f}$ is a map between integral cohomology groups.
Then $B_Y(\alpha,\beta)=\frac{1}{4}\langle f(\alpha),f(\beta)\rangle_F=\langle f(\frac{1}{2}\alpha), f(\frac{1}{2}\beta)\rangle_F=\langle\tilde{f}(\alpha), \tilde{f}(\beta)\rangle_F$.

 It follows that the map $\tilde f$ is an isometry and it preserves the Hodge structure. 
 Then by standard Hodge theory $\tilde f$ induces an isomorphism of polarized integral Hodge structures $T^{-4}_Y\rightarrow T^G_F$ which restricts to an isomorphism of polarized Hodge structures $\T_Y\rightarrow \T_F$ .
\end{proof}
\begin{remark}\label{rem: conjecture and marikings} The lattices $T_Y^{-4}$ and $T_F^G$ are both isometric to $\mathbb{T}$ and we fixed explicitly the embeddings $\psi_{\mathbb{T}}$ and $\varphi_{\mathbb{T}}$ of $\mathbb{T}$ in $\Lambda_N$ and $\Lambda_{K3}$ respectively (see Remarks \ref{rem: explicitl psi} and \ref{rem: embedding E7(-2) in LambdaK3}). Moreover, there exist markings $\alpha$ and $\eta$ of $F$ and $Y$ respectively such that $T_F^G=\alpha^{-1}(\varphi_{\mathbb{T}}(\mathbb{T}))$ and $T_Y^{-4}=\eta^{-1}(\psi_{\mathbb{T}}(\mathbb{T}))$. Therefore, the Hodge isometry $\tilde{f}\colon T_Y^{-4}\rightarrow T^G_F$ induces an isometry \begin{equation}\label{eq: Hodge f}\tilde{f}:\eta^{-1}(\psi_{\mathbb{T}}(\mathbb{T}))\ra \alpha^{-1}(\varphi_{\mathbb{T}}(\mathbb{T})),\end{equation} which restricts to a Hodge isometry $T_Y^{-4}\supset \T_Y\ra \T_F\subset T_F^G$. \end{remark}

\subsection{The lattice-theoretic description of generalized Nikulin surfaces and the proof of Theorem \ref{thm: projective $K3$ G polarized are generalized Nikulin}}

We want to characterize the generalized Nikulin surfaces from a lattice-theoretic point of view, by identifying them with $K3$ surfaces whose N\'eron--Severi group primitively contains $E_7(-2)$. To do that we will consider a $K3$ surface with this lattice-theoretic property and we associate to it a Nikulin orbifold, by using the properties of their transcendental lattices. Then, we are able to obtain a generalized Nikulin surface from the Nikulin orbifold and to show that it is isomorphic to the original $K3$ surface, which clearly shows that even the original one was a generalized Nikulin surface. 

\begin{thm}\label{thm: generalizes Nikulin iff generalized lattice in NS}
A $K3$ surface $S$ is a generalized Nikulin surface if and only if the lattice $E_7(-2)$ is primitively embedded in $\NS(S)$.\end{thm}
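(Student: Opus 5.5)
The "only if" direction uses the restriction map $\nu^*$, and the "if" direction goes through the Nikulin orbifold via Torelli, following the strategy sketched before the statement.

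\emph{Only if.} Let $F\subset X$ be the fixed surface of $(X,\iota)$. The primitive sublattice $T_F^G\subset H^2(F,\Z)$ and its orthogonal complement $(T_F^G)^\perp\simeq E_7(-2)$ are topological data: they come from $\nu^*(H^2(X,\Z)^{\iota^*})$, which is locally constant in the family of pairs $(X,\iota)$. By Proposition \ref{main-prop}, for very general $(X,\iota)$ we have $(T_F^G)^\perp=\NS(F)\simeq E_7(-2)$. For an arbitrary $(X,\iota)$, $T_F^G\supseteq \T_F$ still holds, because $\nu^*$ is a morphism of Hodge structures and the transcendental part of $X$ is $\iota$-invariant. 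Taking orthogonal complements in $H^2(F,\Z)$ gives
\[
E_7(-2)\simeq (T_F^G)^\perp\subseteq \NS(F),
\]
and this embedding is primitive because orthogonal complements are saturated.

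\emph{If.} Let $S$ be a $K3$ surface with $E_7(-2)\hookrightarrow\NS(S)$ primitive.
\begin{enumerate}
\item Choose a marking $\alpha$ with $\alpha(E_7(-2))=\varphi_{E_7(-2)}(E_7(-2))$; this is possible by uniqueness of the embedding (Lemma \ref{lemma: properties of G}). Then $\T_S\subseteq\alpha^{-1}\varphi_{\mathbb{T}}(\mathbb{T})$, so the period of $S$ lies in $D_{\mathbb{T}}$.
\item Transport this period through the isometry $\mathbb{T}\simeq\psi_{\mathbb{T}}(\mathbb{T})\subset\Lambda_N$ to obtain a point of $D_{-4,2}$. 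Choose the identification compatibly with the Hodge isometry $\tilde f$ of \eqref{eq: Hodge f}, using the version of $\tilde f$ built from a reference very general pair (a fixed isometry $\psi_{\mathbb{T}}(\mathbb{T})\to\varphi_{\mathbb{T}}(\mathbb{T})$).
\item By surjectivity of the period map and Theorem \ref{Thm1.1}, there is an orbifold $Z$ with this period that is bimeromorphic to a Nikulin orbifold $Y$, the terminalization of $X/\iota$. The proof of Theorem \ref{Thm1.1} constructs $(X,\iota)$ directly from the period via $Q$, so $Y$ itself has this period.
\item Let $F'$ be the fixed surface of $(X,\iota)$. Theorem \ref{them: proof of the conj} gives a Hodge isometry $T_Y^{-4}\to T_F^G$ on $F'$. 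Composing with the chosen identification yields a Hodge isometry $\alpha^{-1}\varphi_{\mathbb{T}}(\mathbb{T})\to T^G_{F'}$ that carries $\T_S$ onto $\T_{F'}$.
\item Both $E_7(-2)$ embeddings are standard, and $O(G)\to O(A_G)$ is surjective (Lemma \ref{lemma: properties of G}). Hence this isometry extends to a Hodge isometry $H^2(S,\Z)\to H^2(F',\Z)$ by gluing with a suitable isometry of the $E_7(-2)$ parts, which are algebraic. Strong Torelli, after composing with $\pm$ reflections in $(-2)$-classes so that the ample cone is preserved, gives $S\simeq F'$.
\end{enumerate}

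\emph{Main obstacle.} The delicate point is step 4: the Hodge isometry $\tilde f$ must be compatible with the fixed abstract markings. Theorem \ref{them: proof of the conj} only provides $\tilde f$ on the pair actually constructed, while the periods must match under a fixed isometry between $\psi_{\mathbb{T}}(\mathbb{T})$ and $\varphi_{\mathbb{T}}(\mathbb{T})$. I would handle this by noting that $\tilde f$ is built from the topological maps $\nu^*\pi^*\rho_*$, so it varies flatly over the connected family $\mathcal{M}_{K3^{[2]},I}$. Hence a single identification, fixed at a base point, works for all pairs.

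The remaining gap is that $\tilde f$ is only determined up to the monodromy of this family acting on $\mathbb{T}$. That ambiguity is absorbed in two ways: by choosing the marking of $X$ within its orbit, and by the surjectivity of $O(\mathbb{T})\to O(A_{\mathbb{T}})$, which lets one re-glue on the $E_7(-2)$ side.
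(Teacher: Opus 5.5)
Your argument is correct and is essentially the paper's. The only-if direction goes through $\T_F\subseteq T_F^G$ and $(T_F^G)^\perp\simeq E_7(-2)$. The if direction uses the Nikulin orbifold built in the proof of Theorem \ref{Thm1.1} with the transported period, the Hodge isometry $\tilde f$ of Theorem \ref{them: proof of the conj}, extension across the algebraic $E_7(-2)$ parts, and Torelli; your use of the surjectivity of $O(G)\to O(A_G)$ in the extension step is a cleaner version of the paper's gluing via the quadruple $(M,L,\lambda_M,\lambda_L)$.

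The ``main obstacle'' you raise does not actually occur. Since $(Y,\eta)$ is constructed with exactly the transported period and $\eta(T_Y^{-4})=\psi_{\mathbb{T}}(\mathbb{T})$, composing $\tilde f$ with $\eta^{-1}$ and any fixed identification of $\varphi_{\mathbb{T}}(\mathbb{T})$ with $\psi_{\mathbb{T}}(\mathbb{T})$ is automatically a Hodge isometry. You can therefore drop the flatness and monodromy discussion.
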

\proof By definition, if $S$ is a generalized Nikulin surface, then there exists a pair $(X,\iota)$ such that $S\subset \Fix_{\iota}(X)$ and there exists a Hodge isometry between $\T_F$ and $\T_Y$, where $Y$ is the terminalization of the quotient $X/\iota$, by Theorem \ref{them: proof of the conj}. This implies that $T_F^G$ is well defined and it contains $\T_F$ as already observed. Therefore $$\NS(F)=\left(\T_F\right)^{\perp}\supseteq (T_F^G)^{\perp}\simeq E_7(-2).$$ 

Assume now that $S$ is a $K3$ surface whose N\'eron--Severi group contains primitively $E_7(-2)$. To prove that $S$ is a generalized Nikulin surface we apply the following strategy: 

1) we show that, given a marking $\mu$, $(S,\mu)$ determines a marked Nikulin orbifold $(Y,\eta)$; 

2) we associate to $(Y,\eta)$ a marked generalized Nikulin surface $(F,\alpha)$; 

3) we show that $S\simeq F$, which of course implies that $S$ is a generalized Nikulin surface.

{\bf Step 0): the marked surface $(S,\mu)$.} Since $E_7(-2)$ is primitively embedded in $\NS(S)$ there is a marking $\mu:H^2(S,\Z)\ra \Lambda_{K3}$ such that $$ \mu(\NS(S))\supset \mu(E_7(-2))=\varphi_G(E_7(-2))$$ and this implies that $\mu(\T_S)\subset \varphi_{\mathbb{T}}(\mathbb{T})$. Observe that $(S,\mu)$ determines the quadruple $(M,L,\lambda_M, \lambda_L)$ where $M$ and $L$ are abstract lattices, respectively isometric to $\T_S$ and $E_7(-2)^{\perp_{\NS(S)}}$ (i.e.~ the orthogonal complement in $\NS(S)$ to the copy of $E_7(-2)$ embedded in $\NS(S)$ we are considering) and $\lambda_{\bullet}:\bullet\hookrightarrow \mathbb{T}$ is an explicit embedding. Indeed, for example, one defines $\lambda_M(M)$ to be $\varphi_{\mathbb{T}}^{-1}(\mu(\T_S))$. Vice versa, the data  
$(M,L,\lambda_M, \lambda_L)$ gives the following (non primitive) embedding $$(\varphi_G, \varphi_{\mathbb{T}}\circ \lambda_L,\varphi_{\mathbb{T}}\circ\lambda_M):E_7(-2)\oplus L\oplus M\ra \Lambda_{K3}$$ and  $\Lambda_{K3}$ is the saturation of the image of the map. This determines uniquely the gluing vectors of $\varphi_G(E_7(-2))\oplus  (\varphi_{\mathbb{T}}\circ \lambda_L)(L)$ to its saturation $\Omega$ and of $\Omega\oplus  (\varphi_{\mathbb{T}}\circ \lambda_M)(M)$ to $\Lambda_{K3}$. Applying the map $\mu^{-1}$, this determines uniquely $\NS(S)$($:=\mu^{-1}(\Omega)$) and $\T_S$($:=\mu^{-1}(\varphi_{\mathbb{T}}(\lambda_M(M))$) as well as their gluing vectors in $H^2(S,\Z)$,

{\bf Step 1): $(S,\mu)$ determines $(Y,\eta)$.}
The period map for marked $K3$ surfaces defines a point $p:=\mathcal{P}_{K3}(S,\mu)\in D_{\mathbb{T}}$, where $D_{\mathbb{T}}:=D_{\Lambda_{K3}}\cap\mathbb{P}(\mathbb{T}\otimes\mathbb{C})$. 

Let $\mathcal{M}_{\langle -4\rangle,\psi}$ be the moduli space of marked orbifolds of Nikulin type $(Y,\eta)$ such that $\eta(\NS(Y))\supset \psi_{\langle -4\rangle}(\langle -4\rangle)$ and $\eta(\T_Y)\subset \psi_{\mathbb{T}}(\mathbb{T})$.

Let now $(Y,\eta)\in\mathcal{P}_{\langle -4\rangle,\psi}^{-1}(p)$ be a pair with $Y$ a Nikulin orbifold: it exists by the proof of Theorem \ref{Thm1.1} and it is such that $\eta(\T_Y)=\psi_{\mathbb{T}}(\lambda_M(M))=\psi_{\mathbb{T}}(\varphi_{\mathbb{T}}^{-1}(\mu(\T_S))$. Similarly,  consider the lattice $\langle -4\rangle^{\perp_{\NS(Y)}}$, which is the orthogonal complement in the N\'eron--Severi group of the generator $\Sigma$ of $\langle -4\rangle$ such that $\eta(\mathbb{Z}\Sigma)=\psi_{\langle -4\rangle}(\langle -4\rangle)$. Then  $\eta(\langle -4\rangle^{\perp_{\NS(Y)}})=\psi_{\mathbb{T}}(\lambda_L(L))=\psi_{\mathbb{T}}(\varphi_{\mathbb{T}}^{-1}(\mu(E_7(-2)^{\perp_{\NS(S)}}))$.

{\bf Step 2): $(Y,\eta)$ determines $(F,\alpha)$.} Given the marked Nikulin orbifold $(Y,\eta)$, there exists a $K3$ surface $F$ such that $F$ is the fixed surface of the symplectic involution $\iota$ on the  manifold of $K3^{[2]}$-type $X$ such that $Y$ is the terminalization of $X/\iota$. By Theorem \ref{them: proof of the conj} and Remark \ref{rem: conjecture and marikings} and the map $\widetilde{f}$ in \eqref{eq: Hodge f} is a Hodge isometry which restricts to a Hodge isometry between the transcendental lattices $\T_Y$ and $\T_F$. Therefore there exists a marking $\alpha$ of $F$ such that $\alpha(T_F^G)=\widetilde{f}(T_Y^{ -4})$ and it is such that  $\alpha((T_F^G)^{\perp})=\varphi_G(E_7(-2))$ since it has to be compatible with the ovarelttice determined by the quadruple $(M,L,\lambda_M,\lambda_L)$.

{\bf Step 3) $S$ and $F$ are isomorphic.} The map \begin{equation}\label{eq:big HOdge isom}\widetilde{f}\circ \eta^{-1}\circ \psi_{\mathbb{T}}\circ \varphi^{-1}_{\mathbb{T}}\circ \mu:\ \ \left(\mu^{-1}\circ \varphi_{\mathbb{T}}\right)(\mathbb{T})\ \longrightarrow\ \left(\alpha^{-1}\circ\varphi_{\mathbb{T}}\right)(\mathbb{T})\end{equation}
is a Hodge isometry which maps $\T_S$ to $\T_F$. Observe that, by the definitions of the lattice $M$ and of the map $\lambda_M$,  $\T_S=\left(\mu^{-1}\circ \varphi_{\mathbb{T}}\right)(\lambda_M(M))$, and therefore the isometry between $\T_S$ and $\T_F$ is obtained restricting the previous map to $\lambda_M(M)$, i.e.~ $\T_F=\left(\alpha^{-1}\circ \varphi_{\mathbb{T}}\right)(\lambda_M(M))$. 
So $\widetilde{f}$ induces a Hodge isometry between $\T_S$ and $\T_F$,  and between the lattices $E_7(-2)^{\perp_{\NS(S)}}$ as well as their gluing data. Moreover, such isometries extend to a Hodge isometry $H^2(S,\Z)\ra H^2(F,\Z)$, since, as observed before, all the gluing data depends on the embeddings $\lambda_L$, $\lambda_M$, $\varphi_{\mathbb{T}}$ and $\varphi_G$. Since there exists a Hodge isometry between $H^2(S,\Z)$ and $H^2(F,\Z)$, we conclude that $S\simeq F$, by Torelli theorem (see \cite{PS}). \endproof
\begin{rem}\label{rem: nu* algebraic}{\rm The restriction of $\nu^*$ 
to $\NS(X)^{\iota^*}\subset H^2(X,\Z)^{\iota^*}$ is the sublattice $\nu^*(\NS(X)^{\iota^*})\subset \NS(F)$, and  $\nu^*(\NS(X)^{\iota})$ is orthogonal to the copy of $G$ embedded in $\NS(F)$, i.e.~ to $\alpha^{-1}(\varphi_G(G))$. With the notation on the previous proof $\nu^*(\NS(X)^{\iota})\subset \left(\alpha^{-1}\circ\varphi_{\mathbb{T}}\circ\lambda_L\right)(L)$ where the inclusion has  finite index and therefore the saturation of $\nu^*(\NS(X)^{\iota^*})$ coincides with $\left(\alpha^{-1}\circ\varphi_{\mathbb{T}}\circ\lambda_L\right)(L)$. Again with the notation as before $$\left(\alpha^{-1}\circ\varphi_{\mathbb{T}}\circ\lambda_L\right)(L)\simeq \left(\eta^{-1}\circ \psi_{\mathbb{T}}\circ\lambda_L\right)(L),$$ via $\widetilde{f}$, so that the saturation of $\nu^*(\NS(X)^{\iota^*})$ is isometric to the saturation of $\pi_*(\NS(X)^{\iota^*})$.}\end{rem}

\section{Families of projective generalized Nikulin surfaces}\label{Families of projective generalized Nikulin surfaces}\label{section: Families of projective generalized Nikulin surfaces}
The N\'eron--Severi group of a very general generalized Nikulin surface is the negative definite lattice $E_7(-2)$, by Corollary \ref{cor: very general F are G polarized and TF=TY}, and in particular the very general generalized Nikulin surface is non projective. In this section we consider the projective generalized Nikulin surfaces in order to determine lattice-theoretically  their families and to relate them with the families of manifolds of $K3^{[2]}$-type admitting a symplectic involution of which they are generalized Nikulin surfaces. 
Therefore, we look for results similar to the ones presented in \cite[Section 2]{GS} where the families of projective Nikulin surfaces are described lattice-theoretically and the relation between these families and the ones of  $K3$ surfaces with a symplectic involution of which they are Nikulin surfaces are given. 

In particular, we classify all the lattices which appear as N\'eron--Severi group of projective generalized Nikulin surfaces of minimal Picard number in Corollary \ref{prop: Ns e T of projective F}, we describe the family of projective generalized Nikulin surfaces in Corollary \ref{cor: families of projective K3} and we relate the families of generalized Nikulin surfaces with the associated manifolds of $K3^{[2]}$-type and Nikulin orbifolds in Proposition \ref{prop:relation NS(F), NS(X), NS(W)}. 

\begin{lemma}\label{lemma: X proj iff F proj and rho(F)} Let $X$ be a manifold of $K3^{[2]}$-type admitting a symplectic involution $\iota$ and $F$ its fixed surface. Then, $X$ is projective if and only if $F$ is projective. In this case, $\rho(F)\geq 8$ and $\rho(X)=\rho(F)+1$.\end{lemma}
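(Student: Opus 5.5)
The plan is to compare Picard ranks through the transcendental lattices. For a $K3$ surface, projectivity means that $\NS(F)$ contains a class of positive square, equivalently that $\T_F$ has signature $(2,\rk\T_F-2)$. For a manifold of $K3^{[2]}$-type, Huybrechts' projectivity criterion says the same with respect to the Beauville--Bogomolov form. So we want to relate $\T_X$ and $\T_F$ as lattices up to scaling, compatibly with signatures.

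\emph{Step 1: locate the transcendental lattices.} The involution $\iota$ is symplectic, so $\iota^*$ fixes $H^{2,0}(X)$. Hence $\T_X\subseteq H^2(X,\Z)^{\iota^*}$, and the orthogonal complement $(H^2(X,\Z)^{\iota^*})^\perp\simeq E_8(-2)$ lies in $\NS(X)$. Consequently
\[
\NS(X)=E_8(-2)\oplus \NS(X)^{\iota^*}\quad\text{up to finite index},
\]
and $\rk\NS(X)=8+\rk\NS(X)^{\iota^*}$. In particular $\rk\T_X=15-\rk\NS(X)^{\iota^*}$.

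\emph{Step 2: transfer to $F$.} By Lemma~\ref{lemma: properties of nu*}, the map $\nu^*$ restricted to $H^2(X,\Q)^{\iota^*}$ is injective, and it is a morphism of Hodge structures. By Lemma~\ref{lemma: properties of nu*}(2), the form it induces is $\langle\nu^*x,\nu^*y\rangle_F=\langle\mu_{[F]}x,y\rangle_X$. The proof of Theorem~\ref{them: proof of the conj} gives more: on $H^2(X,\Q)^{\iota^*}$ this pullback form is a positive multiple of the Beauville--Bogomolov form. Indeed $f=\nu^*\pi^*\rho_*$ satisfies $B_Y=\tfrac14\langle f,f\rangle_F$, and on invariant classes $q_Y$ is a positive multiple of $q_X$.

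It follows that $\nu^*$ maps $\T_X\otimes\Q$ isometrically, up to a positive scalar, onto $\T_F\otimes\Q$, as in the proof of Proposition~\ref{main-prop}. Equivalently, Theorem~\ref{them: proof of the conj} gives $\T_F\simeq\T_Y$, and $\T_Y\otimes\Q$ is $\T_X\otimes\Q$ with the form scaled by $2$.

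Therefore $\T_X$ and $\T_F$ have the same rank and the same signature.

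\emph{Step 3: conclude.} Equal signatures give that $X$ is projective if and only if $F$ is projective.

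For the ranks, we have $\rk\T_F=\rk\T_X=23-\rho(X)$ and $\rk\T_F=22-\rho(F)$. Subtracting gives $\rho(X)=\rho(F)+1$.

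In the projective case, Theorem~\ref{thm: generalizes Nikulin iff generalized lattice in NS} gives $E_7(-2)\subset\NS(F)$, which is negative definite. Since $\NS(F)$ must also contain an ample class, $\rho(F)\ge 8$. Alternatively, $\rho(X)\ge 9$ because $\NS(X)$ contains both $E_8(-2)$ and an ample class.

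\emph{Main obstacle.} The delicate point is the claim that the form transported by $\nu^*$ is a \emph{positive} multiple of $q_X$ on the invariant part, since this is what preserves signatures. I would take it from the rational Hodge isometry in \cite[Proposition 3.16]{CGKK} used in Theorem~\ref{them: proof of the conj}. If that is not available, I would argue by deformation instead: the scalar is constant in the family, and it can be checked on $S^{[2]}$, where $\nu^*$ sends an invariant class from $S$ to its image in the Nikulin surface.
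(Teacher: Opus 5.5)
Your proposal is correct and is essentially the paper's argument: compare $\T_F$ with the transcendental lattice of $X$ through the Hodge isometry $\T_F\simeq\T_Y$ (equivalently, through the rational similitude $\nu^*$ on invariant classes, which rescales the form by the positive factor $2$), match ranks and signatures, and get $\rho(F)\geq 8$ from the negative definite $E_7(-2)\subset\NS(F)$ together with an ample class. The only difference is bookkeeping: the paper passes through $\rho(Y)=\rho(X)-7$, whereas you compare $\T_F\otimes\Q$ directly with $\T_X\otimes\Q$, which also shows that $\rho(X)=\rho(F)+1$ holds without assuming projectivity.
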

\begin{proof} Denoted, as above, $Y$ the Nikulin orbifold terminalization of $X/\iota$, we proved in Theorem \ref{them: proof of the conj} that $\T_F\simeq \T_Y$, which in particular implies that the signature of $\T_F$ and $\T_Y$ are the same. 
If $X$ is projective, the quotient variety $X/\iota$ is projective and the same holds true for its terminalization $Y$. Now, $X$ is projective if and only if the signature of $\T_X$ is $(2,20-\rho(X))$. We recall that $\rho(Y)=\rho(X)-7$ (since the anti-invariant sublattice of $\iota$  in $H^2(X,\Z)$, which is isometric to $E_8(-2)$, gets killed by the quotient map, and there is an extra algebraic class in $Y$, arising from the terminalization of the quotient $X/\iota$, see e.g.\cite{CGKK}). Therefore, the signature of $\T_X$ is $(2,20-\rho(X))$ if and only if the signature of $\T_Y$ (which coincides with the one of $\T_F$) is $$(2, 13-(\rho(X)-7))=(2,20-\rho(X))).$$ On the other hand, $F$ is projective if and only if the signature of $\T_F$ is $(2,19-\rho(F))$, and we conclude that $X$ is projective if and only if $F$ is projective and that  $\rho(X)=\rho(F)+1$.

In Theorem \ref{them: proof of the conj} we proved that if $F$ is a generalized Nikulin surface, then there is a primitive embedding  of $E_7(-2)$ in $\NS(F)$. Since $E_7(-2)$ is a negative definite lattice of rank 7, $\rho(F)\geq 7$ and if there exists a positive class in $\NS(F)$, then it is not contained in $E_7(-2)\subset NS(F)$. So, $\rho(F)>7$ and there is a class with a positive self intersection contained in the orthogonal complement to the copy of  $E_7(-2)$ in $\NS(F)$.

Equivalently, one can observe that if $X$ is a projective manifold of $K3^{[2]}$-type with a symplectic involution, then $\rho(X)\geq 9$, which implies $\rho(F)\geq 8$.\end{proof}

We proved that the minimal possible Picard number of a projective generalized Nikulin surface $F$ is $8$ and by the arguments in the proof, it is clear that in this case $\NS(F)$ is an overlattice of finite index of $\langle 2d\rangle \oplus E_7(-2)$ for a positive integer $d$, in which $E_7(-2)$ is primitively embedded and $\langle 2d\rangle$ is spanned by a class $L$ which is a primitive generator of the orthogonal complement of $E_7(-2)$ in $\NS(F)$. In particular, both $\langle 2d\rangle$ and $E_7(-2)$ are primitively embedded in $\NS(F)$. Hence, we are now interested in identifying the lattices which are overlattices of finite index of $\langle 2d\rangle \oplus E_7(-2)$ in which both $\langle 2d\rangle$ and $E_7(-2)$ are primitively embedded. 

\begin{lem}\label{lem}\label{lem: overlattices of <2d>+G}
Let $d\in\mathbb{N}_{>0}$ and 
$\Gamma_d$  be an overlattice of finite index $r$ (possibly 1) of $\langle 2d\rangle\oplus E_7(-2)$ in which both $\langle 2d\rangle$ and $E_7(-2)$ are primitively embedded. Then $\Gamma_d$ is one of the following (and the possibilities are mutually excluding):
\begin{enumerate}
\item $\Gamma_d=\langle 2d\rangle\oplus E_7(-2)$;
\item if $d\equiv 0\mod 4$ then $\Gamma_d=(\langle 2d\rangle\oplus E_7(-2))'$, where $(\langle 2d\rangle\oplus E_7(-2))'$ is the unique (up to isometries) overlattice of index 2 of $\langle 2d\rangle\oplus E_7(-2)$ in which both $\langle 2d\rangle$ and $ E_7(-2)$ are embedded primitively;
\item if $d\equiv 2\mod 4$, then $\Gamma_d$ is either $(\langle 2d\rangle\oplus E_7(-2))'$ or $(\langle 2d\rangle\oplus E_7(-2))^*$ where  $(\langle 2d\rangle\oplus E_7(-2))'$ and $(\langle 2d\rangle\oplus E_7(-2))^*$  are the only two non isometric overlattices of index 2 of $\langle 2d\rangle\oplus E_7(-2)$ in which both and $\langle 2d\rangle$ and $E_7(-2)$ are embedded primitively;
\item if $d\equiv 6\mod 8$, then $\Gamma_d=(\langle 2d\rangle\oplus E_7(-2))^{\bullet}$ where $(\langle 2d\rangle\oplus E_7(-2))^{\bullet}$ is the unique (up to isometries) overlattice of index 4 of $\langle 2d\rangle\oplus E_7(-2)$ in which both and $\langle 2d\rangle\oplus E_7(-2)$  are embedded primitively.
\end{enumerate}
\end{lem}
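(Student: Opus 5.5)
The plan is to use the standard correspondence between finite index overlattices of $\langle 2d\rangle\oplus E_7(-2)$ and isotropic subgroups $H\subset A_{\langle 2d\rangle}\oplus A_{E_7(-2)}$, recalled in the preliminaries (\cite{N}).

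\textbf{Step 1: reduce to graphs of anti-isometries.} Requiring that both summands stay primitive means $H\cap A_{\langle 2d\rangle}=0$ and $H\cap A_{E_7(-2)}=0$. Hence $H$ is the graph of an isomorphism $\gamma\colon H_1\to H_2$, where $H_1\subset A_{\langle 2d\rangle}\simeq\mathbb{Z}/2d\mathbb{Z}\left(\frac{1}{2d}\right)$ and $H_2\subset A_G$. Isotropy of $H$ is equivalent to $q_G(\gamma(x))=-q_{\langle 2d\rangle}(x)$ for all $x\in H_1$. The index is $r=|H_1|$, so the index distinguishes the cases and gives mutual exclusivity, apart from the two index-$2$ lattices in (3).

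\textbf{Step 2: bound the index and decide existence.} $H_1$ is cyclic and $A_G\simeq u(2)^{\oplus 3}\oplus\mathbb{Z}/4\mathbb{Z}\left(\frac14\right)$ (Lemma~\ref{lemma: properties of G}) has exponent $4$, so $r\in\{1,2,4\}$.
\begin{itemize}
\item For $r=2$, $H_1$ is generated by $L/2$ with $q=d/2$, so $d$ must be even. Let $g$ be the generator of $\mathbb{Z}/4\mathbb{Z}$. The order-$2$ elements of $A_G$ are $u$ and $u+2g$ with $u\in u(2)^{\oplus3}$, and $q(u+2g)=q(u)+1$. So their values are $0$ or $1$ mod $2\mathbb{Z}$. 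This forces $d\equiv 0\pmod 4$ (target value $0$) or $d\equiv 2\pmod 4$ (target value $1$), and both are realized.
\item For $r=4$, $H_1=\langle L/4\rangle$ needs $4\mid 2d$, and $q(L/4)=d/8$. Order-$4$ elements of $A_G$ are $\pm g+u$, with $q\in\{\frac14,\frac54\}$ mod $2$. The condition $d/8\equiv -\frac14$ or $-\frac54 \pmod 2$ gives exactly $d\equiv 6\pmod 8$.
\end{itemize}

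\textbf{Step 3: classify up to isometry.} Since $O(G)\to O(A_G)$ is surjective (Lemma~\ref{lemma: properties of G}) and $O(\langle 2d\rangle)=\{\pm1\}$, two overlattices are isometric as soon as the corresponding $H_2$, together with $\gamma$, lie in the same $O(A_G)$-orbit. So I must count $O(A_G)$-orbits of elements of the relevant order and norm.
\begin{itemize}
\item For $q=1$ and order $2$, $2g$ is the unique nonzero element of $2A_G$, so it is fixed by $O(A_G)$. This separates $(\cdot)^*$ (glue to $2g$) from $(\cdot)'$ (glue to an element not divisible by $2$).
\item I then check that the remaining order-$2$ elements with $q=1$ form a single orbit, and likewise those with $q=0$ (this gives uniqueness in (2)). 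The tool is explicit automorphisms of $A_G$ of the form $g\mapsto g+w$, $u\mapsto u+b(u,w)\cdot 2g$-type transvections, which mix elements $u$ with elements $u'+2g$.
\item For order $4$, I show all $\pm g+u$ with the prescribed norm form one orbit by the same kind of transvections.
\end{itemize}
These orbit computations can be double-checked with \cite{oscar}, as for Lemma~\ref{lemma: properties of G}.

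\textbf{Main obstacle.} The hard step is Step 3: proving transitivity of $O(A_G)$ on the relevant sets, especially for $q=0$, where isotropic vectors inside $u(2)^{\oplus 3}$ must be moved to vectors of the form $u+2g$. I would try first to construct the explicit isometry of $A_G$ by hand, and fall back on a computer orbit count if that fails.
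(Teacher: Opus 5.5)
Your set-up is the paper's: Nikulin's correspondence, $H$ as the graph of $\gamma\colon H_1\to H_2$, $r\in\{1,2,4\}$, the conditions $d\equiv 0,2\pmod 4$ for $r=2$ and $d\equiv 6\pmod 8$ for $r=4$, and uniqueness reduced to $O(A_G)$-orbits via surjectivity of $O(G)\to O(A_G)$. The orbit statements you aim for are true, and the paper only asserts them:
\begin{itemize}
\item the nonzero order-$2$ elements with $q=0$ form one orbit of $63$;
\item the order-$2$ elements with $q=1$ split into the $63$ elements outside $2A_G$ and $\{2g\}$;
\item the order-$4$ elements form one orbit of $72$ with $q=\tfrac14$ and one orbit of $56$ with $q=\tfrac54$.
\end{itemize}

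\textbf{The gap.} It is in Step~3, where you separate $(\langle 2d\rangle\oplus E_7(-2))'$ from $(\langle 2d\rangle\oplus E_7(-2))^*$ by noting that $2g$ is $O(A_G)$-fixed. That only shows that no isometry \emph{preserving the sublattice} $\langle 2d\rangle\oplus E_7(-2)$ carries one gluing to the other. An abstract isometry $\Gamma'\to\Gamma^*$ need not map $\langle 2d\rangle\oplus E_7(-2)$ onto itself. So you have not shown the two lattices are non-isometric, which is what ``non isometric'' and ``mutually excluding'' assert. This is also what the next corollary uses, where $\NS(F)$ is identified abstractly.

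\textbf{The fix.} You need an intrinsic invariant; the paper computes $q_{A_{\Gamma_d}}=H^\perp/H$ in each case and observes that the genera differ. Write $\ell=L/2d$ and let $u_1,u_2$ be a standard basis of one copy of $u(2)$, and take $d\equiv 2\pmod 4$.
\begin{itemize}
\item For $H=\langle d\ell+u_1+u_2\rangle$, the class of $\ell+u_1$ lies in $H^\perp$ and has order $2d\equiv 4\pmod 8$ in $H^\perp/H$.
\item For $H=\langle d\ell+2g\rangle$, every $2$-power-torsion $x\in H^\perp$ satisfies $2x\in H$. Hence $A_{\Gamma^*}\simeq \mathbb{Z}/\tfrac d2\mathbb{Z}\oplus(\mathbb{Z}/2\mathbb{Z})^{8}$ has elementary abelian $2$-part.
\end{itemize}
For mutual exclusivity across different indices, state explicitly that $|\det\Gamma_d|=2^9d/r^2$. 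Then $r$ is an isometry invariant once $d$ is fixed.

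\textbf{A smaller point.} The transvection as you wrote it ($g\mapsto g+w$, $u\mapsto u+2b(u,w)\cdot 2g$) is not an isometry: when $b(u,w)=\tfrac12$ it changes $q(u)$ by $1$. What works, for $w\in u(2)^{\oplus 3}$ with $q(w)=0$, is
\begin{itemize}
\item $g\mapsto g+w$,
\item $u\mapsto u+2b(u,w)\,(w+2g)$.
\end{itemize}
This is an isometric involution of $A_G$. For example, with $w=u_1$ it sends $u_2\mapsto u_1+u_2+2g$ and $u_1+u_2\mapsto u_2+2g$. Together with $O(u(2)^{\oplus 3})$ (by Witt's theorem) and $-\mathrm{id}$, it gives all the transitivity you need, so no computer check is required.
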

\begin{proof}
The discriminant form of $E_7(-2)$ is $u(2)^{\oplus 3}\oplus \Z_4(\frac{1}{4})$, generated by the elements $u_i^{(j)}$, $i=1,2$ and $j=1,2,3$ and $v$ (an order 4 element with self intersection $\frac{1}{4}$). The discriminant form of $\langle 2d\rangle$ is generated by $\ell$, an element of order $2d$ and self intersection $\frac{1}{2d}$.

An overlattice of $\langle 2d\rangle\oplus E_7(-2)$ is associated to an 
isotropic subgroup of $\langle u_i^{(j)},v,\ell\rangle$ and the fact that both $\langle 2d\rangle$ and $E_7(-2)$ are primitively embedded in the overlattice implies that any non zero element of the subgroup has  non trivial components both in $\langle u_i^{(j)},v,\rangle$ and in $\langle \ell\rangle$;  as a consequence, since $\langle \ell\rangle$ is cyclic and the elements in $\langle u_i^{(j)},v,\ell\rangle$ have order 2 or 4, this forces the isotropic subgroup to be of order 2 or 4, and cyclic as well. 
The isotropic vector which determines the overlattice 
has necessarily
either $d\ell$ or $\frac{d}{2}\ell$ if $d\equiv 0\mod 2$ as component in  $\langle \ell\rangle$. We observe that $(d\ell)^2=\frac{d}{2}$ and $\left(\frac{d}{2}\ell\right)^2=\frac{d}{8}$.

In $u(2)^{\oplus 3}\oplus \Z_4(\frac{1}{4})$ there are: one orbit of non trivial elements of order 2 with self intersection 0 (represented by $u_1^{(1)}$); two orbits of elements of order 2 and self intersection 1 (represented by $u_1^{(1)}+u_2^{(1)}$ and $2v$); two orbits of elements of order 4, for one of them the self intersection is $\frac{1}{4}$ (and the representative is $v$), for the other the self intersection is $\frac{5}{4}$ (and the representative is $v+u_1^{(1)}+u_2^{(1)}$). So
\begin{itemize}\item if $d\equiv 0 \mod 4$, $d\ell+u_1^{(1)}$ is the unique isotropic vector of order 2 (up to isometries);
\item if $d\equiv 2 \mod 4$, there are exactly two non equivalent isotropic vectors of order 2 up to isometries, which are $d\ell+u_1^{(1)}+u_2^{(2)}$ and $d\ell+2v$
\item if $d\equiv 6 \mod 16$, $\frac{d}{2}\ell+v+u_1^{(1)}+u_2^{(1)}$ is the unique isotropic vector of order 4 (up to isometries);
\item if $d\equiv 14 \mod 16$, $\frac{d}{2}\ell+v$ is the unique isotropic vector of order 4 (up to isometries).\end{itemize}
We see that: if $d\equiv 0\mod 4$ there exists a unique overlattice of index 2, if $d\equiv 6,14\mod 16$ (i.e.~ $d\equiv 6\mod 8$) there are 3 possible overlattices, one of index 4 and two of index 2, if $d\equiv 2 \mod 8$ there are exactly two overlattices both of index 2. No other overlattices satisfying the assumptions can be constructed, up to isometries.

We use \cite[Proposition 1.4.1]{N} to compute the discriminant quadratic form $q_{A_{\Gamma_d}}$ of these overlattices $\Gamma_d$, and we obtain:
\begin{itemize}
    \item if $d\equiv 0 \mod 4$ and $H=\langle d\ell+u_1^{(1)}\rangle$, $H^\perp/H =\langle\ell+u_2^{(1)},u_i^{(j)},\ j=2,3,v\rangle$, hence $q_{A_{\Gamma_d}}:=\Z/2d\Z\left(\frac{1}{2d}\right)\oplus u(2)^{\oplus 2}\oplus \Z/4\Z\left(\frac{1}{4}\right) $; 
\item if $d\equiv 2 \mod 4$ and $H=\langle d\ell+u_1^{(1)}+u_2^{(1)}\rangle$, $H^\perp/H =\langle\ell+u_1^{(1)},
u_i^{(j)},\ j=2,3,v\rangle$, hence $q_{A_{\Gamma_d}}:=\Z/2d\Z\left(\frac{1}{2d}\right)\oplus u(2)^{\oplus 2}\oplus \Z/4\Z\left(\frac{1}{4}\right) $;
\item  if $d\equiv 2 \mod 4$ and $H=\langle d\ell+2v\rangle$, $H^\perp/H =\langle\overline{\ell+v},2v,u_i^{(j)},\ j=1,2,3\rangle$ where $\overline{\ell+v}$ is the class of $\ell+v$ in the quotient by $H$; observe that $d(\ell+v)\in H$ so that $\overline{\ell+v}$ has order $d$. Hence $q_{A_{\Gamma_d}}:=\Z/\frac{d}{2}\Z\left(\frac{d+2}{d}\right)\oplus u(2)^{\oplus 3}\oplus q_k$ where $d=4k+2$ and $q_k$ is as in \ref{eq: form qk}.
\item if $d\equiv 6 \mod 16$ and $H=\langle \frac{d}{2}\ell+v+u_1^{(1)}+u_2^{(1)}\rangle$, $H^\perp/H =\langle\ell+v+u_1^{(1)},\frac{d}{2}\ell+3v,u_i^{(j)},\ j=2,3\rangle$, hence $q_{A_{\Gamma_d}}:=\Z/\frac{d}{2}\Z\left(\frac{8}{d}\right)\oplus v(2)\oplus u(2)^{\oplus 2}$;
\item if $d\equiv 14 \mod 16$ and $H=\langle\frac{d}{2}\ell+v\rangle$, $H^\perp/H =\langle\overline{\ell+3v},u_i^{(j)},\ j=1,2,3\rangle$ where $\overline{\ell+3v}$ is the class of $\ell+3v$ in the quotient by $H$; observe that $\frac{d}{2}(\ell+3v)\in H$ so that $\overline{\ell+3v}$ has order $\frac{d}{2}$. Hence $q_{A_{\Gamma_d}}:=\Z/\frac{d}{2}\Z\left(\frac{d+2}{4d}\right)\oplus u(2)^{\oplus 3}$.
\end{itemize}
This shows that the enumerated overlattices belong indeed to different genera.\end{proof}

In \eqref{disc G} we considered a basis for the discriminant group of $E_7(-2)$, hence, by using that set of generators, we can now exhibit elements in the discriminant group which satisfy the properties of $u_i^{(j)}$ and $v$ appearing in the previous proof. They can be chosen in different ways, which a posteriori coincide up to isometries, and here we fix a choice:
\begin{equation}\label{eq: choice of elements in discriminant of G}u_1^{(1)}=\frac{w_1+w_4}{2},\ \ \ u_2^{(1)}=\frac{w_2+w_4}{2} \mbox{ (observe }u_1^{(1)}+u_2^{(2)}\equiv\frac{w_1+w_2}{2}\mbox{)},\ \ \ v=\frac{w_1-w_2-w_5+w_6+\hat{w}}{4}.\end{equation}

\begin{corollary}\label{prop: Ns e T of projective F}
Let $F$ be a projective generalized Nikulin surface and $\rho(F)=8$, then one of the following holds
\begin{enumerate}
\item $\NS(F)\simeq \langle 2d\rangle\oplus E_7(-2)$, where $d\in\mathbb{N}_{>0}$ and $\T_F\simeq U^{\oplus 2}\oplus\langle -2d\rangle\oplus N\oplus \langle -4\rangle$;
\item $d\equiv 0\mod 2$ and $\NS(F)\simeq (\langle 2d\rangle\oplus E_7(-2))'$ and $\T_F\simeq  U(2)^{\oplus 2}\oplus\langle -2d\rangle\oplus E_8(-1)\oplus \langle -4\rangle$;
\item $d\equiv 2\mod 4$ and $\NS(F)\simeq (\langle 2d\rangle\oplus E_7(-2))^*$ and 
$\T_F\simeq U^{\oplus 2}\oplus K_{\frac{d}{2}}(2)\oplus N$;
\item $d\equiv 6\mod 8$ and $\NS(F)\simeq (\langle 2d\rangle\oplus E_7(-2))^{\bullet}$ and $\T_F\simeq U(2)^{\oplus 2}\oplus K_{\frac{d}{2}}(2)\oplus E_8(-1)$.
\end{enumerate}
\end{corollary}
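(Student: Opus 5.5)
The plan is to combine Lemma \ref{lemma: X proj iff F proj and rho(F)} with Lemma \ref{lem: overlattices of <2d>+G}, and then identify the transcendental lattice in each case from its signature and discriminant form.

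First, the Néron--Severi lattice. By Theorem \ref{thm: generalizes Nikulin iff generalized lattice in NS}, $E_7(-2)$ embeds primitively in $\NS(F)$. Since $\rho(F)=8$ and $F$ is projective, the orthogonal complement of this copy of $E_7(-2)$ in $\NS(F)$ has rank one and is positive, as observed after Lemma \ref{lemma: X proj iff F proj and rho(F)}. It is therefore spanned by a primitive class $L$ with $L^2=2d>0$. Thus $\NS(F)$ is a finite-index overlattice of $\langle 2d\rangle\oplus E_7(-2)$ in which both summands are primitive, and Lemma \ref{lem: overlattices of <2d>+G} gives exactly the four possibilities for $\NS(F)$.

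Second, the transcendental lattice. $\T_F$ is the orthogonal complement of $\NS(F)$ in the unimodular lattice $\Lambda_{K3}$. It therefore has signature $(2,12)$ and discriminant form $-q_{A_{\NS(F)}}$. The forms $q_{A_{\Gamma_d}}$ are computed in the proof of Lemma \ref{lem: overlattices of <2d>+G}. In each case I will check that the proposed lattice has this signature and discriminant form:
\begin{itemize}
\item $U^{\oplus 2}\oplus\langle -2d\rangle\oplus N\oplus\langle -4\rangle$ has form $(-\tfrac{1}{2d})\oplus u(2)^{\oplus 3}\oplus(-\tfrac14)$, which is $-q_{A_{\langle 2d\rangle\oplus E_7(-2)}}$ because $u(2)=-u(2)$.
\item $U(2)^{\oplus2}\oplus\langle -2d\rangle\oplus E_8(-1)\oplus\langle-4\rangle$ has form $u(2)^{\oplus2}\oplus(-\tfrac1{2d})\oplus(-\tfrac14)$, matching the index-2 overlattice $(\cdot)'$.
\item For the two cases involving $K_{d/2}(2)$, with $t=d/2$ odd, I will use the discriminant form of $K_t(2)$ recorded in the preliminaries, namely $\Z/t\Z(-\tfrac{4}{t})\oplus(-q_k)$. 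Its cyclic part and the $q_k$ or $v(2)$ parts should be compared with the forms in the proof of the lemma: $\Z/\tfrac d2\Z(\tfrac{d+2}{d})\oplus u(2)^{\oplus 3}\oplus q_k$ for the lattice $(\cdot)^*$, and the index-4 form for $(\cdot)^\bullet$.
\end{itemize}
In each case I will also verify that the parity conditions on $d$ match the existence of the overlattice.

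Third, uniqueness. To conclude that $\T_F$ is isometric to the proposed lattice, I will invoke \cite[Corollary 1.13.3]{N}: an even indefinite lattice with $\rk\ge 3+\ell(A)$ is unique in its genus. Here the rank is $14$ and the length is at most $8$, so the condition holds. Existence of each case follows from Theorem \ref{thm: generalizes Nikulin iff generalized lattice in NS} together with surjectivity of the period map. This is needed only if one wants all four cases to occur; the statement itself only asserts that one of them holds.

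The main obstacle is the discriminant-form bookkeeping for the cases with $K_{d/2}(2)$. There, the $2$-adic and odd parts of the cyclic factor must be matched carefully; for example, one must identify $\Z/t\Z(-\tfrac4t)$ with the negative of $\Z/\tfrac d2\Z(\tfrac{d+2}{d})$, using $4\cdot\tfrac{d+2}{4d}$-type rescalings by squares mod $t$. I would handle this by splitting into $d\equiv 6,14 \bmod 16$, as in the lemma, and checking the $2$-part against $q_k$ and $v(2)$ separately. A small point to reconcile is that item (2) of the statement says $d\equiv 0\bmod 2$, whereas the lemma gives the $'$ overlattice for $d\equiv 0\bmod 4$ and $d\equiv 2\bmod 4$ separately. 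These two $'$ lattices have the same form shape $\Z/2d\Z(\tfrac1{2d})\oplus u(2)^{\oplus2}\oplus(\tfrac14)$, so a single description of $\T_F$ covers both.
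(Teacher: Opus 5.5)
Your proposal is correct and follows essentially the same route as the paper. The paper likewise pins down $\NS(F)$ via Lemma \ref{lem: overlattices of <2d>+G}, gives $\T_F$ signature $(2,12)$ and discriminant form $-q_{\NS(F)}$, gets uniqueness in the genus from \cite[Corollary 1.13.3]{N}, and matches the $K_{d/2}(2)$ cases by rescaling the odd cyclic part by a square (it records exactly this identification for $d\equiv 14 \bmod 16$). One cosmetic point: Nikulin's corollary only needs $\rk\geq 2+\ell(A)$ for indefinite even lattices, so your bound $3+\ell(A)$ is stronger than necessary, though it is still satisfied here.
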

\begin{proof}
As observed in Lemma \ref{lemma: X proj iff F proj and rho(F)}, if $F$ is a generalized Nikulin surface which is moreover projective, its N\'eron--Severi group contains primitively the lattice $E_7(-2)$ and a positive class, which gives the polarization of the surface. So $\NS(F)$ is an overlattice of $\langle 2d\rangle\oplus E_7(-2)$ in which both $E_7(-2)$ and $\langle 2d\rangle$ are primitively embedded for a certain $d$. If $\rho(F)=8$, the inclusion $\langle 2d\rangle\oplus E_7(-2)\hookrightarrow \NS(F)$ has finite index $r$. This implies that $\NS(F)$ is one of the lattices classified in Lemma \ref{lem: overlattices of <2d>+G}.
Each lattice in Lemma \ref{lem: overlattices of <2d>+G} satisfies the assumptions in \cite[Theorem 1.14.4]{N} and hence admits a unique (up to isometries) primitive embedding in $\Lambda_{K3}$. The discriminant form of $\NS(F)$ is the one computed in Lemma \ref{lem: overlattices of <2d>+G} and the one of $\T_F$ is its opposite. Moreover, by \cite[Corollary 1.13.3]{N}, the lattices $\T_F$ are uniquely determined by their signature and discriminant form. By comparing this discriminant form with the ones given in Section \ref{sec: preliminaries} (and observing that 
$\Z/{\frac{d}{2}}\Z\left(-\frac{d+2}{d}\right)=\Z/{\frac{d}{2}}\Z\left(-\frac{d+2}{4d}\right)$ if $d\equiv 14\mod 16$) one identifies the transcendental lattice as listed above. \end{proof}

\begin{rem}
 Lemma \ref{lem: overlattices of <2d>+G} and Corollary \ref{prop: Ns e T of projective F} correct a mistake contained in \cite[Table 3 \& Proposition 3.6]{CGKK}, where we erroneously stated that the orthogonal of \[\NS(Y_2)=\left[\begin{matrix}
     t-1&2\\
     2&-4
 \end{matrix}\right]\]
 inside $\Lambda_N$ is always isometric to $U(2)^{\oplus 2}\oplus K_{t}(2)\oplus E_7(-1)\oplus \langle -2\rangle $, while this is correct only for  $t\equiv 1 \mod 4$. The correct transcendental of  $Y_2$ in \cite[Proposition 3.6]{CGKK} for any value of $t\equiv 1 \mod 2$ is $ U^{\oplus 2}\oplus K_t(2)\oplus N $.
\end{rem}

\begin{corollary}\label{cor: families of projective K3}
The family of projective generalized Nikulin surfaces up to isomorphisms is the following union of countably many loci, each of dimension 12, of lattice-polarized $K3$ surfaces:
$$\bigcup_{d\in \mathbb{N}} \{(\langle 2d\rangle\oplus G)-\mbox{pol. K3s}\}\cup \bigcup_{d\in 2\mathbb{N}} \{(\langle 2d\rangle\oplus G)'-\mbox{pol. K3s}\}\cup\bigcup_{d\equiv_42} \{(\langle 2d\rangle\oplus G)^\star-\mbox{pol. K3s}\}\cup \bigcup_{d\equiv_86} \{(\langle 2d\rangle\oplus G)^{\bullet}-\mbox{pol. K3s}\}.$$\end{corollary}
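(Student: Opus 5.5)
The plan is to combine Theorem \ref{thm: generalizes Nikulin iff generalized lattice in NS} with Lemma \ref{lemma: X proj iff F proj and rho(F)} and Lemma \ref{lem: overlattices of <2d>+G}, and then to apply the standard theory of lattice-polarized $K3$ surfaces.

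\emph{Every projective generalized Nikulin surface lies in one of the listed loci.} Let $F$ be a projective generalized Nikulin surface. By Theorem \ref{thm: generalizes Nikulin iff generalized lattice in NS}, $G\simeq E_7(-2)$ embeds primitively in $\NS(F)$. Since $G$ is negative definite and $F$ is projective, $\NS(F)$ contains a class of positive square in $G^{\perp_{\NS(F)}}$. Let $L$ be the primitive generator of a rank one primitive sublattice of $G^{\perp_{\NS(F)}}$ spanned by such a class, with $L^2=2d$. The saturation $\Gamma$ of $\Z L\oplus G$ in $\NS(F)$ is then an overlattice of finite index of $\langle 2d\rangle\oplus E_7(-2)$ in which both summands are primitive. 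By Lemma \ref{lem: overlattices of <2d>+G}, $\Gamma$ is one of $\langle 2d\rangle\oplus G$, $(\langle 2d\rangle\oplus G)'$, $(\langle 2d\rangle\oplus G)^*$ or $(\langle 2d\rangle\oplus G)^\bullet$, with the congruence conditions on $d$ stated there. Because $\Gamma$ is primitive in $\NS(F)$, the surface $F$ is $\Gamma$-polarized. I take the union over all $\Gamma$ without requiring $\rho(F)=8$, since surfaces of larger Picard number lie in the closure (as subloci) of these components.

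\emph{Every member of these loci is a generalized Nikulin surface.} Conversely, suppose $S$ is a $\Gamma$-polarized $K3$ surface with $\Gamma$ in the list. Then $G$ is primitive in $\Gamma$, and $\Gamma$ is primitive in $\NS(S)$, so $G$ is primitive in $\NS(S)$. Theorem \ref{thm: generalizes Nikulin iff generalized lattice in NS} therefore gives that $S$ is a generalized Nikulin surface. It is projective because $\NS(S)$ contains the class $L$, which has positive square.

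\emph{Dimension and nonemptiness of each locus.} By Corollary \ref{prop: Ns e T of projective F}, equivalently by \cite[Theorem 1.14.4]{N}, each $\Gamma$ admits a unique primitive embedding in $\Lambda_{K3}$. Its orthogonal complement has signature $(2,12)$. The period domain of $\Gamma$-polarized $K3$ surfaces is therefore an open subset of a quadric in $\PP^{13}$, of dimension $22-8-2=12$. The surjectivity of the period map for $K3$ surfaces ensures that each locus is nonempty and has dimension exactly $12$. Uniqueness of the embedding makes each locus a single family, up to the action of the relevant arithmetic group. Finally, the lattices indexed by different $d$ and different types are pairwise non-isometric, because their discriminant forms differ (Lemma \ref{lem: overlattices of <2d>+G}). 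The union is thus indexed as stated and is countable.

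\emph{Expected obstacle.} The main subtle point is that $F$ may have $\rho(F)>8$. In that case one must choose $L$ so that the saturation is still one of the four listed types. This holds because the argument of Lemma \ref{lem: overlattices of <2d>+G} only used primitivity of the two summands, not the value of $\rho$.
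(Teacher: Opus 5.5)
Your proposal is correct and follows essentially the same argument as the paper. Both inclusions come from the characterization of generalized Nikulin surfaces by a primitive $E_7(-2)\subset \NS(F)$, combined with the classification of overlattices of $\langle 2d\rangle\oplus E_7(-2)$ in which both summands are primitive; as you note, that classification does not depend on $\rho(F)$. Your period-domain count (unique primitive embedding of $\Gamma$ in $\Lambda_{K3}$, orthogonal complement of signature $(2,12)$, surjectivity of the period map) makes explicit the dimension $12$ that the paper leaves implicit. One small correction: the overlattice lemma only compares types at a fixed $d$, so it does not by itself give pairwise non-isometry across different $d$. That follows from the orders and structure of the discriminant groups, and in any case is not needed for the statement.
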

\proof The lattice polarization which appear in the statement are exactly the lattices $\Gamma_d$ in Lemma \ref{lem: overlattices of <2d>+G}. If a $K3$ surface is $\Gamma_d$-polarized, then it is projective and its N\'eron--Severi group primitively contains $E_7(-2)$, therefore it is a generalized Nikulin surface by Theorem \ref{thm: generalizes Nikulin iff generalized lattice in NS}. Viceversa, if $F$ is a projective Nikulin surface, then $\NS(F)$ contains primitively $E_7(-2)$ (by Theorem \ref{thm: generalizes Nikulin iff generalized lattice in NS}) and a class with positive self intersection, orthogonal to $E_7(-2)$, therefore it is $\Gamma_d$-polarized.\endproof

Let $W$ be a $K3$ surface which is $\Gamma_d$-polarized, where $\Gamma_d$ is one of the lattices in Lemma \ref{lem: overlattices of <2d>+G}. Then $W$ is a generalized Nikulin surface, i.e.~ it is a surface contained in a manifold of $K3^{[2]}$-type $X_W$ and fixed by a symplectic involution $\iota\in \Aut(X_W)$. Let us denote $Y_W$ the terminalization of the quotient $X_W/\iota$. So the surface $W$ is associated to two 4-folds $X_W$ and $Y_W$. If $W$ is projective and $\rho(W)=8$, then $X_W$ and $Y_W$ are projective and $\rho(X_W)=9$, $\rho(Y_W)=2$. In particular,  $\NS(W)\simeq \Gamma_d$ for a certain $d\in \mathbb{N}$ and $\Gamma_d$ as in Corollary \ref{prop: Ns e T of projective F} and the pairs  $(\NS(X_W), \T_{X_W})$ and $(\NS(Y_W), \T_{Y_W})$ are classified in \cite{CGKK}. In the following proposition we relate $\NS(W)$ with $(\NS(X_W), \T_{X_W})$ and $(\NS(Y_W), \T_{Y_W})$.

Observe that in view of Corollary \ref{cor: families of projective K3}, we are not just associating to a $K3$ surface $W$ the 4-folds $X_W$ and $Y_W$, but to a family of $K3$ surfaces (the ones $\Gamma_d$-polarized) a family of manifolds of $K3^{[2]}$-type and a family of Nikulin orbifolds (the ones determined by the pairs $(\NS(X_W), \T_{X_W})$ and $(\NS(Y_W), \T_{Y_W})$).

\begin{prop}\label{prop:relation NS(F), NS(X), NS(W)}
Let $W$ be a $K3$ surface which is $\Gamma_d$-polarized, where $\Gamma_d$ is one of the lattices in Lemma \ref{lem: overlattices of <2d>+G}, $X_W$ and $Y_W$ as above. The Picard lattices and the transcendental lattices of $X_W$ and $Y_W$ are uniquely determined by $\Gamma_d$ as in Table \ref{eq: table conrrespondence between NS(F) and family of X}, where we refer to \cite[Section 2]{CGKK} for the notation of the lattices $\Lambda_{t}$, $\widetilde{\Lambda_t}$ and embeddings $j$ inside $\Lambda_{K3^{[2]}}$. 
\begin{align}\label{eq: table conrrespondence between NS(F) and family of X}\begin{array}{|c|c|c|c|c|c|}
\hline
d&\NS(W)&\T_W=\T_{Y_W}& \NS(Y_W)&\NS(X_W)&j\\
\hline  
d\in\mathbb{N}_{>0}&\langle 2d\rangle\oplus G&U^{\oplus 2}\oplus\langle -2d\rangle\oplus N\oplus \langle -4\rangle&\langle 2d\rangle\oplus \langle -4\rangle&\widetilde{\Lambda_{4d}}:=\left(\langle 4d\rangle\oplus E_8(-2)\right)'&\widetilde{j} \\
\hline
d\equiv 0\mbox{ mod }2&(\langle 2d\rangle\oplus G)'&U(2)^{\oplus 2}\oplus\langle -2d\rangle\oplus E_8(-1)\oplus \langle -4\rangle&\langle 2d\rangle\oplus \langle -4\rangle&\langle d\rangle\oplus E_8(-2)&j_1\\
\hline
d\equiv 2\mbox{ mod } 4&(\langle 2d\rangle\oplus G)^*&U^{\oplus 2}\oplus K_{\frac{d}{2}}(2)\oplus N &\left[\begin{array}{cc}\frac{d}{2}-1&2\\2&-4\end{array}\right]&\langle d\rangle\oplus E_8(-2)&j_2\\
\hline
d\equiv 6\mbox{ mod } 8&(\langle 2d\rangle\oplus G)^{\bullet}&U(2)^{\oplus 2}\oplus K_{\frac{d}{2}}(2)\oplus E_8(-1)&\left[\begin{array}{cc}\frac{d}{2}-1&2\\2&-4\end{array}\right]&\langle d\rangle\oplus E_8(-2)&j_3\\
\hline\end{array}\end{align}
\end{prop}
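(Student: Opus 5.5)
The plan is to work one row of Table \ref{eq: table conrrespondence between NS(F) and family of X} at a time. In each case I start from the already computed transcendental lattice $\T_W$, which is listed in Corollary \ref{prop: Ns e T of projective F}, and then transfer it to $Y_W$ and $X_W$.

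By Theorem \ref{them: proof of the conj}, $\T_{Y_W}\simeq \T_W$. This fills the third column directly from Corollary \ref{prop: Ns e T of projective F}.

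Next, $\NS(Y_W)$ is the orthogonal complement of $\T_{Y_W}$ in $\Lambda_N$. It has rank $2$ and contains primitively the class $\Sigma$ with $\Sigma^2=-4$ and $\divi(\Sigma)=2$. Its discriminant form is determined by that of $\T_{Y_W}$ together with the discriminant form of $\Lambda_N$, namely $u(2)^{\oplus 3}\oplus(-\frac12)^{\oplus 2}$, via \cite[Proposition 1.15.1]{N}. I will identify it more concretely using Remark \ref{rem: nu* algebraic}. That remark says the orthogonal complement $L$ of $E_7(-2)$ in $\NS(W)$, which is $\langle 2d\rangle$ generated by the class $L$, is isometric to the orthogonal complement of $\Sigma$ in $\NS(Y_W)$. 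Hence $\NS(Y_W)$ is an overlattice of index $1$ or $2$ of $\langle 2d\rangle\oplus\langle-4\rangle$.

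The two options are as follows.
\begin{itemize}
\item The direct sum $\langle 2d\rangle\oplus\langle -4\rangle$.
\item The overlattice obtained by adding $(L+\Sigma)/2$. This requires $d\equiv 2 \bmod 4$ for the glue vector to be integral and even. Its Gram matrix in the basis $(L+\Sigma)/2,\ \Sigma$ is exactly $\left[\begin{smallmatrix}\frac d2-1&2\\2&-4\end{smallmatrix}\right]$.
\end{itemize}
To decide which option occurs in each row, I compare discriminant groups. The discriminant forms of $\T_W$ computed in Lemma \ref{lem: overlattices of <2d>+G} distinguish the cases. The factor $\Z/2d\Z$ appears in the first two rows, and only $\Z/\frac d2\Z$ appears in the last two. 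Moreover, $|A_{\T_{Y_W}}|$ must match $|A_{\NS(Y_W)}|$ up to the gluing with $A_{\Lambda_N}$. This is a finite computation.

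For $X_W$, I use the index formula from Lemma \ref{lemma: X proj iff F proj and rho(F)}: $\rho(X_W)=9$ and $\NS(X_W)\supset E_8(-2)$ primitively. The invariant part $\NS(X_W)^{\iota}$ has rank one, generated by $H$, and by the explicit map $Q$ from the proof of Theorem \ref{Thm1.1}, $\pi_*$ multiplies squares by $2$ on invariant classes. There are then two cases according to the degree of $H$.
\begin{itemize}
\item If $H^2=4d$, then $\NS(X_W)$ is the index-$2$ overlattice $\widetilde{\Lambda_{4d}}$, with $(H+e)/2$ glued in for $e\in E_8(-2)$.
\item If $H^2=d$, then $\NS(X_W)=\langle d\rangle\oplus E_8(-2)$.
\end{itemize}
I then match these with the classification in \cite[Section 2 and Table 3]{CGKK}, where the corresponding $\NS(Y_W)$ and the embeddings $j$, $j_1$, $j_2$, $j_3$ are listed. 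The correction of \cite[Proposition 3.6]{CGKK} noted in the Remark after Corollary \ref{prop: Ns e T of projective F} is used for the $j_2/j_3$ rows. There, the two different $\T_{Y_W}$ for $d\equiv 2 \bmod 4$ are distinguished by whether $\frac d2\equiv 1$ or $3 \bmod 4$, that is, whether $d\equiv 2$ or $6 \bmod 8$. This pins down which embedding $j$ occurs.

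Uniqueness follows from the fact that each lattice involved is unique in its genus and embeds uniquely, by \cite[Theorem 1.14.4, Corollary 1.13.3]{N}.

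The main obstacle is the matching step between rows $3$ and $4$. Both have the same $\NS(Y_W)$ and the same $\NS(X_W)$ and differ only in the embedding $j$. I will distinguish them by computing the discriminant form of $\T_{X_W}$ for each of $j_2$ and $j_3$, comparing it with the form obtained from $\T_W$ through $\pi_*$, and checking the parity conditions on $d$ directly.
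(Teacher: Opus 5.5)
Your skeleton matches the paper's. You get $\T_{Y_W}\simeq\T_W$ from Theorem~\ref{them: proof of the conj}, take $\T_W$ from Corollary~\ref{prop: Ns e T of projective F}, and then read $\NS(X_W)$ and $j$ off \cite[Table 3]{CGKK}.

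The hands-on steps in between are fine as far as they go:
\begin{itemize}
\item $\divi(\Sigma)=2$ excludes index $4$ over $\langle 2d\rangle\oplus\langle -4\rangle$.
\item The glue $(L+\Sigma)/2$ forces $d\equiv 2\bmod 4$.
\item $H^2\in\{d,4d\}$ is right, since $h_Y^2=2d$ and $\rho^*\pi_*H$ is $h_Y$ or $2h_Y$.
\end{itemize}
You leave the choice between index $1$ and $2$ as a ``finite computation'', but orders of discriminant groups do not decide it: in row $1$ with $d\equiv 2\bmod 4$ both options are numerically consistent. A clean criterion exists. Write $L_Y$, $L_F$ for the generators of $\Sigma^{\perp}\cap\NS(Y_W)$ and $E_7(-2)^{\perp}\cap\NS(W)$. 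Then $(L_Y+\Sigma)/2\in\Lambda_N$ if and only if $L_Y/2\in\mathbb{T}^\vee$ represents the nonzero element of $2A_{\mathbb{T}}$. Since $\tilde f$ sends $L_Y$ to $\pm L_F$, this happens exactly when $\Gamma_d$ is $(\langle 2d\rangle\oplus G)^\star$ or $(\langle 2d\rangle\oplus G)^\bullet$.

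The real problems are in the matching and uniqueness steps.

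\emph{Selecting $j$.} You misread the Remark after Corollary~\ref{prop: Ns e T of projective F}. It does not split $d\equiv 2\bmod 4$ into $j_2$ for $d\equiv 2\bmod 8$ and $j_3$ for $d\equiv 6\bmod 8$. It corrects \cite[Proposition 3.6]{CGKK} to $\T_Y\simeq U^{\oplus 2}\oplus K_t(2)\oplus N$ for \emph{every} odd $t$, which is the row-$3$ lattice. Row $3$ occurs for all $d\equiv 2\bmod 4$ and row $4$ for $d\equiv 6\bmod 8$. So for $d\equiv 6\bmod 8$ both $j_2$ and $j_3$ occur, and $d$ cannot select $j$. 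Only $\Gamma_d$ can, through $\T_W$: for instance $|A_{\T_W}|=2^7d$ in row $3$ against $2^5d$ in row $4$.

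\emph{Uniqueness.} The claim that each lattice ``embeds uniquely'' is false. $\langle d\rangle\oplus E_8(-2)$ has the three inequivalent embeddings $j_1,j_2,j_3$ into $\Lambda_{K3^{[2]}}$, which is the whole point of the last column. Moreover \cite[Theorem 1.14.4]{N} needs a unimodular target, and neither $\Lambda_N$ nor $\Lambda_{K3^{[2]}}$ is unimodular.

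Uniqueness must come, as in the paper, from the fact that the lattices $\T_Y$ attached to distinct rows $(\NS(X),j)$ of \cite[Table 3]{CGKK} are pairwise non-isometric. Their discriminant groups are those of the $\Gamma_d$ in Lemma~\ref{lem: overlattices of <2d>+G}. The $2$-parts have length $8$ or $6$ and are or are not $2$-elementary according to the row, and within a row the order then fixes $d$. Hence $\T_{Y_W}\simeq\T_W$ singles out exactly one row. That row gives $\NS(Y_W)$, $\NS(X_W)$, $j$ and $\T_{X_W}$ at once.
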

\begin{proof} Since $E_7(-2)$ is primitively embedded in $\Gamma_d$, it follows that $W$ is a generalized K3 surface and there exists a Nikulin orbifold $Y_W$ such that $\T_W$ and $\T_{Y_W}$ are Hodge isometric, by Theorem \ref{them: proof of the conj}. Since the choice of $\Gamma_d=\NS(W)$ uniquely determines $\T_W$, it also uniquely determines $\T_{Y_W}$ and therefore $\NS(Y_W)$, by \cite[Table 3]{CGKK}. Moreover, in \cite[Table 3]{CGKK} the data $(\T_{Y_W}, \NS(Y_W))$ is associated to the choice of $\NS(X_W)$ and of its embedding in $U^{\oplus 3}\oplus E_8(-1)^{\oplus 2}\oplus \langle -2\rangle^{\oplus 2}$ (and once one fixes this embedding, one also determines $\T_{X_W}$).\end{proof}

Let $F$ be a generalized Nikulin surface and let $X$ (resp. $Y$) be the associated  4-fold of $K3^{[2]}$-type with a symplectic involution $\iota$ (resp. the terminalization of $X/\iota$). Assume $\rho(F)=8$ and denote $h_F$ the effective generator of the orthogonal complement of $E_7(-2)$ in $\NS(F)$; then $\NS(X)$ (resp. $\NS(Y)$) is an overlattice of finite  index (possibly one) of $\Z h_X\oplus  E_8(-2)$ (resp. $\Z h_Y\oplus \langle -4\rangle$), see e.g. \cite{CGKK}. We are now interested in comparing $h_F$, $h_X$ and $h_Y$. In the proof of Theorem \ref{thm: generalizes Nikulin iff generalized lattice in NS}, we attach to the $K3$ surface $F$ two sublattices $M$ and $L$ of $\mathbb{T}$, which are respectively isometric to the transcendental lattice $\T_F$ and to the lattice $\langle h_F\rangle$. By the proof of Theorem \ref{thm: generalizes Nikulin iff generalized lattice in NS} the sublattice $\lambda_L(L)\subset \mathbb{T}\simeq U(2)^{\oplus 3}\oplus E_8(-1)\oplus \langle -4\rangle$ has to coincide with  $\psi_{\mathbb{T}}^{-1}(h_F)$ where $\psi_{\mathbb{T}}$ is the isometry between $T_Y^{ -4}$ and $\mathbb{T}$ given in Remark \ref{rem: explicitl psi}. Moreover, $h_Y$ is the primitive generator of $\langle \rho^*\pi_*(h_X)\rangle$ and the class $\rho^*\pi_*(h_X)$ is computed in \cite[Proposition 3.5, 3.6, 3.7, 3.8]{CGKK}. Therefore, we obtain $\langle h_F\rangle=\langle\nu^*(h_X)\rangle \simeq \langle\rho^*\pi_*(h_X)\rangle=\langle h_Y\rangle$ for the embedding $j_r$, $r=1,2,3$ where $j_r$ are as in the Table \ref{eq: table conrrespondence between NS(F) and family of X}, and $\langle h_F\rangle \simeq \langle (\rho^*\pi_*(h_X))/2\rangle=\langle h_Y\rangle$ 
for the embedding $\widetilde{j}$. Moreover, by \cite[Proposition 3.5, 3.6, 3.7, 3.8]{CGKK}, one can explicitly describe $\varphi(h_F)=\psi(h_Y)$ as vectors in $\mathbb{T}\simeq U(2)^{\oplus 3}\oplus E_8(-1)\oplus \langle -4\rangle$, obtaining:
\begin{align}\label{eq:psihY}\psi(h_Y)=\left\{\begin{array}{ll}\left(\left(\begin{array}{c}1\\\frac{d}{2}\end{array}\right),\underline{0}, \underline{0}, \underline{0},0\right)&\mbox{ for }j_1\mbox{ and }h_Y^2=2d\mbox{ and }\NS(F)=\left(\langle 2d\rangle\oplus E_7(-2)\right)';\\
\left(\left(\begin{array} {c}2\\2k+2\end{array}\right),\underline{0}, \underline{0}, 2\underline{x},1\right)&\mbox{ for }j_2\mbox{ and }h_Y^2=2d\mbox{ and }\NS(F)=\left(\langle 2d\rangle\oplus E_7(-2)\right)^{\star};\\
\left(\left(\begin{array}{c}2\\(d+2)/4\end{array}\right),\underline{0}, \underline{0}, \underline{0},1\right)&\mbox{ for }j_3\mbox{ and }h_Y^2=2d\mbox{ and }\NS(F)=\left(\langle 2d\rangle\oplus E_7(-2)\right)^{\bullet};\\
\left(\left(\begin{array} {c}1\\k+1\end{array}\right),\underline{0}, \underline{0}, \underline{x},0\right)&\mbox{ for }\widetilde{j}\mbox{ and }h_Y^2=2d\mbox{ and }\NS(F)=\langle 2d\rangle\oplus E_7(-2)\end{array}\right.\end{align}
where the expressions of the vectors are given in $U(2)^{\oplus 3}\oplus E_8(-1)\oplus \langle -4 \rangle$ and in the case $j_2$ (resp. $\widetilde{j}$) $\underline{x}=e_1$ if $d=8k+2$ (resp. $d=2k+1$) and  $\underline{x}=e_1+e_3$ if $d=8k-2$ (resp. $d=2k$).

\begin{rem}\label{rem: the class HF restriction of HX}{\rm Observe that both the transcendental lattices $\T_Y$ and $\T_F$ can be computed as the orthogonal sublattice of $\psi(h_Y)$ in $\mathbb{T}$, where $\psi(h_Y)$ is given in \eqref{eq:psihY}. Moreover, one sees that $h_F$, i.e.~ the primitive generator of $\left(E_7(-2)\right)^{\perp}$ in $\NS(F)$, is the restriction to $F$ of divisor $h_X$ (possibly divided by two), which is the ample generator of $\NS(X)^{\iota^*}$, since it is the class $\varphi_{\mathbb{R}}^{-1}(\psi_{\mathbb{T}}(h_Y))\in H^2(F,\Z)$.}
\end{rem}

In \cite{CGKK} we associated to each manifold of $K3^{[2]}$-type $X$ with a symplectic involution $\iota$ a $K3$ surface, such that $X$ is a moduli space of stable (twisted) sheaves on that $K3$ surface. For this purpose we introduced two different kinds of $K3$ surfaces $S_d$ and $Z_d$, which are identified by their N\'eron--Severi in the following way: $\NS(S_d)= \langle 2d\rangle\oplus \langle -2\rangle^{\oplus 7}$ and $\NS(Z_d)=(\langle 2d\rangle\oplus \langle -2\rangle^{\oplus 7})'$, where the latter lattice is the overlattice of index 2 of $\NS(S_d)$ constructed by adding $\sum b_i/2$, with $\{b_i\}_i$ a basis of $\NS(S_d)=\langle 2d\rangle\oplus \langle -2\rangle^{\oplus 7}$. We observe that in certain cases these surfaces are also either generalized Nikulin surfaces, or geometrically associated to generalized Nikulin surfaces. For this reason we recall the relation proved in \cite{CGKK} between the families of  manifolds of $K3^{[2]}$-type (described as in Table \ref{eq: table conrrespondence between NS(F) and family of X}, by giving the N\'eron--Severi group of $X$ and its embedding in $\Lambda_{K3^{[2]}}$)
and these surfaces. 
\begin{align}\label{eq: X as twisted moduli space of sheaf}\ \ \ \ \ \ \ \ \ \ \ \ \begin{array}{|c|c|c|c|c|c|}
\hline
\NS(X)&j&\mbox{ description of }X&(\NS(K3),\T_{K3}) (K3=S_d,\ Z_d)\\
\hline  
\widetilde{\langle 4d\rangle\oplus E_8(-2)}&\widetilde{j}&M_v(S_{4d})&(\langle 4d\rangle\oplus \langle -2\rangle^{\oplus 7}, U^{\oplus 2}\oplus D_4(-1)\oplus \langle -4d\rangle\oplus \langle -2\rangle^{\oplus 5})\\
\langle d\rangle\oplus E_8(-2)&j_1&M_v(S_{d},\beta)&(\langle d\rangle\oplus \langle -2\rangle^{\oplus 7}, U^{\oplus 2}\oplus D_4(-1)\oplus \langle -d\rangle\oplus \langle -2\rangle^{\oplus 5})\\
\langle d  \rangle\oplus E_8(-2)&j_2&S_{d}^{[2]}&(\langle d\rangle\oplus \langle -2\rangle^{\oplus 7}, U^{\oplus 2}\oplus D_4(-1)\oplus \langle - d\rangle\oplus \langle -2\rangle^{\oplus 5})\\
\langle d\rangle\oplus E_8(-2)&j_3&M_v(Z_{d},\beta)&((\langle d\rangle\oplus \langle -2\rangle^{\oplus 7})', U^{\oplus 2}\oplus N\oplus K_{d/2})\\
\hline
\end{array}\end{align}
The symbol $M_v(V)$ means that $X$ is a moduli space of stable sheaves on $V$ for an appropriate Mukai vector, $M_v(V,\beta)$ means that $X$ is a moduli space of stable twisted sheaves on $V$ for an appropriate Mukai vector $v$ and a Brauer class $\beta$ (the precise vectors and Brauer classes can be found in \cite{CGKK}).

\begin{cor}\label{cor: isom between NS(F) and NS(S),NS(Z)}  
The are the following isometries:
\begin{enumerate}
\item if $d\equiv \pm 1\mod 8$ and $2$ is a square mod $d$, then $\langle 2d\rangle\oplus E_7(-2)\simeq \langle 4d\rangle \oplus \langle -2\rangle^{\oplus 7}\simeq \NS(S_{2d})$.
\item if $d\equiv 2\mod 16$ and $2$ is a square mod $\frac{d}{2}$, $(\langle 2d\rangle\oplus E_7(-2))^*\simeq \langle d\rangle \oplus \langle -2\rangle^7\simeq \NS(S_{d/2})$.
\end{enumerate}\end{cor}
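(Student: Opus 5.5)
The plan is to compare genera. Each side is an even lattice of signature $(1,7)$. By Nikulin's uniqueness criterion \cite[Corollary 1.13.3]{N}, an even indefinite lattice is determined by its signature and discriminant form as soon as its rank is at least $2+\ell(A)$, where $\ell$ is the minimal number of generators. So in each case I will compute both discriminant forms and show that they agree. Then I will check that the rank condition, or a $2$-adic refinement of it such as \cite[Theorem 1.14.2]{N}, forces the isometry.

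\textbf{Case (1).} The lattice $\langle 2d\rangle\oplus E_7(-2)$ has discriminant form $\Z/2d\Z(\tfrac1{2d})\oplus u(2)^{\oplus 3}\oplus \Z/4\Z(\tfrac14)$. The lattice $\langle 4d\rangle\oplus\langle -2\rangle^{\oplus 7}$ has discriminant form $\Z/4d\Z(\tfrac1{4d})\oplus(-\tfrac12)^{\oplus 7}$. With $d$ odd, the odd part of the first form is $\Z/d\Z(\tfrac{a}{d})$ with $a\equiv 8^{-1}\cdot\ldots$. Concretely, writing $\Z/2d=\Z/2\times\Z/d$, the odd part of the first form is generated by $2\ell$, with value $4/(2d)=2/d$. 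The odd part of the second form is generated by $4\ell'$, with value $16/(4d)=4/d$. These differ by the factor $2$, so they are isometric exactly when $2$ is a square mod $d$; this explains that hypothesis.

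For the $2$-parts, I compare $\Z/2(\tfrac{d}{2})\oplus u(2)^{\oplus 3}\oplus\Z/4(\tfrac14)$ with $\Z/4(\tfrac{d}{4})\oplus(-\tfrac12)^{\oplus7}$. I will use the standard relations among finite quadratic forms on $2$-groups: $(\tfrac12)^{\oplus 2}\oplus(-\tfrac12)\cong u(2)\oplus(\tfrac12)$, $(-\tfrac12)^{\oplus 4}\cong u(2)^{\oplus 2}$ or $v(2)^{\oplus2}$, and the relations mixing $\Z/4$ and $\Z/2$ summands. The condition $d\equiv\pm1\pmod 8$ makes the total Gauss sums (equivalently the signatures mod $8$ and the oddity) match. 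I will verify this explicitly on generators, or more economically by invoking the classification of $2$-adic forms via oddity and $2$-excess. Both lattices have rank $8$ and $\ell(A_2)=8$. Nikulin's rank inequality therefore fails, so I will instead apply \cite[Theorem 1.14.2]{N} in its $2$-adic form: its hypothesis is met because the $2$-part contains a summand $u(2)$ on one side. The cleaner route is to exhibit an explicit primitive vector of square $4d$ in $\langle 2d\rangle\oplus E_7(-2)$ whose orthogonal complement is $\langle -2\rangle^{\oplus7}$ with the right gluing. I expect this explicit alternative to be the main obstacle, and I would attempt it first using the basis $w_i,\hat w$ from \eqref{disc G}.

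\textbf{Case (2).} This proceeds identically, starting from the discriminant form of $(\langle 2d\rangle\oplus E_7(-2))^*$ computed in Lemma \ref{lem: overlattices of <2d>+G}, namely $\Z/\tfrac d2\Z(\tfrac{d+2}{d})\oplus u(2)^{\oplus3}\oplus q_k$ with $d=4k+2$. Here $d\equiv 2\pmod{16}$ gives $k\equiv 0\pmod 4$, so $q_k=(\tfrac12)^{\oplus2}$. I compare this with the form of $\langle d\rangle\oplus\langle-2\rangle^{\oplus7}$, which is $\Z/d\Z(\tfrac1d)\oplus(-\tfrac12)^{\oplus7}$. On the odd parts, $(d+2)/d$ versus $4/d$ modulo $\tfrac d2$ again differ by a factor that is a square iff $2$ is a square mod $\tfrac d2$. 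The $2$-parts match by the same relations, using $d/2\equiv 1\pmod 8$. Uniqueness in the genus then follows by the same argument as in case (1).
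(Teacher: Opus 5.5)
Your argument is the paper's: compare the discriminant forms prime by prime, then conclude by uniqueness in the genus. The odd parts differ by the factor $2$, which is where ``$2$ is a square'' enters, and the $2$-parts agree under the congruence conditions. On the uniqueness step you are more precise than the paper, which simply asserts it. Since $\mathrm{rk}=\ell(A_2)=8$, \cite[Corollary~1.13.3]{N} does not apply, whereas \cite[Theorem~1.14.2]{N} does, because the common $2$-part contains $u(2)$. So the explicit-vector route you plan to ``attempt first'' is not needed.

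One ingredient you list is wrong and should not be used. The form $(-\tfrac12)^{\oplus 4}$ is not isometric to $u(2)^{\oplus 2}$ or $v(2)^{\oplus 2}$: it takes the non-integral value $-\tfrac12$, while $u(2)$ and $v(2)$ take only integral values mod $2$. The relations that actually do the job are
$(-\tfrac12)^{\oplus 3}\cong v(2)\oplus(\tfrac12)$, $v(2)^{\oplus 2}\cong u(2)^{\oplus 2}$ and $(\tfrac12)^{\oplus 2}\oplus(-\tfrac12)\cong u(2)\oplus(\tfrac12)$. Together they give $(-\tfrac12)^{\oplus 7}\cong u(2)^{\oplus 3}\oplus(\tfrac12)$.

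This settles $d\equiv 1\pmod 8$ in (1) at once. It also settles (2): the $2$-part of $\Z/d\Z(\tfrac1d)$ is $(\tfrac12)$ because $\tfrac d2\equiv 1\pmod 4$, so both sides are $u(2)^{\oplus 3}\oplus(\tfrac12)^{\oplus 2}=u(2)^{\oplus 3}\oplus q_0$.

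For $d\equiv 7\pmod 8$ in (1) you additionally need $\Z/4\Z(-\tfrac14)\oplus(\tfrac12)\cong\Z/4\Z(\tfrac14)\oplus(-\tfrac12)$. If $x,y$ generate the left side, then $x+y$ and $2x+y$ are orthogonal, generate, and satisfy $q(x+y)=\tfrac14$, $q(2x+y)=-\tfrac12$.

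Matching Gauss sums is in general only a necessary condition for $2$-adic forms with a $\Z/4\Z$ summand. So carry out this explicit check rather than relying on the signature count.
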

\begin{proof} It suffices to observe that all the lattices listed above are uniquely determined by their discriminant form and that the lattices in the pairs which are stated to be isometric have the same discriminant form.

Indeed, the discriminant quadratic form of $\langle 2d\rangle\oplus E_7(-2)$ is $\left(\frac{1}{2d}\right)\oplus u(2)^{\oplus 3}\oplus \left(\frac{1}{4}\right)$; if $d\equiv\pm 1\mod 8$ its $2$-adic component equals the $2$-adic component of $A_{\langle 4d\rangle \oplus \langle -2\rangle^{\oplus 7}}$ by \cite[Proposition 1.8.2 (g)]{N}. Moreover, the two cyclic summands of order $d$, which are respectively $\left(\frac{2}{d}\right)$ and $\left(\frac{4}{d}\right)$, have isometric forms only if  $2$ is a square mod $d$ (since $d$ is odd, this implies that any square root of $2$ is coprime with $d$).

An analogous proof holds for (2).\end{proof}
Observe that in $(1)$ the condition is satisfied if $d\equiv \pm1\mod 8$ and is a prime number. 

\begin{remark}{\rm To the same very general projective manifold of $K3^{[2]}$-type $X$ admitting a symplectic involution $\iota$, we associate two different $K3$ surfaces: $F_X$, which is the fixed locus of $\iota$ on $X$ and the $K3$ surface $V_X$, which is either of type $S_d$ or of type $Z_d$ as in Table \eqref{eq: X as twisted moduli space of sheaf} and it is such that $X=M_v(V_X,\beta)$ (possibly $\beta=0$), i.e.~ $X$ is a moduli space of stable (possibly twisted) sheaves on $V_X$. By Corollary \ref{cor: isom between NS(F) and NS(S),NS(Z)}, we have $\NS(F_X)\simeq \NS(V_X)$ if $X$ is in the families associated to the embeddings $j_2$ or $\tilde{j}$ and $d$ satisfies some numerical conditions.
A natural question is whether $F_X$ is isomorphic to $V_X$.  
We show in Section \ref{subsec: F degree 4 star} that they are when the embedding is $j_2$ and the degree is  $d=2$.
The situation becomes more complicated in the case of $\tilde{j}$ with $d=4$ as described in Problem \ref{prob14.1}. }\end{remark}

We also observe that there are some relations among the projective generalized Nikulin surfaces with different polarizations, in particular some of them are moduli space of (twisted) sheaves on the others. 
\begin{remark}
The following relations between moduli spaces hold:
\begin{itemize}
\item Let $F$ be such that $\NS(F)=\langle 8d\rangle\oplus E_7(-2)$ and let $H$ be the polarization of degree $8d$ orthogonal to $E_7(-2)$, then the surface $Q=M_v(F)$ where $v=(2,H,2d)$ is a $K3$ surface with $\NS(Q)\simeq \langle 2d\rangle\oplus E_7(-2)$.
\item Let $F$ be such that $\NS(F)=\langle 2d\rangle\oplus E_7(-2)$ and let $A$ be the polarization of degree $2d$ orthogonal to $E_7(-2)$. We can assume that $E_7(-2)$ is embedded in $E_8(-1)^{\oplus 2}\subset H^2(F,\Z)$ and  that $A=du_1^{(1)}+u_2^{(1)}\in U\subset H^2(F,\Z)$. Consider the $B$-field $B:=\frac{1}{2}(u_1^{(1)}+u_2^{(2)})$ 
and the Mukai vector $v_B:=(2d,2dB+A,1)$. Let $R$ be  the surface $R=M_{v_B}(F,\beta)$, then $\NS(R)=\langle 8d\rangle \oplus E_7(-2)$.
\end{itemize}
The previous results follows by the similar results on $\langle 8d\rangle$ and $\langle 2d\rangle$-polarized $K3$ surfaces.

In particular, if $d\equiv 1\mod 2$, then \begin{itemize}\item$(\langle 2d\rangle\oplus E_7(-2))$-polarized $K3$ surfaces are moduli spaces of stable sheaves over a $(\langle 2(4d)\rangle\oplus E_7(-2))$-polarized $K3$ surface, i.e.~  $S_{2d}$ is a moduli space of stable sheaves over $F$ with $\NS(F)=\langle 2(4d)\rangle\oplus E_7(-2)$.

\item $(\langle 8d\rangle\oplus E_7(-2))$-polarized $K3$ surfaces are  moduli spaces of stable twisted sheaves over $S_{2d}$.

In Proposition \ref{prop: 8+G and 2+G related by twisted moduli space} we give a geometric interpretation of the previous 
result if $d=1$.
\end{itemize}
\end{remark}

\subsection{Remarks on projective generalized Nikulin surfaces and projective Nikulin surfaces}
The construction of the N\'eron--Severi lattice of a very general generalized Nikulin surface is obtained by considering the orthogonal to $\sum_iE_i$ in the Nikulin lattice $N$, as explained in Section \ref{section: proofs of 2nd theorem and definition of Generalized Nikulin lattice}. This construction depends on the fact that the very general generalized Nikulin surface $F$ appears as a natural deformation of a very general Nikulin surface $\widetilde{S/\iota_S}$ and that  $\sum_iE_i=\nu^*(\Delta)$. The class $\Delta$ is the class which is algebraic for the  manifold of $K3^{[2]}$-type whose fixed surface is $\widetilde{S/\iota_S}$, and it is transcendental for the  manifold of $K3^{[2]}$-type whose fixed surface is $F$, so that the deformation under consideration consists in moving $\Delta$ from the algebraic part to the transcendental part of the second cohomology group.

Now, let $F$ be a projective generalized Nikulin surfaces (in particular it is not very general); one expects that its N\'eron--Severi group can be obtained as the orthogonal complement to $\nu^*(t)$ for a given $t\in \Lambda_{K3^{[2]}}$ which moves from the algebraic to the transcendental part of the second cohomology group under a specific deformation, but this does not implies that $\nu^*(t)=\sum_iE_i$ and indeed $t$ is not necessarily $\Delta$, as we will see. We already observed that $\NS(F)$ is determined by the choice of the ample divisor $h_X$ in $\NS(X)$ and in particular by the choice of the embedding of such a divisor in $\Lambda_{K3^{[2]}}$. We discuss explicitly two cases: in both of them $\NS(X)=\Z h_X\oplus E_8(-2)$, but we consider the two different embeddings $j_1$ and $j_2$ of $h_X$ in $\Lambda_{K3^{[2]}}$. In the first case $t$ is $\Delta$, and in the second one it is a different class. 

Let us fix the following embedding of $h_X$ in $U^{\oplus 2}\oplus E_8(-1)^{\oplus 2}\oplus\langle -2\rangle$:
\[
j_1(h_X):=\left(\left(\begin{array}{c}1\\d\end{array}\right),
\left(\begin{array}{c}0\\0\end{array}\right),
\left(\begin{array}{c}0\\0\end{array}\right),
\underline{0}, \underline{0},0\right),
\]
(see \cite[Proposition 2.5]{CGKK} for details) and observe that the class $\Delta$ is contained in $\T_X$.
Now we specialize $X$ is such a way that it becomes $S^{[2]}$ for a certain $K3$ surface $S$ endowed with a symplectic involution $\iota_S$, since this corresponds to specializing the generalized Nikulin surface $F$ to the Nikulin surface $\widetilde{S/\iota_S}$. This specialization corresponds to requiring that $\Delta$ becomes algebraic: under our assumptions, $\NS(X)\simeq \langle 2d\rangle\oplus E_8(-2)\oplus \langle -2\rangle$, so $\NS(S)\simeq \langle 2d\rangle\oplus E_8(-2)$ and $\NS(\widetilde{S/\iota})=(\langle 4d\rangle\oplus N)'$, which is the unique even overlattice of index 2 of $\langle 4d\rangle\oplus N$ in which both the summands are primitively embedded. 
To deduce from this the N\'eron--Severi group of the surface $F$ fixed by $\iota$ on $X$, one requires that $\nu^*\Delta=\sum_i E_i\subset \NS(\widetilde{S/\iota})$ becomes transcendental.
So $\NS(F)$ is the orthogonal to $\sum_iE_i=\nu^*(\Delta)$ in $\NS(\widetilde{S/\iota_S})$, which is isometric to $\NS(F)=(\langle 4d\rangle\oplus E_7(-2))'$. 

Let us now consider the embedding $j_2$ with $\NS(X)=\langle 8k+2\rangle\oplus E_8(-2)$:
$$j_2(h_X)=\left(\left(\begin{array}{c}2\\2k+2\end{array}\right),\underline{0},\underline{0},\underline{e_1},\underline{e_1},1\right).$$ The class $\Delta$ is neither transcendental (indeed its intersection with $j_2(h_X)$ is non zero) nor algebraic. In order to specialize $X$ to $S^{[2]}$ for a certain $K3$ surface $S$ with a symplectic involution $\iota_S$, one needs to require that a class $t\in \T_X$ becomes algebraic and that this implies that also $\Delta$ becomes algebraic for $X$. 
The required specialization is obtained for example by choosing $t:=\left(\left(\begin{array}{c}2\\2k+2\end{array}\right),\underline{0},\underline{0},\underline{e_1},\underline{e_1},4k+2\right)\in \T_X$: indeed $t\cdot j_2(h_X)=0$ and therefore $t\in \T_X$; moreover, if $X'$ is a  manifold of $K3^{[2]}$-type such that $\NS(X')$ is an overlattice of finite index of $\NS(X)\oplus \Z t$, then $\Delta=(t-j_2(h_X))/(4k+1)\in \NS(X')$, and in particular it is algebraic. This allows to determine a $\Z$ basis for $\NS(X')$, given by $$j_2(h_X)-\Delta=\left(\left(\begin{array}{c}2\\2k+2\end{array}\right),\underline{0},\underline{0},\underline{e_1},\underline{e_1},0\right),\ 
\left(\underline{0},\underline{0},\underline{0},\underline{e_i},-\underline{e_i},0\right)\ {i=1,\ldots, 8},\ \Delta=\left(\underline{0},\underline{0},\underline{0},\underline{0},\underline{0},1\right)$$ Computing the intersection form on this basis one obtains $\NS(X')\simeq (\langle 8k+4\rangle\oplus E_8(-2))'\oplus \langle -2\rangle$, which implies that $X'$ is the Hilbert $S^{[2]}$ for a K3 surface $S$ such that $\NS(S)=\left(\langle 8k+4\rangle\oplus E_8(-2)\right)'$. Moreover, one sees that the symplectic involution on $X'$ is indeed the natural involution induced by the symplectic involution on $S$ acting as minus the identity on the copy of $E_8(-2)$ primitively embedded in $\NS(S)$.
Since $S$ is a $K3$ surface with a symplectic involution such that $\NS(S)=\left(\langle 8k+4\rangle\oplus E_8(-2)\right)'$, $\NS(\widetilde{S/\iota_S})\simeq \langle 4k+2\rangle\oplus N$.

To compute the N\'eron--Severi group of the surface $F$ fixed by $\iota_X$ on $X$, we consider the $K3$ surface $\widetilde{S/\iota_S}$ and we require that the class $\nu^*(t)$ becomes transcendental. Observe that $t=j_2(h_X)+(4k+1)\Delta$ so we have to compute the orthogonal complement to $\nu^*(j_2(h_X)+(4k+1)\Delta)$ in $\NS(\widetilde{S/\iota})$. This class corresponds to $$2L+(4k+2)\sum_{i=1}^8E_i\in \NS(\widetilde{S/\iota_S}),$$ where $L$ is the generator of the summand $\langle 4k+2\rangle$ in $\NS(\widetilde{S/\iota_S})\simeq \langle 4k+2\rangle\oplus N$.

\section{Projective models of generalized Nikulin surfaces}\label{section:projective models}
The aim of this section is to describe the projective models of some generalized Nikulin surfaces $F$. 
We first state some general results and then we shall consider a case by case analysis for small degrees. This generalizes the results proved in \cite[Sections 3 and 4]{GS} for Nikulin surfaces; in a specific case (i.e.~ $\Gamma_d\simeq \langle 8\rangle\oplus E_7(-2)$), the geometric description of $F$ allows us to determine the unirationality of the space which parametrizes the surfaces $F$, in the same spirit of the results proved  in \cite{V} for the Nikulin surfaces of degree 8.\\ 

Let $F$ be a generalized Nikulin surface whose N\'eron--Severi group is isometric to $\Gamma_d$ as in Lemma \ref{lem: overlattices of <2d>+G}. Then $E_7(-2)$ is primitively embedded in $\NS(F)$ and its orthogonal complement is generated by a class with positive self-intersection. Up to a change of sign, we can take it to be effective and we denote $h$ the effective generator  of the orthgonoal to $E_7(-2)$. By Remark \ref{rem: the class HF restriction of HX}, the class $h$ is either the restriction of the primitive $\iota$-invariant polarization on $X$, or  its saturation (if the restriction is not already primitive in $\NS(F)$), where $(X,\iota)$ are such that $F$ is the surface in $X$ fixed by $\iota$.
\begin{lemma}\label{lemma: h ample} If $\NS(F)\simeq \Gamma_d$ (see Lemma \ref{lem: overlattices of <2d>+G}) and $h$ is the effective generator of the orthogonal complement of $E_7(-2)$ in $\NS(F)$, then $h$ is ample.\end{lemma}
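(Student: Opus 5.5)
We will show that $h$ has positive intersection with every smooth rational curve on $F$. Since $h^2=2d>0$ and $h$ is effective, $h$ lies in the positive cone of $F$. On a $K3$ surface, a class in the positive cone is ample if and only if it is positive on every $(-2)$-curve. So it is enough to show that $\NS(F)\simeq\Gamma_d$ contains no $(-2)$-class $\delta$ with $h\cdot\delta=0$. Indeed, if there is no such class, every effective $(-2)$-class $C$ has $h\cdot C\neq 0$. Since $h$ is nef-or-better on the positive cone side, we then get $h\cdot C>0$. More carefully: $h$ is in the closure of the ample cone up to reflections in $(-2)$-classes. If no $(-2)$-class is orthogonal to $h$, then $h$ lies in the interior of a chamber of the positive cone. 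Effectivity of $h$ plus Riemann--Roch lets us move by Weyl reflections that fix the copy of $E_7(-2)$ up to isometry. Alternatively, we argue directly that $h\cdot C>0$ for every irreducible curve $C$, using $h\cdot C\ge 0$ for $C\neq$ components of the fixed part.

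The key lattice step is the following. Suppose $\delta\in\Gamma_d$ satisfies $\delta^2=-2$ and $h\cdot\delta=0$. Then $\delta$ lies in $h^\perp\cap\Gamma_d$. Because $E_7(-2)$ is primitive in $\Gamma_d$ and is exactly the orthogonal complement of $h$, we have $h^\perp\cap\Gamma_d=E_7(-2)$. But $E_7(-2)$ is even with all nonzero vectors of square $\le -4$, being $E_7$ scaled by $2$. So it contains no $(-2)$-classes, which is a contradiction.

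It remains to pass from "no $(-2)$-class orthogonal to $h$" to ampleness, which is the main obstacle. Let $C$ be an irreducible curve with $h\cdot C\le 0$. If $C^2\ge 0$, then $C$ is in the closure of the positive cone and $h\cdot C>0$ unless $C=0$, since $h^2>0$. So $C$ is a $(-2)$-curve with $h\cdot C<0$. Then $C$ is a fixed component of $|h|$. Write $h=M+\sum a_iC_i$ with $M$ the moving part; $M$ is nef with $M^2\ge0$. We then use the standard trick: the reflection $s_C$ satisfies $s_C(h)=h+(h\cdot C)C$, and $s_C(h)^2=h^2$. Repeatedly applying such reflections yields a nef class $h'=w(h)$ with $w$ in the Weyl group. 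The class $w(h)$ is orthogonal to $w(E_7(-2))$, again a primitive copy of $E_7(-2)$. Then $h'$ is nef, and by the key step applied to $h'$, no $(-2)$-curve is orthogonal to it, so $h'$ is ample.

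To conclude that $h'=h$, we must exclude $(-2)$-curves $C$ with $h\cdot C<0$. For this, decompose $C=\frac{a h+e}{r}$ with $e\in E_7(-2)^\vee$ and $r$ the gluing index ($r\in\{1,2,4\}$). Then
\[-2=\frac{a^2\,2d+e^2}{r^2}.\]
Since $-e^2$ is bounded in terms of the discriminant form computed in Lemma \ref{lem: overlattices of <2d>+G}, this gives only few possibilities. We would try to show each possibility yields an effective $(-2)$-class with $h\cdot C\ge 0$. Failing that, we would replace $h$ by $h'$ and note that the statement only concerns the generator up to isometry, since the Weyl group preserves the positive cone and $h$ is the effective one.
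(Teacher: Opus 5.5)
Your key lattice step is the one in the paper. The complement $h^\perp\cap\Gamma_d$ is the primitive copy of $E_7(-2)$, whose nonzero vectors all have square $\le -4$. So no $(-2)$-class is orthogonal to $h$, and $h$ lies in the interior of a chamber of the positive cone. The paper then simply says one may assume this chamber is the ample one. That is your closing move of replacing $h$ by $w(h)$ for a suitable element $w$ of the Weyl group. So in substance you have the paper's argument.

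The branch in which you try to prove $h'=h$ should be dropped, because it cannot succeed: for an arbitrary primitive copy of $E_7(-2)$ the statement is false. Take $h$ ample and orthogonal to a copy $G_0\simeq E_7(-2)$, and take a $(-2)$-curve $C$ on $F$. Such curves exist whenever $\Gamma_d$ represents $-2$, e.g.\ for $d=1$, where $(h-w_1)^2=-2$. Then $s_C(G_0)$ is again a primitive copy of $E_7(-2)$ in $\NS(F)$. The effective generator of its orthogonal complement is $s_C(h)=h+(h\cdot C)C$, and $s_C(h)\cdot C=-(h\cdot C)<0$, so it is not even nef.

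Hence no case analysis of decompositions $C=(ah+e)/r$ can exclude $(-2)$-curves with $h\cdot C<0$. The lemma must be read as ``for a suitable choice of the copy of $E_7(-2)$''. The Weyl element does not fix the copy: it replaces it by $w(E_7(-2))$, whose orthogonal complement is generated by $w(h)$. For the same reason, your sentence that $h$ is ``nef-or-better'' and hence $h\cdot C>0$ is unjustified and should be deleted.

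If you want the literal statement for the distinguished copy $(T_F^G)^\perp$ attached to $(X,\iota)$, there is a shorter route that avoids chambers altogether. This is the copy for which the paper identifies $h$ with the restriction of $h_X$. By Remark \ref{rem: the class HF restriction of HX}, $h$ is a positive multiple of $\nu^*h_X$, where $h_X$ is the ample generator of $\NS(X)^{\iota^*}$. The restriction of an ample class to the subvariety $F\subset X$ is ample, so $h$ is ample.
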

\proof 
Recall that 
$\Gamma_d$ is an overlattice of index $r=1,2,4$ of $\Z h\oplus E_7(-2)$. If $D$ is a $(-2)$-class
in $\NS(F)$, then $D=\alpha h+\beta g$, $\alpha,\beta\in\frac{1}{4}\mathbb{Z}$ and $g\in E_7(-2)$.
Since $E_7(-2)$ does not contain vectors of self-intersection $-2$, it follows that
$\alpha\neq 0$, i.e.~ there are no $(-2)$-classes orthogonal to $h$, which is then
strictly contained in a chamber of the positive cone. One can assume that it is the
chamber of the ample cone, i.e.~ we can assume $h$ to be ample.
\endproof

\begin{prop}\label{prop: proj models direct sum}
Let $F$ be a generalized Nikulin surface such that $\NS(F)\simeq \langle 2d\rangle\oplus E_7(-2)$. 

If $d\geq 2$, $h$ is very ample and $\varphi_{|h|}(F)\subset\mathbb{P}^{d+1}$ is a smooth surface which admits 7 quadric sections which split in pairs of degree $2d$ curves of genus $d-1$ whose intersection consists of $2d+4$ points.

If $d=1$, $h$ is ample but not very ample and $\varphi_{|h|}(F)\subset\mathbb{P}^{2}$ is a double cover branched on a sextic which admits 7 conics which intersect the branch curve with an even multiplicity in each intersection points, and then they split in the double cover.
\end{prop}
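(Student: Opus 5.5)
By Lemma \ref{lemma: h ample}, $h$ is ample with $h^2=2d$. The plan is to apply Saint-Donat's criteria to decide very ampleness, and then to produce the quadric sections (respectively the conics) from the classes of $E_7(-2)$ of self-intersection $-4$.

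\emph{Step 1: very ampleness.} For an ample $h$ with $h^2\geq 4$, Saint-Donat's theorem says $h$ is very ample unless one of two things happens: there is an elliptic class $E$ with $E^2=0$ and $hE\in\{1,2\}$ (hyperelliptic case), or $h=2B$ with $B^2=2$.

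Write any class as $D=ah+g$ with $a\in\Z$ and $g\in E_7(-2)$; the sum is direct, so no fractions appear. Since $E_7(-2)$ is divisible by $2$ in the sense that $g^2\in 4\Z$, we get $D^2=2da^2+g^2$.

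If $D^2=0$ and $D\cdot h=2da\in\{1,2\}$, then $d=1$ and $a=1$. So for $d\geq 2$ no such $E$ exists. Also $h$ is primitive, so $h\neq 2B$. Hence $h$ is very ample for $d\geq 2$.

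Smoothness of the image follows because $h$ is ample and there are no $(-2)$-curves orthogonal to $h$, as shown in Lemma \ref{lemma: h ample}.

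For $d=1$ we have $h^2=2$, so $\varphi_{|h|}$ is a double cover of $\PP^2$ branched along a sextic.

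\emph{Step 2: splitting quadric sections.} Use the roots $\varepsilon$ of $E_7(-2)$, which satisfy $\varepsilon^2=-4$. There are $63$ pairs $\pm\varepsilon$, and one can choose $7$ mutually orthogonal ones, for instance $w_1,w_3,w_5,w_7$ together with three more coming from the $E_7$ structure. This orthogonal choice is optional; only the count of $7$ matters.

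Set $C_\pm=h\pm\varepsilon$. Then $C_\pm^2=2d-4$, $C_\pm\cdot h=2d$, and $C_+\cdot C_-=2d+4$, while $C_++C_-=2h$. So the decomposition $2h=C_++C_-$ exhibits a quadric section splitting as claimed.

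It remains to check that $C_\pm$ are effective, irreducible curves, with genus $p_a=(2d-4)/2+1=d-1$ and degree $C_\pm\cdot h=2d$.

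Effectivity comes from Riemann--Roch: $C^2\geq -2$ and $C\cdot h>0$. Irreducibility (for $d\geq 2$) requires ruling out decompositions $C=A+B$. A $(-2)$-curve $R=ah+g$ would need $2da^2+g^2=-2$, which is impossible mod $4$ when $d$ is even. For odd $d$ I would instead bound $R\cdot h<2d$ directly and reach a contradiction. Fixed components are excluded similarly, via elliptic classes $E$ with $E\cdot C=0$ or $1$.

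\emph{Main obstacle.} I expect the irreducibility and base-point-freeness check in Step 2 to be the hardest part, together with the $d=2$ case where $C_\pm^2=0$. There $C_\pm$ are elliptic pencils and "genus $d-1=1$" is consistent, so one should take a member of each pencil.

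\emph{The case $d=1$.} Here $C_\pm^2=-2$ and $C_\pm\cdot h=2$, so $C_\pm$ are effective $(-2)$-classes. Their images in $\PP^2$ are conics whose preimage is $C_+\cup C_-$, since $C_++C_-=2h$. Because the preimage of each conic is reducible, the conic meets the branch sextic with even multiplicity at every point. Seven orthogonal roots then give the seven conics.
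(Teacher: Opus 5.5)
Your proposal is correct and follows essentially the paper's argument. Very ampleness comes from Saint-Donat's criterion, since every class meets $h$ in a multiple of $2d$ and $h$ is primitive. The splitting quadric sections are $2h=(h+\varepsilon)+(h-\varepsilon)$ for $(-4)$-roots $\varepsilon$ of $E_7(-2)$, and the same classes give the seven conics when $d=1$.

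There are two small differences. First, the paper uses the specific roots $w_1, w_1+w_2,\ldots,\sum_{i=1}^{6}w_i,\hat{w}$, so that $h$ together with the classes $h-\varepsilon_j$ is a $\Z$-basis of $\NS(F)$; this is what the converse Remark needs. Second, your irreducibility discussion goes beyond the paper's ``generically irreducible''. In fact it reduces to one line: every nonzero effective class has $h$-degree in $2d\Z_{>0}$, while $h\cdot C_\pm=2d$, so $C_\pm$ cannot split.
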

\begin{proof}
We already proved in Lemma \ref{lemma: h ample} that $h$ is ample. Let $E$ be a class with self-intersection $0$. As above, $E=\alpha' h+\beta g$ with
$g\in G$, and then $E\cdot h=2d\alpha'$ for an integer $\alpha'$. This implies that
if $d>1$, then there are no genus $1$ curves on $F$ whose intersection with $h$ is
$2$. Moreover, by construction $h$ is primitive, so that $h$ is an ample class with
self-intersection bigger than $2$, which cannot be $2A$ for an ample divisor $A$ with
self-intersection $2$. By \cite[Theorem 5.2]{SD}, this implies that $h$ is very ample.
On the other hand, if $d=1$, then $h$ is an ample divisor with $h^2=2$, which implies
that $\varphi_{|h|}$ is $2:1$ onto $\mathbb{P}^2$ and the branch locus is a smooth
sextic.

To conclude the proof it suffices to consider the following basis for $\NS(F)$:
\[
\left\{h,\, h-w_1,\, h-\sum_{i=1}^2w_i,\, h-\sum_{i=1}^3w_i,\,
h-\sum_{i=1}^4w_i,\, h-\sum_{i=1}^5w_i,\, h-\sum_{i=1}^6w_i,\,
h-\hat{w}\right\}.
\]
Denoting by $b_j$, for $j=1,\ldots,8$, the elements of this basis, one observes that $b_j^2=2d-4$ and
$h\cdot b_j=2d$ for $j=2,\ldots, 8$. Hence the classes $b_j$ are effective divisors,
i.e.~ they represent (possibly reducible) curves on $F$. Moreover, for each $b_j$
with $j=2,\ldots, 8$, the divisor $d_j:=2h-b_j$ is also effective; indeed
$d_j^2=b_j^2$ and $d_j\cdot h=b_j\cdot h$. Since $2h=b_j+d_j$ for $j=2,\ldots, 8$,
there are $7$ quadric sections (contained in the linear system $|2h|$) which split
into the union of two curves $b_j$ and $d_j$. Generically $b_j$ and $d_j$ are
irreducible; their intersection numbers and genera can be computed explicitly from
their expressions in terms of the basis of $\NS(F)$.

If $d=1$, the inverse image of $\varphi_{|h|}(b_j)$ is the union of $b_j$ and $d_j$,
i.e.~ $\varphi_{|h|}(b_j)$ splits in the double cover. Moreover, since
$b_j+d_j=2h$, the curve $\varphi_{|h|}(b_j)$ has degree $2$ in $\mathbb{P}^2$.
To conclude, recall that a rational curve meets the branch locus with even multiplicity
at each point if and only if it splits in the double cover.
\end{proof}
\begin{rem}\label{rem: vice versa proj models direct sum} In the previous proof we give a different basis of $\NS(F)$, in terms of quadric sections of a certain model of $F$. Then one can reverse the previous statement as follows: let $F$ be a surface with an embedding in $\mathbb{P}^{d+1}$ which admits 7 quadric sections which split each in two curves $B_j$ and $D_j$, $j=1,\ldots, 7$, such that: $B_j$ and $D_j$ have genus $d-1$, and $B_iB_j=2d-2$ $i\neq j$, and $i,j\neq 7$, $B_7B_i=2d-2$ if $i\equiv 1 \mod 2$ and  $B_7B_i=2d$ if $i\equiv 0\mod 2$, $i\neq 7$. Then, $F$ is a generalized Nikulin surface and if it is general among the ones with the described properties, then $\NS(F)\simeq \langle 2d\rangle\oplus E_7(-2)$. Indeed the intersection properties between $h$, $B_j$ and $D_j$ suffice to show that the lattice $\{h,B_1,\ldots B_7\}$ is isometric to $\langle 2d\rangle\oplus E_7(-2)$ and that generically it coincides with $\NS(F)$. Similar considerations hold for the case $d=1$.
\end{rem}

\begin{prop}\label{prop: proj models '}
Let $d$ be an even positive integer $\NS(F)\simeq (\langle 2d\rangle\oplus E_7(-2))'$. 

If $d\neq  2$, $h$ is very ample, $\varphi_{|h|}(F)\subset\mathbb{P}^{d+1}$ is a smooth surface which admits 7 hyperplane sections which split in two curves $C_1$ and $C_2$; denoted $\deg(C_i)$ and $g(C_i)$ the degree and the genus respectively of a curve, it holds:
$$\begin{array}{l}
\deg(C_1)=\deg(C_2)=d,\ \ g(C_1)=g(C_2)=\frac{d}{4}-\frac{1}{2}\mbox{ and }C_1C_2=\frac{d}{2}+3\mbox{ if }d\equiv 2 \mod 4\\
\deg(C_1)=\deg(C_2)=d,\ \ g(C_1)=g(C_2)=\frac{d}{4}-1\mbox{ and }C_1C_2=\frac{d}{2}+4\mbox{ if }d\equiv 0 \mod 4.
\end{array}
$$
If $d=2$, $h$ is ample but not very ample and $\varphi_{|h|}:F\ra\mathbb{P}^3$ is a $2:1$ map to a quadric $Q\simeq \mathbb{P}^1\times \mathbb{P}^1$ and there are 7  curves of bidegree $(1,1)$ on $Q$ which split in the double cover.
\end{prop}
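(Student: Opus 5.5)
The plan mirrors the proof of Proposition \ref{prop: proj models direct sum}. By Lemma \ref{lemma: h ample}, $h$ is ample. For very ampleness I would use Saint-Donat's criterion \cite[Theorem 5.2]{SD}. This requires excluding two things: an elliptic class $E$ with $E^2=0$ and $E\cdot h=2$, and $h=2A$ with $A^2=2$.

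The second possibility is excluded because $h$ is primitive. For the first, write $E=\alpha h+\beta g$ with $g\in E_7(-2)$. Here $\alpha\in\frac12\Z$, since the index of $\Z h\oplus E_7(-2)$ in $\NS(F)$ is $2$ and the gluing vector is $(d\ell+u_1^{(1)})$ or $(d\ell+u_1^{(1)}+u_2^{(1)})$ in the notation of Lemma \ref{lem: overlattices of <2d>+G}; in particular the $h$-component of the glue is $h/2$. Then $E\cdot h=2d\alpha\in d\Z$. So $E\cdot h=2$ forces $d\le 2$, and for $d\ge 4$ there is nothing to check. For $d=2$ the value $E\cdot h=2$ is attained with $\alpha=\frac12$. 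Writing $E=\frac12(h+\gamma)$ with $\gamma\in E_7(-2)$ such that $\gamma/2$ represents $u_1^{(1)}$ or $u_1^{(1)}+u_2^{(1)}$ in $A_{E_7(-2)}$, the condition $E^2=0$ becomes $\gamma^2=-4$. One checks that such a $\gamma$ exists, for instance $\gamma=w_1$ up to the choice \eqref{eq: choice of elements in discriminant of G}, whose class is $w_1/2$. This produces an elliptic pencil of degree $2$. Hence $\varphi_{|h|}$ is $2:1$ onto a quadric $Q\subset\PP^3$, and $Q\simeq\PP^1\times\PP^1$ because $h=E+E'$ with $E'=h-E$ a second elliptic class and $E\cdot E'=2$.

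To produce the split hyperplane sections, I would exhibit seven decompositions $h=C_1+C_2$ with $C_1=\frac12(h+\gamma_j)$ and $C_2=\frac12(h-\gamma_j)$. Here $\gamma_j\in E_7(-2)$ runs over seven classes of the form $w_1+(\text{element of }2E_7(-2))$, or more generally classes congruent modulo $2E_7(-2)$ to the glue representative, and each $\gamma_j$ should have minimal norm. The $\gamma_j$ are chosen so that the $C_1$'s together with $h$ generate $\NS(F)$ over $\Z$, analogously to the basis $b_j$ in Proposition \ref{prop: proj models direct sum}.

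Then $\deg C_i=C_i\cdot h=d$. Also $C_i^2=\frac14(2d+\gamma_j^2)$ and $C_1C_2=\frac14(2d-\gamma_j^2)$. The glue is $u_1^{(1)}$, isotropic, when $d\equiv0\bmod4$, and it is $u_1^{(1)}+u_2^{(1)}$, of norm $1\bmod 2$, when $d\equiv2\bmod4$. Accordingly, the minimal norms are $\gamma_j^2=-8$ and $\gamma_j^2=-4$ respectively. This gives $C_i^2=\frac d2-2$ and $C_1C_2=\frac d2+2$ in the first case, and $C_i^2=\frac d2-1$, $C_1C_2=\frac d2+1$ in the second. The genera are then $g=\frac d4$ and $g=\frac d4+\frac12$. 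These values differ from the stated ones, which suggests that the intended $\gamma_j$ have norm $-12$ and $-8$. In that case one gets $C_i^2=\frac d2-3$ or $\frac d2-2$, hence $g=\frac d4-\frac12$ or $\frac d4-1$, and $C_1C_2=\frac d2+3$ or $\frac d2+4$, matching the statement once the parities are matched. I would therefore choose $\gamma_j$ of norm $-12$, resp. $-8$, within the correct coset. The explicit search is to be carried out in the basis $w_i,\hat w$ of \eqref{disc G}.

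Effectiveness of $C_1$ and $C_2$ follows from Riemann--Roch, since $C_i^2\ge-2$ and $C_i\cdot h>0$. For general $F$, irreducibility follows from the absence of $(-2)$-curves with small $h$-degree. In the case $d=2$, the images of the seven split sections are hyperplane sections of $Q$, i.e.\ curves of bidegree $(1,1)$, which split in the double cover.

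The main obstacle is the explicit choice of the seven $\gamma_j$. They must lie in the prescribed glue coset, have the right norm, make both $C_1$ and $C_2$ irreducible (in particular with no $(-2)$-curve splitting off), and give a $\Z$-basis together with $h$. I would first try $\gamma_j$ built from $w_1$, $w_1\pm w_{2k}$ and $\hat w$-combinations, and verify the lattice generation with a Gram matrix computation.
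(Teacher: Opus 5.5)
Your framework is the paper's: Saint-Donat via $E\cdot h\in d\Z$, the pencil $\frac12(h-w_1-w_2)$ when $d=2$, and split hyperplane sections $h=\frac12(h+\gamma)+\frac12(h-\gamma)$ with $\gamma$ in the glue coset of $E_7(-2)$. The gap is in the step that carries the content, namely producing the seven $\gamma_j$. You leave this step open, and you also mis-specify it for $d\equiv 0\pmod 4$.

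\textbf{The norm for $d\equiv 0\pmod 4$ is wrong.} From $C_i^2=\frac14(2d+\gamma^2)$ and $C_1C_2=\frac14(2d-\gamma^2)$, the stated values $g=\frac d4-1$ and $C_1C_2=\frac d2+4$ force $C_i^2=\frac d2-4$, i.e.\ $\gamma^2=-16$, not $-8$. Your own preceding sentence shows that $\gamma^2=-8$ gives $g=\frac d4$ and $C_1C_2=\frac d2+2$, so the claim that norm $-8$ ``matches the statement'' is an arithmetic slip.

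\textbf{The search you propose cannot succeed.} The coset $w_1+w_4+2E_7(-2)$ contains only twelve vectors of norm $-8$. These are $\pm(w_1\pm w_4)$ and four further pairs orthogonal to $w_1$ and $w_4$. Two norm-$4$ vectors of $E_7$ in the same class mod $2E_7$ are orthogonal or opposite, so you get six pairs $\pm\gamma$. This yields six genus-one splittings, which are exactly those of Proposition \ref{prop: alternative description 8+G'} for $d=4$, not the seven of the statement. Similarly, the minimal norm $-4$ in the coset of $w_1+w_2$ gives only $\pm(w_1+w_2)$, i.e.\ one splitting. That is why norm $-12$ is needed there; your $-12$ is correct.

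\textbf{What the paper supplies.} The missing ingredient is the explicit list.
\begin{itemize}
\item For $d\equiv 2\pmod 4$, set $z=\frac12(h-w_1-w_2)$ and take $z+w_1$, $z+w_2$, and $z-\sum_{i=3}^{k}w_i$ for $3\le k\le 7$. Each has the form $\frac12(h+\gamma)$ with $\gamma^2=-12$; for example $\gamma=w_1-w_2$.
\item For $d\equiv 0\pmod 4$, set $u=\frac12(h-w_1-w_4)$ and take $u+w_1+w_2$, $u-w_2$, $u-w_3$, $u+w_3+w_4$, $u-w_5$, $u-w_5-w_6$, $u-w_5-w_6-w_7$. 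Each has $\gamma^2=-16$; for example $u+w_1+w_2=\frac12(h+w_1+2w_2-w_4)$.
\end{itemize}
With these classes, your degree, genus and $C_1C_2$ computations and the Riemann--Roch effectivity go through. Note that $\frac d2-4\ge -2$ holds since $d\ge 4$ in this case.

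\textbf{Two smaller points.}
\begin{itemize}
\item You can drop the requirement that $h$ and the $C_1$'s generate $\NS(F)$ over $\Z$. It is not part of the statement. The paper's classes together with $z$ (resp.\ $u$) only span $\Z z+A_7(-2)$ (resp.\ $\Z u+A_7(-2)$), which has index $2$ in $\NS(F)$ because $\hat w$ is missing.
\item For $d=2$, the glue with the paper's fixed choice is $\frac12(w_1+w_2)$, not $\frac12 w_1$. So the elliptic classes are $\frac12(h\pm(w_1+w_2))$. Using $\frac12(h+w_1)$ requires first moving the root $w_1+w_2$ to $w_1$ by a Weyl group element.
\end{itemize}
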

\begin{proof}
The arguments are similar to the ones of Proposition \ref{prop: proj models direct sum}, by recalling that now a basis for $\NS(F)$ is given by $$\left\{\frac{h-w_1-w_2}{2}, w_i,\ i=1,\ldots, 6, \hat{w}\right\}\mbox{ if }d\equiv 2\mod 4\mbox{ and }\left\{\frac{h-w_1-w_4}{2}, w_i,\ i=1,\ldots, 6, \hat{w}\right\}\mbox{ if }d\equiv 0\mod 4.$$

The presence of certain hyperplane sections which split can be proven considering a different basis for $\NS(F)$. In particular, denoted $z=\frac{h-w_1-w_2}{2}$ and $u=\frac{h-w_1-w_4}{2}$, we consider $$\left\{z, z+w_1,z+w_2, z-w_3, z-\sum_{i=3}^4w_i, z-\sum_{i=3}^5w_i,  z-\sum_{i=3}^6w_i, z-\sum_{i=3}^7w_i\right\}\mbox { 
if }d\equiv 2\mbox{ mod }4\mbox{  and }$$
$$\left\{u, u+\sum_{i=1}^2w_i,u-w_2,u-w_3,u+\sum_{i=3}^4w_i,u-w_5,u-\sum_{i=5}^6w_i,u-\sum_{i=5}^7w_i\right\}\mbox{ 
if }d\equiv 0\mbox{ mod }4.$$

As before, denoted $b_j$ the elements of the previous set of generators, one obtains that $\varphi_{|h|}(b_j)$ are curves on the surface as well as $\varphi_{|h|}(h-b_j)$, and since $h=b_j+(h-b_j)$, this guarantees the splitting of 7 hyperplane sections. In particular, the last 7 vectors of the set of generators above produce the curves with the stated properties. The argument is analogous if $d=2$, i.e.~ $\varphi_{|h|}$ is a double cover: if $d=2$, then the class $\frac{h-w_1-w_2}{2}\in \NS(F)$ has self-intersection 0 and $h(\frac{h-w_1-w_2}{2})=2$, which implies that $h$ in not very ample and $$\varphi_{|h|}:F\ra Q\simeq\mathbb{P}^1\times \mathbb{P}^1\subset\mathbb{P}^3$$ where $\frac{h-w_1-w_2}{2}$ and $\frac{h+w_1+w_2}{2}$ are the pullback of the classes of the two rulings of $Q$.

In this case, the inverse images of the classes $\varphi_{|h|}(b_j)$ consist of the union of the two curves $b_j$ and $d_j$, i.e.~ the curves split. The curves are of bidegree $(1,1)$ since $b_j(\frac{h\pm(w_1+w_2)}{2})=1$ for $j=2,\ldots,8$.
\end{proof}

\begin{prop}\label{prop: proj models star}
Let $d\equiv 2\mod 4$ and  $\NS(F)\simeq (\langle 2d\rangle\oplus E_7(-2))^{\star}$. Then $h$ is very ample, so $\varphi_{|h|}(F)\subset\mathbb{P}^{d+1}$ is a smooth surface. If $d\geq 10$,  $\varphi_{|h|}(F)\subset\mathbb{P}^{d+1}$ admits 7 hyperplane sections which split in two curves $C_1$ and $C_2$ and, denoted $\deg(C_i)$ and $g(C_i)$ the degree and the genus respectively of each curve, it holds:
$$\deg(C_1)=\deg(C_2)=d,\ \ g(C_1)=g(C_2)=\frac{d-10}{4}\mbox{ and }C_1C_2=\frac{d}{2}+7.$$
\end{prop}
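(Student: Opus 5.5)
The plan is to follow the proofs of Propositions \ref{prop: proj models direct sum} and \ref{prop: proj models '}. First I make the glue vector of $(\langle 2d\rangle\oplus E_7(-2))^{\star}$ explicit. By the proof of Lemma \ref{lem: overlattices of <2d>+G}, this lattice is obtained from the isotropic element $d\ell+2v$, with $\ell=h/2d$ and $v$ as in \eqref{eq: choice of elements in discriminant of G}. So $\NS(F)$ is generated by $\Z h\oplus E_7(-2)$ together with a class
\[ z=\frac{h+g_0}{2},\qquad g_0\in E_7(-2),\qquad g_0\equiv 2\cdot 2v \ \mathrm{mod}\ 2E_7(-2). \]
Here $g_0$ may be changed by any element of $2E_7(-2)$, and I choose it of minimal absolute norm in its coset. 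Under the rescaling $E_7(-2)\leftrightarrow E_7$, $g_0$ corresponds to twice a minimal vector of the nontrivial class of $E_7^\vee/E_7$, which has norm $3/2$. Hence I expect $g_0^2=-12$, and then $z^2=(2d-12)/4=(d-6)/2$ and $z\cdot h=d$. I will check with the explicit generators $w_i,\hat w$ that such a $g_0$ exists, for instance $g_0=w_1-w_2-w_5+w_6+\hat w$ or a translate of it.

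\emph{Very ampleness.} Ampleness of $h$ is Lemma \ref{lemma: h ample}. I then apply \cite[Theorem 5.2]{SD}, exactly as in Proposition \ref{prop: proj models direct sum}. Since $h$ is primitive and $h^2=2d\geq 4$, $h$ is not twice a degree-$2$ class, so it suffices to exclude isotropic classes $E$ with $E\cdot h\in\{1,2\}$. Every class has the form $E=\alpha h+g$ with $\alpha\in\frac12\Z$, and $E\cdot h=2d\alpha$. For $d\geq 2$ the only possible case is $d=2$, $\alpha=1/2$, $E=(h+g)/2$ with $g$ in the coset of $g_0$. Then $E^2=(4+g^2)/4$, which would force $g^2=-4$. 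This is impossible by the minimality of $|g_0^2|=12$ in that coset. This minimum-norm statement for the coset is the key lattice fact and the step I expect to need the most care; I will verify it through the correspondence with $E_7^\vee$ and, if necessary, by a short direct enumeration.

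\emph{Splitting hyperplane sections, $d\geq 10$.} I look for classes $C_1=z+w$ with $w\in E_7(-2)$, $w^2=-4$ and $w\cdot g_0=0$, so that $z\cdot w=0$. For such a class $C_1\cdot h=d$ and $C_1^2=(d-6)/2-4=(d-14)/2$. This gives genus $(d-10)/4$, which is a nonnegative integer because $d\equiv 2\mod 4$ and $d\geq 10$. The residual class $C_2=h-C_1$ has the same degree and self-intersection, and
\[ C_1C_2=d-\frac{d-14}{2}=\frac d2+7. \]
Since $C_i^2\geq -2$ and $C_i\cdot h>0$, Riemann--Roch shows that both $C_i$ are effective, so $h=C_1+C_2$ is a split hyperplane section. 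It remains to exhibit seven such vectors $w$, chosen analogously to the bases used in Proposition \ref{prop: proj models '}, for example suitable signed partial sums of the $w_i$ that lie in $g_0^\perp$, and such that $\{z, z+w^{(1)},\dots,z+w^{(7)}\}$ is a $\Z$-basis of $\NS(F)$; this is a direct computation with the explicit generators. Finally, for general $F$ the curves $C_1$ and $C_2$ are irreducible, by the same genericity argument used in the earlier propositions.
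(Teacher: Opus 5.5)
Most of your argument is sound and runs parallel to the paper's. Your glue class $z=(h+g_0)/2$ differs from the paper's $x=\frac h2+2v$ by an element of $E_7(-2)$. A representative with $g_0^2=-12$ does exist: your suggested $w_1-w_2-w_5+w_6+\hat w=4v$ has norm $-28$, but the translate $4v+2(w_2+w_3+w_4+w_5)=w_1+w_2+2w_3+2w_4+w_5+w_6+\hat w$ has norm $-12$. Your minimal-norm argument for $d=2$ is the same fact as the paper's congruence $a^2\equiv x^2\equiv 2\pmod 4$. The invariants $C_1^2=(d-14)/2$, $C_1\cdot h=d$, $C_1C_2=\frac d2+7$ and the Riemann--Roch effectivity are also correct.

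The step that fails is the last one. You cannot find seven $w\in E_7(-2)$ with $w\cdot g_0=0$ such that $\{z,z+w^{(1)},\dots,z+w^{(7)}\}$ is a $\Z$-basis of $\NS(F)$. All such $w$ lie in $g_0^{\perp}\cap E_7(-2)$, which has rank $6$: after rescaling, the roots of $E_7$ orthogonal to a minimal vector of $E_7^\vee$ form an $E_6$ system with $72$ roots. Hence $\Z z+\sum_i\Z w^{(i)}$ has rank at most $7<8$, and your ``direct computation with the explicit generators'' cannot succeed.

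For the statement as literally phrased you can simply drop the basis claim. Any seven of those $72$ roots give seven distinct splittings $h=(z+w)+(h-z-w)$ with the required invariants. They are distinct because $z+w=h-z-w'$ would force $g_0=-(w+w')$. Then $g_0\cdot w=0$ gives $w\cdot w'=4$, hence $(w+w')^2=0\neq-12$.

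If you want the seven curves, together with one more class, to generate $\NS(F)$, you must give up the orthogonality $w\cdot g_0=0$. This is how the paper's seven sections are meant, as its later remark on reconstructing a full set of generators shows. The paper keeps the non-minimal representative $x=\frac h2+2v$, which already satisfies $x^2=(d-14)/2$. It then uses $w$ with $w^2=-4$ and $x\cdot w=2$, so that $(x+w)^2=x^2$. Its choice $w\in\{-(w_1+w_2+w_3),-w_3,-(w_3+w_4),w_5,-w_6,-(w_6+w_7),-\hat w\}$ spans $E_7(-2)$ over $\Z$. Since $h=2x-4v$ with $4v\in E_7(-2)$, the set $\{x,x+w^{(1)},\dots,x+w^{(7)}\}$ is then a $\Z$-basis of $\NS(F)$.
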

\begin{proof}
It is analogous to the proof of Proposition \ref{prop: proj models direct sum} and \ref{prop: proj models '}, by considering the set of generators 
$$\left\{x, x-\sum_{i=1}^3w_1,x-w_3, x-\sum_{i=3}^4w_i, x+w_5, x-w_6, x-\sum_{i=6}^7w_6, x-\hat{\omega}\right\}$$
where $x=\frac{h}{2}+2v=\frac{h}{2}+\frac{3w_1-2w_2+w_3-w_5+2w_6+w_7}{4}$ (and $v$ is as in \eqref{eq: choice of elements in discriminant of G}). Observe that if $d=2$, i.e.~ $h^2=4$, then there exist classes whose intersection with $h$ is 2, but they cannot be classes with self-intersection 0: indeed if $a$ is a class such that $ha=2$, then $a=x+\beta h+\gamma g$, with $g\in E_7(-2)$. Since $h^2\equiv g^2\equiv 0\mod 4$ and $hx\equiv gx\equiv hg\equiv 0\mod 2$, one obtains $a^2\equiv x^2\mod 4$ and $x^2\equiv 2\mod 4$. This shows that $h$ is very ample.
\end{proof}

\begin{prop}\label{prop: proj models bullet}
Let $d\equiv 6\mod 8$ and $\NS(F)\simeq (\langle 2d\rangle\oplus E_7(-2))^{\bullet}$.
Then $h$ is very ample and if $d>6$, $\varphi_{|h|}(F)\subset\mathbb{P}^{d+1}$ admits 7 hyperplane sections which split in the union of 2 (not necessarily irreducible) curves $C_1$ and $C_2$; denoted $\deg(C_i)$ and $g(C_i)$ the degree and the genus respectively of a curve, it holds:
$$\begin{array}{l}
\deg(C_1)=\frac{d}{2},\deg(C_2)=\frac{3d}{2}\ \ g(C_1)=\frac{d-22}{16}\ g(C_2)=\frac{9d-22}{16}\mbox{ and }C_1C_2=\frac{3d+38}{8}\mbox{ if }d\equiv 6 \mod 16,\ d\neq 6,\\
\deg(C_1)=\frac{d}{2},\deg(C_2)=\frac{3d}{2}\ \ g(C_1)=\frac{d-14}{16}\ g(C_2)=\frac{9d-14}{16}\mbox{ and }C_1C_2=\frac{3d+30}{8}\mbox{ if }d\equiv 14 \mod 16.
\end{array}$$
\end{prop}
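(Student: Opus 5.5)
The plan follows the pattern of Propositions \ref{prop: proj models direct sum}, \ref{prop: proj models '} and \ref{prop: proj models star}. There are three steps: very ampleness of $h$, an adapted basis of $\NS(F)$, and intersection numbers.

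\emph{Very ampleness.} Lemma \ref{lemma: h ample} already gives that $h$ is ample, with $h^2=2d\geq 12$ and $h$ primitive. By \cite[Theorem 5.2]{SD} it then suffices to exclude classes $E$ with $E^2=0$ and $E\cdot h\in\{1,2\}$; the case $h=2A$ with $A^2=2$ cannot occur since $h$ is primitive. Write $\NS(F)$ as the index $4$ overlattice of $\Z h\oplus E_7(-2)$ obtained by adding the gluing vector $y:=\frac{h}{4}+\tau$. Here $\tau\in E_7(-2)^\vee$ represents the order $4$ element $v+u_1^{(1)}+u_2^{(1)}$ when $d\equiv 6\mod 16$, and $v$ when $d\equiv 14\mod 16$ (proof of Lemma \ref{lem: overlattices of <2d>+G}, with the explicit choice \eqref{eq: choice of elements in discriminant of G}). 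Every class then has the form $a=\frac{m}{4}h+g'$ with $g'\in E_7(-2)^\vee$, so $a\cdot h=\frac{md}{2}$. This is at least $3$ unless $m=0$; the case $m=0$ gives $a\in E_7(-2)$ by primitivity, which is negative definite. Since $d\geq 6$, $a\cdot h\leq 2$ with $a\neq 0$ forces $a^2<0$, so there is no such $E$ and $h$ is very ample.

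\emph{Adapted basis.} I would write down eight generators of $\NS(F)$ of the form $b_1=y$ and $b_j=y+\gamma_j$ for $j=2,\ldots,8$, with $\gamma_j\in E_7(-2)$ chosen as partial sums of the $w_i$ and $\hat w$. The $\gamma_j$ should be chosen so that all $b_j$ have the same square: since $b_j^2=\frac{d}{8}+\tau^2+2\tau\gamma_j+\gamma_j^2$, I need $2\tau\gamma_j+\gamma_j^2=0$, i.e.\ $\gamma_j$ lies on the sphere $(\gamma+\tau)^2=\tau^2$. For each $j$ I would take $C_1=b_j$ and $C_2=h-b_j$. Then $\deg C_1=h\cdot b_j=\frac{d}{2}$ and $\deg C_2=\frac{3d}{2}$, and
\[
C_1^2=\frac{d}{8}+\tau^2,\qquad C_2^2=\frac{9d}{8}+\tau^2,\qquad C_1\cdot C_2=\frac{3d}{8}-\tau^2 .
\]
With $\tau^2=-\frac{5}{4}\cdot 2=\ldots$, the correct value is forced by the stated genera: $\tau^2=-\frac{15}{4}$ in the first case and $-\frac{11}{4}$ in the second, modulo $2$ consistent with the discriminant values $\frac54$ and $\frac14$ taken with the sign from $E_7(-2)$. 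These values give $g=\frac{d-22}{16}$ and $\frac{d-14}{16}$, and $C_1C_2=\frac{3d+30}{8}$ and $\frac{3d+38}{8}$ respectively (the two cases may be swapped, which I would verify against the statement). As a consistency check, $g(C_2)-g(C_1)=\frac{d}{2}$ matches the statement.

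\emph{Effectivity.} Since $C_i^2\geq -2$ once $d>6$ (this is where the hypothesis enters: $g(C_1)\geq 0$ requires $d\geq 22$ or $d\geq 14$ respectively), and $h\cdot C_i>0$, Riemann--Roch shows that both $C_i$ are effective. Hence $h=C_1+C_2$ gives a splitting hyperplane section, possibly into reducible curves, which is why the statement says ``not necessarily irreducible''.

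The main obstacle is the explicit bookkeeping. I must find a representative $\tau\in E_7(-2)^\vee$ of the right norm in terms of the $w_i$ and $\hat w$, and then seven $\gamma_j$ on the sphere above such that $\{b_1,\ldots,b_8\}$ is genuinely a $\Z$-basis, checking that the determinant equals $\pm d(\Gamma_d)=\pm\frac{4^4\cdot 2d}{16}$. I would search for the $\gamma_j$ among short vectors $\pm w_i$ and partial sums, mimicking the generator lists in the previous propositions, and confirm the result with a direct Gram-matrix computation.
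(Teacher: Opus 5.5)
\textbf{There is a genuine gap: you never exhibit the seven classes, and the norms you set as search targets are wrong.}

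Your very-ampleness argument is correct and is the paper's own: every class of $\NS(F)$ either lies in $E_7(-2)$ or has $h$-degree a nonzero multiple of $d/2\geq 3$. Deducing effectivity of $C_1=b_j$ and $C_2=h-b_j$ from Riemann--Roch is also how the paper argues. The problem is in the part that carries the content of the statement.

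Your own formulas $C_1^2=\frac d8+\tau^2$ and $C_1\cdot C_2=\frac{3d}{8}-\tau^2$ show what the stated data force:
\begin{itemize}
\item $\tau^2=-\frac{19}{4}$ for $d\equiv 6\pmod{16}$;
\item $\tau^2=-\frac{15}{4}$ for $d\equiv 14\pmod{16}$.
\end{itemize}
Your values $-\frac{15}{4}$ and $-\frac{11}{4}$ give genera $\frac{d-14}{16}$ and $\frac{d-6}{16}$ instead. Your pairing of $g=\frac{d-22}{16}$ with $C_1C_2=\frac{3d+30}{8}$ also violates $C_1\cdot C_2=\frac d2-C_1^2$.

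Worse, such $\tau$ cannot exist in the classes you prescribed. Representatives of $v$ have norms in $\frac14+2\mathbb{Z}$ (e.g.\ $v^2=-\frac74$). Representatives of $v+u_1^{(1)}+u_2^{(1)}$ have norms in $\frac54+2\mathbb{Z}$. There is no extra sign to account for, since these are already the values of the discriminant form of $E_7(-2)$. So the search as you set it up returns nothing, and nothing in the proposal shows that seven suitable $\gamma_j$ exist.

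What is needed is the explicit construction, which is what the paper's proof consists of.

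For $d\equiv 14\pmod{16}$, put $c=\frac h4+v$ with $v=\frac{3w_1-2w_2+w_3-w_5+2w_6+w_7}{8}$, so $c^2=\frac{d-14}{8}$. Each $\gamma\in\{-(w_1+w_2+w_3),\,-w_3,\,-(w_3+w_4),\,w_5,\,-w_6,\,-(w_6+w_7),\,-\hat w\}$ has $\gamma^2=-4$ and $v\cdot\gamma=1$. Hence $(c+\gamma)^2=c^2-2=\frac{d-30}{8}$, which gives $g=\frac{d-14}{16}$ and $C_1C_2=\frac{3d+30}{8}$.

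For $d\equiv 6\pmod{16}$, put $y=\frac h4+\tau_0$ with $\tau_0=\frac{7w_1+2w_2+w_3-w_5+2w_6+w_7}{8}$. This $\tau_0$ represents $v+u_1^{(1)}+u_2^{(1)}$ and has $\tau_0^2=-\frac{11}{4}$. Each $\gamma\in\{w_2,\,w_2+w_3,\,w_2+w_3+w_4,\,w_5,\,-w_6,\,-(w_6+w_7),\,-\hat w\}$ has $\gamma^2=-4$ and $\tau_0\cdot\gamma=1$. Hence $(y+\gamma)^2=\frac{d-38}{8}$, which gives $g=\frac{d-22}{16}$ and $C_1C_2=\frac{3d+38}{8}$.

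Note that the base vector is not itself one of the curves. The condition these $\gamma$ satisfy is $2\tau_0\cdot\gamma+\gamma^2=-2$, not your sphere condition $=0$; recentring at $\tau_0+\gamma_1$ recovers your normalization. With these classes your Riemann--Roch step finishes the proof, and the $\mathbb{Z}$-basis/determinant check is not needed for the statement.
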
\begin{proof}
It is analogous to the proofs of Proposition \ref{prop: proj models direct sum}, \ref{prop: proj models '} and \ref{prop: proj models star}, by considering the last seven elements of the set of generators as classes of the curves in the splitting hyperplane sections
$$\left\{y, y+w_2,y+\sum_{i=2}^3w_i,y+\sum_{i=2}^4w_i,y+w_5,y-w_6,y-\sum_{i=6}^7w_i,y-\hat{w}\right\}\mbox{ if }d\equiv 6\mod 16,\ d\neq 6$$
and $$\left\{c, c-\sum_{i=1}^3w_1,x-w_3, c-\sum_{i=3}^4w_i, c+w_5, c-w_6, c-\sum_{i=6}^7w_6, c-\hat{w}\right\}
\mbox{ if }d\equiv 14\mod 16$$
where $c=\frac{h}{4}+v=\frac{h}{4}+\frac{3w_1-2w_2+w_3-w_5+2w_6+w_7}{8}$ and $y=\frac{h}{4}+v+u_1^{(1)}+u_2^{(1)}-w_4=\frac{h}{4}+\frac{7w_1+2w_2+w_3-w_5+2w_6+w_7}{8}$ 
(and $v$ is an in \eqref{eq: choice of elements in discriminant of G}). Observe that $d\geq 6$, implies that $hk\geq 3$ for every $k\in \NS(F)$, which suffices to prove $h$ is very ample.
\end{proof}

\begin{rem}{\rm 
In the previous propositions one can be more precise for specific choices of values of $d$. For example, in Proposition \ref{prop: proj models bullet}, when $d$ is big enough the mentioned hyperplane sections split in the union of 4 curves and in Proposition \ref{prop: proj models star} for all the values of $d$ (even $d\leq 10$) there are 2 hyperplanes sections which split in the union of two curves, but if $d\geq 10$, then one needs also the conditions that certain hypersurface sections split in order to reconstruct the geometric properties of a full set of generators for the N\'eron--Severi group.

To obtain the analogue of the Remark \ref{rem: vice versa proj models direct sum} if $\NS(F)$ is a proper overlattice of $\langle 2d\rangle\oplus E_7(-2)$ (i.e.~ in the cases considered in Propositions \ref{prop: proj models '}, \ref{prop: proj models star}, \ref{prop: proj models bullet}), one has first to consider a basis over $\Z$ of the N\'eron--Severi group instead of the set of generators we considered in the proofs of the previous propositions and then to give explicitly the intersections among all the curves which are elements of the chosen basis. To obtain a $\Z$ basis from the set of generators given in the proof of Propositions \ref{prop: proj models '}, \ref{prop: proj models star}, \ref{prop: proj models bullet} it suffices to substitute a class of the given set of generators with an effective class which contains $w$, then one can compute the intersection properties.
}\end{rem}

\subsection{The case $\NS(F)=\langle 2\rangle\oplus E_7(-2)$ and $\NS(X)=(\langle 4\rangle\oplus E_8(-2))'$, embedding $\widetilde{j}$.}

By Proposition \ref{prop: proj models direct sum}, $\varphi_{|h|}:F\ra \mathbb{P}^2$ is a double cover branched along a sextic which admits 7 conics which intersect the branch sextic with even multiplicity in each of their intersection points.
By Corollary \ref{cor: isom between NS(F) and NS(S),NS(Z)}, $\NS(F)\simeq \langle 4\rangle\oplus \langle -2\rangle^{\oplus 7}$, which exhibits $F$ as a seven nodal quartic.
To relate these two models, one denotes $A$ the nef and big divisor of degree 4 such that $\varphi_{|A|}(X)$ is a seven nodal quartic and $n_i$, $i=1,\ldots, 7$ the contracted rational curves, and one observes that the linear system $h:=2A-\sum_in_i$ has degree 2, so that $\varphi_{|h|}:F\ra\mathbb{P}^2$ is a double cover and $\varphi_{|h|}(n_i)$ are the 7 conics in $\mathbb{P}^2$ which split in the double cover $\varphi_{|h|}$.
\subsubsection{ Description of a degree 2 model using equations}\label{Verra}
The aim of this section is to describe  a method to find explicit equations of the sextics above. 
By Proposition \ref{prop:relation NS(F), NS(X), NS(W)}, we see that the generalized Nikulin surface $F$ with $\NS(F)\simeq \langle 2\rangle\oplus E_7(-2)$, is the fixed surface of a symplectic involution on a manifold of $K3^{[2]}$-type $X$ with $\NS(X)\simeq \left(\langle 4\rangle\oplus E_8(-2)\right)'$. These manifolds are symmetric double EPW quartics and are considered in \cite[Section 5]{CGKK} and now we deduce a more precise description of $F$ by this relation. We shall restrict the Lagrangian description of the EPW quartic to the fixed $K3$ surface. In fact to the fixed plane, that is the image of the fixed $K3$ surface through the involution.

Let $\eta: 3\mathcal O(-1)\to 6 \mathcal O_{\mathbb P^2}$ be given by the matrix:
\[
W=\left(\begin{array}{ccc}
2x_3 & 0 &0\\
 0& 2x_2 &0\\
0 & 0 &2x_1\\
0 & x_1 & x_2\\
-x_1 & 0 &x_3\\
-x_2 & -x_3 &0
\end{array}
\right)
\]
where $x_1,x_2,x_3$ are the coordinates of $\mathbb P^2$ and let $E:=\operatorname{coker}  \eta$. Let $\psi: E^{\vee}\to E$ be a general symmetric map. 
\begin{prop}\label{degree2}
    The natural polarization of degree $4$ on $X$ gives a map to $\PP^9$ whose restriction 
    to $F$ is a $2:1$ cover of a Veronese embedding of $\PP^2$ in $\PP^5\subset \PP^9$.
    In such a way $F$ is isomorphic to the double cover of $\mathbb P^2$ branched along the discriminant locus of the conic bundle associated to $\psi$. This gives a model of a general generalized Nikulin surface $F$ with $\NS(F)\simeq \langle 2\rangle\oplus E_7(-2)$. 
\end{prop}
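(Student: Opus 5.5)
The plan is to use the description of $X$ with $\NS(X)\simeq(\langle 4\rangle\oplus E_8(-2))'$ as a symmetric double EPW quartic, following \cite[Section 5]{CGKK}. There, $X$ is the double cover of an EPW quartic $Q_A\subset\PP(V)$, $\dim V=10$, which is determined by a Lagrangian $A$ that is invariant under an involution of $V=V_+\oplus V_-$. The natural degree-$4$ polarization $h_X$ is the pullback of $\oo(1)$. The involution $\iota$ on $X$ covers the projective involution of $\PP^9$ induced by $\pm1$ on $V_\pm$. Hence $\iota$-fixed points map into the fixed linear spaces $\PP(V_+)\cup\PP(V_-)$. By Remark \ref{rem: the class HF restriction of HX} and Proposition \ref{prop:relation NS(F), NS(X), NS(W)} (embedding $\widetilde{j}$), $h_X|_F=2h$, where $h$ is the degree-$2$ generator of $E_7(-2)^\perp$. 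So $\varphi_{|h_X|}|_F$ is given by a subsystem of $|2h|$, and its image has degree dividing $h_X|_F^2=8$.

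\textbf{Step 1: the image of $F$.} I will show that $F$ maps onto a $\PP^5$-component $\PP(V_+)$, and that its image there is the Veronese surface. The image of $\varphi_{|h|}$ is $\PP^2$, and $|2h|$ restricted from $|\oo_{\PP^2}(2)|$ gives the Veronese embedding $v_2(\PP^2)\subset\PP^5$. It therefore suffices to show that the restriction $H^0(X,h_X)^{+}\to H^0(F,2h)$ has image exactly $\varphi_{|h|}^*H^0(\PP^2,\oo(2))$, of dimension $6$. The degree count helps here: $\deg v_2(\PP^2)=4$ and $8=2\cdot 4$, which is consistent with a $2:1$ map. The map factors through $F/\tau\simeq\PP^2$, where $\tau$ is the covering involution of $\varphi_{|h|}$. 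Identifying which eigenspace gives $6$ sections uses $\dim V_\pm$ from \cite{CGKK}.

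\textbf{Step 2: the equations.} Next I restrict the Lagrangian data to $\PP^2$. On $\PP(V_+)\cong\PP^5$ restricted to the Veronese, the EPW degeneracy structure pulls back as follows. The tautological data $V_+\otimes\oo$ modulo a rank-$3$ subbundle becomes $6\oo_{\PP^2}$ modulo $3\oo(-1)$, and the map $\eta$ given by $W$ realizes this; $E=\operatorname{coker}\eta$ has rank $3$. The Lagrangian condition on $A$ translates into a symmetric map $\psi:E^\vee\to E$, i.e.\ a conic bundle over $\PP^2$. The double cover structure of $X\to Q_A$ over the fixed locus is then recovered by the standard EPW construction, where the double cover is branched where the corank jumps. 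This identifies $F$ as the double cover branched along the discriminant $\{\det\psi=0\}$. Computing $c_1(E)=3$ gives $\det\psi\in H^0(\oo(6))$, a sextic, as required.

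\textbf{Step 3: generality and the Néron--Severi lattice.} I will check that a general symmetric $\psi$ yields a smooth sextic. I will also check that the $7$ conics splitting in the cover arise from the explicit form of $W$; this is plausibly via the degeneracy loci of the columns or minors of $W$, giving contact conics. Remark \ref{rem: vice versa proj models direct sum} and Proposition \ref{prop: proj models direct sum} then give $\NS(F)\supseteq\langle2\rangle\oplus E_7(-2)$. Finally, a parameter count should give dimension $12$: symmetric maps $\psi$ modulo automorphisms of $E$ and $\PP^2$. Since Corollary \ref{cor: families of projective K3} says this locus is $12$-dimensional, the family is dense, so the model is general.

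The main obstacle is Step 2. The difficulty is justifying that the restriction of the EPW Lagrangian data to the fixed Veronese surface is exactly $E$ with a symmetric $\psi$, and that the double cover of the EPW quartic restricts to the discriminant double cover. I would attack this by writing $A\cap(\wedge^3V)$ in $\iota$-adapted coordinates, as in \cite[Section 5]{CGKK}.
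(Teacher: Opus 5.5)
Your overall framework (restrict the invariant Lagrangian data of the symmetric double EPW quartic of \cite[Section 5]{CGKK} to the fixed locus) is the right one. However, the proposal does not yet prove the statement. Step 2 carries the whole content and is only asserted, and Step 1 rests on an incorrect picture of the ambient space.

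The degree-$4$ map sends $X$ two-to-one onto a quartic section of the \emph{cone} over the Segre variety $\PP(V_3)\times\PP(\bigwedge^2V_3^\vee)$ of rank-one $3\times 3$ matrices; it is not a hypersurface in $\PP(V)$. The involution acts by transposition and trivially on the vertex coordinate. Hence $\dim V_+=7$ and $\dim V_-=3$, and the fixed locus on the cone is the cone over the Veronese surface in $\PP(V_+)=\PP^6$. So no eigenspace "gives $6$ sections", and $\PP(V_+)$ is not the $\PP^5$ of the statement.

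Your reduction in Step 1 is correct, and even sharper than you state. Since $(\varphi_{|h|})_*\oo_F\simeq\oo_{\PP^2}\oplus\oo_{\PP^2}(-3)$, the space $H^0(F,2h)$ is already $6$-dimensional and equals $\varphi_{|h|}^*H^0(\oo_{\PP^2}(2))$. What remains is exactly surjectivity of the restriction map, i.e.\ that the image of $F$ is a hyperplane section of the fixed cone missing the vertex. Nothing in your proposal supplies this.

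The missing idea is the splitting of the degeneracy locus. Invariance forces $A=A_1\oplus A_2$ with $A_1\subset H^+_{12}$ (pairs of symmetric matrices) and $A_2\subset H^-_6$ (pairs of skew matrices). These live in the $18$-dimensional symplectic space, not in $V$. For $U$ in the fixed cone the Lagrangians $\bar T_U$ split as well, so the quartic restricted there is the union of two degeneracy loci:
\begin{itemize}
\item In the skew part, the forms $Q_B(M_-)$ have rank $\le 1$, so $\det(b_0Q_{A-}+Q_B(M_-))=b_0^2\,l(B,b_0)$. This linear form is what cuts the Veronese surface out in a $\PP^5$, and it fills your Step 1 gap.
\item In the symmetric part, the forms $Q_B(M_+)$ have rank $\le 3$, so the determinant is $b_0^3 f$ with $f$ a cubic. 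Its restriction to the Veronese surface is the sextic along which $F$ is branched, by \cite[Proposition 5.3]{CGKK}.
\end{itemize}

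Your derivation of $\psi$ by "$V_+\otimes\oo$ modulo a rank-$3$ subbundle" is also not how it arises. The relevant rank-$6$ trivial bundle is $\Sym^2V_3\otimes\oo$, a Lagrangian summand of $H^+_{12}$. The argument runs as follows:
\begin{itemize}
\item $E_B=T_{B,+}\cap\Sym^2V_3$ is an isotropic rank-$3$ subbundle, isomorphic to $3\oo(-1)$ and embedded by $W$.
\item $A\cap\Sym^2V_3=0$.
\item Passing to the symplectic reduction $E_B^\perp/E_B$, the bundle $\Sym^2V_3/E_B$ is Lagrangian and transverse to both $\bar A_B$ and $\bar T_{B,+}$.
\item This converts the Lagrangian degeneracy locus into the symmetric degeneracy locus of $\Sym^2V_3/E_B\to(\Sym^2V_3/E_B)^\vee$, i.e.\ of $\psi$.
\end{itemize}

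Generality then follows directly from generality of $A$. So your Step 3, with conics that only ``plausibly'' come from $W$ and an uncomputed parameter count, is not needed, and as written it is not an argument.
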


\begin{proof}
    We know that $F$ appears as the fixed locus of the symplectic involution on the symmetric double EPW quartic described in \cite[Section 5A]{CGKK}. The latter is a double cover of a Lagrangian degeneracy locus obtained as follows. We consider the cone over the Segre product $\mathbb P(V_3)\times \mathbb P(\bigwedge^2 V_3^\vee)$ for $V_3$ a $3$-dimensional vector space.  After fixing a basis of $V_3$ this is interpreted as the cone over the locus of rank 1 matrices corresponding to maps $V_3\to V_3^{\vee}$. 
    
    On the cone we have an involution given by transposing the matrices and acting trivially on the additional coordinate corresponding to the vertex. This involution is naturally associated to an involution $\iota$ of the space $$V_3\otimes \bigwedge^2 V_3^{\vee}\oplus V_3^{\vee}\otimes \bigwedge^2 V_3\subset \bigwedge^3 (V_3\oplus V_3^{\vee})/\bigwedge^3 V_3$$ which acts as transposition on both components; the space is equipped with a natural skew symmetric form coming from wedge product. 
    The symmetric EPW quartic section on the cone is induced by a Lagrangian degeneracy locus associated to a Lagrangian space invariant with respect to $\iota$. Note that such a Lagrangian space $A$ is given as a direct sum of Lagrangian spaces $A_1$ and $A_2$ in the two eigenspaces for $\iota$: $H^+_{12}$ of dimension 12 representing pairs of symmetric matrices and  $H^-_6$ of dimension 6 representing pairs of skew-symetric matrices. Note furthermore that the fixed locus of the involution on the cone  admits two components. 
    One of them is a cone over the diagonal in $\mathbb P(V_1)\times \mathbb P(V_1)$. As pointed out in \cite[Proposition 5.3]{CGKK} the generalized Nikulin surface is a double cover of the intersection of the cone over the diagonal with a linear space branched in its intersection with a cubic. The linear space and the cubic are in fact two components of the intersection of the cone over the diagonal with the EPW quartic. 
    Note that for $U$ in the cone over the diagonal the spaces $\bar T_U$ are also invariant and hence also direct sums of Lagrangian subspaces $\bar T_{U,1}$, $\bar T_{U,2}$  of the two eigenspaces. We hence have two Lagrangian degeneracy loci on the cone over the diagonal. One of them is a linear section while the other is a cubic hypersurface.

    More precisely, an affine open subset of the cone over the diagonal can be interpreted as the locus of symmetric rank 1 matrices $B$ with entries $b_{ij}$.  Let furthermore $V_3\otimes \bigwedge^2 V_3^{\vee}$ have coordinates given by entries $m_{ij}$ of a matrix $M$.  
   In these coordinates we get a family parametrized by the space of rank 1 matrices $B$ of quadric forms on $V_3\otimes \bigwedge^2 V_3^{\vee}$:
   $$Q_B(M)=\sum b_{ij} M^{ij},$$ where $M^{ij}$ is the entry of the adjoint matrix to $M$. Now we have an  involution on the space of quadrics in the coordinates $m_{ij}$ induced by transposing the matrix $M$. If we decompose the matrix $M$ as $M=M_++M_-$ with $M_+=\frac{1}{2}(M+M^T)$ and $M_-=\frac{1}{2}(M-M^T)$
   we have 
   $$Q_B(M)=\sum b_{ij} M_+^{ij} + \sum b_{ij} M_-^{ij}.$$
   
 In this setup the considered affine piece of the EPW quartic section is given by the discriminant of the family of quadrics in $m_{ij}$ given by $Q_A +Q_B(M)$, 
 where $Q_A$ is a general quadric in the coordinates $m_{ij}$ invariant with respect to the involution acting as transposition of $M$.
Taking the latter symmetry into account we end up with two degeneracy loci corresponding to the families of quadrics:
\begin{enumerate}
\item $b_0 Q_{A-} +Q_B(M_-)$ in the space of quadrics in $M_-$ i.e.~ in the space of skewsymmetric $3\times 3$ matrices.
\item $b_0 Q_{A+} +Q_B(M_+)$ in the space of quadrics in $M_+$ i.e.~ in the space of symmetric $3\times 3$ matrices.
\end{enumerate}
Now, observe that $Q_B(M_-)$ is a family of quadrics which is represented by the matrix $B$ hence all these quadrics are of rank $\leq 1$. It follows that its determinant is of the form $b_0^2l(B,b_0)$ with $l(B,b_0)$ a linear form. This leads to a hyperplane section of the cone over the Veronese surface. 

Similarly, a direct computation shows that $Q_B(M_+)$ is a family of quadrics of rank $\leq 3$. In particular,  
$$\det (b_0 Q_{A+} +Q_B(M_+))=b_0^3 f(B,b_0),$$
where $f(B,b_0)$ is a cubic. Now restricting $Q_B(M_+)$ to the Veronese surface defined by the hyperplane equation we get a family of quadrics in $\mathbb P^5$ of rank equal to 3. Translating back to Lagrangian degeneracy loci: in the space
$\Sym^2 V_3\oplus \Sym^2 V_3^{\vee}$ we have a natural skew-symmetric form, a Lagrangian $A_+$ and a family of Lagrangians $T_{B,+}$ parametrized by a Veronese surface. We know that 
$A\cap \Sym^2 V_3=0$ while $E_B=T_{B,+}\cap \Sym^2 V_3$ form a bundle of rank 3, that by syzygy computation using Macaulay 2, is isomorphic to $3\mathcal O(-1)$, while its embedding in $\Sym^2 V_3$ is given by the matrix $W$. It follows that the Lagrangian degeneracy locus given by $A$ and the family $\{T_{B,+}\}$ is equal to the Lagrangian degeneracy locus given by the family $\{\bar A_B\}$, the image of $A\cap E_B^{\perp}$ by the natural quotient map $E_B^{\perp}\to E_B^{\perp}/E_B$, and the family $\{\bar T_{B,+}=T_{B,+}/E_B\}$. Moreover, we have $\Sym^2 V_3/E_B \subset E_B^{\perp}\to E_B^{\perp}/E_B$ is a Lagrangian subbundle disjoint from $\bar A_B$ as well as $T_{B,+}$. It follows that the degeneracy locus  is in fact symmetric degeneracy locus between $$\Sym^2 V_3/E_B \to (\Sym^2 V_3/E_B)^{\vee}$$ which is as in the assertion.
It remains to observe that since $A$ is general the induced symmetric degeneracy locus is general of this form, which concludes the proof.\end{proof}

{\begin{rem}{\rm In both  Propositions \ref{degree2}  and \ref{prop: proj models direct sum} we described a model of $F$ as double cover of $\mathbb{P}^2$ where the map to $\mathbb{P}^2$ is given by the ample polarization $h$ (which is the saturation of the restriction of the primitive $\iota$-invariant polarization on $X$). So the two descriptions coincide, which allows us to conclude that the discriminant of the conic bundle described in Proposition \ref{degree2} is a smooth sextic  which admits 7 conics which intersect the branch sextic with even multiplicity in each of their intersection points.}\end{rem}}

\subsection{The case $\NS(F)=(\langle 4\rangle\oplus E_7(-2))'$ and $\NS(X)=\langle 2\rangle\oplus E_8(-2)$, embedding $j_1$.}
By Proposition \ref{prop: proj models '}, if $\NS(F)\simeq (\langle 4\rangle\oplus E_7(-2))'$, then $\varphi_{|h|}:F\ra \mathbb{P}^3$ is a double cover of a quadric such that there are seven $(1,1)$ curves on $Q$ which split in the cover. Here we give extra information on these curves and we completely characterize the generalized Nikulin surfaces which admit a model as double cover of $\mathbb{P}^1\times \mathbb{P}^1$ and which has a minimal Picard number.
Recall, that if a $K3$ surface $S$ admits a model as double cover of $\mathbb{P}^1\times \mathbb{P}^1$, the two projections $\mathbb{P}^1\times \mathbb{P}^1\ra\mathbb{P}^1$ endow $S$ with two genus 1-fibrations such that $E_1E_2=2$ if $E_j$ $j=1,2$ is the class of the fiber of these genus 1 fibrations. Call $\mathcal{E}_i$ the elliptic fibration $\mathcal{E}_i:=\varphi_{|E_i|}$.
\begin{lemma}\label{lem: decrip F (4+G)'}
Let $F$ be a $K3$ surface such that $\NS(F)\simeq (\langle 4\rangle\oplus E_7(-2))'$. Then $\varphi_{|h|}:F\ra \mathbb{P}^1\times \mathbb{P}^1$ is a double cover such that the equivalent following conditions hold: \begin{enumerate}\item there are six rational curves $R_i$ on $F$ such that $\varphi_{|h|}(R_i)$ is a $(1,1)$ curve in $\mathbb{P}^1\times \mathbb{P}^1$ which splits in the double cover and whose intersection properties are \begin{eqnarray}\label{eq: int Ri's}R_iR_j=0\mbox{ if }1\leq i<j\leq 5\mbox{ or }(i,j)=(1,6),(3,6),(5,6)\mbox{ and }R_2R_6=R_4=R_6=2\end{eqnarray} and the set $\{E_j,R_i,\ j=1,2\ i=1,\ldots, 6\}$ is a $\Z$-basis of $\NS(F)$; \item the genus 1 fibrations $\mathcal{E}_1$ and $\mathcal{E}_2$ admit a common zero section $\mathcal{O}$ and both their Mordell--Weil groups are isomorphic to  $\Z^6$ and are generated by the same set of 6 sections $\mathcal{S_i}$ such that \begin{eqnarray}\label{eq: int sections 4'}\mathcal{S}_i\mathcal{S}_j=0\mbox{ if }1\leq i<j\leq 5,\ \mathcal{O}\mathcal{S}_i=\left\{\begin{array}{l}0\mbox{ if }i=1,2,3,\\2\mbox{ if }i=4,5,6,\end{array}\right.\  \mathcal{S}_6\mathcal{S}_i=\left\{\begin{array}{l}4\mbox{ if }i=1\\2\mbox{ if }i=2,\ldots,6,\end{array}\right. \end{eqnarray}\end{enumerate}
\end{lemma}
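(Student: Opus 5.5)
The plan is to work entirely in $\NS(F)\simeq(\langle 4\rangle\oplus E_7(-2))'$, using the generators from the proof of Proposition \ref{prop: proj models '} with $d=2$. There $z=\frac{h-w_1-w_2}{2}$ satisfies $z^2=0$ and $hz=2$. Set $E_1:=z$ and $E_2:=h-z=\frac{h+w_1+w_2}{2}$, so that $E_1E_2=2$ and $E_1+E_2=h$. Proposition \ref{prop: proj models '} gives that $\varphi_{|h|}$ is $2:1$ onto $Q\simeq\mathbb{P}^1\times\mathbb{P}^1$ and that $E_1,E_2$ are the pullbacks of the two rulings. The fibrations $\mathcal{E}_i=\varphi_{|E_i|}$ are the compositions with the projections.

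For (1), I take the rational classes among the seven split $(1,1)$-curves $b_j$ of Proposition \ref{prop: proj models '}. These have $b_j^2=-2$ and $b_jE_1=b_jE_2=1$, so their images are $(1,1)$-curves on $Q$, and each preimage splits as $b_j+(h-b_j)$.
\begin{itemize}
\item I would choose six of them (or suitable $\pm$ combinations $z\pm\cdots$) and compute the Gram matrix of $\{E_1,E_2,R_1,\dots,R_6\}$ directly from $w_i^2=-4$, $w_iw_{i+1}=2$, $\hat w$.
\item This should realize the intersection numbers \eqref{eq: int Ri's}; if a choice fails, I replace some $R_i$ by the residual $h-R_i$.
\item I then check that the determinant equals $\pm|A_{\NS(F)}|=\pm 64$, computed from the discriminant form in Lemma \ref{lem: overlattices of <2d>+G}. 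This gives a $\Z$-basis.
\end{itemize}
Irreducibility of the $R_i$ follows for general $F$ from Lemma \ref{lemma: h ample} and Riemann--Roch: $R_i$ is effective, and it cannot decompose, since any component would have $h$-degree $1$ and would itself be a $(1,1)$-type splitting curve. For the specific $F$, the assertion holds up to the usual genericity.

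For the equivalence with (2), I use that a curve $R$ with $R^2=-2$ and $RE_1=RE_2=1$ is simultaneously a section of both $\mathcal{E}_1$ and $\mathcal{E}_2$. I fix $\mathcal{O}:=R_j$ for a suitable $j$ and set $\mathcal{S}_i$ to be the remaining five $R$'s together with one more class. Most likely this last class is the residual $h-R_k$, which is again a rational $(1,1)$ class.

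With these choices, \eqref{eq: int sections 4'} becomes a rewriting of \eqref{eq: int Ri's} in the new basis. I would then apply the Shioda–Tate formula to each fibration. The lattice $\NS(F)$ has rank $8$ and $\langle E_i,\mathcal{O}\rangle\simeq U$. The orthogonal complement of this $U$ is a rank-$6$ negative definite lattice, and I must check that it contains no $(-2)$-vectors. Then the fibration has no reducible fibres, its Mordell–Weil group is $\NS(F)/U$, which is free of rank $6$, and it is generated by the images of the $\mathcal{S}_i$ because the set is a $\Z$-basis.

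The converse direction, (2)$\Rightarrow$(1), reverses this: the sections are the $R_i$, and the splitting follows from $R_iE_j=1$.

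The main obstacle is the bookkeeping: finding a choice of six rational classes that matches exactly the prescribed intersection pattern and unimodular-index determinant. The other delicate point is verifying that $U^\perp$ has no roots; I would check the latter via its discriminant or by the parity argument used in Lemma \ref{lemma: h ample}.
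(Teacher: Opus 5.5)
\paragraph{The gap is in part (1).} This part is the real content of the lemma, and you never exhibit the six classes. Worse, the pool you propose to draw them from cannot supply them.

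For $d=2$, the seven split curves of Proposition \ref{prop: proj models '} are $z+w_1$, $z+w_2$, $z-w_3$, \dots, $z-\sum_{i=3}^7 w_i$. These curves, their residuals $h-b_j$, and $E_1=z$, $E_2=h-z=z+w_1+w_2$ all lie in $\Z z+\langle w_1,\dots,w_7\rangle$. That sublattice has index $2$ in $\NS(F)$, because it does not contain $\hat{w}$. So no selection from this pool, with or without replacing some classes by residuals, can be a $\Z$-basis.

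The pattern \eqref{eq: int Ri's} is not attainable there either:
\begin{itemize}
\item $z+w_2, z-w_3,\dots, z-\sum_{i=3}^7 w_i$ are six mutually orthogonal $(-2)$-classes $b_3,\dots,b_8$.
\item $z+w_1=h-(z+w_2)$ is itself a residual.
\item A residual $h-b_j$ meets every $b_k$ with $k\neq j$ with multiplicity $2$, and meets $b_j$ with multiplicity $4$.
\end{itemize}
Hence five mutually orthogonal classes from this pool are either all $b$'s or all residuals. No remaining class in the pool meets exactly two of them with multiplicity $2$ and the other three with multiplicity $0$.

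The missing idea is that one of the $R_i$ must involve $\hat{w}$. The paper uses
$R_1=z+w_2$, $R_2=z-w_3$, $R_3=z-w_3-w_4$, $R_4=z-w_3-w_4-w_5$, $R_5=z-\sum_{i=3}^6 w_i$, and $R_6=z+w_2\pm\hat{w}$, with $E_1=z$ and $E_2=h-z$.
With the paper's conventions ($w_iw_{i+1}=2$, $w_2\hat{w}=2$, $z\hat{w}=0$) the sign must be $+$, since the printed class $z+w_2-\hat{w}$ has square $-10$. With $+$, the intersection numbers \eqref{eq: int Ri's} follow by direct computation, and the basis property holds because $R_6-R_1=\hat{w}$.

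Your determinant test is also mis-calibrated. One has $|\det\NS(F)|=1024/2^2=256$, not $64$. Any rank-$8$ sublattice $L$ satisfies $|\det L|=256\,[\NS(F):L]^2$, so the value $64$ can never occur.

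\paragraph{Part (2) is fine.} It is in fact more careful than the paper. The paper sets $\mathcal{O}=R_6$, $\mathcal{S}_i=R_i$ for $i\le 5$, and $\mathcal{S}_6=E_1+E_2-R_1$ (this requires reordering the $R_i$ so that $\mathcal{O}\mathcal{S}_i$ reads $0,0,0,2,2,2$). It then asserts the Mordell--Weil statement without excluding reducible fibres.

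The root check you postpone is a one-line parity argument. Write $x=mz+ah+g$ with $g\in E_7(-2)$. Then $x^2\equiv 2m\,(x\cdot z)\pmod 4$. So every class orthogonal to $E_1$ has square divisible by $4$, and the same holds for $E_2=h-z$ by the identical argument. Hence all fibres are irreducible, and $\mathrm{MW}\cong\NS(F)/U$ is free of rank $6$.

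The same parity idea gives irreducibility of the $R_i$ for every such $F$, not only a general one. Since $h\cdot\NS(F)\subseteq 2\Z$ and $h$ is ample, an effective class of $h$-degree $2$ has a single component. This replaces your argument via components of $h$-degree $1$, which cannot exist.
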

\begin{proof}
To prove the condition (1) it suffices to exhibit a basis of $\NS(F)$ whose classes represents the required curves, slightly modifying the set of generators considered in the proof of Proposition \ref{prop: proj models '} (we keep the same notation of that proof). To this purpose let us consider the $\Z$-basis of $\NS(F)$ given by $$\{z,h-z,z+w_2, z-w_3, z-\sum_{i=3}^4w_i, z-\sum_{i=3}^5w_i,  z-\sum_{i=3}^6w_i, z+w_2-\hat{w}\}.$$ Then $E_1:=z$ and $E_2:=h-z$ are genus 1 curves such that $E_1E_2=2$, hence $\varphi_{|h|}=\varphi_{|E_1+E_2|}:F\ra\mathbb{P}^1\times \mathbb{P}^1$ is a double cover. Calling $R_i$, $i=1,\ldots, 6$ the remaining curves, one observes that $R_i^2=-2$, $R_iE_j=1$, $j=1,2$, $i=1,\ldots, 6$, which implies that the $R_i's$ correspond to rational curves, which are mapped to $(1,1)$ curves over $\mathbb{P}^1\times \mathbb{P}^1$ and which split in the double cover (otherwise their intersection with the $E_j$'s should be 2, since the surface is a double cover of $\mathbb{P}^1\times\mathbb{P}^1$). The intersection properties among the $R_i$'s can be directly computed using the basis given above.

To show (2), observe that the two classes $E_i$ define the two elliptic fibration $\mathcal{E}_i:=\varphi_{|E_i|}$ whose class of the fiber is $E_i$ and $E_i^2=0$, $E_1E_2=2$. A common section $\mathcal{O}$ 
is such that $\mathcal{O}^2=-2$ and $\mathcal{O}E_j=1$. Denote $R_6$ the zero section of the fibrations; and $R_i$, $i=1,\ldots, 5$ the sections $\mathcal{S}_i$. Putting $\mathcal{S}_6=E_1+E_2-\mathcal{S}_1$ one sees that the intersection properties given in point \eqref{eq: int sections 4'} are equivalent to \eqref{eq: int Ri's}. Moreover, the fact that the section $\mathcal{S}_i$'s generate the Mordell--Weil groups, implies that $\{E_1,\mathcal{O},\mathcal{S}_1\ldots, \mathcal{S}_6\}$ is a $\Z$-basis for the N\'eron--Severi group and vice versa. Since one can replace $\mathcal{S}_6$ with $E_2$, one obtains that $$\{E_1,E_2,\mathcal{S}_k, k=1,\ldots, 5,\mathcal{O}\}=\{E_1,E_2,R_i,\ldots i=1,\ldots, 6\}$$ is a basis of $\NS(F).$\end{proof}

\begin{prop}\label{prop: vice versa <4>+G'}
Let $W$ be a $K3$ surface with $\rho(W)=8$ and admitting two to one map $f:W\ra \mathbb{P}^1\times \mathbb{P}^1$ and such that there exist two  the genus 1 fibrations $\mathcal{E}_1$ and $\mathcal{E}_2$ which have a common zero section $\mathcal{O}$ and common generators of the Mordell--Weil group $\mathcal{S}_i$ which satisfy \eqref{eq: int sections 4'}. 
 
 Then $\NS(W)\simeq (\langle 4 \rangle\oplus E_7(-2))'$ and in particular $W$ is a generalized Nikulin surface.\end{prop}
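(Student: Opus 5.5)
The plan is to reverse the argument of Lemma \ref{lem: decrip F (4+G)'}. First I show that the given curves span a lattice isometric to $(\langle 4\rangle\oplus E_7(-2))'$ and that this lattice is all of $\NS(W)$. Then I invoke Theorem \ref{thm: generalizes Nikulin iff generalized lattice in NS}. Let $E_1,E_2$ be the fibre classes of $\mathcal{E}_1,\mathcal{E}_2$. Since $f$ is $2:1$ onto $\mathbb{P}^1\times\mathbb{P}^1$ and the fibrations are induced by the two projections, we have $E_1^2=E_2^2=0$ and $E_1E_2=2$. The zero section satisfies $\mathcal{O}^2=-2$ and $\mathcal{O}E_j=1$, and the same holds for each section $\mathcal{S}_i$. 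All remaining intersections are given by \eqref{eq: int sections 4'}. This yields the Gram matrix of the classes $\{E_1,E_2,\mathcal{O},\mathcal{S}_1,\ldots,\mathcal{S}_5\}$, with $\mathcal{S}_6$ expressible as in the proof of Lemma \ref{lem: decrip F (4+G)'}. Equivalently, I pass to the classes $E_1,E_2,R_1,\ldots,R_6$ with intersections \eqref{eq: int Ri's}.

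Next I identify this span $\Lambda$ with $(\langle 4\rangle\oplus E_7(-2))'$. Set $h:=E_1+E_2$, so $h^2=4$, and consider the basis $\{z,h-z,z+w_2,z-w_3,\ldots,z+w_2-\hat w\}$ from the proof of Lemma \ref{lem: decrip F (4+G)'}. Put $z:=E_1$, and solve for $w_2,w_3,\ldots,w_6,\hat w$ as integral combinations of $E_1,E_2,R_i$. For example, $w_2=R_1-E_1$ and $w_3=E_1-R_2$; moreover $w_1=h-2z-w_2$ follows from $z=(h-w_1-w_2)/2$. I then check two things: that these $w$'s have the Gram matrix of $G\simeq E_7(-2)$ and are orthogonal to $h$, and that the change of basis is unimodular. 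Together these give $\Lambda\simeq(\langle 4\rangle\oplus E_7(-2))'$ with $E_7(-2)$ primitive. As an independent check, I would compute $\det\Lambda$ and its discriminant form directly. These should equal $4\cdot 4^{\ldots}/4=$ the discriminant of $\langle4\rangle\oplus E_7(-2)$ divided by $4$, namely $2^{8}$ with form $\Z/4\Z(\tfrac14)\oplus u(2)^{\oplus2}\oplus\Z/4\Z(\tfrac14)$, as computed in Lemma \ref{lem: overlattices of <2d>+G}. The uniqueness statements in that lemma, together with \cite[Theorem 1.14.4]{N}, would then also give the isometry.

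The main obstacle is showing $\Lambda=\NS(W)$ rather than a finite-index sublattice. Since $\rho(W)=8$, $\Lambda$ has finite index in $\NS(W)$. To rule out a proper overlattice I use the Mordell--Weil hypothesis. By the Shioda--Tate formula for $\mathcal{E}_1$, with zero section $\mathcal{O}$ and $MW\simeq\Z^6$ generated by the $\mathcal{S}_i$, the rank count $2+6=8$ forces all fibres of $\mathcal{E}_1$ to be irreducible, so the trivial lattice is $U=\langle E_1,\mathcal{O}\rangle$. Shioda's theory then gives $\NS(W)=U\oplus(\text{lattice generated by the sections modulo }U)$. Hence $\NS(W)$ is generated by $E_1$, $\mathcal{O}$ and the $\mathcal{S}_i$, which is exactly $\Lambda$. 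Here $E_2$ and $\mathcal{S}_6$ are already in this span, as noted in Lemma \ref{lem: decrip F (4+G)'}.

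Finally, since $E_7(-2)$ embeds primitively in $\NS(W)$, Theorem \ref{thm: generalizes Nikulin iff generalized lattice in NS} shows that $W$ is a generalized Nikulin surface.
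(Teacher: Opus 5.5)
Your proposal is correct and follows essentially the paper's route. The span of $E_1,E_2,\mathcal{O},\mathcal{S}_1,\ldots,\mathcal{S}_5$ is matched with $(\langle 4\rangle\oplus E_7(-2))'$ through the basis in the proof of Lemma \ref{lem: decrip F (4+G)'}, and Theorem \ref{thm: generalizes Nikulin iff generalized lattice in NS} finishes; your Shioda--Tate step makes explicit the equality of this span with $\NS(W)$, which the paper takes from the Mordell--Weil generation hypothesis. One small correction for your Gram-matrix check: with $\hat w=(w_1+w_3+w_5+w_7)/2$, the class realizing \eqref{eq: int Ri's} is $R_6=z+w_2+\hat w$ (the printed $z+w_2-\hat w$ has square $-10$), so solve $\hat w=R_6-R_1$; the lattice spanned is unchanged.
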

\begin{proof}
Let us assume that $W$ admits a model as double cover of $\mathbb{P}^1\times \mathbb{P}^1$ and that $\mathcal{E}_i:=\varphi_{|E_i|}$ are the associated genus 1 fibrations. Let $\mathcal{O}$ be their common zero section, $\mathcal{S}_i$, $i=1,\ldots 5$, be the common generators of their Mordell--Weil group and $\mathcal{S}_6^{(j)}$ be the generator of $\mathrm{MW}(\mathcal{E}_j)$ with $j=1,2$. Then $\{E_1,E_2,\mathcal{O},\mathcal{S}_i,\ i=1,\ldots, 5\}$ is a $\Z$-basis of a lattice which is isometric to $(\langle 4\rangle\oplus E_7(-2))'$ by the proof of Lemma \ref{lem: decrip F (4+G)'}. 
\end{proof}

We now investigate the relations between the geometry of $F$ and of the manifold of $K3^{[2]}$-type $X$ admitting a symplectic involution $\iota$ such that $F$ is the fixed surface of $\iota$.

By Proposition \ref{prop:relation NS(F), NS(X), NS(W)}, it follows that $\NS(X)\simeq \langle 2 \rangle \oplus E_8(-2)$ and that it is embedded in $H^2(X,\Z)$ with the embedding $j_1$ (see \cite[Table 3]{CGKK} for the definition).

This case was already studied a bit in \cite[Section 4]{CGKK}.
Recall that manifolds of $K3^{[2]}$-type of degree $2$ admitting a symplectic involution were described  in \cite{C}. 
The projective model is an EPW sextic $Z\subset \PP^5$  invariant with respect to a linear involution. The involution induces on $\PP^5$ a linear involution of type  $(+,+,+,+,-,-)$.
As shown in \cite[Section 4]{CGKK}, the fixed $\PP^3$ intersects that EPW sextic along the union of a quadric $Q$ and a Kummer quartic $K$.
We can see that the fixed $K3$ surface $F$ maps $2:1$ to the quadric and is ramified along the intersection  $Q\cap K$.

\begin{prop}\label{prop 10.10}
    A generalized Nikulin surface $F$ with $\NS(F)\simeq (\langle 4\rangle\oplus E_7(-2))'$ is a double cover of a smooth quadric branched along its intersection with a Kummer quartic.
    Moreover, a general such double cover is a generalized Nikulin surface.
\end{prop}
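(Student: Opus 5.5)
The plan is to prove both directions through the geometry of the fixed locus of a symplectic involution on a double EPW sextic, using the lattice correspondences of Proposition \ref{prop:relation NS(F), NS(X), NS(W)}.

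\emph{First direction.} Let $F$ be a generalized Nikulin surface with $\NS(F)\simeq(\langle 4\rangle\oplus E_7(-2))'$. By Theorem \ref{thm: generalizes Nikulin iff generalized lattice in NS} and Proposition \ref{prop:relation NS(F), NS(X), NS(W)}, $F$ is the fixed surface of a symplectic involution $\iota$ on a manifold $X$ of $K3^{[2]}$-type. Here $\NS(X)\simeq\langle 2\rangle\oplus E_8(-2)$, embedded via $j_1$, and the invariant polarization $h_X$ has degree $2$.

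For $X$ general in this family, \cite{C} and \cite[Section 4]{CGKK} show that $X$ is a double EPW sextic. Thus $\varphi_{|h_X|}:X\to Z\subset\PP^5$ is $2:1$, with $Z$ an EPW sextic invariant under a linear involution of type $(+,+,+,+,-,-)$. The surface $F$, being fixed by $\iota$, maps into the fixed $\PP^3$. Its image lies in $\PP^3\cap Z=Q\cup K$, with $Q$ a quadric and $K$ a Kummer quartic.

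By Remark \ref{rem: the class HF restriction of HX}, $h_X|_F=h$, with $h^2=4$ and $d=2$. Proposition \ref{prop: proj models '} shows that $\varphi_{|h|}$ is a $2:1$ map onto a smooth quadric. Since $\varphi_{|h_X|}$ restricts to $\varphi_{|h|}$ on $F$, the image is $Q$, and $Q$ is smooth.

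Next, identify the branch curve. The covering involution of $X\to Z$ commutes with $\iota$ and restricts to the covering involution of $F\to Q$. Hence the branch locus is the intersection of $Q$ with the branch locus of the double EPW sextic, namely the singular surface of $Z$, restricted to the fixed $\PP^3$. I expect this to give exactly $Q\cap K$, following the local analysis of \cite[Section 4]{CGKK}. As a consistency check, a double cover of $\PP^1\times\PP^1$ that is a $K3$ surface is branched along a $(4,4)$ curve, which is the class of $Q\cap K$.

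To pass from general $X$ to arbitrary $F$ with this N\'eron--Severi lattice, I will use a degeneration argument. Alternatively, it holds for the very general member, and the set of such $F$ is irreducible (a single lattice-polarized family, Corollary \ref{cor: families of projective K3}), so the statement holds in the closure. \textbf{This is the step I expect to be the main obstacle}, because non-general $X$ may fail to be a double EPW sextic. I would first try to argue directly with the divisor $h$ on $F$, invoking Proposition \ref{prop: proj models '} to get the double cover, and use EPW geometry only to identify the branch curve generically.

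\emph{Converse.} Take a general double cover of a smooth quadric $Q$ branched along $Q\cap K$, with $K$ a Kummer quartic. Choose an EPW sextic $Z$, invariant under the involution, whose fixed $\PP^3$ cuts out $Q\cup K$. Then the fixed surface of the induced involution on the double EPW sextic is this cover, so it is a generalized Nikulin surface.

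The main tasks are to show that every such pair $(Q,K)$ arises, and to count dimensions. The quadric gives $9$ parameters and the Kummer quartics give $3$, for $12$ in total. The moduli of $(\langle4\rangle\oplus E_7(-2))'$-polarized $K3$ surfaces also has dimension $12$. The construction therefore dominates this moduli space, and the general member is a generalized Nikulin surface.

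If realizing every pair via an EPW sextic is hard, I will argue lattice-theoretically instead. I will exhibit the seven split $(1,1)$ curves (tangent conics of $K$ in $Q$ coming from the tropes and nodes), check that they satisfy the conditions of Proposition \ref{prop: vice versa <4>+G'}, and conclude from there.
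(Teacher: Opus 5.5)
Your first direction is essentially the paper's, which simply quotes \cite[Section 4]{CGKK}: $X$ is a double EPW sextic whose involution fixes a $\PP^3$ meeting the sextic in $Q\cup K$, and $F$ is the double cover of $Q$ branched along $Q\cap K$. Your identification of the branch curve as $Q\cap\mathrm{Sing}(Z)$ is fine. The sextic's equation must be even in the anti-invariant coordinates, since otherwise it would vanish on the fixed $\PP^3$. So along that $\PP^3$ the singular points of $Z$ are exactly those of $Q\cup K$.

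Drop the ``closure'' fallback, though. Being branched along a section of a \emph{Kummer} quartic is not a closed condition, because Kummer quartics can degenerate in a limit. No degeneration is needed anyway. By Proposition \ref{prop:relation NS(F), NS(X), NS(W)}, $\NS(F)\simeq(\langle 4\rangle\oplus E_7(-2))'$ pins down $\NS(X)\simeq\langle 2\rangle\oplus E_8(-2)$ with embedding $j_1$. That is precisely the family described in \cite{C} and \cite[Section 4]{CGKK}.

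The converse has a genuine gap. You never justify ``choose an EPW sextic $Z$, invariant under the involution, whose fixed $\PP^3$ cuts out $Q\cup K$'', and the dimension count you give does not supply it. Knowing that pairs $(Q,K)$ give $9+3=12$ parameters and that the $(\langle4\rangle\oplus E_7(-2))'$-polarized moduli space is $12$-dimensional yields no dominance, for two reasons. First, you have no map between the two spaces: you do not yet know that a general double cover carries that lattice polarization, which is what is to be proved. Second, ``dominating the moduli space'' is the wrong direction; what you need is that generalized Nikulin surfaces fill up the family of double covers.

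The missing idea is to feed your first direction into the count, as the paper does. By the first part, every $F$ with $\NS(F)\simeq(\langle4\rangle\oplus E_7(-2))'$ is such a double cover. Hence the family of double covers of smooth quadrics branched along Kummer sections contains a $12$-dimensional family of generalized Nikulin surfaces. This family of covers is irreducible, being parametrized by pairs $(Q,K)$ modulo $\mathrm{PGL}(4)$, and has dimension at most $9+3=12$. So the generalized Nikulin surfaces are dense in it. A general such cover therefore contains $E_7(-2)$ primitively in $\NS$, and is a generalized Nikulin surface by Theorem \ref{thm: generalizes Nikulin iff generalized lattice in NS}. No invariant EPW sextic through a prescribed $Q\cup K$ has to be constructed.

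Your trope fallback is only a sketch. It is true that the conic $Q\cap H$, for $H$ a trope, splits in the cover. But you would still have to compute the intersection numbers of the lifted conics. You would also have to show that these classes generate $\NS$, as Proposition \ref{prop: vice versa <4>+G'} requires, and that in turn needs $\rho=8$ for the general cover.
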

\begin{proof} By \cite[Section 4]{CGKK} a generalized Nikulin surface $F$ with $\NS(F)\simeq (\langle 4\rangle\oplus E_7(-2))'$ is a double cover of a quadric $Q$ branched on $Q\cap K$ for a Kummer surface $K$. It remains to show that any double cover of a quadric branched on the intersection with a Kummer surface is a generalized Nikulin surface with N\'eron--Severi group which primitively contains $(\langle4\rangle\oplus E_7(-2))'$ (and that generically the N\'eron--Severi group is exactly $(\langle4\rangle\oplus E_7(-2))'$). 
 Since the dimension of the moduli space of $( \langle4\rangle\oplus E_7(-2))'$ polarized $K3$ surfaces is 
 $12$ it is enough to prove that the dimension of the family of double covers of quadrics branched along an intersection with a Kummer surface is at most $12$.
 The last follows from the fact that the family of quadrics (respectively of projective Kummer surfaces) is $9$-dimensional  (respectively $3$-dimensional).

   Another computational approach to the proof of Proposition \ref{degree2} is to extend a symmetric determinantal
   description of a Kummer surface (see \cite[Section 2]{IKKR}) to get an equation of an EPW sextic so of a Lagrangian space.
\end{proof}

By merging Proposition \ref{prop: vice versa <4>+G'} and Proposition \ref{prop 10.10}, we see the following.
\begin{corollary}
Let $W$ be a $K3$ surface as in \ref{prop: vice versa <4>+G'}, then it is the double cover of a quadric branched along its intersection with a Kummer quartic.\end{corollary}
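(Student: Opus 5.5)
The corollary should follow by chaining Proposition \ref{prop: vice versa <4>+G'} with Proposition \ref{prop 10.10}. The one point needing care is that Proposition \ref{prop 10.10} describes a surface with Néron--Severi lattice exactly $(\langle 4\rangle\oplus E_7(-2))'$. I would also check that the double cover structure it produces is the given map $f$.

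First, let $W$ be as in Proposition \ref{prop: vice versa <4>+G'}. That is, $\rho(W)=8$, there is a double cover $f:W\to\mathbb{P}^1\times\mathbb{P}^1$, and the two genus 1 fibrations induced by the rulings have a common zero section $\mathcal{O}$ and common Mordell--Weil generators $\mathcal{S}_i$ satisfying \eqref{eq: int sections 4'}. That proposition gives $\NS(W)\simeq(\langle 4\rangle\oplus E_7(-2))'$. The proof shows that $\{E_1,E_2,\mathcal{O},\mathcal{S}_1,\ldots,\mathcal{S}_5\}$ spans a lattice isometric to $(\langle 4\rangle\oplus E_7(-2))'$. Since $\rho(W)=8$, this lattice has finite index in $\NS(W)$. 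I would confirm it is all of $\NS(W)$ by noting that the hypothesis on Mordell--Weil generators makes these classes a $\Z$-basis, as in the proof of Lemma \ref{lem: decrip F (4+G)'}. By Theorem \ref{thm: generalizes Nikulin iff generalized lattice in NS}, $W$ is then a generalized Nikulin surface.

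Second, I would apply the first part of Proposition \ref{prop 10.10} to $W$. Any generalized Nikulin surface with this Néron--Severi lattice is a double cover of a smooth quadric $Q\subset\mathbb{P}^3$, branched along $Q\cap K$ for a Kummer quartic $K$.

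The step I expect to need the most attention is identifying this double cover with $f$. In Proposition \ref{prop 10.10}, the quadric model is the map $\varphi_{|h|}$, where $h$ is the ample generator of $E_7(-2)^\perp$ in $\NS(W)$; this map is described in Proposition \ref{prop: proj models '} and Lemma \ref{lem: decrip F (4+G)'}. I would show that $f^*\mathcal{O}(1,1)=E_1+E_2$ equals $h$ and is orthogonal to the copy of $E_7(-2)$. The lattice computation in Lemma \ref{lem: decrip F (4+G)'} gives $E_1=z$ and $E_2=h-z$, so $E_1+E_2=h$. Hence $f=\varphi_{|E_1+E_2|}=\varphi_{|h|}$, up to an automorphism of $\mathbb{P}^1\times\mathbb{P}^1\cong Q$. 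Consequently the branch curve of $f$ is the intersection of $Q$ with a Kummer quartic, which is the statement.
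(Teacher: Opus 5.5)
Your proposal is correct and is essentially the paper's argument: the paper simply combines Proposition~\ref{prop: vice versa <4>+G'}, which gives $\NS(W)\simeq(\langle 4\rangle\oplus E_7(-2))'$ and hence that $W$ is a generalized Nikulin surface, with the first part of Proposition~\ref{prop 10.10}. Your extra step identifying that cover with $f$ is not needed for the statement; if you keep it, note that $\NS(W)$ may contain several primitive copies of $E_7(-2)$, so you should run Theorem~\ref{thm: generalizes Nikulin iff generalized lattice in NS} explicitly with the copy $(E_1+E_2)^{\perp}$, which is what makes $h=E_1+E_2$ unambiguous.
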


\subsection{The case $\NS(F)=(\langle 4\rangle\oplus E_7(-2))^*$ and $\NS(X)=\langle 2\rangle\oplus E_8(-2)$, embedding $j_2$.}\label{13.3}\label{subsec: F degree 4 star}

By Corollary \ref{cor: isom between NS(F) and NS(S),NS(Z)}, $\NS(F)\simeq \langle 2\rangle\oplus \langle -2\rangle^{\oplus 7}$.
\begin{prop}\label{prop: double cover dP2} Let $\NS(F)\simeq (\langle 4\rangle\oplus E_7(-2))^*$. Then $F$ is a double cover of a del Pezzo surface of degree 2 and it is a $4:1$ cover of $\mathbb{P}^2$ branched in a smooth quartic non hyperelliptic curve.

Vice versa, if $W$ is a $K3$ surface which is a general double cover of a del Pezzo surface of degree 2, then $\NS(W)\simeq (\langle 4\rangle\oplus E_7(-2))^*$ and so $W$ is a generalized Nikulin surface.
\end{prop}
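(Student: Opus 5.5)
The plan is to work entirely inside $\NS(F)$, using the isometry $(\langle 4\rangle\oplus E_7(-2))^*\simeq \langle 2\rangle\oplus\langle -2\rangle^{\oplus 7}$ from Corollary \ref{cor: isom between NS(F) and NS(S),NS(Z)} (case $d=2$). Fix a basis $H,n_1,\dots,n_7$ with $H^2=2$, $n_i^2=-2$, mutually orthogonal. After reflecting in $(-2)$-classes we may assume $H$ is nef. Since no class $E$ has $E^2=0$ and $EH=1$ (this follows from a parity check in the basis), $\varphi_{|H|}\colon F\to\mathbb{P}^2$ is a double cover branched on a sextic $B$. For general $F$ the $n_i$ are effective irreducible $(-2)$-curves with $Hn_i=0$. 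The classes $n_i$ are contracted by $\varphi_{|H|}$, so $B$ has $7$ nodes. Alternatively, we can move to a better chamber. A cleaner route is to look for the class $h=2H-\ldots$ of degree $4$ orthogonal to $E_7(-2)$. Concretely, I will identify $E_7(-2)$ inside $\langle -2\rangle^{\oplus 7}+\ldots$ by taking $h=3H-\sum_i n_i$, which has $h^2=18-14=4$. Its orthogonal complement $\{aH+\sum b_in_i : 3a=\ldots\}$ should then be checked to be $E_7(-2)$.

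The geometric statement comes from a different viewpoint. The model of $F$ as a double cover of a degree-$2$ del Pezzo surface $D$ requires an involution $\sigma$ on $F$ with quotient $D$. Here I would use $\NS(F)\simeq \langle 2\rangle\oplus\langle -2\rangle^{\oplus 7}$ as follows. Take a nonsymplectic involution $\sigma$ acting as $+1$ on $\NS(F)$ and as $-1$ on $\T_F$. This is the Nikulin involution with invariant lattice $S(\sigma)=\NS(F)$, which is $2$-elementary with invariants $(r,a,\delta)=(8,8,1)$. Nikulin's classification shows that such an involution exists, since $\NS(F)$ is $2$-elementary and $\T_F$ is then $2$-elementary of the right signature. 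The fixed locus is a curve of genus $g=(22-r-a)/2=3$ plus $k=(r-a)/2=0$ rational curves. Hence $F/\sigma$ is a smooth rational surface $D$ with $K_D^2=2$ and branch curve $C\in|-2K_D|$ of genus $3$. Rationality and $\rho(D)=8$ give a blow-up of $\mathbb{P}^2$ in $7$ points. Ampleness of $-K_D$, meaning that no $(-2)$-curves occur, holds for general $F$, because $(-2)$-curves on $D$ would pull back to $(-4)$ or split classes that are absent from $\NS(F)$. So $D$ is del Pezzo of degree $2$. Composing with the anticanonical double cover $D\to\mathbb{P}^2$, which is branched on a smooth quartic $\Gamma$, gives the $4:1$ map. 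Smoothness of $\Gamma$ is automatic for del Pezzo surfaces of degree $2$, and non-hyperellipticity is automatic for plane quartics. We should also check that $H$ is the pullback of $-K_D$, which follows since $\sigma^*$ is trivial on $\NS$.

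For the converse, let $W\to D$ be a double cover of a general degree-$2$ del Pezzo surface branched on a general smooth $C\in|-2K_D|$. Then $\NS(W)\supseteq \pi^*\NS(D)=\NS(D)(2)\simeq\langle 2\rangle\oplus\langle -2\rangle^{\oplus 7}$. This sublattice is primitive, because the invariant lattice of the cover involution is primitive and its rank $8$ equals $\rho(D)$. A dimension count settles equality generically: the family of such covers has dimension $6$ (the moduli of $D$) plus $\dim|-2K_D|=6$, giving $12$, which equals $20-8$. Then $\NS(W)\simeq(\langle4\rangle\oplus E_7(-2))^*$ by Corollary \ref{cor: isom between NS(F) and NS(S),NS(Z)}, and Theorem \ref{thm: generalizes Nikulin iff generalized lattice in NS} shows that $W$ is a generalized Nikulin surface.

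I expect the main obstacle to be the direct step: showing that $\sigma$ exists as an automorphism and not just as a Hodge isometry, and that the quotient is genuinely del Pezzo, i.e.\ that $-K_D$ is ample. For existence, I would first use the Torelli theorem: the isometry $\mathrm{id}_{\NS}\oplus(-\mathrm{id}_{\T})$ is well defined on $H^2(F,\Z)$ because $\NS(F)$ is $2$-elementary. To make it an automorphism it must preserve an ample class, which holds because it fixes all of $\NS(F)$. The ampleness of $-K_D$ needs the genericity assumption, which I would justify by the absence of $(-4)$-classes $v$ in $\NS(F)$ of the form $\pi^*$(a $(-2)$-curve), checked in the explicit basis.
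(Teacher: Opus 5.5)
Your route differs from the paper's, and in outline it works. The paper stays with the double-plane model. For the degree-$2$ generator $L$ of $\langle 2\rangle\oplus\langle -2\rangle^{\oplus 7}$, the map $\varphi_{|L|}$ contracts seven disjoint $(-2)$-curves. Hence $F$ is the double cover of the blow-up $V$ of $\mathbb{P}^2$ at the seven nodes of the branch sextic. The paper then \emph{assumes} the nodes are in general position, concludes that $V$ is del Pezzo, and checks that $3L-\sum_i E_i=\pi^*(-K_V)$ has orthogonal complement $E_7(-2)$. The converse is only asserted.

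You instead obtain the involution intrinsically from Torelli; it is the covering involution of $\varphi_{|L|}$ in disguise. You then identify the quotient through Nikulin's invariants $(8,8,1)$, which avoids choosing a model. Done correctly, this shows the quotient is del Pezzo for \emph{every} $F$ with this N\'eron--Severi lattice, which removes the paper's general-position assumption. Your converse (primitivity of $\pi^*\NS(D)$ plus the count $6+6=12$) is also more complete than the paper's.

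The step whose justification fails as written is the ampleness of $-K_D$. It cannot be settled by a lattice check. Indeed $\NS(F)=\pi^*\NS(D)\simeq\NS(D)(2)$ contains many $(-4)$-classes $\pi^*(\delta)$ with $\delta$ a $(-2)$-class on $D$, e.g.\ $n_1-n_2$. Whether $\delta$ is represented by a curve is not a lattice-theoretic property.

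Use the splitting argument you allude to instead:
\begin{itemize}
\item $-K_D=\frac12 C$ is nef and big. By Nakai--Moishezon and adjunction it fails to be ample only if there is a smooth rational $R\subset D$ with $R^2=-2$ and $K_D\cdot R=0$.
\item For such $R$ we get $R\cdot C=0$, and $R\not\subset C$ since $C$ is irreducible of genus $3$.
\item Hence $\pi^{-1}(R)=R_1\sqcup R_2$ with $\sigma(R_1)=R_2$ and $R_1\cdot R_2=0\neq R_1^2$.
\item So $\sigma^*[R_1]\neq[R_1]$, contradicting $\sigma^*=\id$ on $\NS(F)$.
\end{itemize}
No genericity is needed.

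Two smaller slips:
\begin{itemize}
\item $\pi^*(-K_D)$ has square $4$, so it is $h$, not $H$. It is identified with $h$ because $\pi^*(K_D^{\perp})\simeq E_7(-2)$, not merely because $\sigma^*$ is trivial on $\NS$. This identification is not needed for the statement.
\item In the converse, equal rank only shows that $\pi^*\NS(D)$ has finite index in $H^2(W,\Z)^{\sigma}$. Equality follows from Nikulin's formula $g=(22-r-a)/2$ with $g=3$ and $r=8$. This gives $a=8$, the same discriminant $2^8$ as $\NS(D)(2)$.
\end{itemize}
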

\begin{proof} We already observed that $\NS(F)\simeq \langle 2\rangle\oplus \langle -2\rangle^{\oplus 7}$, and called $L$ the first generator, one sees that $\varphi_{|L|}:F\ra\mathbb{P}^2$ is a degree 2 cover which contracts 7 rational curves, so that $F$ is the double cover of the blow up $V$ of $\mathbb{P}^2$ in 7 points, which are assumed to be in general position. Then, $V$ is a del Pezzo surface of degree 2 and $\Pic(V)=\langle \ell, e_1,\ldots , e_8\rangle$ where $\ell$ is the pullback on $V$ of the class of a line in $\mathbb{P}^2$ and $e_i$ are the exceptional divisors. The pullback of these classes to $F$ are called $L$ and $E_i$ respectively and their non trivial intersections are $L^2=2$, $E_i^2=-2$. The class $\eta:=3L-\sum_{i=1}^7E_i$ is the pullback of the anticanonical class on $V$.  Its orthogonal complement in $\NS(F)$ is generated by $\{E_i-E_{i+1},i=1,\ldots,6,\  L-E_4-E_5-E_6\}$ which is isometric to $E_7(-2)$. This implies that $\eta$ coincides with the class $h$ considered before, i.e.~ with the ample generator of the orthogonal complement to $E_7(-2)$ in $\NS(F)$. 
Recall that the anticanonical divisor $-K_V=3\ell-\sum_{i=1}^7e_i$ gives a $2:1$ map $\varphi_{|-K_V|}:V\ra\mathbb{P}^2$ which is a double cover branched on a quartic curve. The composition of the two double covers $F\ra V$ and $V\ra \mathbb{P}^2$ provides a $4:1$ map to $\mathbb{P}^2$. 

Vice versa, the condition that a $K3$ surface is a double cover of a del Pezzo surface of degree 2, i.e.~ of a blow up of $\mathbb{P}^2$ in 7 points in general position, implies that its N\'eron--Severi group is $\langle 2\rangle\oplus \langle -2\rangle^{\oplus 7}\simeq \langle \left(4\rangle\oplus E_7(-2)\right)^{\star}$.\end{proof}

By Proposition \ref{prop:relation NS(F), NS(X), NS(W)}, $F$ is the fixed locus of a symplectic involution on a manifold of $K3^{[2]}$-type $X$ with $\NS(X)\simeq \langle 2\rangle\oplus E_8(-2)$, embedded in $H^2(X,\Z)$ via $j_2$.

If $H$ is the divisor which spans $\langle 2 \rangle$ in $\NS(X)$, then $H$ gives a $6:1$ map from $X$ to a quadric  $Q\subset \mathbb{P}^5$. The two fixed spaces are $\mathbb{P}^2_{+}$ and $\mathbb{P}^2_{-}$ contained in the quadric. 
By Table \eqref{eq: X as twisted moduli space of sheaf}, $X$ is isometric to $S_2^{[2]}$, where $S_2$ is a $K3$ surface with N\'eron--Severi group isometric to $$ \langle 2\rangle \oplus \langle -2\rangle^7\simeq \left(\langle 4\rangle\oplus E_7(-2)\right)^\star.$$ In particular, $S_2$ is as in Proposition \ref{prop: double cover dP2}: it admits a model as double cover of a del Pezzo surface of degree 2, $V$, and a model as quartic in $\mathbb{P}^3$. The cover involution $\gamma$ of $S_2\ra V$ acts trivially on the N\'eron--Severi group of $S_2$ and in particular on the class which gives the embedding of $S_2$ in $\mathbb{P}^3$. So, $X=S_2^{[2]}$ admits a non--symplectic involution $\gamma^{[2]}$ and the Beauville involution $\beta$. The composition $\gamma^{[2]}\circ \beta$ is the symplectic involution defined on $X$ (by the sublattice $E_8(-2)$), as described in \cite[Proposition 3.14]{CGKK}. Observe that in \cite[Proposition 3.14]{CGKK} we proved that $S_2\simeq F$, and not just that they have the same N\'eron--Severi group, as stated in Corollary \ref{cor: isom between NS(F) and NS(S),NS(Z)}.

\subsection{The case $\NS(F)=\left(\langle 8\rangle\oplus E_7(-2)\right)'$ and $\NS(X)=\langle 4\rangle\oplus E_8(-2)$, embedding $j_1$.}\label{subsec:d=4,j1} By Proposition \ref{prop: proj models '}, if $\NS(F)=\left(\langle 8\rangle\oplus E_7(-2)\right)'$, then $F$ admits a model as a complete intersection of three quadrics in $\mathbb{P}^5$ with 7 hyperplane sections which split into the union of two rational curves. Here we present also other properties of the same model, which characterize it completely.

In order to analyze generalized Nikulin surfaces with a degree $8$ polarization as above, we
consider specializations of a well-known construction which relates $\langle
8\rangle$-polarized $K3$ surfaces with $\langle 2\rangle$-polarized $K3$ surfaces:
the $K3$ surfaces with Picard number $1$ and degree $8$, i.e.~ complete intersections
of three quadrics $Q_i$, $i=1,2,3$, in $\mathbb{P}^5$, can also be described as
moduli spaces of stable sheaves on $K3$ surfaces which are double covers of $\mathbb{P}^2$.
The corresponding branch sextic is then the discriminant of the net of quadrics
generated by $Q_1$, $Q_2$, $Q_3$. Conversely, given the branch sextic one can
reconstruct the net of quadrics, if one fixes a theta characteristic on the sextic,
or equivalently a $B$-field, to obtain the complete intersection as a moduli space of stable twisted sheaves. In Section~\ref{subsubsec: net and discriminant sextic} we consider many specializations of this construction to $K3$ surfaces with higher Picard number and in particular to generalized Nikulin surfaces. 

\begin{prop}\label{prop: alternative description 8+G'}
Let $\NS(F)=\left(\langle 8\rangle\oplus E_7(-2)\right)'$, then the image of $\varphi_{|H|}:F\ra\mathbb{P}^5$ admits 6 hyperplane sections each of which splits into the union of two genus 1 curves $C_1^{(i)}$ and $C_2^{(i)}$, $i=1,\ldots, 6$, of degree 4 and a quadric hypersurface which splits into the union of two genus 3 curves $D_1$ and $D_2$. Moreover, each $C_i^{(j)}$ is a bisection for the genus 1 fibration induced by any other $C_h^{(k)}$ and  $$D_1C_1^{(i)}=D_2C_2^{(i)}=3\ \ \ D_1C_2^{(i)}=D_2C_1^{(i)}=5.$$

Vice versa, if $W$ is a $K3$ surface whose N\'eron--Severi group is generated by $h$, $C_1^{(i)}$, $i=1,\ldots,6 $ and $D_1$ where $h$ is a very ample class such that  $\varphi_{|h|}(W)$ is a complete intersection of three quadrics in $\mathbb{P}^5$ with 6 hyperplanes which split into the union of two genus 1 curves, one of which is $C_1^{(j)}$, $i=1,\ldots,6$ and a quadric section which splits into the union of  two genus 3 curves, one of which is $D_1$, and $$C_{1}^{(j)}C_1^{(k)}=2,\ \ j\neq k,\ \ D_1C_1^{(i)}=3,$$ then $\NS(W)=\left(\langle 8\rangle\oplus E_7(-2)\right)'$ and in particular $W$ is a generalized Nikulin surface.
\end{prop}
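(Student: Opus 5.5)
We argue as in Propositions \ref{prop: proj models '} and \ref{lem: decrip F (4+G)'}: find an explicit $\Z$-basis of $\NS(F)$ whose elements are effective classes with the required degrees, genera and intersections. By Proposition \ref{prop: proj models '} with $d=4$, $h$ is very ample with $h^2=8$. Since $d\equiv 0\mod 4$, $\NS(F)$ is generated by $u=\frac{h-w_1-w_4}{2}$, the $w_i$ and $\hat w$. We have $u^2=(8-4-4)/4=0$ and $hu=4$, so $u$ is a genus $1$ class of degree $4$, and so is $h-u$. Hence the first step is to list six classes of the form $C_1^{(i)}=u+g_i$, with $g_i\in E_7(-2)$, satisfying $(C_1^{(i)})^2=0$, $hC_1^{(i)}=4$ and $C_1^{(i)}C_1^{(j)}=2$ for $i\neq j$. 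Natural candidates come from the generators used in the proof of Proposition \ref{prop: proj models '}, such as $u$, $u-w_2$, $u-w_3$, $u-w_5$, \dots, adjusted so that all squares vanish. Setting $C_2^{(i)}=h-C_1^{(i)}$ then gives $(C_2^{(i)})^2=0$ and $C_1^{(i)}C_2^{(i)}=4$. Next, pick $D_1$ with $hD_1=8$ and $D_1^2=4$, for instance $D_1=h-u+(\text{a suitable } w)$ or $u+(\text{a class in }G)$, and set $D_2=2h-D_1$, so that $D_2^2=4$ and $D_1D_2=12$. The vector $D_1$ has to be chosen so that $D_1C_1^{(i)}=3$ for all $i$. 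Then $D_1C_2^{(i)}=8-3=5$, and since $D_2=2h-D_1$ this forces $D_2C_1^{(i)}=5$ and $D_2C_2^{(i)}=3$.

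Effectivity follows from Riemann--Roch, since every class of square $\geq -2$ with positive degree against the ample $h$ is effective. The $C$'s are genus $1$ and the $D$'s have arithmetic genus $3$. To show that the classes are irreducible for general $F$, I would check that no $(-2)$-class $R$ has $0<hR<hC$ with $C-R$ effective. Lemma \ref{lemma: h ample} shows that every $(-2)$-class has a nonzero $h$-component. For $C$ with $hC=4$, a decomposition would need a $(-2)$-class of small degree $hR\in\{1,2,3\}$, and $hR$ is even for classes in this lattice. So only the degree-$2$ case has to be excluded, via the congruences coming from the index-$2$ gluing. This gives primitive elliptic fibrations $|C_1^{(i)}|$ and, since $C_h^{(k)}C_j^{(i)}=2$, the bisection statement.

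For the converse, compute the Gram matrix of $h,C_1^{(1)},\dots,C_1^{(6)},D_1$ from the given data. The intersections $hC_1^{(i)}=4$ and $hD_1=8$ come from the degrees, and $(C_1^{(i)})^2=0$, $D_1^2=4$ come from the genera. Check that its determinant and discriminant form coincide with those of $(\langle 8\rangle\oplus E_7(-2))'$ computed in Lemma \ref{lem: overlattices of <2d>+G}, namely $\Z/8\Z(\frac18)\oplus u(2)^{\oplus 2}\oplus\Z/4\Z(\frac14)$. By Nikulin's uniqueness results for indefinite even lattices of this type, as in the proof of Corollary \ref{cor: isom between NS(F) and NS(S),NS(Z)}, the lattice is isometric to $(\langle 8\rangle\oplus E_7(-2))'$. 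Theorem \ref{thm: generalizes Nikulin iff generalized lattice in NS} then shows that $W$ is a generalized Nikulin surface.

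The main obstacle is the explicit search: finding the $g_i$ and $D_1$ inside $\NS(F)$ so that all the prescribed intersection numbers hold at once, and so that the eight classes form a $\Z$-basis rather than a finite-index sublattice. I would first try to take the $g_i$ as short sums of consecutive $w_i$ and $\hat w$, as in the earlier propositions, and test the basis condition with a determinant computation.
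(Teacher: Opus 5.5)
Your strategy is the paper's own: write every class as $au+g$ with $u=\frac{h-w_1-w_4}{2}$ and $g\in G$, then exhibit an explicit $\Z$-basis of effective classes. However, the proposal stops exactly where the proof's content lies. The existence of six classes $C_1^{(i)}$ of square $0$, degree $4$ and pairwise product $2$, together with a class $D_1$ of square $4$, degree $8$ and $D_1C_1^{(i)}=3$, \emph{is} the statement, and you leave it as ``the main obstacle''.

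Moreover, both shapes you suggest for $D_1$ are wrong. Since $\NS(F)=\Z u+G$ with $hu=4$ and $h\cdot G=0$, one has $h\cdot(au+g)=4a$. So $h-u+w$ and $u+g$ both have degree $4$, not $8$. A degree-$8$ class must be $D_1=h+g$ with $g\in G$. Then $D_1^2=4$ forces $g^2=-4$, and $D_1C_1^{(i)}=3$ forces $g\cdot C_1^{(i)}=-1$ for all $i$. The paper's choice is
\[
D_1=h+w_5,\qquad C_1^{(i)}\in\bigl\{u,\ u+w_1,\ u-w_2-w_3,\ u+w_4+\hat w,\ u+w_1+w_3+w_4+w_5-\hat w,\ u+w_1-w_6-\hat w\bigr\}.
\]
Together with $h$, these are its eight basis vectors. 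Here $w_5\cdot C_1^{(i)}=-1$ for every $i$, and all the required intersection numbers follow by a direct check.

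Three further remarks.

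First, the Gram matrix forced by the data has determinant $-512=\pm|A_{\NS(F)}|$. The data are $h^2=8$, $hC_1^{(i)}=4$, $hD_1=8$, $(C_1^{(i)})^2=0$, $C_1^{(j)}C_1^{(k)}=2$, $D_1C_1^{(i)}=3$, $D_1^2=4$. Hence any eight such classes automatically form a $\Z$-basis, so no separate determinant test is needed.

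Second, the converse is then immediate. The explicit basis shows that a lattice with this Gram matrix is $(\langle 8\rangle\oplus E_7(-2))'$. Your detour through discriminant forms and Nikulin's uniqueness is valid but unnecessary.

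Third, your irreducibility criterion works for the $C$'s; in fact $h\cdot\NS(F)=4\Z$, so no degree-$2$ case ever arises. It fails for $D_1$, however. The class $D_1-C_1^{(i)}$ has square $4-6+0=-2$ and degree $4$, so it is effective by Riemann--Roch. Thus $|D_1|$ does contain reducible members $C_1^{(i)}+(D_1-C_1^{(i)})$. Irreducibility of a general member of $|D_1|$ has to be argued differently, e.g.\ via nefness of $D_1$ together with Saint-Donat's results.
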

\proof The proof is analogous to the ones of the Lemma \ref{lem: decrip F (4+G)'} and Proposition \ref{prop: vice versa <4>+G'}: it suffices to find a basis of $(\langle 8\rangle\oplus E_7(-2))'$ which is generated by the classes described in the statement. Let us recall that $u=\frac{h-w_1-w_4}{2}$ and let us consider the basis 
$$\left\{h,h+w_5,u, u+w_1,u-w_2-w_3,u+w_4+\hat{w}, u+w_1+w_3+w_4+w_5-\hat{w},u+w_1-w_6-\hat{w}\right\},$$
and put $D_1=h+w_5$ and $C_1^{(i)}$, $i=1,\ldots, 6$ equals to the last six elements of the basis. Then one shows that both $C_1^{(i)}$ and $h-C_1^{(j)}=:C_2^{(i)}$ are effective and that the intersection properties among the curves $C_i^{(j)}$ are the required ones. Similarly $D_1$ and $2h-D_1=:D_2$ are effective and the intersection properties are the required ones.\endproof

Observe that the rational curves appearing in Proposition \ref{prop: proj models '} are components of the reducible fibers of the genus 1 fibrations defined by the classes $C_1^{(j)}$.

\begin{rem}\label{rem: singular memebers in the net of quadrics}{\rm The curves $C_i^{(j)}$ are degree 4 and genus 1, contained in a hyperplane of $\mathbb{P}^5$.
Since the genus 1 curves are complete intersections of 2 quadrics in $\mathbb{P}^3$, this implies that in the net of quadrics defining $\varphi_{|h|}(F)$ there are 6 quadrics which admit hyperplanes section which split in two $\mathbb{P}^3$'s, i.e.~ there are six quadrics which are singular in a $\mathbb{P}^1$ and so  which are cones over a quadric in $\mathbb{P}^3$.}
\end{rem}

By Proposition \ref{prop:relation NS(F), NS(X), NS(W)}, a generalized $K3$ surface $F$ such that $\NS(F)\simeq\left(\langle 8\rangle\oplus E_7(-2)\right)'$ is the fixed locus of a symplectic involution on a manifold of $K3^{[2]}$-type $X$ such that $\NS(X)=\langle 4\rangle\oplus E_8(-2)$. Let us denote $H$ the polarization of degree 4 invariant for the involution. The map $\varphi_{|H|}$ gives a $1:1$ map to $\PP^9$, and the associated involution has fixed points $\PP^5_+$ and $\PP^3_-$, by \cite[Section 5A]{CGKK}. The fixed surface $F$ maps $1:1$ to a complete intersection of three quadrics $S_{2,2,2}$ in $\PP^5_+$ as above.\\

We now relate the family of the $\left(\left(\langle 8\rangle\oplus E_7(-2)\right)'\right)$-polarized $K3$ surfaces with a different family of $K3$ surfaces, in order to describe its property and in particular to show its unirationality.

Attached to each net of quadrics in $\mathbb{P}^5$ there is a sextic in $\mathbb{P}^2_{(\alpha:\beta:\gamma)}$ which is the zero locus of the determinant of the matrix $\alpha Q_1+\beta Q_2+\gamma Q_3$ where, for $i=1,2,3$, $Q_i$ is the symmetric matrix which represents the quadric $Q_i\subset\mathbb{P}^5$ and $Q_1,Q_2,Q_3$ are three quadrics which generates the net. So the sextic is the discriminant locus of the net and represents the quadrics which are singular. Hence, to each complete intersection of 3 quadrics in $\mathbb{P}^5$ one can associate another $K3$ surface, which is the double cover of $\mathbb{P}^2$ branched on the discriminant locus of the net of quadrics. In particular, one can reinterpret this relation in terms of a moduli space of stable sheaves on a $K3$ surface: let $W_8$ be a $K3$ surface which is the complete intersection of three very general quadrics in $\mathbb{P}^5$; then $\NS(W_8)=\langle 8\rangle=\Z \ell$ and $M_v(W_8)$, where $v=(2,h,2)\in H^*(W_8)$, is a $\langle 2\rangle$-polarized $K3$ surface, and indeed the one which is the double cover of $\mathbb{P}^2$ branched on the discriminant sextic. The geometric properties of the discriminant sextic of this double cover are related with the geometric properties of the net of quadrics defining $W_8$. This construction can be specialized to $K3$ surfaces with Picard number greater than 1, as we do in Section \ref{subsubsec: net and discriminant sextic}.

Let $Z$ be a $K3$ surface which is very general among the ones which admit a model as double cover of $\mathbb{P}^2$ branched on a sextic with 6 nodes, such that there exists a conic passing through all these nodes, i.e.~ $\NS(Z)=\langle \ell, \gamma, r_1,\ldots r_6\rangle$ such that $\ell^2=2$, $\ell r_i=0$, $r_ir_j=-2\delta_{ij}$, $\gamma^2=-2$, $\ell\gamma=2$, $\gamma r_i=1$. In Section \ref{subsubsec: net and discriminant sextic} we prove the following proposition, needed to prove Corollary \ref{8'}.
\begin{prop}\label{propB}
 A general $K3$ surface with N\'eron--Severi group $\left(\langle 8\rangle\oplus E_7(-2)\right)'$ is the moduli space of stable twisted sheaves on a $K3$ surface $Z$ as described above.
 The twisted Mukai vector is $v_B=(2,2B+\ell,1)$ and the Brauer class admits a $B$-lift with $B\ell=\frac{1}{2}$, $B^2=0$,
 $Br_i=B\gamma=\frac{1}{2}$.
\end{prop}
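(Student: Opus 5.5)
We use the standard dictionary between twisted moduli spaces and lattice computations in the Mukai lattice. For a $K3$ surface $Z$ with a $B$-field $B\in H^2(Z,\Q)$ and a primitive twisted Mukai vector $v_B$ with $v_B^2=0$, the moduli space $M=M_{v_B}(Z,\beta)$ is a $K3$ surface. It satisfies
$$H^2(M,\Z)\simeq v_B^{\perp}/\Z v_B,$$
with $v_B^\perp$ taken in $\widetilde H(Z,B,\Z)$, and $\T_M\simeq \T_{(Z,B)}$, the kernel of $B\cdot$ on $\T_Z$ (an index-2 sublattice of $\T_Z$). The same formula gives $\NS(M)\simeq (v_B^\perp\cap \widetilde{\NS}(Z,B))/\Z v_B$.

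The plan is to start from $Z$ very general with $\NS(Z)=\langle \ell,\gamma,r_1,\ldots,r_6\rangle$ as described, and fix $B$ with the stated products. Since the periods of such $(Z,B)$ form a family of dimension $20-8=12$, matching the $12$-dimensional moduli of $(\langle 8\rangle\oplus E_7(-2))'$-polarized $K3$ surfaces from Corollary \ref{cor: families of projective K3}, it suffices to show two things. First, $\NS(M)\simeq(\langle 8\rangle\oplus E_7(-2))'$ for $M=M_{v_B}(Z,\beta)$. Second, the map $(Z,B)\mapsto M$ is dominant onto that moduli space, equivalently every $F$ in it arises this way, which is the surjectivity direction.

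\textbf{Step 1: the twisted Néron--Severi lattice.} Write it as
$$\widetilde{\NS}(Z,B)=\{(r,rB+D,s): D\in \NS(Z)\otimes\Q,\ rB+D\ \text{integral}\},$$
in the standard exp-$B$ twisted description, so it is generated by $(0,0,1)$, $(0,D,0)$ for $D\in\NS(Z)$, and $(1,B,B^2/2)$ up to integral corrections. Check that $v_B=(2,2B+\ell,1)$ lies in it and that
$$v_B^2=(2B+\ell)^2-4=4B^2+4B\ell+2-4=0+2+2-4=0$$
using $B^2=0$ and $B\ell=\tfrac12$. Then compute a $\Z$-basis of $v_B^\perp\cap\widetilde{\NS}(Z,B)$ modulo $v_B$. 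This is a rank $8$ lattice. In it, exhibit the degree $8$ class (the image of $(0,\ell,\ast)$-type vectors, or of $(1,B,0)$ suitably normalized) and seven classes built from $r_i$ and $\gamma$ (differences like $(0,r_i-r_j,\ast)$ and a combination with $\gamma$) spanning a copy of $E_7(-2)$. Finally compare the discriminant form with that of $(\langle 8\rangle\oplus E_7(-2))'$ computed in Lemma \ref{lem: overlattices of <2d>+G}, namely $\Z/8\Z(\frac18)\oplus u(2)^{\oplus2}\oplus(\frac14)$, and use uniqueness in the genus via Nikulin.

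\textbf{Step 2: transcendental side.} Independently, check that $\T_{(Z,B)}$ has the opposite discriminant form and equals $U(2)^{\oplus 2}\oplus\langle -8\rangle\oplus E_8(-1)\oplus\langle-4\rangle$ from Corollary \ref{prop: Ns e T of projective F}. This gives a consistency check. It also gives the converse: given general $F$ with that Néron--Severi lattice, realize $\T_F$ as an index-2 sublattice $\ker B$ of an overlattice $T'$ of signature $(2,12)$. Show that $T'$ has the discriminant form of $\NS(Z)^\perp$, hence equals $\T_Z$ for some $Z$ by surjectivity of the period map. Then Torelli (via Step 1 and the uniqueness of the primitive embedding) gives $F\simeq M$.

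\textbf{Main obstacle.} The hard part is Step 1: choosing the right explicit $B$ (say $B=\tfrac12(\text{suitable integral class})$ with $B\cdot r_i=B\cdot\gamma=B\cdot\ell=\tfrac12$ and $B^2\in 2\Z$ adjustable to $0$ modulo integral shifts) and carrying out the basis and discriminant computation, including verifying primitivity of the $E_7(-2)$ copy. The dominance and genericity assertions then follow from the dimension count.
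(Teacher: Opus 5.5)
Your proposal is correct and is essentially the paper's argument. The paper likewise computes $\NS(M_{v_B}(Z,B))$ as $v_B^{\perp}/\Z v_B$ inside the twisted extended N\'eron--Severi lattice, building up through the Picard rank $2$ and $3$ cases and recognizing the result via the classes $h$, $C_1^{(i)}$, $D_1$ of Proposition~\ref{prop: alternative description 8+G'} rather than via discriminant forms; it gets generality from the same dimension count $12=12$.

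One correction for carrying out Step 1. Since $2B$ is integral, $\ell\cdot\NS(Z)\subset 2\Z$ and $B\ell=\frac12$, the Brauer class is nontrivial. Hence $\widetilde{\NS}(Z,B)$ has no rank-one vectors (so neither $(1,B,\ast)$ nor any $(0,\ell,\ast)$ in $v_B^\perp$ is available), and it is spanned by $(2,2B,0)$, $(0,\NS(Z),0)$ and $(0,0,1)$. The degree-$8$ class is then the image of $(0,2\ell,3)$. The classes $(0,r_i-r_{i+1},0)$ for $1\le i\le 5$, $(0,\gamma-\ell,0)$ and $(-2,-2B+r_1+r_2-\ell,0)$ give the primitive copy of $E_7(-2)$, and the index-$2$ overlattice is the one of Lemma~\ref{lem: overlattices of <2d>+G}.
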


A $(1,2)$ divisor in $\mathbb P^2 \times \mathbb P^5$ gives a two dimensional systems of  quadrics on $\PP^5$ defined as fibers of the fibration over $\PP^2$.
\begin{corr} \label{8'}
A very general generalized Nikulin surface $F$ such that $\NS(F)\simeq \left(\langle 8\rangle\oplus E_7(-2)\right)'$ is the base locus of a net of quadrics given by a $(1,2)$-divisor $D$ in $\mathbb P^2 \times \mathbb P^5$ such that, denoted $p$ the restriction of the projection $\mathbb P^2 \times \mathbb P^5\to \mathbb P^2$ to $D$, there are six points $P_i\in\mathbb{P}^2$ such that $p^{-1}(P_i)$ contains a $\mathbb{P}^3$ for $i=1,\ldots, 6$ and these six points are contained in a conic. 

Moreover, the generalized Nikulin surfaces whose N\'eron--Severi lattice is $\left(\langle 8\rangle\oplus E_7(-2)\right)'$ are parameterized by a unirational variety.
\end{corr}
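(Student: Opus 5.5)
The plan is to deduce both assertions from Proposition \ref{propB}, by translating the twisted-moduli description into the geometry of the discriminant sextic of a net of quadrics. A very general $F$ with $\NS(F)\simeq(\langle 8\rangle\oplus E_7(-2))'$ is, by Proposition \ref{prop: proj models '} and the discussion in \S\ref{subsec:d=4,j1}, a complete intersection of three quadrics $Q_1,Q_2,Q_3$ in $\mathbb{P}^5$, i.e.\ the base locus of the net $\alpha Q_1+\beta Q_2+\gamma Q_3$. This net is exactly a $(1,2)$-divisor $D\subset\mathbb{P}^2\times\mathbb{P}^5$, and the projection $p:D\to\mathbb{P}^2$ has as fibres the quadrics of the net. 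The discriminant of $p$ is the sextic $\det(\alpha Q_1+\beta Q_2+\gamma Q_3)=0$, and the double cover of $\mathbb{P}^2$ branched along it is the $\langle 2\rangle$-polarized $K3$ surface. By Proposition \ref{propB} this double cover is the surface $Z$, and $F$ is recovered from $Z$ as $M_{v_B}(Z,\beta)$ with the prescribed $B$-field, i.e.\ with an even theta characteristic on the sextic.

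The first step is to identify the geometric meaning of the nodes. For a net of quadrics whose base locus is a smooth $K3$, a node of the discriminant sextic occurs exactly at a quadric of corank $2$, that is, a quadric singular along a line. A quadric of rank $4$ in $\mathbb{P}^5$ is a cone with vertex $\mathbb{P}^1$ over a smooth quadric surface in $\mathbb{P}^3$, and it contains two families of $\mathbb{P}^3$'s. So the six nodes of the branch sextic of $Z$ give six points $P_i$ with $p^{-1}(P_i)\supset\mathbb{P}^3$. This is consistent with Remark \ref{rem: singular memebers in the net of quadrics}: the $\mathbb{P}^3$'s cut the degree-$4$ genus-$1$ curves $C^{(i)}_j$ of Proposition \ref{prop: alternative description 8+G'}. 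The class $\gamma$ with $\gamma^2=-2$, $\ell\gamma=2$ and $\gamma r_i=1$ maps to a conic through the six nodes which splits in the double cover, so the $P_i$ lie on a conic. Conversely, starting from such a $D$, the discriminant is a sextic with six nodes on a conic, hence the double cover is a surface of type $Z$. The base locus is then $M_{v_B}(Z,\beta)$, and Proposition \ref{propB}, read in the reverse direction together with a dimension count, shows that for general data it is a surface with $\NS\simeq(\langle 8\rangle\oplus E_7(-2))'$.

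The second step is unirationality. The data are parametrized by a rational variety: choose a conic in $\mathbb{P}^2$, six points on it, and a $(1,2)$-divisor $D$ such that $p^{-1}(P_i)$ contains a $\mathbb{P}^3$. The last condition is linear once the $\mathbb{P}^3$'s are fixed. Indeed, the condition $Q_{P_i}|_{\Pi_i}\equiv0$ for chosen $3$-planes $\Pi_i\subset\mathbb{P}^5$ is linear in the coefficients of $D$. So the incidence variety
\[
\{(P_1,\dots,P_6,\Pi_1,\dots,\Pi_6,D)\}
\]
fibres over a rational base (a conic, six points on it, and six points of $\mathrm{Gr}(4,6)$) with linear-space fibres, and is therefore rational. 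It dominates the $12$-dimensional moduli space, which is consequently unirational.

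The main obstacle is the dominance, i.e.\ showing that the very general member of this family really has Picard lattice $(\langle 8\rangle\oplus E_7(-2))'$ and not something larger or smaller. I would handle it by comparing dimensions: count the dimension of the incidence variety modulo $\mathrm{PGL}_3\times\mathrm{PGL}_6$ and check that it is at least $12$. Then I would use Proposition \ref{propB} to argue that the map to moduli is generically finite onto the $12$-dimensional locus of Corollary \ref{cor: families of projective K3}. If the count falls short, I would instead produce the parametrization directly from the rational family of surfaces $Z$ (double planes branched on six-nodal sextics with nodes on a conic) together with the finitely many choices of $B$-field.
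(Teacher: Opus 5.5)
Your overall strategy is the paper's. You parametrize six points on a conic, a $\mathbb{P}^3$ in each of the six fibres, and the $(1,2)$-divisors containing the $\{P_i\}\times\Pi_i$ (hyperplanes of $\mathbb{P}^{62}$ containing the six Veronese $\mathbb{P}^9$'s). You then use ``rank $4$ $\Leftrightarrow$ contains a $\mathbb{P}^3$ $\Leftrightarrow$ node of the discriminant'' to reach the hypotheses of Proposition~\ref{propB}. However, the step you flag as the crux, dominance, is set up wrongly. The map from your incidence variety to the space of divisors $D$ is \emph{not} generically finite. Indeed, $p^{-1}(P_i)$ is a cone with vertex a line over $\mathbb{P}^1\times\mathbb{P}^1$, so it contains two one-parameter families of $\mathbb{P}^3$'s, and every $D$ has a $6$-dimensional fibre. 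Your incidence variety has dimension $11+48+2=61$, i.e.\ $18$ modulo $\mathrm{PGL}_3\times\mathrm{PGL}_6$. So your test ``at least $12$'' is passed trivially and says nothing about the image, and Proposition~\ref{propB} contains no finiteness statement that could compensate. What is needed is the paper's count $11+48+2-6=55$, i.e.\ codimension $7$ in $\mathbb{P}^{62}$, giving a $19-7=12$-dimensional family of degree-$8$ surfaces. This count uses that the $60$ linear conditions are independent (the six $\mathbb{P}^9$'s span a $\mathbb{P}^{59}$), which the paper checks on a random example.

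A second ingredient is missing in your converse direction. You write that the base locus ``is then $M_{v_B}(Z,\beta)$'' without identifying the Brauer class. The net determines its own theta characteristic, and the twisted N\'eron--Severi computation behind Proposition~\ref{propB} only yields $(\langle 8\rangle\oplus E_7(-2))'$ for a $B$-lift with $Br_i=\frac12$. By Case a) of \S\ref{13.6.1}, the alternative $Br_i=0$ produces a $(-2)$-class orthogonal to $h$, i.e.\ a singular complete intersection. So you must know that the base locus of a general member of your family is smooth. The paper verifies this on a random example in Macaulay2 and only then deduces $Br_i=\frac12$. Your fallback (the rational family of surfaces $Z$ ``together with the finitely many choices of $B$-field'') does not remove the difficulty either. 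It produces a finite cover of a unirational variety, and such a cover need not be unirational unless you show the $B$-field is uniquely determined.
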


\begin{proof}
 Let us consider the base locus of the net of quadrics described above. After fixing the configuration of the $\PP^3$'s, the corresponding $(1,2)$ divisors are hyperplanes $H$ in the $(1,2)$-embedding of $\mathbb P^2 \times \mathbb P^5$ containing the span of the Veronese's of the configuration of $\PP^3$'s. 
 The space of $(1,2)$-divisors as above is denoted by $\mathcal{V}$.
Hence, the family $\mathcal{V}$ is unirational.  
    
To show that the family constructed is indeed the family of the $\left(\left(\langle 8\rangle\oplus E_7(-2)\right)'\right)$-polarized $K3$ surfaces we first perform a dimension count and then we show that the generic members coincide.

 We choose 6 points on a conic (dimension $11=6\cdot 2-1$),  then 6 general $\mathbb{P}^3$'s in the 6 $\mathbb{P}^5$ fibers (dimension $48=6\cdot8$). The $\mathbb{P}^3$'s span in general 6 skew $\mathbb P^9$'s in $\mathbb P^{62}=\mathbb P(H^0(\oo(1,2), \mathbb P^2\times \mathbb P^5))$ (the fact that the $\mathbb P^9$'s are skew so that they span a $\mathbb P^{59}$ is checked by a random choice using Macaulay2), we hence have a $2$-dimensional space of additional choices of the $(1,2)$-divisor. We then need to subtract 6 since each of our divisors contains two pencils of $\mathbb{P}^3$'s in each of the distinguished $\mathbb P^5$'s. We hence get a $11+48+2-6=55$-dimensional family, i.e.~  a $7$-codimensional family of $(1,2)$-divisors. This leads to a 7-codimensional subfamily of $K3$ surfaces inside the $19$-dimensional families of polarized $K3$ surfaces of  degree 8, i.e.~ to a $12$-dimensional subfamily of $K3$ surfaces in the family of the $K3$ surfaces which are base locus of a net of quadrics.

    Since a quadric in $\mathbb P^5$ has rank 4 if and only if it contains a $\mathbb{P}^3$, we deduce that the discriminants of the nets of quadrics represented by $(1,2)$-divisors from the family $\mathcal V$ satisfy the assumption of Proposition \ref{propB}
    
     Now looking at a random example, using Macaulay2, we can see that the intersection of the net of quadrics satisfying the assumptions is a smooth surface.  
    So it follows from  Section \ref{13.6.1} Case 1, that the geometric conditions determine the degrees of intersection of a $B$-lift with the exceptional curves to be $B.r_i=\frac{1}{2}$ (mod $\mathbb{Z}$). Moreover, Case 2 shows that $B.\ell=\frac{1}{2}$ (mod $\mathbb{Z}$) for the considered quadric bundle (the condition $B^2=0$ can be obtained by choosing an appropriate $B$-lift keeping the other conditions).
It follows from Proposition \ref{propB}, that in this way we obtain generalized Nikulin surfaces.
We conclude that our construction leads to a $12$-dimensional (hence maximal)  irreducible family of generalized Nikulin surfaces whose very general element has N\'eron--Severi lattice $\left(\langle 8\rangle\oplus E_7(-2)\right)'$.
\end{proof}

\subsection{The case $\NS(F)=\langle 8\rangle\oplus E_7(-2)$ and $\NS(X)=\langle 16\rangle\oplus E_8(-2)$, embedding $\widetilde{j}$.} By Proposition \ref{prop: proj models direct sum} and Remark \ref{rem: vice versa proj models direct sum} a generalized Nikulin surface $F$ is $\left(\langle 8\rangle\rangle\oplus E_7(-2)\right)$-polarized if and only if it admits a model as a complete intersection of three quadrics in $\mathbb{P}^5$ with 7 quadric sections which splits in the union of two genus 3 curves of degree 8. 

As in Section \ref{subsec:d=4,j1}, we attach to $F$ a $K3$ surface $Z$ which is the double cover of $\mathbb{P}^2$ branched on a sextic which is the discriminant locus of the net of quadrics defining $F$.
By the results in Section \ref{subsubsec: net and discriminant sextic}, one obtains a description of the relations between $F$ and $Z$. Let $Z$ be a $K3$ surface which is very general among the ones which admit a model as double cover of $\mathbb{P}^2$ branched on a sextic such that there exist 7 conics which split in the double cover, i.e.~ 7 conics such that their intersection with the branch locus has even multiplicity in each intersection point, which is equivalent to require that $\NS(Z)\simeq \langle \ell, \gamma_1,\ldots \gamma_7\rangle$ such that $\ell^2=2$, $\ell\gamma_i=2$, $\gamma_i^2=-2$, and $\gamma_i\gamma_j=0$ if $j\neq i$ and $|j-i|= 1$, and $\gamma_i\gamma_j=2$ otherwise. By Proposition \ref{prop: proj models direct sum} and Remark \ref{rem: vice versa proj models direct sum}, we obtain that $\NS(Z)\simeq \langle 2\rangle\oplus E_7(-2)$, i.e.~ that $Z$ is a generalized Nikulin surface of degree 2. In Section \ref{subsubsec: net and discriminant sextic} we will prove the following.
\begin{prop}\label{prop: 8+G and 2+G related by twisted moduli space}
 A very general generalized Nikulin surface $F$ with N\'eron--Severi group $\langle 8\rangle\oplus E_7(-2)$ is the moduli space of stable twisted sheaves on a generalized Nikulin surface $Z$ with $\NS(Z)\simeq \langle 2\rangle\oplus E_7(-2)$.
 The twisted Mukai vector is $v_B=(2,2B+\ell,1)$ and the Brauer class admits a $B$-lift with $B\ell=\frac{1}{2}$, $B^2=0$,
 $B\gamma_i=\frac{1}{2}$.
\end{prop}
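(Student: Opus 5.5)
We prove Proposition~\ref{prop: 8+G and 2+G related by twisted moduli space} the same way as Proposition~\ref{propB}. The underlying mechanism is the classical correspondence between a net of quadrics in $\mathbb{P}^5$ and the double plane branched along its discriminant sextic, together with a theta characteristic on that sextic, i.e.\ a $B$-field. We first produce, at the level of lattices and Hodge structures, a Hodge isometry that realizes $F$ as $M_{v_B}(Z,\beta)$. Geometry enters only to identify $Z$ with the discriminant double cover.

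\textbf{Step 1: the twisted Mukai lattice.} Take $Z$ very general with $\NS(Z)\simeq\langle 2\rangle\oplus E_7(-2)$, with basis $\ell,\gamma_1,\dots,\gamma_7$ as above. Choose $B\in H^2(Z,\mathbb{Q})$ with $B\ell=B\gamma_i=\tfrac12$ and $B^2=0$. Such a $B$ exists: start with $B_0=\tfrac12 x$ for a suitable integral class $x$ with $x\ell$ and $x\gamma_i$ odd, then correct $B_0^2$ by adding $\tfrac12$ times an element of $\T_Z$; this is possible because $\T_Z\simeq U^{\oplus2}\oplus\langle-2\rangle\oplus N\oplus\langle-4\rangle$ contains a copy of $U$. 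Let $\beta=\exp(B)$. Put $v_B=(2,2B+\ell,1)$. It satisfies $v_B^2=(2B+\ell)^2-4=4B\ell+2-4=0$, since $B^2=0$. Hence by the results of Yoshioka/Huybrechts--Stellari the moduli space $M_{v_B}(Z,\beta)$, for a generic polarization, is a $K3$ surface $F'$ with
$$H^2(F',\mathbb{Z})\simeq v_B^{\perp}/\mathbb{Z}v_B\subset \widetilde H(Z,B,\mathbb{Z})$$
as Hodge structures. In particular $\T_{F'}\simeq \ker(B\cdot\colon \T_Z\to\mathbb{Q}/\mathbb{Z})$, a sublattice of index $2$ in $\T_Z$.

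\textbf{Step 2: the lattice computation.} The algebraic part of $v_B^\perp/v_B$ is spanned by the classes $(0,D,D\cdot B)$ with $D\in\NS(Z)$ and $DB\in\mathbb{Z}$, together with a lift of $(1,B,0)$-type vectors. From these we write an explicit $\mathbb{Z}$-basis, for instance $(0,\gamma_i-\gamma_j,0)$, $(0,\gamma_i+\ell,1)$ and $(1,B,0)$ suitably combined, and compute its Gram matrix. We then check that the discriminant form equals $\left(\frac1{8}\right)\oplus u(2)^{\oplus3}\oplus\left(\frac14\right)$, the form of $\langle 8\rangle\oplus E_7(-2)$. Since this lattice is unique in its genus, as in Corollary~\ref{prop: Ns e T of projective F}, we get $\NS(F')\simeq\langle 8\rangle\oplus E_7(-2)$. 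A useful cross-check runs on the transcendental side: $\T_{F'}$ should have index $2$ in $\T_Z$ and be isometric to $U^{\oplus2}\oplus\langle-8\rangle\oplus N\oplus\langle-4\rangle$. We also check that $E_7(-2)$ sits primitively, so that $F'$ is a generalized Nikulin surface by Theorem~\ref{thm: generalizes Nikulin iff generalized lattice in NS}. This step is a routine but fiddly computation.

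\textbf{Step 3: identifying $F$ with $F'$.} Both families are $12$-dimensional and irreducible: the $\langle 8\rangle\oplus E_7(-2)$-polarized $K3$ surfaces and the pairs $(Z,\beta)$ with $\beta$ fixed by the conditions above. The Hodge isometry construction of Step 2 is reversible, since a given $\T_F$ determines an index-$2$ overlattice $\T_Z$ together with a $B$-field. So for a very general $F$ there is a $Z$ with $F\simeq M_{v_B}(Z,\beta)$, by the Torelli theorem, as in Step~3 of the proof of Theorem~\ref{thm: generalizes Nikulin iff generalized lattice in NS}. Geometrically, $Z$ is the double plane branched on the discriminant sextic of the net of quadrics cutting out $F\subset\mathbb{P}^5$. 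The seven quadric sections of Proposition~\ref{prop: proj models direct sum} that split into two genus $3$ curves should correspond to the seven splitting conics $\gamma_i$.

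\textbf{Main obstacle.} We expect the hardest step to be Step~3, specifically showing that the reverse construction lands exactly on $\langle 2\rangle\oplus E_7(-2)$ for $Z$ rather than on an overlattice, and that the values of $B$ modulo $\mathbb{Z}$ are forced to be $B\ell=B\gamma_i=\tfrac12$. We would settle this by computing the theta characteristic conditions on the conics, in the same way as Cases~1 and~2 of Section~\ref{13.6.1}.
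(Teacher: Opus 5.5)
Your plan is essentially the paper's argument. The paper also iterates the net-of-quadrics/discriminant-sextic correspondence of Section~\ref{subsubsec: net and discriminant sextic}, with $B\gamma_i=\frac12$ forced as in Case b) of Section~\ref{13.6.1}, and computes $v_B^\perp/v_B$ in the $B$-twisted extended N\'eron--Severi lattice for an explicit admissible choice of $\ell,\gamma_i,B$. Only your sample generators in Step 2 need fixing: the twisted lattice has no vectors of odd rank, so $(1,B,0)$ must be replaced by $(2,2B,0)$, and $(0,D,s)\perp v_B$ forces $s=D\cdot B+\frac12 D\cdot\ell$, not $s=D\cdot B$. The computation is then immediate, because $B\gamma_i=B\ell$ makes $B$ orthogonal to the $\gamma_i-\ell$, hence to $E_7(-2)=\ell^{\perp}\subset\NS(Z)$; so the twisted lattice is $\langle (0,\ell,0),(2,2B,0),(0,0,1)\rangle\oplus E_7(-2)$, and $v_B^\perp/v_B\simeq\langle 8\rangle\oplus E_7(-2)$ follows from the rank-one computation at the start of Section~\ref{subsubsec: net and discriminant sextic}.
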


\subsection{The complete intersections of three quadrics in $\mathbb{P}^5$ and the discriminant sextic of the net}\label{subsubsec: net and discriminant sextic}
We already recalled that to a $K3$ surface $S$ which is the complete intersection of three quadrics $Q_1,Q_2,Q_3\in\mathbb{P}^5$, such that  $\NS(S)\simeq \langle 8\rangle=\Z h$, one can associate a $K3$ surface $Z$, which is the double cover of $\mathbb{P}^2$ branched in the sextic $\det(x_0Q_1+x_1Q_2+x_3Q_3)=0$, i.e.~ on the discriminant sextic of the net. One can reconstruct the $K3$ surface $S$ using $Z$ if one fixes an even theta characteristic on the sextic curve; we consider a symmetric resolution of the theta characteristic using free sheaves on $\PP^2$.

This procedure can be interpreted as follows: $Z$ is a moduli space of stable sheaves on $S$ and vice versa, $S$ is a moduli space of stable twisted  sheaves on $Z$ (in this latter case one has to fix a $B$-field, which corresponds to choosing a theta characteristic on the sextic).
Let us explain this procedure: on $S$ we consider the Mukai vector $v=(2,h,2)$ in $H^*(S,\Z)$. Then $Z\simeq M_v(S)$ has N\'eron--Severi group isometric to $\langle 2\rangle\simeq v^{\perp}/v$.

Vice versa, if $\NS(Z)=\langle 2\rangle\simeq \Z\ell$, then we can fix a $B$-field such that $B^2=0$, $B\ell=1/2$. Up to an isometry of $\Lambda_{K3}$, one can assume that $\ell=u_1^{(1)}+u_2^{(1)}$ and that $B=\frac{1}{2}(u_1^{(1)}+u_1^{(2)})\in \frac{1}{2}H^2(Z,\Z)$ where $u_i^{(j)}$, $i=1,2$, $j=1,2,3$ form the basis of the $j$-th copy of $U$ in $\Lambda_{K3}$. The twisted N\'eron--Severi group is now generated by  $(0,\ell,0)$, $(2,2B,0)$ and $(0,0,1)$ and hence it is $\left[\begin{array}{ccc}2&1&0\\1&0&-2\\0&-2&0\end{array}\right]$. We fix the vector $v_B:=(1,1,1)$ in the twisted extended N\'eron--Severi group as Mukai vector, i.e.~ $v_B=(2,\ell+2B,1)$ in the Mukai lattice. The N\'eron--Severi group of $M_v(Z,B)$ is $v_B^{\perp}/v_B$ in the twisted extended N\'eron--Severi group. 
Since the orthogonal to the vector $(1,1,1)$ in $\left[\begin{array}{ccc}2&1&0\\1&0&-2\\0&-2&0\end{array}\right]$ is generated by $\langle (1,1,1), (0,-2,1)\rangle$ one obtains that $\NS(M_v(Z,B))=\langle 8\rangle$ and so one can identify $M_v(Z,B)$ with a $K3$ surface with a model as complete intersection of three quadrics in $\mathbb{P}^5$. Observe that the choice of $B$ and $v_B$ determines the transcendental lattice of $M_v(Z,B)$. Recall that if the Picard number of a K3 surface is less than 11, its N\'eron--Severi group determines uniquely its transcendental lattice. In the following we will consider surfaces with Picard number less than 9, we compute their N\'eron--Severi group and this determines uniquely also their transcendental lattice.

\subsubsection{K3 surfaces with Picard number 2}\label{13.6.1}
As done in \cite{GvG} one can specialize the previous construction to $K3$ surfaces with higher Picard number. So we consider: $S$ a K3 surface with a polarization $h$ of degree 8, $Z$ a K3 surface with a polarization $\ell$ of degree 2 and a B-field  on $Z$ such that $$B^2=0,\ \ B\ell=\frac{1}{2}.$$
In view of our applications we are interested in $K3$ surfaces $S$ with one of the following properties: a) there exists a hyperplane section which splits in two genus 1 curves, meeting in 4 points; b) there exists a quadric section which splits in two genus 3 curve, meeting in 12 points.

{\it {\bf Case a).}} One has $\NS(S)\simeq \left[\begin{array}{cc}8&4\\4&0\end{array}\right]$ and denotes $\langle h,e\rangle$ the corresponding generators of $\NS(S)$.
Considering the same Mukai vector as the one of the general case, i.e.~ $v=(2,h,2)$, one obtains that $Z=M_v(S)$ has the following N\'eron--Severi group $\NS(Z)=\left[\begin{array}{rr}2&0\\0&-2\end{array}\right],$ which corresponds to a $K3$ surface $Z$ which is the double cover of $\mathbb{P}^2$ branched on a sextic with one node. We denote $\langle \ell, r\rangle$ the basis of the N\'eron--Severi group of $Z$.

We want to reconstruct $S$ from $Z$ as a moduli space of stable twisted sheaves on $Z$. Since now we added the class $r$ to $\NS(Z)$, one has a priori two possibilities, either $Br=0$ or $Br=\frac{1}{2}$. \begin{enumerate}\item Assume $Br=0$. Then the twisted extended N\'eron--Severi group is $\left[\begin{array}{rrrr}2&0&1&0\\
0&-2&0&0\\1&0&0&-2\\0&0&-2&0\end{array}\right]$ and chosen the vector $(1,0,1,1)$ in it as Mukai vector, one obtains that the N\'eron--Severi group of the associated surface $S$ is $\left[\begin{array}{rr}-2&0\\0&8\end{array}\right]$.
\item Assume $Br=\frac{1}{2}$. Then the twisted extended N\'eron--Severi group is $\left[\begin{array}{rrrr}2&0&1&0\\
0&-2&1&0\\1&1&0&-2\\0&0&-2&0\end{array}\right]$ and chosen the vector $(1,0,1,1)$ in it as Mukai vector, one obtains that the N\'eron--Severi group of the associated surface $S$ is $\left[\begin{array}{rr}0&4\\4&8\end{array}\right]$.
\end{enumerate}
The previous computations imply that there are two different geometric reasons for which the discriminant sextic of a net of quadrics has a singular point: either the complete intersection of the quadrics in the net is singular, or it admits a hyperplane section which splits (in this latter case, one of the quadric of the net is singular along a line, but the complete intersection is smooth).
We are interested in the second case, so to reconstruct $Z$ by $S$ we must assume $Br=1/2$.

Observe that, once chosen the classes $\ell$ and $B$ as above, an admissible choice for $r$ is $r=u_2^{(2)}+e_1^{(1)}$ where we will denote $e_i^{(j)}$, $i=1,\ldots, 8$, $j=1,2$ a basis of $E_8(-1)\oplus E_8(-1)$.

{\it {\bf Case b)}.} One has $\NS(S)\simeq \left[\begin{array}{cc}8&8\\8&4\end{array}\right]$ and denotes $\langle h,g\rangle$ the corresponding generators of $\NS(S)$.
Considering the same Mukai vector as the one of the general case, i.e.~ $v=(1,h,1)$, one obtains that if $Z=M_v(S)$, $\NS(Z)=\left[\begin{array}{rr}2&2\\2&-2\end{array}\right],$ which corresponds to a $K3$ surface $Z$ which is the double cover of $\mathbb{P}^2$ branched on a sextic which admits an everywhere tangent conic.  We denote $\langle \ell, \gamma\rangle$ the basis of the N\'eron--Severi group of $Z$.

We want to reconstruct $S$ from $Z$ as a moduli space of stable twisted sheaves on $Z$. Since now we added the class $\gamma$ to $\NS(Z)$, one has a priori two possibilities, either $B\gamma=0$ or $B\gamma=\frac{1}{2}$. \begin{enumerate}\item Assume $B\gamma=0$. Then the twisted extended N\'eron--Severi group is $\left[\begin{array}{rrrr}2&2&1&0\\
2&-2&0&0\\1&0&0&-2\\0&0&-2&0\end{array}\right]$ and chosen the vector $(1,0,1,1)$ in it as Mukai vector, one obtains that the N\'eron--Severi group of the associated surface $S$ is $\left[\begin{array}{rr}-2&4\\4&8\end{array}\right]$.
\item Assume $B\gamma=\frac{1}{2}$. Then the twisted extended N\'eron--Severi group is $\left[\begin{array}{rrrr}2&2&1&0\\
2&-2&1&0\\1&1&0&-2\\0&0&-2&0\end{array}\right]$ and chosen the vector $(1,0,1,1)$ in it as Mukai vector, one obtains that the N\'eron--Severi group of the associated surface $S$ is $\left[\begin{array}{rr}8&8\\8&4\end{array}\right]$.
\end{enumerate}
The previous computations imply that there are two different geometric reasons for which the discriminant sextic of a net of quadrics has an everywhere tangent conic: either the complete intersection of the quadrics admits a hyperplane section which splits in two rational normal curves, or it admits a quadric section which splits in two genus 3 curves.
The geometric setting that we are considering is the latter, so to reconstruct $Z$ from $S$ we must assume $B\gamma=1/2$.
Chosen $\ell$ and $B$ as above, an admissible choice for $\gamma$ is $\gamma=u_1^{(1)}+u_2^{(2)}+e_1^{(1)}+e_1^{(2)}$.

\subsubsection{K3 surfaces with Picard rank 3}
We specialize again the previous surfaces to $K3$ surfaces with higher Picard number. In particular, we are  interested in $K3$ surfaces $S$ which are complete intersection of quadrics in $\mathbb{P}^5$ and admit both the previous properties, i.e.~ for which  there exists a hyperplane section which splits in two genus 1 curves, meeting in 4 points, and  there is a quadric section which splits in two genus 3 curves, meeting in 12 points. 

In this case one has $\NS(S)\simeq \left[\begin{array}{ccc}8&4&8\\4&0&3\\8&3&4\end{array}\right]$ and we denote $\langle h,e,g\rangle$ the corresponding generators of $\NS(S)$.
Let us observe that $g-e$ is the class of a rational curve whose intersection with $h$ is 4. In particular on the $K3$ surface $S$ there is a hyperplane section which splits in the union of two rational curves (this agrees with the 2 different descriptions we gave of $F$ when $\NS(F)=\left(\langle 8\rangle\oplus E_7(-2)\right)'$ in Propositions \ref{prop: proj models '} and \ref{prop: alternative description 8+G'}).

Considering the same Mukai vector as the one of the general case, i.e.~ $v=(1,h,1)$, one obtains that $Z=M_v(S)$ has the following N\'eron--Severi group $\NS(Z)=\left[\begin{array}{rrr}2&0&2\\0&-2&1\\2&1&-2\end{array}\right],$ which corresponds to a $K3$ surface $Z$ which is the double cover of $\mathbb{P}^2$ branched on a sextic with one node and admitting an everywhere tangent conic and passing through the node. We denote $\langle \ell, r,\gamma\rangle$ the basis of the N\'eron--Severi group of $Z$.

As above, one is interested in reconstructing $S$ from $Z$ in terms of moduli space of stable twisted sheaves on $Z$. Since now we added a class to $\NS(Z)$, one has to consider the intersection between the $B$-field and the new class. We already observed that in our geometric context $Br$ has to be $1/2$ (otherwise the associated $K3$ surface $S$ would be singular). One can freely chose $B\gamma$ to be equal to 0 or to $1/2$, indeed we observed that the $K3$ surface $S$ has both the properties associated to these choices. More explicitly, \begin{itemize}\item if one assumes that $Br=1/2$ and $B\gamma=0$, the N\'eron--Severi group of the $K3$ obtained by choosing $(1,0,0,1,1)$ as Mukai vector in the twisted N\'eron--Severi group is $\left[\begin{array}{ccc}8&4&4\\4&0&3\\4&3&-2\end{array}\right]$ 
i.e.~ the $K3$ surface is a complete intersection of three quadrics in $\mathbb{P}^5$ for which there is a hyperplane section which splits in two genus 1 curves of degree 4 and another hyperplane section which splits in the union of two rational curves of degree 4.
\item if one assumes that $Br=B\gamma=1/2$ the N\'eron--Severi group of the $K3$ surface obtained choosing $(1,0,0,1,1)$ as Mukai vector in the twisted N\'eron--Severi group is 
$\left[\begin{array}{ccc}8&4&8\\4&0&3\\8&3&4\end{array}\right]$
i.e.~ the $K3$ surface is a complete intersection of three quadrics in $\mathbb{P}^5$ for which there is a hyperplane section which splits in two genus 1 curves of degree 4 and a quadric section which splits in the union of two genus 3 curves  of degree 8.\end{itemize}
The two N\'eron--Severi groups (the ones obtained for $B\gamma=0$ and for $B\gamma=\frac{1}{2}$) are isometric, hence there is no a difference among these two choices.
\subsubsection{Higher Picard rank}
One can iterate the previous process many times, and in particular, for the case of our interest, one can consider the $K3$ surface $S$, which is the complete intersection of three quadrics in $\mathbb{P}^5$ such that there are six hyperplanes sections each of  which split in the union of two genus 1 curves and an hyperplane section which splits in the union of two rational curves (or equivalently a quadric section which splits in the union of two genus 3 curve of degree 8). 
The moduli space of stable sheaves  on $S$ with Mukai vector $(1,h,1)$ is the $K3$ surface $Z$ which is the double cover of $\mathbb{P}^2$ branched on a sextic which has 6 singular points and a there exists a conic through all these singular points. We will denote $\langle \ell,r_i,\gamma\rangle$, $i=1,\ldots, 6$ a base of the N\'eron--Severi group of $Z$ such that $\ell^2=2$, $\ell r_i=0$, $\ell \gamma=2$, $r_i^2=\gamma^2=-2$, $r_ir_j=0$ and $r_i\gamma=1$. 

Vice versa, one can obtain $S$ as moduli space of stable twisted sheaves on $Z$ by considering a $B$-field such that $B\ell=\frac{1}{2}$, $B^2=0$, $Br_i=1/2$ for all $i=1,\ldots 6$ and $B\gamma=1/2$ (or, equivalently $B\gamma=0$). Observe that an admissible choice for $B$ and the basis $\langle \ell,r_i,\gamma\rangle$ is $\ell=u_1^{(1)}+u_2^{(1)}$, $B=\frac{1}{2}(u_1^{(1)}+u_1^{(2)})$ (as above), $\gamma=u_1^{(1)}+u_2^{(1)}+2u_1^{(2)}+e_1^{(1)},\ r_i=u_2^{(2)}+e_{2i+1}^{(1)},\ i=1,2,3,$ and $r_j=u_2^{(2)}+e_{2j-7}^{(2)},\ j=4,5,6.$  This proves Proposition \ref{prop: alternative description 8+G'}.

Similarly one proves Proposition \ref{propB} by iterating the process to complete intersection of three quadrics which admits 7 hyperplane sections each of which splits in two genus 3 curves. More precisely, one considers $Z$ whose N\'eron--Severi group is generated by $\langle \ell,\gamma_i\rangle$, $i=1,\ldots, 7$ such that $\ell^2=2$, $\ell\gamma_i=2$, and $\gamma_i\gamma_j=0$ if $j\neq i$ and $|j-i|= 1$, $\gamma_i^2=-2$ and $\gamma_i\gamma_j=2$ otherwise and one constructs $S$ as the moduli space of stable twisted sheaves $M_v(Z,B)$ with $v$ as above and $B$ a $B$-field with the properties $B^2=0$, $B\ell=B\gamma=\frac{1}{2}$. An admissible choice for the involved vectors is  $\ell=u_1^{(1)}+u_2^{(1)}$, $B=\frac{1}{2}(u_1^{(1)}+u_1^{(2)})$ (as above), $\gamma_i=u_1^{(1)}+u_2^{(1)}+(-1)^i\left(e_i^{(1)}+e_i^{(2)}\right)$, $i=1,\ldots, 7$.

\subsection{The case $\NS(F)=(\langle 12\rangle\oplus G)^\bullet$, $\NS(X)=\langle 6\rangle\oplus E_8(-2)$, embedding $j_3$.} Observe that in Proposition \ref{prop: proj models bullet} the geometric description of the surfaces with N\'eron--Severi group $(\langle 2d\rangle\oplus G)^\bullet$ is provided for every admissible $d\neq 6$. Here we fill this gap.
By Proposition \ref{prop:relation NS(F), NS(X), NS(W)}, if $\NS(F)=(\langle 12\rangle\oplus G)^\bullet$, then $F$ is the fixed $K3$ surface of a symplectic involution on $X$, where $\NS(X)=\langle 6\rangle\oplus E_8(-2)$, embedded in $H^2(X,\Z)$ via $j_3$.  The hyper-K\"ahler manifold $X$ is the  Fano variety of lines on a cubic 4-fold, the involution is induced by an involution on the cubic and the explicit description is known. In particular the fixed surface is considered in \cite[Proposition 3.15]{CGKK} where it is shown that $F$ is a complete intersection of two divisors of bidegree type $(2,1)$ and $(0,3)$ in $\mathbb{P}^1\times \mathbb{P}^3$, i.e.~ $F$ is a divisor of type $(2,1)$ in $\mathbb{P}^1\times V_3$ where $V_3$ is a cubic hypersurface in $\mathbb{P}^3$. 

\section{Related topics}\label{section: related problems}

\subsection{The residual involution}\label{subset:the residuals involution}
It follows from \cite[Section 2]{CGKK} that a manifold of $K3^{[2]}$-type $X$ admitting a symplectic involution $i$ is isomorphic to a moduli space of stable twisted sheaves on some $K3$ surface $S$ (so stable sheaves on a twisted $K3$ surface $(S,\alpha)$).
From \cite[Proposition 1.4]{BO}, there exists an action $\iota$ on $D(S,\alpha)$ that induces the above action $i$ on the moduli space $X. $ 
Denote by $F$ the fixed locus of such a symplectic involution $i$.
 
By \cite[Theorem 1.1]{BO}  we expect that the equivariant category $D(S,\alpha)_{i}$ is equivalent to a derived category of twisted sheaves on the fixed surface $(F,\beta)$ for some Brauer class $\beta\in H^2(F)_{tors}$ (cf.~\cite[Section 1.3]{BO}). 

By a result of Elagin, \cite[Theorem 1.3]{El}, there exists a residual involution (see \cite{KP} for an introduction to residual involutions) on the derived category $D(F,\beta)$ that induces an involution $j$
on its Mukai lattice $$\tilde{H}(F,\ZZ)=H^0(F,\ZZ)\oplus H^2(F,\ZZ)\oplus H^4(F,\ZZ).$$

This involution was explicitly described in \cite[Section 7.2]{BO} in the case when $X\simeq S^{[2]}$, and hence such that the fixed surface is a Nikulin surface. It is natural to ask for the same description in the more general context in which $X$ is not necessarily $S^{[2]}$ and therefore the fixed surface $F$ is a generalized Nikulin surface (and not a Nikulin surface).
In particular, one can be interested in describing possible Brauer classes $\beta$ and 
residual involutions on $D(F,\beta)$.

The aim of this section is to present some results in this direction.

We recall that when the fixed $K3$ surface $F$ is a Nikulin surface, then the Nikulin lattice $N$ is contained in $\NS(F)$  and
the dual action $j$ on the Mukai lattice $\tilde{H}(F,\ZZ)$  given in \cite[Section 7.2]{BO} and it is 
$(0,E_i,0)\to (0,-E_i,1)$ and $(1,0,0)\to (1,\hat{E},-2)$ where $E_i$ for $i=1,\ldots, 8$ are the exceptional curves in the Nikulin lattice and $\hat{E}:=(E_1+\dots +E_8)/2$. 
\begin{prop}\label{61}
 Let $j$ be the residual involution defined above on a Nikulin surface $F$. The anti-invariant part of $j$ on $\tilde{H}(F,\ZZ)$ is isometric to $E_8(-2)$ and it is contained in  $H^0(F)\oplus \NS(F)\oplus H^4(F)$.
\end{prop}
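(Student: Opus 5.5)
Throughout I use the paper's formulas for $j$: $j(0,E_i,0)=(0,-E_i,1)$ and $j(1,0,0)=(1,\hat{E},-2)$. The plan is to compute $j$ explicitly on $\tilde H(F,\ZZ)$, using the Mukai pairing $\langle (r,c,s),(r',c',s')\rangle = cc'-rs'-r's$.

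\textbf{Step 1: determine $j$.} First I would recover $j(0,0,1)$. We have $\langle (0,E_i,0),(0,E_i,0)\rangle=-2$ and $\langle (0,-E_i,1),(0,-E_i,1)\rangle=-2$, so $j$ is consistent with being an isometry. Next, $(0,0,1)$ is orthogonal to every $(0,E_i,0)$ and pairs to $-1$ with $(1,0,0)$. Its image must therefore be orthogonal to all $(0,-E_i,1)$, have square $0$, and pair to $-1$ with $(1,\hat E,-2)$. These conditions force $j(0,0,1)=(0,0,1)$.

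Second, $j$ acts trivially on $N^\perp\subset H^2(F,\ZZ)$. This holds because the residual involution comes from the covering involution on the Nikulin side, which only involves the $E_i$. So $j$ is the identity on $(0,N^\perp,0)$.

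Third, $j$ is determined on $(0,\hat E,0)$ by linearity:
\[
j(0,\hat E,0)=\Bigl(0,-\hat E,4\Bigr).
\]

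\textbf{Step 2: anti-invariant lattice.} Write $V:=\ZZ(1,0,0)\oplus N\oplus\ZZ(0,0,1)$. Then $\tilde H(F,\ZZ)=V\oplus N^\perp$, and $j$ preserves $V$ and is the identity on $N^\perp$. Hence the anti-invariant part lies in $V$, which is contained in $H^0\oplus\NS(F)\oplus H^4$. This proves the containment claim.

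Next compute $\ker(j+1)$ on $V$. We have
\[
(j+1)(0,E_i,0)=(0,0,1),\qquad (j+1)(1,0,0)=(2,\hat E,-2),\qquad (j+1)(0,0,1)=(0,0,2).
\]
Solving, the anti-invariant part is spanned by the classes $(0,E_i-E_k,0)-\tfrac12\cdot 0$, adjusted appropriately. Concretely it contains:
\begin{itemize}
\item the classes $(0,E_i-E_{k},0)$ modified by $H^4$-terms, which give the $E_7(-2)$-part coming from $G$;
\item one extra vector involving $(1,0,0)$, namely $v=(1,\hat E,-2)-(1,0,0)\cdot\ldots$; more cleanly, $(2,\hat E,-2)-j(\cdot)$-type combinations.
\end{itemize}
The rank of the anti-invariant part is $\rk V-\rk V^j$. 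The invariant part $V^j$ is spanned by $(0,0,1)$ and $(1,\hat E/2,\cdot)$-type vectors, so it has rank $2$. Hence the anti-invariant part has rank $10-2=8$.

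\textbf{Step 3: identify with $E_8(-2)$.} The anti-invariant sublattice of an involution on a unimodular lattice is $2$-elementary. I will compute its Gram matrix and check three things: it is negative definite, it is even, and its discriminant group is $(\ZZ/2)^8$. A negative definite lattice of rank $8$ with discriminant group $(\ZZ/2)^8$ is $M(2)$ for some unimodular $M$; here $M$ must be even, hence $M\cong E_8(-1)$.

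The key sublattice inside it is $G\cong E_7(-2)$, spanned by the $j$-anti-invariant lifts of $w_i=E_i-E_{i+1}$ and $\hat w$; this uses Lemma \ref{lemma: properties of G}. Adding one vector built from $(1,0,0)$ then produces the index-$2$ overlattice of $E_7(-2)\oplus\langle-4\rangle$. By the Remark after Lemma \ref{lemma: properties of G}, this overlattice is $E_8(-2)$.

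\textbf{Main obstacle.} The hard step is pinning down the precise integral generators of $\ker(j+1)$ and checking primitivity. I would rely on the $2$-elementary argument above rather than on direct matrix computation.
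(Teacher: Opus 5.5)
Your skeleton (compute $j$ on $H^0\oplus N\oplus H^4$, use $j|_{N^\perp}=\mathrm{id}$, count ranks) is the same as the paper's, and the rank count and the containment are essentially right. The gap is that you never actually identify the anti-invariant lattice with $E_8(-2)$, and the sketch you give for it is wrong in two places.

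\textbf{Wrong description of the anti-invariant vectors.} By your own formulas,
$(j+1)\bigl(a(1,0,0)+\sum_i c_i(0,E_i,0)+b(0,0,1)\bigr)=\bigl(2a,\ a\hat E,\ -2a+\sum_i c_i+2b\bigr)$.
So anti-invariance forces $a=0$ and $b=-\frac12\sum_i c_i$. No anti-invariant vector has a nonzero $H^0$-component. The vector $(2,\hat E,-2)=(j+1)(1,0,0)$ that you propose is \emph{invariant}. The vector spanning the $\langle -4\rangle$ orthogonal to $G$ is instead $(j-1)(1,0,0)=(0,\hat E,-2)$.

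\textbf{Insufficient identification criterion.} The test ``negative definite, even, discriminant group $(\ZZ/2\ZZ)^8$'' does not single out $E_8(-2)$: $\langle -2\rangle^{\oplus 8}$ passes it. This is because $M(2)$ is even for every integral $M$, so evenness tells you nothing about $M$. Similarly, the Remark after Lemma~\ref{lemma: properties of G} only exhibits $E_8(-2)$ as \emph{one} index-$2$ overlattice of $E_7(-2)\oplus\langle-4\rangle$. There is a second one with both summands primitive, glued along an order-$2$ element of $u(2)^{\oplus 3}$ of square $1$ instead of $2v$, and it is not $2$-elementary. So that route also needs an extra argument.

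\textbf{How to close the gap.} Once $a=0$ is recorded, the missing step is short. Since $(0,0,1)$ is isotropic and orthogonal to $N$, the map $(0,n,b)\mapsto n$ is an isometry from the anti-invariant lattice onto $N_0=\{n\in N : n\cdot\hat E\in 2\ZZ\}$. This sublattice has index $2$ in $N$, hence $|d(N_0)|=2^8$. Write $n=\sum c_iE_i$, where either all $c_i\in\ZZ$ or all $c_i\in\frac12+\ZZ$, and $\sum c_i$ is even. Then $n^2=-2\sum c_i^2\in 4\ZZ$ in both cases. Hence $N_0(\frac12)$ is even, unimodular and negative definite of rank $8$, i.e.\ $E_8(-1)$. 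This argument also gives the sharper containment in $\NS(F)\oplus H^4(F)$. The paper instead writes down the generators $(0,E_i-E_{i+1},0)$ for $i\le 6$, $(0,-E_1-E_7,1)$ and $(0,-\hat E+2E_7,1)$, and computes their Gram matrix.

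\textbf{Two smaller points.}
\begin{itemize}
\item $\tilde H(F,\ZZ)\neq V\oplus N^\perp$ integrally, because $N$ is not unimodular. The containment should be argued over $\mathbb{Q}$, together with primitivity of $V$.
\item In Step~1, your three conditions do not force $j(0,0,1)=(0,0,1)$: any $(0,c,1)$ with $c\in N^\perp$ isotropic satisfies them. You also need $j(0,0,1)\perp N^\perp$, which follows from $j|_{N^\perp}=\mathrm{id}$.
\end{itemize}
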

\begin{proof}
The lattice $\Lambda_{K3}$ is the unique even unimodular lattice with signature $(3,19)$. We fix a specific construction of it (which is not the standard one, but still it is useful for our purpose).

Let us consider the lattice $U^{\oplus 3}\oplus N$, where $N$ is the Nikulin lattice. We denote $u_{i}^{(j)}$, $i=1,2$ a basis of the $j$-th copy of $U$, so that $u_1^{(j)}u_2^{(j)}=1$ and $u_i^{(j)}u_h^{(k)}=0$ if $j\neq k$. Recall that 
a $\Z$-basis of $N$ is given by $\{E_1,E_2,E_3,E_4,E_5,E_6, E_7,\hat{E}\}$.

Consider another copy of the lattice $N$, generated by $\epsilon_i$, $i=1,\ldots,8$ such that $\epsilon_i^2=-2$ and $\epsilon_i\epsilon_j=0$ and by the class $\hat{\epsilon}=-\sum_i\epsilon_i/2$.

The lattice $U^{\oplus 3}\oplus N\oplus N$ with basis $\{u_i^{(j)}, E_h,\hat{E},\epsilon_k,\hat{\epsilon}\}$ is an even lattice with signature $(3,19)$, whose discriminant group is $(\Z/2\Z)^{\oplus 12}$.
To obtain an overlattice which is unimodular it suffices to properly add 6 classes which are gluing vectors: let us consider the following ones
$$(-E_i-E_{i+1}+\epsilon_i+\epsilon_{i+1})/2,\ i=1,2,3,4,5,6.$$
A $\Z$-basis of the overlattice obtained is
\begin{equation}\label{eq: basis LambdaK3}u_i^{(j)}, E_1,\ldots , E_7,\hat{E}, \frac{-E_1-E_2+\epsilon_1+\epsilon_2}{2}, \frac{-E_2-E_3+\epsilon_2+\epsilon_3}{2}, \frac{-E_3-E_4+\epsilon_3+\epsilon_4}{2},\ \end{equation} \begin{equation*}\frac{-E_4-E_5+\epsilon_4+\epsilon_5}{2},\frac{-E_5-E_6+\epsilon_5+\epsilon_6}{2},\frac{-E_6-E_7+\epsilon_6+\epsilon_7}{2}, \epsilon_7,\hat{\epsilon}.\end{equation*}

Computing the intersection form on this basis, one obtains an even unimodular lattice of signature $(3,19)$, which is necessarily isometric to $\Lambda_{K3}$. So from now we fix the set of vectors \eqref{eq: basis LambdaK3} as basis of $\Lambda_{K3}$.

The Mukai lattice is isometric to $\Lambda_{K3}\oplus U(-1)$ where we denote $w_1$ and $w_2$ the vectors generating $U(-1)$ (they correspond to the vector $(1,0,0)$ and $(0,0,1)$ in the standard notation of the Mukai vectors).

We are now interested in an isometry $j$ of the Mukai lattice whose action is the following:
$$j(E_i)=-E_i+w_2,\ j(w_2)=w_2\,\ j(w_1)=w_1+\hat{E}-2w_2,$$
and which is the identity on the orthogonal complement to $\langle w_1,w_2,E_i,i=1,\ldots, 8\rangle$.
This allows one to deduce the action on the $\Z$-basis of the Mukai lattice given by $w_1$, $w_2$ and the vectors in \eqref{eq: basis LambdaK3}: $$j(w_1)=w_1+\hat{E}-2w_2,\ j(w_2)=w_2,\ j(\epsilon_i)=\epsilon_i,\ j(u_i^{j})=u_i^{j},\ j(\hat{E})=-\hat{E}+4w_2,\ j(\hat{\epsilon})=\hat{\epsilon},$$ $$j\left((-E_i-E_{i+1}+\epsilon_i+\epsilon_{i+1})/2\right)=(E_i+E_{i+1}+\epsilon_i+\epsilon_{i+1})/2-w_2=\left((-E_i-E_{i+1}+\epsilon_i+\epsilon_{i+1})/2\right)+E_i+E_{i+1}-w_2.$$

The map $j$ is an order 2-isometry, which has 8 eigenvalues $-1$ and 16 eigenvalues $+1$. 
The anti-invariant lattice is generated by $E_i-E_{i+1}$, $i=1,\ldots, 6$, $w_2-E_1-E_7$, $w_2-\hat{E}+2E_7$. Computing the intersection form on this basis one finds a lattice with signature $(0,8)$, whose intersection form can be divided by 2 obtaining a unimodular even lattice, so the anti-invariant lattice is $E_8(-2)$. Since $j$ is the identity on the orthogonal complement to $U\oplus N\simeq \langle w_1,w_2,E_i,i=1,\ldots,7, \hat{E}\rangle$, the anti--invariant part is clearly contained in $U\oplus N\subset H^0(F)\oplus \NS(F)\oplus H^4(F)$. 
\end{proof}
\begin{rem}\label{rem: different U+N decompostions}
By considering more closely the action of the map $j$ on the sublattice $U\oplus N=\langle w_1,-w_2,E_i,i=1,\ldots, 7, \hat{E}\rangle$, one obtains two different decompositions of the same lattice as direct sum of the hyperbolic plane and the Nikulin lattice, which will be useful in the following. We describe these two decompositions.
The orthogonal to the anti-invariant part $E_8(-2)$ in $U\oplus N=\langle w_1,w_2,E_i,\hat{E}\rangle$ is generated by $\{-w_2,2w_1-w_2+\hat{E}\}$ and isometric to $U(2)$. 
The vector $v=2w_1-2w_2+\hat{E}$ is the unique vector with square $v^2=4$ in $U(2)$, up to a sign. 
Observe that $$w_1=\left(v-\varepsilon\right)/2\in U\oplus N$$ where $$\varepsilon=\sum_{i=1}^6(E_i-E_{i+1})+(w_2-E_1-E_7)+(w_2-\hat{E}+2E_7)=2w_2-\hat{E}\in E_8(-2).$$ So the minimal primitive sublattice of $U\oplus N$ which contains both $E_8(-2)$ and $v$ is an overlattice of index 2 of $\langle 4\rangle\oplus E_8(-2)$ and computing its discriminant form one shows that it is isometric to $U\oplus E_7(-2)$. Observe that the first summand in this orthogonal decomposition is not the same copy of $U$ that we have been considering before (i.e.~  $\langle w_1,w_2\rangle$). To stress this fact we write this lattice as $U'\oplus E_7(-2)$ (where $U'$ is the hyperbolic plane with certain generators which are not $w_1,w_2$). Observe that the class $z=-(-w_2)+2w_1-w_2+\hat{E}=2w_1+\hat{E}$ is the unique vector with square $z^2=-4$ in $U(2)$, up to a sign and that there is an overlattice of index 2 of $\langle z\rangle\oplus E_7(-2)$ which is embedded in $U\oplus N$. Denoted $(\langle z\rangle\oplus E_7(-2))'$ such an overlattice, one first observes that it is isometric to a Nikulin lattice (denoted $N'$ to stress that it is not the copy of the Nikulin lattice generated by the $E_i$'s) and then one obtains $U\oplus N\simeq U'\oplus (E_7(-2)\oplus\langle z\rangle)'\simeq U'\oplus N'$.
\end{rem}

It is natural to study the residual involution for a general generalized Nikulin surface and to do that one needs to study the action on families of manifolds of $K3^{[2]}$-type with involution and this can be done as in \cite[Section 8.4]{BP}.

 As before let $F$ be the Nikulin surface that is the fixed locus of an induced involution on $S^{[2]}$.
From \cite[Section 7.3]{BO} the invariant category is equivalent to $D(F)$. 
One can deform $D(F)$ together with the residual involution. First, we consider a deformation of $(S^{[2]}, \iota^{[2]})$ over a small disc $B$ (from Table \ref{eq: X as twisted moduli space of sheaf} we can choose it by considering a family of moduli spaces of twisted sheaves $M_v(S_t,\alpha_t)$ with involution $j_t$). 
We obtain the family $\mathcal{F}\to B$ of twisted $K3$ surfaces  such that the residual involution to $j_t$ acts on each fiber $D(F_t,\beta_t)$ separately.
Now from \cite[Proposition 3.9]{BP} generalized for twisted sheaves one could prove that there exists an etale cover $C$ of $B$ (also of the moduli space of the corresponding lattice-polarized $K3$ surfaces as considered in Corollary \ref{prop: Ns e T of projective F}) such that the involution lifts to an involution of the total family $\mathcal{F}_C\to C$ acting fiberwise, i.e.~on $D(\mathcal{F}_C, \beta)$ where $\beta$ is a Brauer class on the total family. 
Then one can use \cite[Section 4.3]{HMS} (generalized for twisted $K3$ surfaces) and see that for a $K3$ surface $F_t$ the category $D(F_t,\beta_t)$ deforms together with the involution when the anti-invariant part in the cohomology $\tilde{H}(F_t,\ZZ)$ remains algebraic.
By dimension count we obtain by such deformations the action on a very general $D(F_t,\beta_t)$ (meaning we obtain very general examples with $\rk \NS(F_t)=8$). Thus the anti-invariant part of the action on $\tilde{H}(F_t,\ZZ)$ is $E_8(-2)$ and for very general $t$ the generalized Picard lattice $\tilde{H}^{1,1}(F_t,\ZZ)$ contains the anti-invariant part.
It is natural to expect the following.

    \begin{prob}
Let $X$ be a manifold of $K3^{[2]}$-type with a symplectic involution and fixed $K3$ surface $F$. Let $j$ be a residual involution on $D(F)$ inducing an action
 $j$ on $\tilde{H}(F,\ZZ)$. Is it true that the anti-invariant sublattice of $j$ is isometric to $E_8(-2)$ and that it is contained in $\tilde{H}^{1,1}(F,\ZZ)$?
    \end{prob}

\begin{rem}
Let us describe the Hodge structures of the categories $D(F_t,\beta_t)$ in $\tilde{H}(F,\ZZ)\otimes \mathbb C$ explicitly when $F$ is a Nikulin surface (we mean after choosing a trivialisation of the local system). Hence we have $N\subset \NS(F)$ generated by the $E_i$'s and a copy of $U=\langle w_1,-w_2\rangle$ which spans $H^0(F,\Z)\oplus H^4(F,\Z)$ in $\tilde{H}(F,\Z)$ as in Remark \ref{rem: different U+N decompostions}. Moreover, we have the isometry $j$ acting on $U\oplus N$ as in proof of Proposition \ref{61}.
In Remark \ref{rem: different U+N decompostions}, we show 
such that we have two decompositions $U\oplus N=U'\oplus N'$ in such a way that the orthogonal complement to $U'$ in $\tilde{H}(F,\ZZ)$, that is necessarily isometric to $\Lambda_{K3}$, 
contains $E_7(-2)$ 
and the $j$-invariant $(-4)$-vector $z$ (perpendicular to $v$ in the invariant $U(2)\subset U\oplus N$).  The very general periods $\omega_t$ of $D(F_t,\beta_t)$ are contained in
$\Lambda_{K3}\otimes \mathbb C \subset \tilde{H}(F,\ZZ)\otimes \mathbb C $. Those periods which are invariant with respect to $j$ and not necessarily perpendicular to the $(-4)$-vector $z$ correspond by the Torelli theorem  to $K3$ surfaces $F_t$ such that $\NS(F_t)$ contains primitively $E_7(-2)$.
\end{rem}

\subsubsection{The residual involution on the double EPW quartic}\label{14.1}
Let us consider a generalized Nikulin surface $F$
with N\'eron--Severi lattice $\langle2\rangle\oplus E_7(-2)$ and let $h$ be the generator of  $\langle 2\rangle$. 
 By Table \ref{eq: X as twisted moduli space of sheaf},  $F$ is isomorphic to a component of the fixed locus of a symplectic involution $g$ on $X=M_{(2,\sum n_i,4)}(S_2)$, where $S_2$ is a $K3$ surface constructed as the resolution of a $7$-nodal quartic and the exceptional $(-2)$-classes are denoted by $n_i$ (the N\'eron--Severi lattice is $\langle4\rangle\oplus \langle-2\rangle^{\oplus 7}$). {Recall that $\NS(X)\simeq (\langle 4\rangle\oplus E_8(-2))'$.} 
 From the isomorphism $D(S_2)_g\simeq D(F,\alpha)$ (for some Brauer class $\alpha$ \cite{BO}),  by duality (see \cite{El}) we obtain  $D(S_2)\simeq D(F,\alpha)_{g^{\vee}}$ and the induced map $\varphi\colon \tilde{H}(S_2,\ZZ)\to \tilde{H}(F,\ZZ)$ as in \cite[Section 7.3]{BO}. We expect that $\alpha$ is trivial so $S_2$ could be a fixed locus of an involution on some moduli space of stable sheaves on $F$.

\begin{prob}\label{prob14.0}
    Is $S_2$ a fixed surface  of some involution on some moduli space of stable sheaves on $F$?
    Describe the map $\varphi$ and find the square $0$ vector $v_1$ mapping to $(0,h,0)$.
    \end{prob}
The previous problem is related to a construction we performed in \cite{CGKK}.
We consider on $F$ the Mukai vector $v=(0,h,0)$, then the moduli space $M_v(F)$ has Picard lattice $U\oplus E_7(-2)=(\langle 4\rangle\oplus E_8(-2))'$ and observe that the stability condition can be chosen using \cite[Theorem 1.3]{BM}.
Hence, we obtain the symmetric double EPW quartic $M_v(F)$ considered in 
\cite[Section 5]{CGKK}.
As observed in \cite[Section 5.1]{CGKK}, $M_v(F)$ admits a symplectic involution. We expect  this involution to be related to the residual involution obtained by the construction above of $F$ as the fixed locus.
 
Finally, recall also that $M_v(F)$ is isomorphic to the Hilbert scheme of $(1,1)$-conics contained in a symmetric Verra fourfold $V$ (see \cite[Section 5A]{CGKK}). In this way it
is also the moduli space of stable twisted sheaves on a $K3$ surface $S$ with polarization $H$ of degree $2$ that is the double cover of $\PP^2$ branched along the discriminant sextic of the quadric fibration on $V$.
So $M_{(0,h,0)}(F)$ is isomorphic to $M_{(0,H,0)}(S,B)$ where $B$ is a $B$ lift of a $2$-torsion Brauer class such that  $B^2=0$ and $B.H=0$.
 It is natural to ask the following.
\begin{prob}\label{prob14.1}
   Is $F$ isomorphic to the  surface $S$? 
    \end{prob}
    Note that a priory $F$ could be associated to the discriminant locus of another Verra fourfold associated to $M_v(F)$. The problem is thus to understand the discriminant locus of the symmetric $V$ (cf.~Section \ref{Verra}).

\subsubsection{The residual involution and {six-fold of $K3^{[3]}$-type}}
$ $
Observe that if $E_8(-2)$ is primitively embedded in a lattice $\Gamma$, then there exists an isometry of $\Gamma$ which acts as $-\id$ on the embedded copy of $E_8(-2)$ and as the identity on its orthogonal complement. This is because the action of $-\id$ on $E_8(-2)$ induces the identity on the discriminant group of $E_8(-2)$, and then can be glued with the identity on the orthogonal complement of $E_8(-2)$ in $\Gamma$.

Let us now consider $F$ a very general generalized Nikulin surface with $\NS(F)=E_7(-2)$ and observe that $H^0(F,\Z)\oplus H^4(F,\Z)\oplus \NS(F)\subset H^*(F,\Z)$ is a lattice isometric to $U\oplus E_7(-2)$. Recall that 
$U\oplus E_7(-2) \simeq (E_8(-2)\oplus \langle4\rangle)'$, see \ref{lemma: properties of G}. Therefore there is an isometry $i$ on $U\oplus E_7(-2)\simeq (E_8(-2)\oplus \langle4\rangle)'$ which acts as $-\id$ on $E_8(-2)$ and as the identity on the vector $v$ which generates the sublattice $\langle 4\rangle$ orthogonal to $E_8(-2)$.
This extends to an isometry, still denoted $i$, which  acts on $$H^*(F,\Z)\simeq U\oplus H^2(F,\Z)$$ as $-\id$ on $E_8(-2)\hookrightarrow U\oplus \NS(F)\hookrightarrow H^*(F,\Z)$ and as the identity on its orthogonal complement. 

Then we choose $v$ as Mukai vector and a $v$-generic stability condition such that $M_v(F)$ is a six dimensional smooth hyperk\"ahler manifold of $K3^{[3]}$-type (it exists by \cite{BM} as discussed before). 

The construction above implies that $\NS(M_v(F))\simeq E_8(-2)$, which is the orthogonal to $v$ in $U\oplus \NS(F)$.
The involution $i$ fixes the vector $v$, therefore it defines an isometry on $H^2(M_v(F),\Z)$, which acts as $-\id$ on $\NS(M_v(F))\simeq E_8(-2)$ and as the identity on the transcendental lattice of $M_v(F)$.
In particular, $i$ is an Hodge isometry of the second cohomology group of $M_v(F)$ and hence it is related with the presence of a birational involution $b$ on $M_v(F)$. The birational involution $b$ induces an isometry on $H^2(M_v(F),\Z)$ which is the composition of $i$ and certain monodromy operator. Observe that a priori $b$ can be biregular or not, but one can show that there exists a class $W\in H^2(M_v(F),\Z)$ which has divisibility 2 and self intersection $-4$ and is contained in $\NS(X)\simeq E_8(-2)$. So $i(W)=-W$, which implies that $i$ is not an isometry preserving a K\"ahler class. 

\begin{prob}
    Does the involution $j$ induce a birational involution
    on the sixfold $M_v(F)$? 
\end{prob}

 We can see that $E_8(-2)$ contains $(-4)$-classes of divisibility $2$ so classes of exceptional divisors of  contractions.
 It follows that the involution does not preserve the movable cone and we cannot use the Torelli theorem \cite{BL,MenetTorelli} since all invariant classes are perpendicular to the $(-4)$-class $W$ so cannot be K\"ahler classes.

Observe that the existence of a good Mukai vector $v$, preserved by the involution $i$ is very much related with the choice considering $6$-dimensional manifolds. Indeed, let us consider a $K3$ surface $F$ with $\NS(F)\simeq E_7(-2)$ and so $\T_F\simeq U(2)^{\oplus 3}\oplus E_8(-1)\oplus \langle -4\rangle$. Let us consider a Mukai vector $v\in H^*(F,\Z)$ such that $v^2=2d$ and recall that $\T_F\simeq \T_{M_v(F)}$. If we require that $H^2(M_v(F),\Z)$ admits a Hodge isometry which acts as $-\id$ on a copy of $E_8(-2)$ and as the identity on the orthogonal complement, we are requiring that $\NS(M_v(F))=E_8(-2)$. So $H^2(F,\Z)\simeq U^{\oplus 3}\oplus E_8(-1)\oplus E_8(-1)\oplus \langle -2d\rangle$ has to be an overlattice of finite index of $$\NS(M_v(F))\oplus \T_{M_v(F)}\simeq E_8(-2)\oplus U(2)^{\oplus 3}\oplus E_8(-1)\oplus \langle -4\rangle.$$ This forces $d=2$, so that $v^2=4$.

\subsection{N-generalized Nikulin surfaces}\label{subset: N generalized}
Until now we considered manifolds of $K3^{[2]}$-type with a symplectic involution and we considered the fixed surface. A natural generalization of this problem is to consider a manifold of  $K3^{[n]}$-type for $n>2$ with a symplectic involution and consider its fixed locus. We mainly consider the case $n=3$.

Let $X_n$ be a  manifold of $K3^{[n]}$-type admitting a symplectic involution $\iota_X$. In Theorem \ref{Thm1.2} we proved that if $n=2$, the transcendental lattice of the terminalization $Y$ of $X_2/\iota_X$ is Hodge isometric to the transcendental lattice of the $K3$ surface in the fixed locus.  Observe that if $X_2$ is general among the manifolds of $K3^{[2]}$-type admitting a symplectic involution $\iota_X$, then the transcendental lattice $\T_Y$ coincides with $\pi_*(H^2(X_2,\Z))\simeq \pi_*(\T_{X_2})$, where $\pi:X_2\ra X_2/\iota_X$ is the quotient map and $\pi_*$ is the topological push--forward which coincides with the ones on algebraic cycles when both are defined. So we can rephrase Theorem \ref{Thm1.2} as follows: let $F$ be a very general generalized Nikulin surface, then the transcendental lattice $\T_F$ of $F$ is Hodge isometric to $\pi_*(H^2(X_2,\Z))\simeq \pi_*(\T_{X_2})$.\\

If $n\geq 3$, then $X_n/\iota_X$ is already terminal and so $Y$ coincides with $X_n/\iota_X$. In particular either $H^2(Y,\Z)$ is $\pi_*(H^2(X_n,\Z))$ or it is an overlattice of finite index of it. 
\begin{lemma}\label{lem: pi* higher dimension} Let $X_n$ be a  manifold of $K3^{[n]}$-type admitting a symplectic involution $\iota_X$ and $\pi:X_n\ra X_n/\iota_X$ be the quotient map. Then $$\pi_*(H^2(X_n,\Z))\simeq U(2)^{\oplus 3}\oplus E_8(-1)\oplus \langle -4(n-1)\rangle.$$
\end{lemma}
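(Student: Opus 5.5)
We reduce to the case $X_n=S^{[n]}$ with $\iota_X=\iota_S^{[n]}$ induced by a symplectic involution $\iota_S$ on a $K3$ surface $S$. By Mongardi's results for $K3^{[n]}$-type, the action of a symplectic involution on $H^2(X_n,\Z)$ is unique up to isometry, and any pair $(X_n,\iota_X)$ deforms to such a natural pair. The lattice $\pi_*(H^2(X_n,\Z))$ with the induced form is a topological invariant, constant in such a family, so it suffices to compute it for $S^{[n]}$. Here we use the decomposition
\[
H^2(S^{[n]},\Z)=H^2(S,\Z)\oplus\Z\delta,\qquad \delta^2=-2(n-1),
\]
where $2\delta$ is the class of the exceptional divisor $\Delta$. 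Write $H^2(S,\Z)\simeq U^{\oplus 3}\oplus E_8(-1)\oplus E_8(-1)$, with $\iota_S^*$ exchanging the two copies of $E_8(-1)$ and acting trivially on $U^{\oplus 3}$. Then $\iota_X^*$ acts in the same way and fixes $\delta$.

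The key step is to transport the bilinear form to $\pi_*H^2(X_n,\Z)$. Following the convention of the proof of Theorem \ref{Thm1.1} and of \cite{Menet}, the form on $Y$ is defined via Fujiki relations, which give $q_Y(\pi_*x)=2q_X(x)$ for invariant $x$. More generally, with $\pi^*\pi_*x=x+\iota^*x$ one has
\[
q_Y(\pi_*x,\pi_*y)=\tfrac12 q_X(x+\iota^*x,\,y+\iota^*y).
\]
The images are as follows. On $U^{\oplus 3}$ and on $\delta$ we have $\pi_*x$ with $\pi^*\pi_*x=2x$, so the form is multiplied by $2$. This gives $U(2)^{\oplus 3}$ and $\langle -4(n-1)\rangle$, the latter since $\pi_*\delta$ is primitive in the image. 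A pair $(e,0)$ in $E_8(-1)\oplus E_8(-1)$ maps to the same element as $(0,e)$, and
\[
q(\pi_*(e,0))=\tfrac12 q_X((e,e))=q_{E_8(-1)}(e).
\]
The two copies of $E_8(-1)$ therefore map onto a single copy of $E_8(-1)$ with its original form. These three pieces are mutually orthogonal and together generate the image, which is then the orthogonal direct sum $U(2)^{\oplus 3}\oplus E_8(-1)\oplus\langle -4(n-1)\rangle$.

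The main obstacle is normalization. One must check that the Beauville--Bogomolov form on $X_n/\iota_X$ (via Fujiki constants for the orbifold, as in \cite{Menet} for $n=2$) indeed satisfies $q_Y(\pi_*x)=2q_X(x)$ on invariant classes for every $n$. This should follow from $\pi^*a\cdots\pi^*a=2a^{2n}$ together with the Fujiki relations, once the Fujiki constant of the quotient is fixed appropriately; we will take the normalization compatible with the $n=2$ case. As a consistency check, for $n=2$ we recover $\mathbb{T}=U(2)^{\oplus 3}\oplus E_8(-1)\oplus\langle -4\rangle$, which matches the proof of Theorem \ref{Thm1.1}.
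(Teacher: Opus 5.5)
Your proposal is correct and follows essentially the same route as the paper: deform to a natural pair $(S^{[n]},\iota_S^{[n]})$ (the paper cites \cite{KMO} for this step rather than Mongardi), read off the action on $U^{\oplus 3}\oplus E_8(-1)^{\oplus 2}\oplus\langle -2(n-1)\rangle$, and use that $\pi_*$ doubles the form on invariant classes while merging the two $E_8(-1)$ copies into one. Your formula $q(\pi_*x,\pi_*y)=\tfrac12 q_X(x+\iota^*x,\,y+\iota^*y)$ simply makes explicit the normalization that the paper takes for granted.
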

\proof The symplectic involution on  manifolds of $K3^{[n]}$-type are always induced by natural symplectic involution, i.e.~ there exists a specialization of $(X_n,\iota_X)$ to $(S^{[n]},\iota_S^{[n]})$ where $S$ is a $K3$ surface and $\iota_S$ is a symplectic involution on it, see \cite{KMO}. Therefore, the cohomological action of $\iota_X$ is the same as the one of $\iota_S^{[n]}$: in particular, it fixes the exceptional divisor of the Hilbert--Chow morphism and acts on $U^{\oplus 3}\oplus E_8(-1)^{\oplus 2}$ switching the two copies of $E_8(-1)$ and as the identity on $U^{\oplus 3}$. Recalling that $$H^2(X_n,\Z)=U^{\oplus 3}\oplus E_8(-1)^{\oplus 2}\oplus \langle -2(n-1)\rangle$$ where the last summand is generated by  half the exceptional divisor, by computations analogous to the ones in \cite{Mor}, one obtains $$\pi_*(H^2(X_n,\Z))\simeq U(2)^{\oplus 3}\oplus E_8(-1)\oplus \langle -4(n-1)\rangle,$$ since $\pi_*$ multiply by 2 the form of the invariant sublattice and identifies the two copies of $E_8(-1)$ to one.\endproof

\subsubsection{The case $n=3$}
Let us now concentrate one the case $n=3$. The fixed locus of $\iota_X$ consists of $64$ isolated points and 8 $K3$ surfaces, $Z_i$, $i=1,\ldots, 8$, all isomorphic to each other, see \cite{KMO}.
With ideas similar to the ones applied to study the generalized Nikulin surfaces, we now identify the surfaces which appear in the fixed locus by their N\'eron--Severi groups and transcendental lattices. First we deform the pair $(X_3,\iota_X)$ of a  manifold of $K3^{[3]}$-type with a symplectic involution to $(S^{[3]},\iota_S^{[3]})$, where $S$ is a $K3$ surface with a symplectic involution $\iota_S$. Then the surfaces in the fixed locus of $\iota_{S}^{[3]}$ are isomorphic to the Nikulin surface $\widetilde{S/\iota_S}$. Let us denote $E_1,\dots, E_8$ the curves spanning the Nikulin lattice in $\widetilde{S/\iota_S}$.

Let $S^{[3]}\to S^{(3)}$ be the Hilbert--Chow contraction with exceptional divisor $\Delta$. It is known by \cite{G} that the base $B$ of the contraction is non-normal along a surface $D$ isomorphic to $S$ and the normalization is bijective and is isomorphic to $S\times S $ (with the conductor supported on the diagonal).

The fixed surface $Z_i\subset S^{[3]}$ can be seen as the set of points $P_i+Q+\iota_S(Q)$ such that $P_i,Q\in S $ and $P_i$ is a point on $S$ fixed by $\iota_S$. The image of $P_i$ in $S/\iota_S$ is singular and it is resolved by the exceptional divisor $E_i$ in $\widetilde{S/\iota_S}$. 
One sees that the exceptional curve $E_i\subset Z_i$ is distinguished and proves the following.
    
\begin{prop}\label{prop: fixed locus 6 dimnesion}
   The intersection  $\Delta.Z_i$ in $S^{[3]}$ is $\sum_{j=1}^8E_j+2E_i$ on $Z_i$.
   For the general deformation $(X,Z_i')$ of $(S^{[3]},Z_i)$ obtained by deforming the symplectic involution, the N\'eron--Severi lattice of
   $Z_i'$ is the orthogonal to $Z_i.\Delta$ in the N\'eron--Severi lattice of $Z_i$ (i.e.~ in the Nikulin lattice).

The transcendental lattice of the surfaces $Z_i'$ is $U(2)^{\oplus 3}\oplus E_8(-1)\oplus \langle -8\rangle$.
\end{prop}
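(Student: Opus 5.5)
The proof will run in parallel with Proposition \ref{main-prop}, in three steps: a local computation of $\Delta|_{Z_i}$, a deformation argument that turns $\Delta$ transcendental, and a lattice computation.

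\textbf{Step 1: the class $\Delta|_{Z_i}$.} I identify $Z_i$ with $\widetilde{S/\iota_S}$ through the parametrization $Q\mapsto P_i+Q+\iota_S(Q)$. A point of $Z_i$ lies in $\Delta$ exactly when the subscheme $P_i+Q+\iota_S(Q)$ is non-reduced. This happens in two ways.
\begin{itemize}
\item When $Q=\iota_S(Q)$ is a fixed point $P_j$ with $j\neq i$, the subscheme has a double point at $P_j$. Over that fixed point the corresponding locus in $Z_i$ is the exceptional curve $E_j$. Locally this is the same situation as in $S^{[2]}$, where $\nu^*\Delta=\sum E_j$, so each such $E_j$ occurs with multiplicity $1$.
\item When $Q$ tends to $P_i$, all three points collide at $P_i$, and the locus is the curve $E_i$.
\end{itemize}
To get the multiplicity $3$ along $E_i$, I will use local coordinates $(x,y)$ at $P_i$ in which $\iota_S=-\mathrm{id}$. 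A curvilinear triple point supported at $P_i$ and invariant under $\iota_S$ is of the form $(y-ax^2,x^3)$ after a linear change of coordinates. I then compute the order of vanishing of the Hilbert--Chow discriminant along the family transverse to $E_i$, following \cite[Example 4.5]{F}. Equivalently, I compare with $\Delta^2$ and the known intersection numbers on $S^{[3]}$.

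A consistency check is available: the intersection numbers $\Delta|_{Z_i}\cdot E_j$ can be predicted from the normal bundle. This gives $\Delta|_{Z_i}=\sum_j E_j+2E_i$.

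\textbf{Step 2: the Néron--Severi lattice of $Z_i'$.} I take $S$ very general with a symplectic involution, so that $\NS(Z_i)=N$. I then deform $(S^{[3]},\iota_S^{[3]})$ to a very general $(X,\iota_X)$, using \cite{KMO}. Exactly as in Proposition \ref{main-prop}, along this deformation the invariant class $\Delta$ becomes transcendental, while the anti-invariant $E_8(-2)$ remains algebraic.

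The restriction $\nu_i^*$ is a morphism of Hodge structures and is injective on the invariant part; this is the analogue of Lemma \ref{lemma: properties of nu*}, with the same proof. Hence $\nu_i^*\Delta$ becomes transcendental on $Z_i'$, and nothing else in $N$ leaves $\NS$. It follows that
\[
\NS(Z_i')=(\textstyle\sum_jE_j+2E_i)^{\perp}\subset N .
\]

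\textbf{Step 3: the transcendental lattice.} I compute $\NS(Z_i')$ directly. The vector $\sum_jE_j+2E_i=2\hat E+2E_i$ is divisible by $2$ in $N$, so its primitive generator is $\hat E+E_i$. This generator has square $-16/2\cdot\ldots$; concretely $(\hat E+E_i)^2=-4-2\cdot 2\cdot\tfrac12\cdot 2+\ldots$. The exact value will be fixed from $\hat E^2=-4$, $\hat E E_i=-1$ and $E_i^2=-2$, which give $(\hat E+E_i)^2=-4-2-2=-8$.

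So $\NS(Z_i')$ is the orthogonal complement of a $(-8)$-vector $w$ in $N$. I will check that $\mathrm{div}_N(w)=2$, so the glue between $\langle w\rangle$ and $w^\perp$ has index $2$. From this I obtain $A_{w^\perp}$ and its discriminant form, using \cite[Proposition 1.4.1]{N}; I expect $u(2)^{\oplus3}\oplus\mathbb{Z}/8(\tfrac18)$ or a variant of it.

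Uniqueness of the primitive embedding of $w^\perp$ into $\Lambda_{K3}$, by \cite[Theorem 1.14.4]{N}, together with \cite[Corollary 1.13.3]{N}, then shows that the orthogonal complement is the unique lattice of signature $(3,12)$ with the opposite discriminant form. I will match this lattice with $U(2)^{\oplus 3}\oplus E_8(-1)\oplus\langle -8\rangle$.

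The result is consistent with Lemma \ref{lem: pi* higher dimension} for $n=3$, which gives $\pi_*H^2=U(2)^{\oplus 3}\oplus E_8(-1)\oplus\langle -8\rangle$.

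\textbf{Main obstacle.} I expect the hardest step to be the scheme-theoretic multiplicity $3$ of $\Delta|_{Z_i}$ along $E_i$, which requires the local analysis of triple points at $P_i$. I will carry out that analysis first.
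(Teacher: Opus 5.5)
Your Steps 1 and 2 follow the paper's argument. The multiplicity along $E_i$ comes from a local computation (the paper just cites \cite[page 13]{F}). Then $\NS(Z_i')$ is obtained, as in Proposition~\ref{main-prop}, as the orthogonal complement of $\nu^*\Delta$ in $N$.

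\textbf{Step 1 (minor correction).} An $\iota_S$-invariant curvilinear triple point at $P_i$ cannot be $(y-ax^2,x^3)$ with $a\neq 0$, because $\iota_S$ sends it to $(y+ax^2,x^3)$. The points of $E_i$ are the collinear schemes $(y-tx,x^3)$, and the computation then takes one line.
\begin{itemize}
\item For $Q=(x_0,tx_0)$, the scheme $P_i+Q+\iota_S(Q)$ is $(y-tx,\,x^3-sx)$, where $s=x_0^2$ is a local equation of $E_i$ on $\widetilde{S/\iota_S}$.
\item On the curvilinear chart of $S^{[3]}$, $\Delta$ is cut out by the reduced discriminant, which here is $4s^3$. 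This gives multiplicity $3$ along $E_i$.
\item Near $E_j$ with $j\neq i$, one gets instead the discriminant $4s$ of $x^2-s$, so multiplicity $1$.
\end{itemize}

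\textbf{Step 3 (a genuine error).} Your $w=\hat E+E_i$ and $w^2=-8$ are correct. However, $w\cdot E_j=-1$ for $j\neq i$ (and $w\cdot E_i=-3$). So $\mathrm{div}_N(w)=1$, not $2$, and $[N:\mathbb{Z}w\oplus w^{\perp_N}]=|w^2|/\mathrm{div}_N(w)=8$, not $2$. Hence $|A_{w^{\perp_N}}|=2^6\cdot 8^2/8=512$, that is, $A_{w^{\perp_N}}\cong(\mathbb{Z}/2\mathbb{Z})^{\oplus 6}\oplus\mathbb{Z}/8\mathbb{Z}$. With your index-$2$ glue you would get $|A_{w^{\perp_N}}|=32$. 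That cannot match $U(2)^{\oplus 3}\oplus E_8(-1)\oplus\langle -8\rangle$, whose discriminant group has order $512$.

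After this correction your route works: $q_{w^{\perp_N}}=u(2)^{\oplus 3}\oplus\left(\frac18\right)$, and \cite[Corollary 1.13.3]{N} then identifies the transcendental lattice. The paper's route is shorter and needs no discriminant-form computation. Set $\T_{Z_i}=N^{\perp}\cong U(2)^{\oplus 3}\oplus E_8(-1)$. Then $\T_{Z_i}\oplus\mathbb{Z}w\cong U(2)^{\oplus 3}\oplus E_8(-1)\oplus\langle -8\rangle$ is a finite-index sublattice of $\T_{Z_i'}=(w^{\perp_N})^{\perp}$. It already has discriminant $2^6\cdot 8=512=|\mathrm{disc}\,\NS(Z_i')|$, so it is saturated and equals $\T_{Z_i'}$.
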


\begin{proof}
The idea is to show that $\Delta$ intersects $Z_i$ in $E_i$ with multiplicity $3$ and $E_j$ with multiplicity 1, for $j\neq i$.
The contraction $\Delta \to B$ is a topologically locally trivial fibration over $B-D$ with general fibers $\PP^1$ and the fibers over $D$ are isomorphic to the cone over a twisted cubic.
The local situation around $Z_i\cap \Delta$ is worked out in \cite[page 13 diagram 8]{F}, where it is proved that 
the multiplicity of the intersection $\Delta .Z_i$ is $3$.

As in Corollary \ref{cor: very general F has NS=G}, to compute the N\'eron--Severi group of $Z_i'$, it suffices to compute the orthogonal complement to $\sum_{j=1}^8E_j+2E_i$ inside the Nikulin lattice $N$. It is a rank 7 negative definite lattice of discriminant $(\Z/2\Z)^{\oplus 6}\oplus (\Z/8\Z)$. The transcendental lattice is an overlattice of finite index (possibly 1) of the transcendental lattice of a very general Nikulin surface $K3$ surface, plus the class $\sum_{j=1}^8E_j+2E_i$ (since it is no longer algebraic). Observe that also the class $(\sum_{j=1}^8E_j+2E_i)/2$ is contained in the N\'eron--Severi group (and hence in the second cohomology group) of a Nikulin surface. Therefore $\T_{Z_i'}$ is an overlattice of finite index of $$U(2)^{\oplus 2}\oplus E_8(-1)\oplus  \left\langle \left(\sum_{j=1}^8E_j+2E_i\right)/2\right\rangle\simeq U(2)^{\oplus 2}\oplus E_8(-1)\oplus  \langle -8\rangle,$$ whose discriminant coincides with the one of the N\'eron--Severi group. This implies that $$\T_{Z_i'}\simeq U(2)^{\oplus 2}\oplus E_8(-1)\oplus  \langle -8\rangle.$$ 
\end{proof}

\begin{cor}\label{cor: conjecture dimension 6}
 Let $X_3$ be a very general  manifold of $K3^{[3]}$-type admitting a symplectic involution $\iota_X$, $\pi:X_3\ra X_3/\iota_X$ be the quotient map and $Z$ be one of the surface fixed by $\iota_X$. Then there is the following lattice-theoretic isometry: $\T_Z\simeq \pi_*(H^2(X_3,\Z))\simeq \pi_*(\T_{X_3})$.\end{cor}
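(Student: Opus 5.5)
The plan is to identify both lattices in the statement with the abstract lattice $U(2)^{\oplus 3}\oplus E_8(-1)\oplus\langle -8\rangle$, so that the isometry is purely lattice-theoretic and no Hodge compatibility has to be checked.

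\textbf{The quotient side.} By Lemma \ref{lem: pi* higher dimension} with $n=3$ we have
$$\pi_*(H^2(X_3,\Z))\simeq U(2)^{\oplus 3}\oplus E_8(-1)\oplus \langle -8\rangle.$$
Since $\iota_X$ acts as $-1$ on its anti-invariant lattice $E_8(-2)$, $\pi_*$ kills that lattice. For very general $(X_3,\iota_X)$ we have $\NS(X_3)=E_8(-2)$, which is the anti-invariant part, and $\T_{X_3}=H^2(X_3,\Z)^{\iota_X^*}$. Hence $\pi_*(\T_{X_3})$ and $\pi_*(H^2(X_3,\Z))$ span the same image. To see that they are actually equal, and not merely of finite index in each other, I would use the explicit description of $\pi_*$ from the proof of Lemma \ref{lem: pi* higher dimension}. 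On $x\in E_8(-1)^{\oplus 2}$ one has $\pi_*(x)=\pi_*(\iota_X^*x)$, so $\pi_*(x,y)$ depends only on $x+y$. Thus the diagonal of $E_8(-1)^{\oplus 2}$, which lies in $\T_{X_3}$, already surjects onto the $E_8(-1)$ summand of the image. The $U^{\oplus 3}$ summand and the class $\delta$ (half the exceptional divisor) are invariant, so they lie in $\T_{X_3}$ as well. This gives $\pi_*(\T_{X_3})=\pi_*(H^2(X_3,\Z))$.

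\textbf{The fixed-surface side.} I would invoke Proposition \ref{prop: fixed locus 6 dimnesion}. It applies because a very general $X_3$ with a symplectic involution is a general deformation of some $(S^{[3]},\iota_S^{[3]})$ in the family where the involution persists, by \cite{KMO}. For such $X_3$ the fixed surface $Z$ is one of the $Z_i'$, whose transcendental lattice is stated there as $U(2)^{\oplus 3}\oplus E_8(-1)\oplus\langle -8\rangle$. Comparing with the quotient side gives the corollary.

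\textbf{The main obstacle.} This is the transcendental lattice computation in Proposition \ref{prop: fixed locus 6 dimnesion}, and it has to be checked carefully. The proof there writes $U(2)^{\oplus 2}$, whereas the statement says $U(2)^{\oplus 3}$. Let me recount the ranks. The Néron--Severi lattice is rank $7$, so $\T_Z$ has rank $15$ and signature $(3,12)$. The lattice $U(2)^{\oplus 3}\oplus E_8(-1)\oplus\langle -8\rangle$ has rank $6+8+1=15$, which matches. The second check is the discriminant. $\NS(Z)=(\sum_j E_j+2E_i)^{\perp}$ in $N$ has discriminant group $(\Z/2)^6\oplus\Z/8$. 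I would confirm this, for instance by noting that $N\supset \NS(Z)\oplus\langle -24\rangle$, and compare it with $u(2)^{\oplus 3}\oplus(-\tfrac18)$. The discriminant forms should be opposite, so that of $\NS(Z)$ should be $u(2)^{\oplus 3}\oplus(\tfrac18)$. Then \cite[Corollary 1.13.3]{N} gives uniqueness of the indefinite lattice with this signature and discriminant form, which identifies $\T_Z$ as claimed. This is the step where a discriminant-form mismatch, such as $v(2)$ versus $u(2)$, could break the argument, so it is the one to compute explicitly.
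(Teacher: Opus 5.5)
Your main line is exactly how the paper gets the corollary: read off $\pi_*(H^2(X_3,\Z))\simeq U(2)^{\oplus 3}\oplus E_8(-1)\oplus\langle -8\rangle$ from Lemma~\ref{lem: pi* higher dimension} with $n=3$, read off $\T_Z$ from Proposition~\ref{prop: fixed locus 6 dimnesion}, and compare.

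\textbf{The step that fails.} Your upgrade of $\pi_*(\T_{X_3})\subseteq\pi_*(H^2(X_3,\Z))$ to an equality is wrong.
\begin{itemize}
\item You correctly note that $\pi_*(x,y)$ depends only on $x+y$. But then the diagonal element $(x,x)$ goes to $\pi_*(2x,0)=2\pi_*(x,0)$. So the diagonal maps onto $2E_8(-1)$, not onto the $E_8(-1)$ summand.
\item For very general $X_3$ we have $\T_{X_3}=H^2(X_3,\Z)^{\iota_X^*}\simeq U^{\oplus 3}\oplus E_8(-2)\oplus\langle -4\rangle$, and $\pi_*$ doubles the form on invariant classes. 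Hence $\pi_*(\T_{X_3})\simeq U(2)^{\oplus 3}\oplus E_8(-4)\oplus\langle -8\rangle$.
\item This lattice has index $2^8$ in $\pi_*(H^2(X_3,\Z))$. It is not even abstractly isometric to it, since the discriminants are $2^{25}$ and $2^{9}$.
\end{itemize}
So the second $\simeq$ in the statement can only mean agreement up to finite index. The two lattices have the same rational span, and $\pi_*(H^2(X_3,\Z))$ is the saturation of $\pi_*(\T_{X_3})$ inside it. This is how the paper compares saturations for $n=2$, just before defining $T_Y^{-4}$. You should state that part in this form rather than claim equality.

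\textbf{The discriminant check you flagged.} Your auxiliary lattice is off.
\begin{itemize}
\item The class $\sum_jE_j+2E_i=2(\hat E+E_i)$ has square $-32$. Its primitive generator $\hat E+E_i$ has square $-8$.
\item $\hat E+E_i$ has divisibility $1$ in $N$, since it pairs to $-1$ with $E_j$ for $j\neq i$.
\item Hence $N$ is an index-$8$ overlattice of $\NS(Z)\oplus\langle -8\rangle$, not of $\NS(Z)\oplus\langle -24\rangle$.
\end{itemize}
With this correction the check closes cleanly. The glue subgroup of $A_{\NS(Z)}$ is anti-isometric to $A_{\langle -8\rangle}$, so it is $\Z/8\Z(\tfrac18)$. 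This form is nondegenerate, so it splits off, and its orthogonal complement is isometric to $A_N=u(2)^{\oplus 3}$. Therefore $q_{\NS(Z)}=u(2)^{\oplus 3}\oplus(\tfrac18)$ and no $v(2)$ appears. Then \cite[Corollary 1.13.3]{N} gives $\T_Z\simeq U(2)^{\oplus 3}\oplus E_8(-1)\oplus\langle -8\rangle$.

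The $U(2)^{\oplus 2}$ in the paper's proof is indeed a typo. The very general Nikulin surface has transcendental lattice $U(2)^{\oplus 3}\oplus E_8(-1)$, and rank $15$ forces $U(2)^{\oplus 3}$.
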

\begin{rem}
The previous result is similar to the one in Theorem \ref{Thm1.2}, but in this case, i.e.~ for $n=3$, we are able to prove the existence of an isometry between the lattices $\T_Z$ and  $\pi_*(\T_{X_3})$, but we do not know if it is a Hodge isometry.
\end{rem}

\subsubsection{Results in higher dimension}

Let $3\leq n\leq 10$, $I$ a set of ordered and distinct indices in $\{1,\ldots,8\}$ such that $|I|\leq n-2$.
Let $X_n$ be a very general  manifold of $K3^{[n]}$-type with a symplectic involution $\iota$. Deform $(X,\iota)$ to $(S^{[n]},\iota_S^{[n]})$ for a $K3$ surface $S$ with a symplectic involution $\iota_S$ whose fixed points are called $P_i$. Then the image $Z_I$ in $S^{[n]}$ of the surface $\{(s,\iota(s), P_{i_1},\ldots,P_{i_{n-2}})\mbox{ with }s\in S, i_j\in I\}$ is a Nikulin surface such that $Z_I\cap \Delta$ is $$\sum_{j=1}^8 E_j+2E_{i_1}+\ldots +2E_{i_{n-2}}.$$ 

Then, by the same arguments considered in Proposition \ref{prop: fixed locus 6 dimnesion} and Corollary \ref{cor: conjecture dimension 6}, we conclude that there are surfaces in the fixed locus of $\iota$ on $X_n$ whose N\'eron--Severi group is isometric to the orthogonal complement to $\sum_{j=1}^8 E_j+2E_{i_1}+\ldots +2E_{i_n}$ in the Nikulin lattice and whose transcendental lattice is $U(2)\oplus E_8(-1)\oplus \langle 4(n-1)\rangle$. In particular, there are surfaces $Z_I$ in the fixed locus of $\iota_X$ on $X$ such that $$\T_{Z_i}\simeq \pi_*(\T_{X_n})\simeq \pi_*(H^2(X_n,\Z)).$$

\end{document}